\documentclass[12pt,twoside]{amsart}
\input{includeNice}
\usepackage{mabliautoref}
\usepackage{amssymb}
\usepackage{amscd}
\usepackage[abbrev,alphabetic]{amsrefs}
\usepackage{hyperref}
\usepackage[a4paper,margin=1in]{geometry}  
\usepackage{soul}

\newenvironment{claimproof}[0]
{%
\paragraph{\it Proof of Claim.}%
}
{%
\hfill$\blacksquare$%
}

\newenvironment{s1proof}[0]
{%
\paragraph{\it Proof of Step 1.}%
}
{%
\hfill$\blacksquare$%
}

\newenvironment{s2proof}[0]
{%
\paragraph{\it Proof of Step 2.}%
}
{%
\hfill$\blacksquare$%
}

\newenvironment{s3proof}[0]
{%
\paragraph{\it Proof of Step 3.}%
}
{%
\hfill$\blacksquare$%
}
\newenvironment{s4proof}[0]
{%
\paragraph{\it Proof of Step 4.}%
}
{%
\hfill$\blacksquare$%
}

%package xypic
\usepackage{xypic}
\usepackage{tikz-cd}
\usepackage{caption}

\usepackage{url}
\RequirePackage{booktabs, multirow}
\RequirePackage{pgf}
%%%%
%%%%%%%

% Name commands: choose your colors
\newcommand{\FB}[1]{\textcolor{red}{(FB: #1)}}

\newcommand{\cyan}{\color{black}}

%%%%
%%%%%%%

%not italics or slanted enumeration!!
\setlist[enumerate]{font=\normalfont}

\title[%Geometry and arithmetic of 
Geometrically integral regular del Pezzo surfaces]
{Geometry and arithmetic of geometrically integral regular del Pezzo surfaces} 
\author[Fabio Bernasconi and Hiromu Tanaka]{Fabio Bernasconi and Hiromu Tanaka} 
\subjclass[2020]{}
\keywords{del Pezzo surfaces, rational points, rationality, imperfect fields}
\thanks{}

\address{Dipartimento di Matematica “Guido
Castelnuovo”, SAPIENZA Università di Roma, Piazzale Aldo Moro 5, I-00185,
Roma} 
\email{fabio.bernasconi@uniroma1.it}

\address{Department of Mathematics, Graduate School of Science, 
Kyoto University, Kyoto 606-8502, JAPAN}
\email{tanaka.hiromu.7z@kyoto-u.ac.jp}

\begin{document}
	
\begin{abstract}
We classify geometrically integral regular del Pezzo surfaces which are not geometrically normal over imperfect fields of positive characteristic. 
Based on this classification, we 
show that 
a three-dimensional terminal del Pezzo fibration onto a curve  over an algebraically closed field always admits a section. 
Moreover, we prove that the total space is rational if the base curve is rational and the anticanonical degree of a fibre is at least five. 
\end{abstract}
	
	\maketitle
	\tableofcontents

\section{Introduction}

% {\cyan 
% Problems (green colours): 
% \begin{itemize}
% \item Pf of 3.7 (very ampleness or existence of prime div for $K^2=4$)
% \item Lem 4.1. (Br)
% \item 4.15, 4.16. (very ampleness: can we use the geom normal well known result?) 
% \item 4.21: Probably typos. 
% \end{itemize}
% }
% {\green Using generic member, 
% 2.4 and 2.5 holds by assuming $-K$ is bpf (instead of assuming the existence of a prime divisor in $|-K|$). 
% This settles 3.7 and 4.15. Remaining: 4.16. 

% 4.16: We have $\sigma : Y \to X$ (in the proof). 
% If $-K_Y$ is ample, then we can use our classification (to get a contradiction). 
% Otherwise, we get $\tau : Y \to Z$ the anti-cano, which satisfies $\rho(Z)=1$. 
% Then we can find a prime div by $\Pic Z = \Z K_Z$? 
% }

We work over a field $k$. A regular del Pezzo surface $X$ 
(i.e., a regular projective surface such that $-K_X$ is ample) is one of the possible outcomes of the Minimal Model Program (for short, MMP) for regular projective surfaces.
When $k$ is algebraically closed, del Pezzo surfaces are completely classified: either $X$ is isomorphic to %$\mathbb{P}^2_k$,
$\mathbb{P}^1_k \times \mathbb{P}^1_k$ or 
the blow-up of $\P^2_k$ at most at 8 points in general position \cite{Dol12}. 
The study of del Pezzo surfaces in the case when $k$ is perfect and their arithmetic properties has begun with the work of the italian school (e.g., \cite{Enr97, Seg51}) and the seminal article of Manin \cite{Man66}. Various problems concerning the geometry and arithmetic of del Pezzo surfaces remain active areas of research. Examples include the investigation of the unirationality of low-degree del Pezzo surfaces (see, for instance, \cite{STVA, FL16}), the exploration of the existence of rational points over various fields (e.g., \cite{CTM, ct20}), and the classification of their automorphism groups and twisted forms (see \cite{Shr20, SZ, Boi, Yas22, DM22, DM23}).

In this article, we study the geometry and the arithmetic of regular del Pezzo surfaces over arbitrary fields of positive characteristic, including the case of imperfect fields. 
This is motivated by the three-dimensional MMP: one of the possible outcomes of an MMP for a smooth threefold  over an algebraically closed field is a del Pezzo fibration $X \to B$, where $X$ is a $\mathbb{Q}$-factorial terminal threefold, 
$-K_X$ is ample over $B$, and $\dim(B)=1$.
In particular, the generic fibre $X_{K(B)}$ is a regular del Pezzo surface defined over an imperfect field. 
In characteristic $p>0$, generic smoothness %theorem 
does not hold in general and $X_{K(B)}$ can fail to be geometrically regular or even geometrically normal.

Nevertheless, various bounds on the pathological behaviours of $X_{K(B)}$ have been obtained in recent years \cite{BM23, BT22,  FS20, JW21, PW, Tan19}.
Our first theorem %adds another step: we 
is to give a complete classification of geometrically integral regular del Pezzo surfaces which are not geometrically normal.

\begin{theorem}[\autoref{thm: class_dP}]\label{t-main-classify}
    Let $k$ be  a field of characteristic $p>0$. 
    Let $X$ be a regular del Pezzo surface over $k$. 
    Assume that $X$ is {geometrically integral and} not geometrically normal. 
    Then $p \in \{2, 3\}$ and $X$ belongs to one of the families in \autoref{table p=3} and \autoref{table p=2} {of \autoref{thm: class_dP}}.
\end{theorem}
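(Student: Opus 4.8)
The plan is to descend to the algebraic closure, analyse the normalization of the geometric model, and read the characteristic constraint off the purely inseparable structure of the non-normal locus. First I would set $\overline X := X \times_k \overline k$; since $X$ is geometrically integral but not geometrically normal, $\overline X$ is integral and non-normal, and it is Gorenstein because the dualizing sheaf $\omega_X$ is invertible (as $X$ is regular) and commutes with the base change along $\overline k/k$. Let $\pi \colon Y \to \overline X$ be the normalization, and let $C \subset Y$ and $D = \pi(C) \subset \overline X$ be the conductor divisors, which are curves as $\overline X$ fails to be normal in codimension one. Gorensteinness gives the adjunction-type identity $K_Y + C \sim \pi^\ast K_{\overline X}$; since $-K_{\overline X}$ is ample and $\pi$ is finite, $L := -(K_Y + C) = \pi^\ast(-K_{\overline X})$ is nef and big, with $L^2 = (-K_X)^2 =: d > 0$.

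Next I would extract genus and positivity constraints. Restricting $L$ to $C$ and applying adjunction on the normal surface $Y$ gives $2p_a(C) - 2 + \deg \mathrm{Diff}_C = -(L \cdot C) \le 0$, so $p_a(C) \in \{0,1\}$ and the different is small. The decisive input is the behaviour in characteristic $p$: because $X$ is regular while $\overline X$ is not, the map $\pi$ is purely inseparable along $D$, and identifying $\overline X$ with a quotient $Y/\mathcal F$ by a corank-one $p$-foliation $\mathcal F \subset T_Y$ yields $K_Y \sim \pi^\ast K_{\overline X} + (p-1)\det \mathcal F$. Comparing with the conductor formula shows $C \sim (p-1)B$ for the effective divisor $B := \det\mathcal F$; thus a factor of $(p-1)$ is built into the conductor, hence into every intersection number and genus attached to it.

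With this the bound on $p$ follows. Since $C = (p-1)B$, the arithmetic genus $p_a(C)$ grows with $p$ once $B$ has nonnegative self-intersection, so the constraint $p_a(C) \le 1$, together with $L^2 = (K_Y + C)^2 = d > 0$ and $L \cdot B \ge 0$, produces an inequality of the shape $(p-1)\cdot(\text{positive quantity}) \le d$ with $d$ controlled by the del Pezzo hypothesis; this forces $p \in \{2,3\}$. Finally, for $p = 3$ and $p = 2$ separately I would classify the pairs $(Y,C)$ with $-(K_Y+C)$ nef and big, $p_a(C) \le 1$, and $C = (p-1)B$ — running the MMP on the rational surface $Y$ and invoking the structure of Hirzebruch surfaces and weak del Pezzo surfaces — and then determine which pairs carry Frobenius-descent data reconstructing a $k$-form $X$, thereby filling in the two tables.

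The hardest part will be the characteristic-$p$ step in two guises: first, pinning down the conductor/different of the inseparable cover $\pi$ precisely enough to obtain the exact $(p-1)$-divisibility and the sharp genus inequality; and second, in the classification stage, controlling the descent of the geometric pair $(Y,C)$ over $\overline k$ back to forms over the imperfect field $k$, which is what separates the individual families rather than merely bounding $p$.
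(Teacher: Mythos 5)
There is a genuine gap, and it occurs at the step you yourself flag as decisive. You claim that $\overline X := X_{\overline k}$ can be identified with a quotient $Y/\mathcal F$ of its normalization $Y$ by a corank-one $p$-foliation $\mathcal F \subset T_Y$. No such identification can exist: the quotient of a normal variety by a $p$-foliation is again normal, whereas $\overline X$ is non-normal by hypothesis. The correct structural statement (due to Patakfalvi--Waldron and Tanaka) goes the other way: the normalization $Y$ is sandwiched between $X_{\overline k}$ and a purely inseparable twist, i.e.\ there is a finite purely inseparable morphism \emph{onto} $Y$, and the consequence one actually extracts --- that the conductor satisfies $C=(p-1)\Delta$ for an effective divisor $\Delta$ --- is a nontrivial theorem, not a formal consequence of your identification. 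In effect this whole portion of your argument is an attempted reproof of \cite[Theorem 4.6]{Tan19}, which the paper does not reprove but cites (see \autoref{prop: geom_normality_degree} and its repeated use later): that result already supplies $p\in\{2,3\}$, the degree constraints, and the list of possible $(X_{\overline k})^N$ with their conductors.

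Even granting that step, your outline produces only the geometric data $\bigl((X_{\overline k})^N, C\bigr)$, i.e.\ the right-hand entries of the tables, and not the content of \autoref{thm: class_dP} itself: the Picard rank $\rho(X)$, the irregularity $h^1(X,\MO_X)$ (rows $1$-$1$-i and $1$-$2$-i, which the paper handles by citing \cite[Theorem 1.1]{BM23} and which your conductor analysis cannot see), the presentations $X\simeq H_6\subset \P(1,1,2,3)$, $H_4\subset\P(1,1,1,2)$, $H_3\subset\P^3_k$, $H_{2,2}\subset\P^4_k$, the blowups at purely inseparable points, the double-cover structure, and the types of extremal rays. These are invariants and structures of $X$ over $k$, not of the pair $(Y,C)$ over $\overline k$, and your proposal to recover them by classifying ``Frobenius-descent data'' is not a workable argument: there is no effective descent theory along the infinite inseparable extension $\overline k/k$ with which one could enumerate the forms. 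The paper's proof instead works directly with $X$: for $H^1(X,\MO_X)=0$ it splits into primitive and imprimitive cases via the surface MMP of \cite{BFSZ} (\autoref{p rho 1}, \autoref{p 2 conic bdls}, \autoref{p imprim geom-non}, \autoref{p imprim geom-nor}), produces a prime anticanonical member (\autoref{l index 1}), computes anticanonical rings to obtain the hypersurface and complete-intersection models (\autoref{p va deg3}, \autoref{p hypersurf}), and only then descends --- by genuine Galois descent from $k^{\sep}$ to $k$, in Steps 2 and 3 --- to an arbitrary base field. None of these steps appears in your outline, and they are where the substance of the theorem lies.
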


It is important to classify the geometrically non-normal case, 
because the geometrically normal regular del Pezzo surfaces are 
automatically geometrically canonical, i.e., 
the base change $X \times_k \overline k$ is a del Pezzo surface with at worst canonical singularities, and these have been classified \cite[Section 8]{Dol12}. 
%Such del Pezzo surfaces are classified 
Thanks to description of the geometrically non-normal case (\autoref{t-main-classify}) 
and the Sarkisov program for regular surfaces developed in \cite{BFSZ}, 
we prove the following %sufficient criterion for 
rationality criteria of regular del Pezzo surfaces, which extend the work of Enriques, Segre, Manin, Iskovskikh and others (\cite{Enr97, Man66, Isk79, SD72, Sko93, SB92, Isk96}) to the (possibly non-smooth) geometrically integral case.
{Recall that a surface $X$ over $k$ is {\em rational} if there exists a birational map $X \dashrightarrow \P^2_k$ over $k$.}

\begin{theorem}[\autoref{p deg7}, \autoref{t rationality}, \autoref{them: Enriques}] \label{thm: rationality_regular_dP}
	Let $k$ be a field of characteristic $p>0$.
	Let $X$ be a %geometrically integral 
    regular del Pezzo surface over $k$ {which is geometrically integral}. 
Then the following hold. 
\begin{enumerate}
\item If $K_X^2 \geq 5$ and $X(k) \neq \emptyset$, then $X$ is rational;
\item If $K_X^2 =5$ or $K_X^2 =7$, then $X$ is rational. 
\end{enumerate}
	% In the case where $K_X^2 = \left\{ 5, 7 \right\}$ and $X$ del Pezzo surface, then $X(k) \neq \emptyset$ is automatically satisfied.
\end{theorem}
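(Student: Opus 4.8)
The plan is to handle both parts simultaneously by first reducing to the case where the geometric surface has at worst canonical singularities, and then running an explicit birational reduction to $\P^2_k$. The starting observation is that, by the classification in \autoref{t-main-classify}, the geometrically non-normal examples occur only in small anticanonical degree (the families of \autoref{table p=3} and \autoref{table p=2} all have $K_X^2 \leq 4$). Hence every geometrically integral regular del Pezzo surface with $K_X^2 \geq 5$ is automatically geometrically normal, and therefore geometrically canonical: $X_{\overline k} \coloneqq X \times_k \overline k$ is a del Pezzo surface with at worst Du Val singularities. Passing to the separable closure preserves regularity, so $X_{k^{\sep}}$ is a regular del Pezzo surface and $\Gal(k^{\sep}/k)$ acts on the configuration of negative curves on the minimal resolution of $X_{\overline k}$. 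This reduction is what neutralizes the imperfect-field pathologies: once we are geometrically canonical, the combinatorics of $(-1)$- and $(-2)$-curves is the classical one and descent along $k^{\sep}/k$ behaves as usual.

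For part (1), I would establish the conditional statement that $K_X^2 \geq 5$ together with $X(k) \neq \emptyset$ forces rationality (this is \autoref{t rationality}) by recasting the Enriques--Manin--Iskovskikh method through the Sarkisov program for regular surfaces of \cite{BFSZ}. A $k$-rational point on a regular surface is automatically a smooth point, so blow-ups centred there are well behaved. The reduction proceeds by degree: for each $5 \leq K_X^2 \leq 9$ one combines the rational point with a $\Gal(k^{\sep}/k)$-stable extremal contraction to produce a Sarkisov link that lowers the degree or the Picard rank, terminating at either $\P^2_k$ or a conic bundle over $\P^1_k$ with a section coming from the point; such a conic bundle is $k$-rational. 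The delicate cases are $K_X^2 \in \{8,9\}$, i.e.\ forms of $\P^1_k \times \P^1_k$ and Severi--Brauer surfaces, where rationality is equivalent to splitting and is guaranteed by the rational point, together with the need to track $(-2)$-curves through the links when $X_{\overline k}$ is singular.

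For part (2), the strategy is to show that degrees $5$ and $7$ always carry a rational point, after which part (1) applies. In degree $7$ (\autoref{p deg7}) the configuration of negative curves is rigid: there is a distinguished $(-1)$-curve $L$ meeting two of the others, so $L$ is $\Gal(k^{\sep}/k)$-invariant and descends to $k$; contracting $L$ exhibits $X$ as a birational modification of a form of $\P^1_k \times \P^1_k$ carrying the $k$-rational image of $L$, and a quadric surface with a rational point is rational. In degree $5$ (\autoref{them: Enriques}) I would reprove Enriques's theorem that $X(k) \neq \emptyset$ always holds: the Galois action on the ten lines factors through a subgroup of $W(A_4) \cong S_5$, and an orbit analysis (equivalently, vanishing of the relevant $H^1(k,\Pic)$ obstruction together with a Lang--Nishimura argument) produces a $k$-point.

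The step I expect to be the main obstacle is the unconditional existence of rational points in degree $5$, that is, adapting Enriques's theorem to the geometrically canonical, possibly $\overline k$-singular regular setting, since the classical proofs lean on smoothness and perfectness of the base. A secondary difficulty, running through part (1), is keeping each Sarkisov link defined over $k$ when $X_{\overline k}$ acquires canonical singularities, which forces one to carry the $(-2)$-curve configuration along the entire reduction.
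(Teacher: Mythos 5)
Your opening reduction rests on a misreading of the classification, and this is a genuine gap. It is not true that the families in \autoref{table p=3} and \autoref{table p=2} all have $K_X^2 \leq 4$: \autoref{table p=2} contains No.~2-5 ($p=2$, $K_X^2=5$, a blowup of $\P^2_k$ at a purely inseparable point of degree $4$, cf.\ \autoref{e 2-5}) and No.~2-6 ($p=2$, $K_X^2=6$, cf.\ \autoref{e 2-6}), both geometrically non-normal. So the implication ``$K_X^2 \geq 5$ $\Rightarrow$ geometrically normal'' is false, and everything you build on it (the reduction to geometrically canonical surfaces, the classical combinatorics of $(-1)$- and $(-2)$-curves, the Galois-stable links) does not cover these surfaces. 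The paper uses \autoref{thm: class_dP} in the opposite direction: in the non-normal degree-$5$ case the classification says $X$ \emph{is} a blowup of $\P^2_k$, hence visibly rational, which disposes of that case before the geometric arguments begin; and the paper's proof of part (1) (\autoref{t rationality}) never invokes geometric normality at all --- it is a descending induction on $K_X^2$ using only the extremal-contraction results for regular surfaces from \cite{BFSZ} (blow up a $k$-rational point, contract the other extremal ray onto a regular del Pezzo surface of strictly larger degree, apply Lang--Nishimura and the induction hypothesis), so the imperfect-field pathologies never need to be ``neutralized.''

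The second gap is the degree-$5$ case of part (2), which you correctly flag as the main obstacle but for which you offer no workable idea. Your plan is to prove $X(k) \neq \emptyset$ unconditionally via an orbit analysis on the ten lines and vanishing of $H^1(k, \Pic X_{k^{\sep}})$. This fails twice over: when $X_{\overline k}$ is canonical but singular there are fewer than ten lines and the $W(A_4)$ combinatorics breaks down, and even in the smooth case the vanishing of $H^1$ does not by itself produce a rational point --- that \emph{is} the hard content of the classical theorem, not a reformulation of it. The paper avoids proving rational-point existence altogether (\autoref{them: Enriques}): it embeds $X \subset \P^5_k$ anti-canonically as an intersection of five quadrics, blows up to obtain $W = \Bl_X \P^5_k \to \P^4_k$, and observes that the fibre over a $k$-rational point of $\P^4_k$ is the proper transform of a \emph{secant line} $L$ of $X$. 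Then either $L \cap X$ contains a $k$-point (conclude by part (1)), or $L \cap X = \Spec k'$ for a separable quadratic extension $k'/k$; in the latter case blowing up this degree-$2$ point yields a regular surface whose base change to $\overline k$ is a cubic surface with two disjoint Galois-conjugate lines, which is rational by the classical argument (\cite[Example 1.35]{KSC04}). Existence of rational points in degree $5$ then follows \emph{a posteriori} from rationality via Lang--Nishimura, rather than serving as an input. This secant-line construction is the key idea your proposal is missing, and without it (or a genuinely new proof of unconditional rational-point existence in the geometrically canonical setting) the degree-$5$ case does not go through.
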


{
\noindent
As a consequence of the above theorem, we establish the following rationality criterion {for Mori fibre spaces of dimension two}. 

\begin{theorem}[\autoref{thm: rat_min_MFS}]\label{intro: rat_min_MFS}
Let $k$ be a field of characteristic $p>0$. 
Let $X$ be a projective regular surface over $k$. 
Assume that $X$ is geometrically integral and $X$ admits a Mori fibre space structure $\pi \colon X \to B$. 
Then the following are equivalent. 
\begin{enumerate}
\item $X$ is rational. 
\item $K_X^2 \geq 5$ and  $X(k) \neq \emptyset$. % and $h^1(X, \mathcal{O}_X)=0$. 
\end{enumerate}
\end{theorem}
}

\noindent 
In the last section, we present sufficient conditions for the unirationality of regular del Pezzo surfaces of degree 4 (\autoref{thm: unirat_deg_4}) and 3  
(\autoref{thm: unirat_cubics}), extending work of Koll\'{a}r \cite{Kol02}. This is optimal due to the examples of Oguiso--Schr\"{o}er \cite{OS22}.

In \cite{Man66}, Manin showed  the existence of rational points for
%that 
a large class of smooth del Pezzo surfaces over perfect $C_1$-fields,   %have{\cyan has?} a rational point 
and this has been later extended to all smooth del Pezzo surfaces (see \cite{Isk79} and \cite[Proposition 2.6]{CT86}). 
We extend their results by showing the $C_1$ conjecture for  regular del Pezzo surfaces which are not necessarily smooth. 
Note that this result is new also 
in the case of the function field of a curve.
(For a nice overview on the state of art of the $C_1$ conjecture, we refer to \cite{Esn23}.)

\begin{theorem}[\autoref{thm: existence_section}, cf. \autoref{r non geom int}(4)] \label{thm: C_1}
   Let $X$ be a regular del Pezzo surface over a $C_1$-field $k$  with $\dim_k H^0(X, \MO_X) =1$. 
   Then $X(k) \neq \emptyset$.
\end{theorem}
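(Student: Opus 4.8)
The plan is to combine the structural dichotomy of \autoref{t-main-classify} with a case analysis on the anticanonical degree $d := K_X^2 \in \{1,\dots,9\}$, exploiting two features of a $C_1$-field $k$: the Tsen--Lang theorem (a system of forms of degrees $d_1,\dots,d_r$ in $n > d_1 + \dots + d_r$ variables has a nontrivial common zero) and the vanishing $\Br(k)=0$. The first reduction is to the geometrically integral case. Since $X$ is regular it is integral, and $\dim_k H^0(X,\MO_X)=1$ forces $X$ to be geometrically connected; the only possible failure is geometric reducedness, which I would dispose of via \autoref{r non geom int}(3). Note that any $k$-point of $X$ automatically lies in the regular locus, so no attention to singularities is needed when producing points.

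If $X$ is not geometrically normal, then \autoref{t-main-classify} gives $p\in\{2,3\}$ and places $X$ in one of the explicit families of \autoref{table p=3} and \autoref{table p=2}. For each family I would read off a distinguished model and exhibit a point directly: whenever $X$ admits a conic bundle structure over $\P^1_k$ it suffices to restrict to the fibre above any $k$-point of the rational base and apply the $C_1$ property to the resulting ternary quadratic form, while the residual models are low-degree (weighted) hypersurfaces to which Tsen--Lang applies. If instead $X$ is geometrically normal, it is geometrically canonical, and I would run the degree analysis below directly on the regular surface $X$, using the minimal model program and Sarkisov theory for regular surfaces developed in \cite{BFSZ}.

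For the geometrically canonical case the argument proceeds degree by degree. Several degrees are immediate from the anticanonical model: for $d=1$ the linear system $|-K_X|$ has a single base point, which is $k$-rational; for $d=3$, respectively $d=4$, the anticanonical image is a cubic surface in $\P^3_k$, respectively an intersection of two quadrics in $\P^4_k$, and Tsen--Lang yields a zero since $4 > 3$, respectively $5 > 2+2$. For $d=7$ the three $(-1)$-curves form a chain whose middle member is Galois-stable, hence a smooth conic over $k$, which has a point by the $C_1$ property; $d=5$ carries a rational point over any field. Finally $d=9$ gives a Severi--Brauer surface, which is $\P^2_k$ because $\Br(k)=0$, and for $d=6,8$ I would use the conic-bundle and quadric models, trivialised by $\Br(k)=0$, together with the $C_1$ property.

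The principal obstacle is $d=2$, where $|-K_X|$ defines the double cover $X \to \P^2_k$: neither a base point nor a direct application of Tsen--Lang is available, since a $k$-point of $\P^2_k$ need not have square branch value and the ramification quartic is a ternary form of degree $4 > 3$, beyond the reach of $C_1$. Here I expect to need a finer argument exploiting the auxiliary fibrations on $X$ cut out by pencils through the cover together with $\Br(k)=0$, in the spirit of the smooth treatment of \cite[Proposition 2.6]{CT86}. A secondary but pervasive difficulty is to verify that each classical argument, designed for smooth del Pezzo surfaces, survives on the merely regular surface $X$; this is precisely where the regular-surface MMP and Sarkisov program of \cite{BFSZ} enter.
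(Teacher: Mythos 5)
Your overall strategy --- reduce to the geometrically integral case via \autoref{r non geom int}(3), then argue degree by degree, using $\Br(k)=0$ for $K_X^2\in\{7,8,9\}$, Tsen--Lang on anticanonical models in low degree, and the classification \autoref{thm: class_dP} for geometrically non-normal surfaces --- is the same in outline as the paper's: the proof of \autoref{thm: existence_section} invokes \cite[Lemma 6.3]{BT22} for $K_X^2\leq 4$, \autoref{c deg789} for $K_X^2\in\{7,8,9\}$, \autoref{p deg6 rat pt} for $K_X^2=6$, and \autoref{them: Enriques} for $K_X^2=5$. The trouble is that your proposal has genuine gaps exactly at the three places where the paper does its real work. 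Degree $2$: you explicitly leave it open (``I expect to need a finer argument''), and the same hole reappears inside your non-normal analysis, since family 1-2 of \autoref{table p=2} is a quartic in $\P(1,1,1,2)$, which is out of reach of Tsen--Lang; the paper disposes of all of $K_X^2\leq 4$ by \cite[Lemma 6.3]{BT22}, and the rank-one non-normal families are ruled out over $C_1$-fields by \cite[Theorem 14.1]{FS20}, as in \autoref{cor: non-normal-rho2}. Degree $6$: your sketch ``conic-bundle and quadric models trivialised by $\Br(k)=0$'' has no content when $\rho(X)=1$ and $X$ is geometrically normal but not smooth --- there is then no conic bundle structure and $X$ is not a quadric. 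This is precisely the case handled by \autoref{p deg6 char2} and \autoref{p deg6 char3}: one takes a purely inseparable closed point of degree $p\in\{2,3\}$ (from \cite{BT22}), blows it up, shows $-K_Y$ remains nef and big, identifies the anticanonical model as a complete intersection of two quadrics in $\P^5_k$ (for $p=2$) or a cubic in $\P^3_k$ (for $p=3$), produces a $k$-point there by the $C_1$ property, and transfers it back by showing the exceptional curve over such a point is a conic over $k$. No Brauer-group argument replaces this.

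Degree $5$ is a subtler instance of the same problem: asserting that ``$d=5$ carries a rational point over any field'' is circular here, because the classical theorem is for \emph{smooth} del Pezzo surfaces, while for merely regular, geometrically canonical ones this is the content of \autoref{them: Enriques}, whose proof (the secant-line construction of \autoref{n quintic}--\autoref{lem: secant}) is a substantial part of the paper and is not recovered by a generic appeal to \cite{BFSZ}. A smaller issue of the same kind: your degree-$7$ argument via the chain of three $(-1)$-curves presupposes that $X_{\overline k}$ is smooth, whereas it may have canonical singularities; the paper instead quotes \cite[Lemma 4.12]{BFSZ} to obtain a birational morphism $X\to\P^2_k$. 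In short, the skeleton of your case division matches the paper, and your treatment of degrees $1,3,4,8,9$ and of the conic-bundle non-normal families is sound, but the cases $K_X^2=2$, $K_X^2=5$ (non-smooth), and above all $K_X^2=6$ with $\rho(X)=1$ are left unproved, so the proposal does not constitute a proof.
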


As immediate consequences of 
\autoref{thm: rationality_regular_dP} and 
\autoref{thm: C_1}, 
%hree-dimensional del Pezzo fibrations 
we deduce the following applications for three-dimensional del Pezzo fibrations.

\begin{theorem} \label{main dP fib}
Let $k$ be an algebraically closed field of characteristic $p>0$. 
Let $\pi : X \to B$ be a projective $k$-morphism such that 
$X$ is a terminal threefold over $k$, $B$ is a smooth curve over $k$, $-K_X$ is $\pi$-ample, 
and $\pi_*\MO_X = \MO_B$. 
Then the following hold. 
\begin{enumerate}
\item 
There exists a section of $\pi$. 
\item 
$X$ is rational if $B$ is a rational curve and 
$K_X^2 \cdot F \geq 5$ for a fibre $F$ of $\pi$ over a closed point of $B$. 
%{\cyan Please do not replace "being rat curve" by "$\simeq \P^1$". 
%The latter is more specialised, as $B$ is not assumed to be proper.} 
\end{enumerate}
\end{theorem}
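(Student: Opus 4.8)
The plan is to reduce both statements to the generic fibre $X_\eta = X \times_B \Spec K(B)$ and then invoke the two-dimensional results established earlier. First I would check that $X_\eta$ is a regular del Pezzo surface over the function field $K(B)$. Since $X$ is terminal it is regular in codimension two, so its singular locus is a finite set of closed points; each of these maps to a \emph{closed} point of $B$, hence every point of $X$ lying over the generic point $\eta$ is a regular point of $X$, and the identification $\MO_{X_\eta,x}=\MO_{X,x}$ shows that $X_\eta$ is regular. Relative ampleness of $-K_X$ restricts to ampleness of $-K_{X_\eta}$, and by adjunction $K_{X_\eta}=K_X|_{X_\eta}$, so $X_\eta$ is del Pezzo. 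Finally $\pi_*\MO_X=\MO_B$ yields $H^0(X_\eta,\MO_{X_\eta})=K(B)$, i.e. $\dim_{K(B)}H^0(X_\eta,\MO_{X_\eta})=1$.

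For part (1), since $k$ is algebraically closed and $B$ is a curve, $K(B)$ has transcendence degree one over $k$ and is therefore a $C_1$-field by Tsen's theorem. Thus \autoref{thm: C_1} applies to $X_\eta$ and gives a $K(B)$-rational point, that is, $X_\eta(K(B))\neq\emptyset$. Such a point is the image of the generic point of $B$ under a rational map $B\dashrightarrow X$; because $B$ is a smooth curve, so that all of its local rings at closed points are discrete valuation rings, and $\pi$ is proper, the valuative criterion of properness extends this rational map to a morphism $s\colon B\to X$. As $\pi\circ s=\id_B$ holds over the generic point it holds everywhere, so $s$ is a section.

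For part (2), $B$ rational means $K(B)\cong k(t)$. Note that $\pi$ is flat, being a dominant morphism from an integral scheme onto a smooth curve, so $K_X^2\cdot F=K_{X_\eta}^2$ is independent of the fibre and the hypothesis reads $K_{X_\eta}^2\geq 5$. To apply \autoref{thm: rationality_regular_dP}(1) I also need $X_\eta$ to be geometrically integral: it is normal (being regular) and proper with $H^0(X_\eta,\MO_{X_\eta})=K(B)$, so $K(B)$ is algebraically closed in $K(X)$; moreover $K(X)/k$ is separable because $k$ is perfect, whence $K(X)/K(B)$ is separable, and a primary separable extension is regular, which is exactly geometric integrality of $X_\eta$. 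Now \autoref{thm: rationality_regular_dP}(1), combined with the point furnished by part (1) and with $K_{X_\eta}^2\geq 5$, shows that $X_\eta$ is rational over $K(B)$, so $K(X)=K(X_\eta)\cong K(B)(u,v)\cong k(t,u,v)$ is purely transcendental of degree three over $k$. Therefore $X$ is birational to $\P^3_k$, i.e. rational.

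The genuinely hard input is contained entirely in the cited theorems \autoref{thm: C_1} and \autoref{thm: rationality_regular_dP} (which in turn rest on the classification of \autoref{t-main-classify}); the work specific to this statement is the descent to the generic fibre. I expect the main obstacle to be the positive-characteristic bookkeeping certifying that $X_\eta$ is a geometrically integral regular del Pezzo surface with $\dim_{K(B)}H^0=1$ — in particular extracting regularity from the terminal hypothesis and geometric integrality from $\pi_*\MO_X=\MO_B$ together with the perfectness of $k$ — rather than any new geometric idea.
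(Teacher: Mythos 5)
Your reduction to the generic fibre is exactly the paper's intended argument: the paper states this theorem as an immediate consequence of \autoref{thm: rationality_regular_dP} and \autoref{thm: C_1}, obtained by passing to $X_\eta$ over $K(B)$. Part (1) is correct as written: regularity of $X_\eta$ from terminality, $H^0(X_\eta,\MO_{X_\eta})=K(B)$ from $\pi_*\MO_X=\MO_B$, Tsen's theorem, \autoref{thm: C_1} (which indeed only requires $\dim_{K(B)}H^0=1$), and the valuative criterion to spread the rational point out to a section.

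In part (2), however, your justification of geometric integrality rests on a false inference: from ``$K(X)/k$ is separable because $k$ is perfect'' you conclude ``whence $K(X)/K(B)$ is separable.'' Separability does not pass to an intermediate field this way, even for primary extensions. A counterexample in the spirit of the paper: let $k$ be algebraically closed of characteristic $2$, $K=k(s,t)$, and let $C=\{x^2+sy^2+tz^2=0\}\subset\P^2_K$ be the wild conic. Then $C$ is regular (Jacobian criterion using $\partial_s,\partial_t$), hence normal with $H^0(C,\MO_C)=K$, so $K$ is algebraically closed in $L:=K(C)$; moreover $L\simeq k(s,x,y)$ is purely transcendental, hence separable over $k$; yet $L/K$ is not separable, since $C_{\overline{K}}$ is a double line. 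What actually saves the argument in your situation is not perfectness of $k$ but the fact that $K(B)$ has $p$-degree $1$, i.e.\ $[K(B)^{1/p}:K(B)]=p$: if some $u\in K(B)\setminus K(B)^p$ had a $p$-th root in $K(X)$, that root would be algebraic over $K(B)$ but not in $K(B)$, contradicting the primariness you established; hence $K(X)\otimes_{K(B)}K(B)^{1/p}$ is a field and $K(X)/K(B)$ is separable by MacLane's criterion. This is precisely the content of \cite[Lemma 6.1]{BT22} as packaged in \autoref{r non geom int}(3) (a projective normal variety over a $C_1$-field with $H^0=k$ is automatically geometrically integral). Alternatively, and more simply, use \autoref{r non geom int}(2): by part (1) the regular surface $X_\eta$ has a $K(B)$-rational point, hence is smooth in a neighbourhood of it, and together with $H^0(X_\eta,\MO_{X_\eta})=K(B)$ this yields geometric integrality. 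With either repair, the remainder of your part (2) (flatness giving $K_X^2\cdot F=K_{X_\eta}^2$, then \autoref{thm: rationality_regular_dP}(1)) goes through.
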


As another application, we improve the results on the torsion index of numerically trivial line bundles on del Pezzo fibrations proven in \cite[Theorem 1.1]{BT22}.

\begin{corollary}[\autoref{ss rat pt}] \label{cor: torsion_index}
    Let $k$ be an algebraically closed field of characteristic $p>0$.
    Let $\pi : X \to B$ be a projective $k$-morphism such that $\pi_* \MO_X=\MO_B$, where
    $X$ is a $\mathbb{Q}$-factorial terminal threefold over $k$ and $B$ is a smooth curve over $k$.
    Suppose $\pi$ is a $K_X$-Mori fibre space, i.e., $\rho(X/B)=1$ and $-K_X$ is $\pi$-ample. If $L$ is a $\pi$-numerically trivial Cartier divisor on $X$, then $L \sim_{\pi} 0$.
\end{corollary}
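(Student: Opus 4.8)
The plan is to reduce everything to the generic fibre and feed it into the $C_1$-result \autoref{thm: C_1}. Write $K \coloneqq K(B)$; since $k$ is algebraically closed and $B$ is a curve, $K$ is a $C_1$-field by Tsen's theorem. Let $\eta$ be the generic point of $B$ and $X_\eta \coloneqq X \times_B \Spec K$ the generic fibre. As $X$ is terminal its singular locus is a finite set of closed points, each lying over finitely many closed points of $B$, so $X_\eta$ avoids them and is regular; moreover $-K_{X_\eta} = (-K_X)|_{X_\eta}$ is ample (because $-K_X$ is $\pi$-ample) and $\dim_K H^0(X_\eta, \MO_{X_\eta}) = 1$ (because $\pi_*\MO_X = \MO_B$). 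Thus $X_\eta$ is a regular del Pezzo surface over the $C_1$-field $K$ with $\dim_K H^0 = 1$, and \autoref{thm: C_1} yields $X_\eta(K) \neq \emptyset$.

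Next I would upgrade the $\pi$-numerical triviality of $L$ to genuine triviality on $X_\eta$. The restriction $L_\eta \coloneqq L|_{X_\eta}$ is numerically trivial on $X_\eta$. Since $X$ is normal and integral and $\pi_*\MO_X = \MO_B$, the fibre $X_\eta$ is geometrically integral, so $\Pic(X_\eta) \hookrightarrow \Pic(X_{\overline{\eta}})$ by Hilbert~90, and the rational point makes the Brauer obstruction in the Hochschild--Serre sequence vanish, giving $\Pic(X_\eta) \cong \Pic(X_{\overline{\eta}})^{\Gal(\oK/K)}$. Because $X_{\overline{\eta}}$ is a geometrically integral del Pezzo surface, its Picard group is finitely generated with $\Pic^0 = 0$ and is torsion-free, so a numerically trivial class is zero; via the isomorphism this gives $L_\eta \sim 0$ on $X_\eta$. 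The torsion-freeness is the delicate input: in the geometrically normal (hence canonical) case it is classical, but in the geometrically non-normal case it must be read off from the explicit description provided by \autoref{t-main-classify}, and I expect this to be the main conceptual obstacle of the argument.

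Finally I would spread out over $B$. The $K$-point of $X_\eta$ extends, by properness of $\pi$ and the valuative criterion over the smooth curve $B$, to a section $\sigma \colon B \to X$; its image $\Sigma$ satisfies $\Sigma \cdot \pi^{\ast}[b] = 1$ for every closed point $b$, so no fibre is a multiple fibre, i.e. for each $b$ the multiplicities of the components of $X_b$ have $\gcd$ equal to $1$. Now pick $f \in K(X)^\times$ with $L_\eta = \opdiv_{X_\eta}(f)$; then $V \coloneqq L - \opdiv_X(f)$ is a $\pi$-vertical integral divisor with $V \equiv_\pi 0$. By the negativity lemma for vertical divisors on a fibration over a smooth curve with $\pi_*\MO_X=\MO_B$, the class $V$ is a $\Q$-linear combination $\sum_b c_b\, \pi^{\ast}[b]$ of full fibres; since each relevant $\gcd$ is $1$ and $V$ is integral, every $c_b$ is an integer, whence $V = \pi^{\ast}D$ for an honest divisor $D$ on $B$. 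Therefore $L \sim \pi^{\ast}D$, that is, $L \sim_\pi 0$. Here the section does double duty — supplying the rational point used for descent in the previous step, and ruling out the multiple fibres that would otherwise leave only $\Q$-linear (torsion) equivalence — which is precisely the mechanism forcing the torsion index to be $1$.
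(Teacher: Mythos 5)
Your overall strategy (pass to the generic fibre over $K:=K(B)$, apply \autoref{thm: C_1} to get a rational point, spread it to a section, then analyse the vertical divisor $L-\opdiv_X(f)$) is the same as the paper's, but two of your steps have genuine gaps.

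First, your descent argument for $L_\eta \sim 0$ does not work as written. Since $K=K(B)$ is imperfect, $\oK/K$ is not a Galois extension; Hochschild--Serre only gives $\Pic(X_\eta)\cong\Pic(X_{\eta,K^{\sep}})^{\Gal(K^{\sep}/K)}$, and the purely inseparable part of the base change --- which is exactly where geometric non-normality lives --- is invisible to this argument. Worse, the input you flag as ``the main conceptual obstacle'' (that $\Pic(X_{\overline\eta})$ is torsion-free with $\Pic^0=0$ when $X_{\overline\eta}$ is non-normal) is left unproved, and it cannot be ``read off'' from \autoref{t-main-classify}: the classification describes the normalisation $(X_{\overline k})^N$, not the Picard group of the non-normal surface $X_{\overline k}$ (whose $\Pic^0$ can a priori have a unipotent part; note the irregular cases 1-1-i, 1-2-i). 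The paper avoids all of this by using the hypothesis $\rho(X/B)=1$, which you never invoke in this step: it forces $\rho(X_{K(B)})=1$, and since $K(B)$ has $p$-degree at most $1$, \cite[Theorem 14.1]{FS20} shows $X_{K(B)}$ is geometrically normal (so the non-normal case simply does not arise) with $H^1(X_{K(B)},\MO_{X_{K(B)}})=0$; then \autoref{lem: Kod_vanishing}(1), applied directly over $K(B)$, gives ``numerically trivial $\Rightarrow$ linearly trivial'' with no base change and no Picard computation over $\oK$.

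Second, your claim that the section rules out multiple fibres (``the multiplicities of the components of $X_b$ have $\gcd$ equal to $1$'') is false in the $\Q$-factorial terminal setting: it presupposes that the local intersection numbers $\Sigma\cdot D_i$ are non-negative integers, i.e.\ that the fibre components are Cartier along $\Sigma$, and a terminal singularity on $\Sigma$ destroys this. Concretely, take the local model $\A^3_{(t,x,y)}/\tfrac{1}{n}(1,-1,a)$ with $\gcd(a,n)=1$ (a terminal, $\Q$-factorial quotient singularity), fibred over $\A^1_s$ by $s=t^n$: the central fibre is $nD$ with $D$ not Cartier, while the image of $\{x=y=0\}$ is a section meeting $D$ with intersection number $1/n$. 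So a section coexists with a multiple fibre, and your inference ``$a_i=c_b m_i$, $a_i\in\Z$, $\gcd(m_i)=1$ $\Rightarrow$ $c_b\in\Z$'' has no basis. What actually saves the conclusion is that the vertical divisor is Cartier (it equals $L-\opdiv_X(f)$ near each fibre), so intersecting with the section gives $c_b=\Sigma\cdot V_b\in\Z$ directly. This is precisely the paper's mechanism: using irreducibility of fibres (from $\rho(X/B)=1$ and $\Q$-factoriality), write $\pi^*(b_i)=n_iD_i$, observe that $\Gamma\cdot\pi^*(b_i)=1$ identifies $n_i$ with the Cartier index of $D_i$, and conclude $n_i\mid l_i$ from the Cartier-ness of $l_iD_i$. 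Your $\gcd$ detour must be replaced by this Cartier-index computation.
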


\subsection*{Acknowledgements}
FB was supported by PZ00P2-21610 from the Swiss National Science Foundation. 
HT was supported by JSPS KAKENHI Grant number JP22H01112 and JP23K03028. 
We would like to thank Jérémy Blanc, Gebhard Martin, and Francesco Russo for useful discussions and references on the geometry of del Pezzo surfaces. 
 We  
thank the referees for reading the manuscript carefully and for suggesting several  improvements.

\section{Preliminaries}

\subsection{Notation}

\begin{enumerate}
    \item Throughout this article, $p$ denotes a prime number, 
    $k$ denotes a field of characteristic $p>0$, and 
    %\textcolor{red}{we work within the category of $k$-schemes, unless otherwise specified.}
    we work over $k$ (i.e., we work within the category of $k$-schemes) unless otherwise specified. %\textcolor{red}{$p \geq 0$?}. 
%    {\cyan In paritucular, every }
    We denote by $\overline{k}$ (resp. $k^{\sep}$) the algebraic (resp. separable) closure of $k$.
    \item We say that $X$ is a {\em $k$-variety} (or simply a {\em variety}) if it is an integral separated scheme of finite type over $k$. 
    We say that $X$ is a {\em curve} (resp. a {\em surface}, a {\em threefold}) if it is of dimension one (resp. two, three). Given a variety $X$, $X^N$ denotes its normalisation. {Note that being a variety is not stable under base changes of base fields.}
    \item Given a field extension $k' / k$ and a  $k$-scheme $X$, 
    we set $X_{k'} := X \times_k k' := X \times_{\Spec(k)} \Spec(k')$. 
    \item For the definition of the singularities of the MMP, 
    %(e.g., canonical), 
    we refer to \cite{kk-singbook}.
    {In particular, we say that a $k$-variety $X$ is {\em canonical} if $X$ is a normal variety over $k$ which has at worst canonical singularities.} 
    \item     
    A $k$-variety $X$ is {\em geometrically normal} (resp. {\em geometrically reduced}, {\em geometrically canonical}) if its base change $X_{\overline{k}}$ is normal (resp. reduced, canonical). 
    %\textcolor{red}{it has canonical singularities}). 
    \item Given a $k$-variety $X$ with the structure morphism $\pi \colon X \to\Spec k$, we define the dualising complex $\omega_X^{\bullet} \coloneqq \pi^!\MO_{\Spec k}$ (see \stacksprojs{0A9Y}{0ATZ}). 
    The dualising sheaf is defined as the lowest non-zero cohomology sheaf of the dualising complex: 
    $\omega_X \coloneqq \mathcal{H}^{-\dim X}(\omega_X^{\bullet})$. For example, if $X$ is projective and smooth over $k$, then $\omega_X$ agrees with the usual sheaf of top differentials (\stacksproj{0AU3}).
    If $X$ is normal, then $\omega_X$ is a reflexive sheaf of rank 1 (\stacksproj{0AWE}), and we denote by $K_X$ a Weil divisor such that $\omega_X \simeq \MO_X(K_X)$.

    \item We say that a projective morphism $f \colon X \to Y$ is a {\em contraction} if $f_*\mathcal{O}_X = \mathcal{O}_Y$, i.e., 
    the induced homomorphism $\MO_Y \to f_*\MO_X$ is an isomorphism. {Given a projective birational morphism $f \colon X \to Y$ of varieties, $\Ex(f)$ denotes the {\em exceptional locus of $f$} that is defined as the smallest reduced closed subscheme $E$ of $X$ such that 
    the induced morphism $X \setminus E \to Y \setminus f(E)$ is an isomorphism.}
    \item Given a projective variety $X$, we denote by $\Pic(X)$ its Picard group and by $N^1(X)$ its numerical N\'eron-Severi group, i.e., 
    $N^1(X) := \Pic(X)/\equiv$  
    %{\FB{$\otimes \mathbb{R}$?}}{\cyan I avoid using $N^1(X)$ and Neron-Severi (as I am always confused with $\Z$ or $\R$. Anyway, please use your definition.} 
    and $\equiv$ denotes the numerical equivalence for line bundles. 
    We set $N_1(X)_{\R} := \{ a_1 C_1 + \cdots + a_r C_r \,|\, r\geq 0, a_1, ...., a_r \in \R, C_1, ..., C_r %\text{ is a curve on } 
    \text{ are curves on } X\} /\equiv$, 
    where $\equiv$ denotes the numerical equivalence for $1$-cycles. 
    \item Given a projective variety $X$, the Mori cone $\overline{\NE}(X)$ is the closure of the cone of effective $1$-cycles $\NE(X)$ in $N_1(X)_{\mathbb{R}}$.
    \item Given a Cartier divisor $D$ on a projective  variety $X$, the section ring associated to $D$ is the graded $k$-algebra $R(X, D)= \bigoplus_{m \geq 0} H^0(X, \MO_X(mD))$.
    \item We say that a $k$-variety $X$ is \emph{rational} (resp. \emph{unirational}) if there exists a birational map (resp. dominant rational map) of $k$-varieties 
    %$f \colon 
    $\mathbb{P}^n_k \dashrightarrow X$. 
    \item 
 
   Given a projective normal  variety $X$ with $H^0(X, \mathcal{O}_X)=k$ and a $\mathbb{Q}$-Cartier $\Q$-divisor $D$, we denote its Iitaka dimension by $\kappa(D)$.  
   If $X$ is regular, then we set $\kappa(X) := \kappa(X, K_X)$, 
   which we call     the \emph{Kodaira dimension} %$\kappa(X)$ 
   of $X$.
   % is the Iitaka %-Kodaira 
   %  dimension $\kappa(X, K_X)$ of 
   %  the canonical divisor $K_X$. 
   %  Given a $\mathbb{Q}$-Cartier $\Q$-divisor $D$, we denote its Iitaka dimension by $\kappa(D)$.
\begin{comment}
THE FORMER VERSION. I UNIFIED kappa(X) and kappa(D). 
    Given a projective regular  variety $X$ over a field $k$ with $H^0(X, \mathcal{O}_X)=k$, 
    the \emph{Kodaira dimension} $\kappa(X)$ of $X$ is the Iitaka--Kodaira dimension $\kappa(X, K_X)$ of 
    the canonical divisor $K_X$. 
    Given a $\mathbb{Q}$-Cartier $\Q$-divisor $D$, we denote its Iitaka dimension by $\kappa(D)$.
\end{comment}
%     line bundle $\mathcal{O}_X(K_X).$
% % We reall the definition of Kodaira dimension for regular projective varities.
% \begin{definition}
%     Let $X$ be a projective regular variety $X$ over a field $k$ with 
% \end{definition}
\end{enumerate}

\subsection{Regular del Pezzo surfaces}

A \emph{del Pezzo surface} (resp.\,\emph{weak del Pezzo surface}) $X$  %over a field $k$ 
is a normal Gorenstein %regular 
projective surface such that $H^0(X, \MO_X)=k$ and $-K_X$ is ample (resp. 
nef and big). 
In this section, we gather various results known in the literature and some  technical lemmas we need later.

As we are interested primarily in the case of imperfect fields, $X$ might not be 
smooth, nor geometrically normal.
However, we have the following restrictions on the possible bad behaviour:

\begin{proposition} \label{prop: geom_normality_degree}
%Let $k$ be a field of characteristic $p>0$.
Let $X$ be a geometrically integral regular del Pezzo surface.
\begin{enumerate}
\item If $X$ is geometrically normal, then it is geometrically canonical and $K_X^2 \leq 9$.
\item If $X$ is not geometrically normal, then either
\begin{enumerate}
    \item $p=3$ and $K_X^2 \in \left\{ 1, 3 \right\}$, or
    \item $p=2$ and $K_X^2 \in \left\{ 1, 2, 3, 4, 5, 6, 8 \right\}$.
\end{enumerate} 
\end{enumerate}
\end{proposition}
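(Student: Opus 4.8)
The common core of both parts is to pass to the base change $\overline{X} := X_{\overline{k}}$ and work over the algebraically closed field $\overline{k}$. Since $X$ is geometrically integral, $\overline{X}$ is integral; and since Gorensteinness, ampleness of $-K$, and the condition $\dim H^0(\cdot,\mathcal{O})=1$ are all preserved under the flat base change $\overline{k}/k$, the surface $\overline{X}$ is an integral projective Gorenstein surface with $-K_{\overline{X}}$ ample and $H^0(\overline{X},\mathcal{O}_{\overline{X}})=\overline{k}$. Moreover $K_X^2=K_{\overline{X}}^2$ by flat base change, so the entire statement reduces to understanding the singularities and the anticanonical degree of $\overline{X}$.

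For part (1), $\overline{X}$ is a \emph{normal} Gorenstein del Pezzo surface over $\overline{k}$. I would first reduce geometric canonicity to the rationality of the singularities: for a normal Gorenstein surface, a singularity is canonical if and only if it is a rational double point. The point is that this is \emph{not} automatic for an arbitrary normal Gorenstein del Pezzo (the projective cone over a plane cubic has $-K$ ample, is Gorenstein, yet carries a non-rational simple elliptic singularity); it is precisely the regularity of the $k$-form $X$ that excludes such non-rational geometric singularities, and I would invoke the structural results on regular del Pezzo surfaces over imperfect fields (\cite{PW}, \cite{FS20}, \cite{Tan19}) to conclude that $\overline{X}$ is canonical. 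Granting canonicity, the minimal resolution $f\colon \widetilde{X}\to \overline{X}$ is crepant, so $K_{\widetilde{X}}=f^*K_{\overline{X}}$ and $\widetilde{X}$ is a smooth weak del Pezzo surface over $\overline{k}$; the classical bound $1\le K_{\widetilde{X}}^2\le 9$ then gives $K_X^2=K_{\overline{X}}^2=K_{\widetilde{X}}^2\le 9$.

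For part (2), $\overline{X}$ is integral and reduced but not normal. I would take the normalization $\nu\colon Y\to \overline{X}$, with nonzero conductor, and use the adjunction/conductor formula for the Gorenstein surface $\overline{X}$, namely $K_Y+C=\nu^*K_{\overline{X}}$ where $C$ is the conductor divisor on $Y$. As $\nu$ is finite and $-K_{\overline{X}}$ is ample, $-K_Y-C=\nu^*(-K_{\overline{X}})$ is ample, so $-K_Y$ is big and $Y$ is a (geometrically) rational or ruled surface whose minimal model I would identify. The decisive extra input is that the failure of normality is purely inseparable and already visible over a purely inseparable extension of $k$; combined with the regularity of $X$, the theory of $1$-foliations and purely inseparable descent (\cite{Tan19}, \cite{BT22}) forces a divisibility of the conductor, roughly $C=(p-1)\Gamma$ for an effective divisor $\Gamma$, together with control on the arithmetic genus of the conductor curve. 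Expanding $K_X^2=K_{\overline{X}}^2=(K_Y+C)^2=K_Y^2+2K_Y\cdot C+C^2$ on the classified surface $Y$ and imposing these integrality and adjunction constraints then forces $p\in\{2,3\}$ (for $p\ge 5$ the ampleness of $-K_Y-C$ becomes incompatible with the $(p-1)$-divisibility of $C$) and, for each such $p$, produces the finite list of admissible values of $K_X^2$.

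The routine half is the degree bound in part (1), which follows formally once canonicity is known. The genuine obstacles are: in part (1), justifying geometric canonicity, i.e.\ using regularity of $X$ to rule out the non-rational Gorenstein del Pezzo singularities; and, more seriously, in part (2), pinning down the exact conductor/discrepancy formula in the inseparable setting and extracting both the bound on $p$ and the sharp exclusions. In particular, the fine statements that $K_X^2\in\{1,2,3,4,5,6,8\}$ for $p=2$ (so $K_X^2\ne 7,9$) and $K_X^2\in\{1,3\}$ for $p=3$ will require a careful analysis of the geometry of the conductor curve $\Gamma\subset Y$ and of its image, beyond the crude numerical inequalities.
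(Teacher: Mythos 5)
Your overall strategy is, in outline, exactly the strategy behind the results this proposition rests on; but note what the paper's proof actually is: a two-line citation of \cite[Theorem 3.3]{BT22} for assertion (1) and of \cite[Theorem 4.6]{Tan19} for assertion (2), two theorems whose statements contain \emph{verbatim} what is to be proven, including the exact degree lists. Your proposal instead reconstructs the internal arguments of those references, and by your own admission it stops precisely where the content of the statement lies: you concede that the exclusions $K_X^2 \neq 7, 9$ in characteristic $2$ and the restriction to $K_X^2 \in \{1,3\}$ in characteristic $3$ ``require a careful analysis \dots beyond the crude numerical inequalities,'' and you do not perform that analysis. This is a genuine gap, not a routine verification, because the numerics you set up cannot close it. Concretely: in characteristic $2$ the divisibility $C=(p-1)\Gamma$ is vacuous ($p-1=1$), so your ``decisive extra input'' imposes no constraint at all in precisely the characteristic where the finest list occurs. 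Expanding $(K_Y+C)^2$ and using ampleness of $\nu^*(-K_{X_{\overline k}})$ together with adjunction on $\Gamma$ can plausibly yield the bound $p\in\{2,3\}$, but it can never by itself exclude degrees $7$ and $9$; those exclusions come from the classification in \cite[Theorem 4.6]{Tan19} of the possible normalisations $(X_{\overline k})^N$ (namely $\P^2$, $\P(1,1,n)$, or $\P_{\P^1}(\MO\oplus\MO(n))$) together with a case-by-case study of the conductor curve and its image --- work your sketch points at but does not carry out.

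The same remark applies, less severely, to (1): the key step, that regularity of $X$ forces $X_{\overline k}$ to be canonical rather than merely normal Gorenstein (your cone-over-a-plane-cubic example is exactly the danger), is not something you derive; you defer it to unspecified ``structural results'' in \cite{PW}, \cite{FS20}, \cite{Tan19}, whereas the paper cites the precise statement \cite[Theorem 3.3]{BT22}. Once canonicity is granted, your crepant-resolution argument for $K_X^2\le 9$ is correct and standard. In summary: if your invocations of the literature are read as citations of the precise classification theorems, your proof collapses to the paper's citation proof; read as a self-contained argument, it is incomplete exactly at the decisive points, namely geometric canonicity in (1) and the sharp degree lists in (2).
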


\begin{proof}
The  assertion (1) is proven in \cite{BT22}*{Theorem 3.3}, and the assertion (2) in \cite{Tan19}*{Theorem 4.6}.
\end{proof}

The following shows that del Pezzo surfaces with vanishing irregularity satisfy several vanishing theorems. 
{Note that a geometrically integral regular (or canonical) weak del Pezzo surface is not necessarily geometrically normal {nor have vanishing irregularity} \cite[Theorem in Introduction]{Sch07}.}
\begin{lemma} \label{lem: Kod_vanishing}
%We work over a field of characteristic $p>0$. 
    Let $X$ be a geometrically integral canonical weak del Pezzo surface. 
    Assume $H^1(X, \MO_X)=0$. 
Then the following hold. 
\begin{enumerate}
\item If $N$ is a numerically trivial Cartier divisor, then $N \sim 0$. In particular, $\Pic(X)$ is a finitely generated free abelian group.
\item If $D$ is a nef and big Cartier divisor, then $h^0(X, \MO_X(D)) \geq 2$ and 
$H^1(X, \MO_X(-D))=0$. 
\item $H^i(X, \MO_X(-mK_X))=0$ for $i>0$ and $m \geq 0$.
\end{enumerate}
    
    % Let $D$ be a nef Cartier divisor.
    % Then $h^0(X, \MO_X(D)) \geq 1$.
    % If $D$ is ample \FB{nef and big probably ok}, then $h^1(X, \MO_X(-D))=0$.

    % In particular, $\Pic(X)$ has no torsion.
\end{lemma}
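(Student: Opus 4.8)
The plan is to split the statement into its purely numerical content, which follows from Riemann--Roch and the nef-and-bigness of $-K_X$, and its genuine $H^1$-vanishing content, which I will funnel into a single Kawamata--Viehweg-type statement. I first record the standing computation $\chi(\MO_X)=1$: by definition $h^0(X,\MO_X)=1$, by hypothesis $h^1(X,\MO_X)=0$, and $h^2(X,\MO_X)=h^0(X,\omega_X)=0$ by Serre duality, the last vanishing because $K_X$ cannot be effective (if $0\le E\sim K_X$ then $(-K_X)\cdot E=-(-K_X)^2<0$, contradicting that $-K_X$ is nef). For (1), if $N\equiv 0$ then Riemann--Roch on the normal Gorenstein (hence Cohen--Macaulay) surface gives $\chi(\MO_X(N))=\chi(\MO_X)+\tfrac12 N\cdot(N-K_X)=1$, while $h^2(\MO_X(N))=h^0(\MO_X(K_X-N))=0$ by the same non-effectivity argument applied to $K_X-N\equiv K_X$; hence $h^0(\MO_X(N))\ge 1$. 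Running this for $-N$ as well shows that $\MO_X(N)$ and $\MO_X(-N)$ are both effective, and two effective divisors whose sum is linearly trivial must both vanish (the only effective divisor $\sim 0$ is $0$, as $H^0(X,\MO_X)=k$), so $N\sim 0$. Consequently $\Pic(X)\to N^1(X)$ is injective, so $\Pic(X)$ is finitely generated (as $N^1(X)$ is) and torsion free (a torsion class is numerically trivial, hence trivial), i.e.\ free abelian of finite rank.

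The $h^0$-half of (2) is again numerical: $\chi(\MO_X(D))=1+\tfrac12\big(D^2+D\cdot(-K_X)\big)\ge 2$, since $D^2>0$ and $D\cdot(-K_X)\ge 0$ (both nef) force the integer $\tfrac12(D^2+D\cdot(-K_X))$ to be $\ge 1$, while $h^2(\MO_X(D))=h^0(\MO_X(K_X-D))=0$ because $K_X-D=-\big((-K_X)+D\big)$ is the negative of a nef and big divisor; thus $h^0(\MO_X(D))\ge\chi(\MO_X(D))\ge 2$. All remaining assertions are the $H^1$-vanishings, and I claim they are all instances of
\[
H^1\big(X,\omega_X\otimes\MO_X(M)\big)=0\qquad\text{for every nef and big Cartier divisor }M.
\]
Indeed, for (3) the identity $\MO_X(-mK_X)=\omega_X\otimes\MO_X\big((m+1)(-K_X)\big)$ gives this directly with $M=(m+1)(-K_X)$, nef and big for $m\ge 0$; the companion cohomologies in (3) vanish for elementary reasons, namely $H^2(X,\MO_X(-mK_X))=h^0\big(\MO_X((m+1)K_X)\big)^{\vee}=0$ by non-effectivity. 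For the $H^1$-part of (2), Serre duality gives $H^1(X,\MO_X(-D))\cong H^1(X,\omega_X\otimes\MO_X(D))^{\vee}$, which is the displayed statement for $M=D$.

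To prove the displayed vanishing I would first reduce to $\overline{k}$: cohomology commutes with the flat base change $X_{\overline{k}}\to X$ and the dualising sheaf pulls back, so it suffices to treat $Z:=X_{\overline{k}}$, a canonical weak del Pezzo surface over $\overline{k}$ (using that $X$ is geometrically canonical) with $H^1(Z,\MO_Z)=0$. I would then pass to the minimal resolution $f\colon Y\to Z$; since $Z$ has canonical (Du Val) singularities, $f$ is crepant, so $\omega_Y\cong f^{*}\omega_Z$, and $Z$ has rational singularities, so $f_*\MO_Y=\MO_Z$ and $R^{i}f_*\MO_Y=0$ for $i>0$. The projection formula then yields $\omega_Y\otimes\MO_Y(f^{*}M)=f^{*}\big(\omega_Z\otimes\MO_Z(M)\big)$ with $R^{i}f_*$ of it vanishing for $i>0$, so by the Leray spectral sequence $H^1(Z,\omega_Z\otimes\MO_Z(M))\cong H^1(Y,\omega_Y\otimes\MO_Y(f^{*}M))$, where $Y$ is a \emph{smooth} weak del Pezzo surface and $f^{*}M$ is nef and big.

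The hard part is precisely this last step. Kawamata--Viehweg vanishing can genuinely fail for surfaces in characteristic $p$, so it cannot be invoked as a black box; the whole weight of the lemma rests on establishing $H^1(Y,\MO_Y(-L))=0$ for a nef and big Cartier divisor $L=f^{*}M$ on a smooth weak del Pezzo surface $Y$ over $\overline{k}$. I would prove this using the strong positivity of $-K_Y$, either by citing the vanishing theorems available for smooth weak del Pezzo (equivalently, rational) surfaces in positive characteristic, or, for the anticanonical divisors needed in (3), by an induction along the anticanonical ladder: taking $C\in|-K_Y|$ (nonempty since $h^0(-K_Y)\ge 2$ by the Riemann--Roch computation above), adjunction gives $\omega_C\cong\MO_C$ and the sequence $0\to\MO_X(\MO_C)$-type argument reduces the inductive step to $H^1(C,L)=0$ for a line bundle of positive degree on the connected arithmetic-genus-one curve $C$. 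The delicate point in that approach, and the crux of the lemma, is controlling $H^0(C,L^{-1})$ when $C$ is reducible or non-reduced, where $L^{-1}$ has non-positive degree on each component; for the general nef and big $D$ in (2) one cannot avoid the full vanishing and must appeal to the smooth-surface result.
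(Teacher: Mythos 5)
Your treatment of (1), of the estimate $h^0(X,\MO_X(D))\ge 2$ in (2), and the Serre-duality bookkeeping that funnels every $H^1$-assertion into the single statement $H^1(X,\omega_X\otimes\MO_X(M))=0$ for $M$ nef and big, are all correct and essentially coincide with the paper's Riemann--Roch arguments. The genuine gap is your reduction to $\overline{k}$: you justify it ``using that $X$ is geometrically canonical'', but geometric canonicity is \emph{not} among the hypotheses, and it fails in exactly the situations the lemma exists to handle. The lemma is invoked throughout the paper for geometrically integral regular del Pezzo surfaces that are not geometrically normal (for instance in \autoref{l index 1} and in \autoref{p 2 conic bdls}); for such $X$ the base change $X_{\overline{k}}$ is a non-normal surface, the normalisation introduces a nonzero conductor divisor, and there is no crepant minimal resolution nor any rational-singularities comparison. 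So your whole chain (base change to $\overline{k}$, then minimal resolution, then vanishing on a smooth weak del Pezzo surface) is unavailable precisely in the cases that matter, and the key vanishing is never established.

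The paper's proof never leaves the ground field: it takes the minimal resolution $\mu\colon Y\to X$ over $k$ itself --- legitimate because $X$ is canonical \emph{over $k$}, canonical surface singularities are rational, and crepancy makes $Y$ a regular weak del Pezzo surface over $k$ with $H^1(Y,\MO_Y)=0$ --- and then quotes a vanishing theorem proved over arbitrary, in particular imperfect, fields, namely \cite[Theorem 3.3]{Tan-FanoI}, to get $H^1(Y,\MO_Y(-D))=0$. Two further cautions about your fallback even when $X$ happens to be geometrically canonical: Kawamata--Viehweg-type vanishing for nef and big \emph{Cartier} divisors can fail on smooth rational surfaces in small characteristic (Cascini--Tanaka), so any citation must exploit the weak del Pezzo positivity rather than mere rationality, which is what the paper's cited theorem does; and your anticanonical-ladder alternative is, as you yourself note, incomplete at the point of controlling $H^0(C,L^{-1})$ for reducible or non-reduced members $C\in|-K_Y|$. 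The repair is structural rather than cosmetic: delete the base change, run your resolution argument over $k$ (where it is valid because $X$ itself is canonical), and appeal to a vanishing statement valid over the ground field.
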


\begin{proof}
Taking the minimal resolution $\mu : Y \to X$, we may assume that $X$ is regular (this reduction step is assured, because 
canonical surface singularities are rational \cite[Proposition 2.28]{kk-singbook}, and hence 
we have 
$H^i(Y, \MO_Y(\mu^*L)) \simeq H^i(X, \MO_X(L))$ 
for every Cartier divisor $L$ on $X$ and $i \geq 0$).
Let $L$ be a nef Cartier divisor. By Serre duality, we have $h^2(X, \MO_X(L)) = h^0(X, \MO_X(K_X-L))=0$. 
It follows from the Riemann--Roch theorem that 
\[
h^0(X, \MO_X(L)) \geq h^0(X, \MO_X(L)) - h^1(X, \MO_X(L)) 
=\chi(X, \MO_X(L)) 
\]
\[
= \chi(X, \MO_X) + \frac{1}{2} L \cdot (L -K_X) = 1 +  \frac{1}{2} L \cdot (L -K_X) \geq 1. 
\]
The assertion (1) follows from $h^0(X, \MO_X(N)) \geq 1$. 

% Let us show (1). 
% We have 
% \[
% h^0(X, \MO_X(N)) \geq 1 + \frac{1}{2} N \cdot (N -K_X)\geq 1.  
% \]
% Hence $N \sim 0$. 
% Thus (1) holds. 

Let us show (2). 
We have 
\[
h^0(X, \MO_X(D)) \geq  1 +  \frac{1}{2} D \cdot (D -K_X) \geq 2. 
\]
It follows from \cite[Theorem 3.3]{Tan-FanoI} that 
$H^1(X, \MO_X(-D))=0$. 
Thus (2) holds. 
The assertion (3) follows from (2) and Serre duality. 
\qedhere 

    % By Riemann--Roch, we obtain
    % $h^0(X, (X, \MO_X(D))) \geq 1+\frac{1}{2}(D^2-DK_X) \geq 1.$
    % By \cite[Lemma 3.2]{CT19}, we have that $H^1(X, \MO_X(-D))$ embeds into $H^1(X, \MO_X(-pD))$ and thus we conclude by Serre vanishing.

    % The last statement follows from: if $D \sim_{\mathbb{Q}} 0$, then $D \sim 0$ as it is linearly equivalent to an effective divisor.
\end{proof}

\begin{lem}\label{l va genus1}
%We work over a field $k$  of characteristic $p>0$. 
Let $C$ be a projective Gorenstein curve with $h^1(C, \MO_C) =1$ {and let $D$ be a Cartier divisor on $C$}. 
%Take a Cartier divisor $D$.
Then the following hold. 
\begin{enumerate}
\item If $\deg D = 1$, 
then the base scheme $\Bs |D|$ is scheme-theoretically equal to a $k$-rational point $P$ and 
$R(C, D)$ is generated by $H^0(C, \MO_C(D)) \oplus H^0(C, \MO_C(2D)) \oplus H^0(C, \MO_C(3D))$ as a $k$-algebra. 
\item If $\deg D \geq 2$, then $|D|$ is base point free and 
$R(C, D)$ is generated by $H^0(C, \MO_C(D)) \oplus H^0(C, \MO_C(2D))$ as a $k$-algebra. 
\item If $\deg D \geq 3$, then $|D|$ is very ample 
and  
$R(C, D)$ is generated by $H^0(C, \MO_C(D))$ as a $k$-algebra. 
% \FB{Can we add also that the embedding via $D$ is projectively normal? and an intersection of quadrics? We use it for $dP_5$}
% {\cyan I think that the proj normality is equivalent to the condition that $R(C, D)$ is generated by $H^0(C, D)$ (i.e., it is added in (3)). No? 
% As for an intersection of quads, I added it for the smooth case.  } \FB{agree}
\item 
If $\deg D \geq 4$ and $C$ is smooth, then $C\subset \P^N_k$ is an intersection of quadrics in $\P^N_k$, 
where 
$N := h^0(C, \MO_C(D)) -1$ and $C$ is embedded into $\P^N_k$ by $\varphi_{|D|}$. 
%\item If $\deg D = 4$, then $C$ is a complete intersection of two quadric hypersurfaces in $\P^3_k$. 
% \item 
% If $C$ is geometrically normal and $K_C^2 \geq 4$, then $X$ is an intersection of quadrics. {\cyan add a proof. If $X$ is geom nromal, then well known, so add a reference. If we need the geom non-normal case, then we should give a proof.} 
% \item 
% If $K_X^2 \geq 3$, then $|-K_X|$ is very ample. 
% \item 
% If $K_X^2 =2$, then $|-K_X|$ is base point free and induces a double cover.  
% \item 
% If $K_X^2 =1$, then $\Base |-K_X|=P$ is a $k$-rational point. 
\end{enumerate}
\end{lem}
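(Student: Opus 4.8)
The plan is to run everything off Riemann--Roch and Serre duality once the genus-$1$ Calabi--Yau property is in place. Since $C$ is Gorenstein, $\omega_C$ is a line bundle with $\deg\omega_C = 2h^1(C,\MO_C)-2 = 0$ and $h^0(C,\omega_C) = h^1(C,\MO_C) = 1$; as $C$ is integral, a nonzero section of a degree-$0$ line bundle vanishes nowhere, so $\omega_C \simeq \MO_C$. Writing $\chi(\MO_C) = h^0(\MO_C)-1 = 0$ (the existence of the degree-one Cartier divisor in (1) forces $H^0(\MO_C)=k$), Riemann--Roch reads $\chi(\MO_C(E)) = \deg E$, and Serre duality gives $h^1(\MO_C(E)) = h^0(\MO_C(-E))$, which vanishes whenever $\deg E > 0$ because $C$ is integral. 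Hence $h^0(\MO_C(E)) = \deg E$ for every $E$ of positive degree, which yields all the dimension counts in (1)--(3). For (1) this gives $h^0(D)=1$, so $|D|$ has a unique member $Z$, an effective Cartier divisor with $\dim_k H^0(\MO_Z) = \deg D = 1$; thus $\MO_Z = k$, i.e. $Z$ is a $k$-rational point $P$, and $\Bs|D| = Z = P$ scheme-theoretically.

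For the base-point-freeness and very ampleness in (2) and (3) I would argue pointwise, base changing to $\overline k$ to avoid non-Cartier or non-rational points (surjectivity of the relevant evaluation maps descends along $k \to \overline k$). Removing a point $P$ drops the degree by one, and $\deg(D-P) = \deg D - 1 \geq 1$ when $\deg D \geq 2$, so $h^0(\MO_C(D-P)) = h^0(\MO_C(D)) - 1$ and $P$ is not a base point; this proves (2). For $\deg D \geq 3$ the same computation with two (possibly infinitely near) points gives $\deg(D-P-Q) = \deg D - 2 \geq 1$, so $|D|$ separates points and tangent vectors and is very ample, which is the geometric half of (3).

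The generation statements I would deduce uniformly from the base-point-free pencil trick in Castelnuovo--Mumford form: if $\mathcal{L}$ is base point free and $H^1(C,\mathcal{F}\otimes \mathcal{L}^{-1})=0$, then $H^0(\mathcal{F})\otimes H^0(\mathcal{L}) \to H^0(\mathcal{F}\otimes \mathcal{L})$ is surjective (a base-point-free pencil exists after passing to $\overline k$, and surjectivity descends). Taking $\mathcal{L} = \MO_C(2D)$ in (1) and $\mathcal{L}=\MO_C(D)$ in (2) and (3), the hypothesis becomes $H^1(\MO_C((n-4)D))=0$, respectively $H^1(\MO_C((n-2)D))=0$, which holds once $n$ is large by the degree count; an immediate induction then gives generation of $R(C,D)$ in degrees $\leq 3$ in (1), $\leq 2$ in (2), and $\leq 1$ in (3). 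The single map not covered by the trick is the quadratic one $\Sym^2 H^0(D) \to H^0(2D)$ needed for (3), precisely because $H^1(\MO_C)=1\neq 0$ obstructs the vanishing there; for $\deg D = 3$ it is an injection of two $6$-dimensional spaces (no conic contains the plane cubic $\varphi_{|D|}(C)$) hence an isomorphism, and for $\deg D \geq 4$ one reduces to this, e.g. by factoring through $H^0(D-P)$ for a general $P$ or combining two general pencils. In (1) the generators $x\in H^0(2D)$ and $y\in H^0(3D)$, together with the single degree-$6$ relation, are then pinned down by a finite pole-order (dimension) count, producing the Weierstrass model.

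The hard part is (4). By (3) the embedding $\varphi_{|D|}\colon C \hookrightarrow \P^N_k$ with $N = \deg D - 1$ is projectively normal, so the homogeneous ideal $I_C = \bigoplus_m H^0(\P^N,\mathcal{I}_C(m))$ is the kernel of the surjection from the polynomial ring, and it suffices to show $I_C$ is generated in degree $2$. Running Castelnuovo--Mumford on $0 \to \mathcal{I}_C \to \MO_{\P^N} \to \MO_C \to 0$ shows $\mathcal{I}_C$ is $3$-regular (using $H^1(\mathcal{I}_C(2))=0$ from projective normality and $H^1(\MO_C(mD))=0$ for $m\geq 1$), which already forces generation in degrees $\leq 3$; however $H^2(\mathcal{I}_C) \simeq H^1(\MO_C) \neq 0$ blocks $2$-regularity, so the passage from ``degree $\leq 3$'' to ``quadrics'' is not formal and is exactly where $\deg D \geq 4 = 2g+2$ and the smoothness of $C$ enter. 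I expect this to be the genuine obstacle, and I would resolve it via the curve case of Green's property $N_1$, equivalently Saint-Donat's classical theorem that a smooth curve embedded by a line bundle of degree $\geq 2g+2$ is cut out by quadrics: concretely one shows $H^0(\mathcal{I}_C(2))\otimes H^0(\MO_{\P^N}(1)) \to H^0(\mathcal{I}_C(3))$ is surjective and inducts on $m$, the base case $m=3$ being the required vanishing of a Koszul cohomology group.
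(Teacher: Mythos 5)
Your preliminary reductions are sound ($\omega_C\simeq\MO_C$, Serre duality, $h^0(\MO_C(E))=\deg E$ for $\deg E>0$, the identification of $\Bs|D|$ in (1), and the appeal to Mumford/Saint--Donat for (4)), but the pointwise argument you use for base point freeness and very ampleness in (2) and (3) fails in exactly the generality the lemma is stated for, and this is where the real content lies. The curve $C$ is only assumed Gorenstein: it may be singular, and it may fail to be geometrically reduced --- this is not a fringe case but the typical one in this paper, where the lemma is applied (via \autoref{p va deg3}) to anticanonical members on geometrically non-normal regular del Pezzo surfaces. Two steps break. First, at a singular closed point $P$ the reduced point is not a Cartier divisor, so ``$D-P$'' is undefined; one must work with $\MO_C(D)\otimes I_P$ for the ideal sheaf $I_P$, which is not invertible, so your Serre-duality step $h^1(D-P)=h^0(P-D)$ has no meaning. (The correct form is $H^1(\MO_C(D)\otimes I_P)^\vee\simeq\Hom(I_P,\MO_C(-D))$, and bounding that $\Hom$ via torsion-freeness and a $\chi$-comparison only rules out base points of degree $<\deg D$; points of degree $\geq\deg D$ require a separate argument through the members of $|D|$.) Second, your proposed fix --- passing to $\overline k$ --- does not remove singular points, and it destroys the one hypothesis all your dimension counts rest on: integrality. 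Over $\overline k$ the curve can be non-reduced or reducible, and then ``a line bundle of negative degree has no nonzero sections'' is simply false, so $h^0(D-P)=h^0(D)-1$ and the separation of points/tangent vectors cannot be established this way. In effect your proof covers smooth, geometrically integral curves, i.e.\ the textbook case the lemma is designed to go beyond; this is precisely why the paper does not prove (2) and (3) pointwise at all, but quotes results from the reference [Tan-elliptic] established for arbitrary Gorenstein genus-one curves over any field, and proves only (1) by hand via Castelnuovo--Mumford $0$-regularity with respect to $\MO_C(2D)$ --- an argument that never subtracts a point.

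Two smaller gaps in your ring-generation inductions. In (1), your regularity trick covers target degrees $n\geq 5$ only, hence gives generation in degrees $\leq 4$, not $\leq 3$: the map onto $H^0(\MO_C(4D))$ is not covered (it would need $H^1(\MO_C)=0$, which is false), and one must check directly that $H^0(\MO_C(2D))\otimes H^0(\MO_C(2D))\oplus H^0(\MO_C(D))\otimes H^0(\MO_C(3D))\to H^0(\MO_C(4D))$ is surjective, as the paper does (using that $H^0(\MO_C(D))$ is spanned by the section cutting out $P$ and that $|2D|$ is base point free). In (3), your reduction of $\Sym^2 H^0(\MO_C(D))\twoheadrightarrow H^0(\MO_C(2D))$ for $\deg D\geq 4$ to the plane-cubic case invokes ``a general $k$-rational point'', which need not exist over the fields considered here ($C(k)$ can be empty or finite), so this step also needs to be rerun either with ideal sheaves over $k$ or after a base change argument that confronts the non-integrality issue above.
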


\begin{proof}
The assertion (2) 
follows from \cite[Theorem 4.4 and Lemma 4.5]{Tan-elliptic}. 
The assertion (3) holds by \cite[Theorem 4.9]{Tan-elliptic}.
The assertion (4) follows from \cite[Theorem 10, page 80]{Mum70}.

Let us show (1). 
By the Riemann-Roch theorem, we get $h^0(C, \MO_C(D)) = \deg D = 1$, 
and hence $P:=\Bs |D|$  is a $k$-rational point. 
It follows from (2) that $|2D|$ is base point free. 
For $n \geq 3$, we get 
\[
H^1(C, \MO_C(nD) \otimes \MO_C(-2D)) = 0, 
\]
and hence $\MO_C(nD)$ is $0$-regular with respect to 
the  globally generated ample invertible sheaf $\MO_C(2D)$. 
Then the induced $k$-linear map 
\[
H^0(C, \MO_C(nD)) \otimes_k H^0(C, \MO_C(2D)) \to H^0(C, \MO_C((n+2)D))
\]
is surjective for $n \geq 3$ (cf. \cite[Lemma 5.1]{FAG}). 
%{\cyan add ref}. 
Therefore, $R(C, D)$ is generated by $\bigoplus_{1 \leq j \leq 4}H^0(C, \MO_C(jD))$. 
It is easy to check that 
\[
H^0(C, \MO_C(2D)) \otimes H^0(C, \MO_C(2D)) 
\oplus 
H^0(C, \MO_C(D)) \otimes H^0(C, \MO_C(3D)) 
\to H^0(C, \MO_C(4D))
\]
is surjective. 
%generates $H^0(C, \MO_C(4D))$. 
Thus (1) holds. %Then $\MO_C(nD)$ is 
\end{proof}

\begin{prop}\label{p va deg3}
%We work over a field $k$  of characteristic $p>0$. 
% We work over a separably closed field $k$ of characteristic $p>0$.\FB{not needed sep. closed?}{\cyan Yes. Probably, this kind of results (intersection of quads should be summarised in Prelim.} 
Let $X$ be a geometrically integral 
canonical del Pezzo surface with $H^1(X, \MO_X)=0$. 
Assume that one of {\rm (a)} and {\rm (b)} holds.
\begin{enumerate}
\item[(a)]
%Assume that $H^1(X, \MO_X)=0$ and 
There exists a prime divisor $C$ satisfying $-K_X \sim C$. 
\item[(b)] 
 $|-K_X|$ is base point free.
\end{enumerate}
Then the following hold. 
\begin{enumerate}
\item 
If $K_X^2 =1$, then $\Base |-K_X|=P$ is a $k$-rational point 
and $R(X, -K_X)$ is generated by $H^0(X, \MO_X(-K_X)) \oplus H^0(X, \MO_X(-2K_X)) \oplus H^0(X, \MO_X(-3K_X))$ as a $k$-algebra. 
\item 
If $K_X^2 =2$, then $|-K_X|$ is base point free and 
$R(X, -K_X)$ is generated by $H^0(X, \MO_X(-K_X)) \oplus H^0(X, \MO_X(-2K_X))$.  
\item 
If $K_X^2 \geq 3$, then $|-K_X|$ is very ample and  
$R(X, -K_X)$ is generated by $H^0(X, \MO_X(-K_X))$ as a $k$-algebra. 
\item 
If $X$ is geometrically normal and $K_X^2 \geq 4$, then $X$ is an intersection of quadrics. 
\end{enumerate}
\end{prop}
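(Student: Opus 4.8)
The plan is to reduce each of the four assertions to the corresponding statement for the anticanonical curve $C$ supplied by hypothesis, applying \autoref{l va genus1}. The geometry on $X$ is controlled by restriction to $C$ thanks to the Kodaira-type vanishing in \autoref{lem: Kod_vanishing}.

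First I would set up the curve side. Since $X$ is a del Pezzo surface it is normal Gorenstein, so the prime divisor $C$ with $-K_X \sim C$ is Cartier and $C$ is a projective Gorenstein curve. Adjunction gives $\omega_C \cong (\omega_X \otimes \MO_X(C))|_C \cong \MO_C$, because $\MO_X(C) \cong \MO_X(-K_X) \cong \omega_X^{-1}$. As $-K_X$ is ample we have $H^0(X,\omega_X)=0$, and $H^1(X,\MO_X)=0$ by assumption, so the sequence $0 \to \MO_X(K_X) \to \MO_X \to \MO_C \to 0$ forces $H^0(C,\MO_C)=k$ and hence $h^1(C,\MO_C)=h^0(C,\omega_C)=1$. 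Setting $D \coloneqq \MO_C(-K_X)|_C$, a Cartier divisor of degree $C\cdot(-K_X)=K_X^2$, the pair $(C,D)$ satisfies the hypotheses of \autoref{l va genus1}.

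Next I would record the restriction mechanism. For each $m\geq 1$, twisting $0 \to \MO_X(-C) \to \MO_X \to \MO_C \to 0$ by $\MO_X(-mK_X)$ yields $0 \to \MO_X(-(m-1)K_X) \to \MO_X(-mK_X) \to \MO_C(mD) \to 0$, the first map being multiplication by the defining section $s_C$ of $C$. By \autoref{lem: Kod_vanishing}(3) we have $H^1(X,\MO_X(-(m-1)K_X))=0$, so the restriction $H^0(X,\MO_X(-mK_X)) \to H^0(C,\MO_C(mD))$ is surjective with kernel $s_C\,H^0(X,\MO_X(-(m-1)K_X))$. Thus the graded ring map $R(X,-K_X) \to R(C,D)$ is surjective in every degree with kernel the principal ideal $(s_C)$, giving $R(X,-K_X)/(s_C)\cong R(C,D)$. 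Because $C$ is a member of $|-K_X|$ we get $\Bs|-K_X|\subseteq C$, and surjectivity identifies the scheme-theoretic base locus of $|-K_X|$ with $\Bs|D|$ inside $C$; \autoref{l va genus1}(1),(2),(3) then give respectively $\Bs|-K_X|=P$ a $k$-rational point when $K_X^2=1$, and base point freeness when $K_X^2=2$ or $K_X^2 \geq 3$. The generation statements follow from the isomorphism $R(X,-K_X)/(s_C)\cong R(C,D)$ by a graded Nakayama induction: if $R(C,D)$ is generated in degrees $\leq d$ (with $d=3,2,1$ in the three cases), then for $x\in R(X,-K_X)_m$ with $m>d$ one writes its image as a product of lower-degree elements, lifts, and subtracts to land in $s_C R(X,-K_X)_{m-1}$, which by induction already lies in the subalgebra generated in degrees $\leq d$. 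When $K_X^2\geq 3$ this gives degree-one generation, and together with ampleness of $-K_X$ it shows $\varphi_{|-K_X|}$ identifies $X$ with $\Proj R(X,-K_X)\subseteq \P^{K_X^2}_k$, i.e.\ $-K_X$ is very ample.

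For (4) I would first reduce to $k=\overline{k}$, since being cut out by quadrics is both preserved and detected under the faithfully flat base change $\overline{k}/k$ (the degree-two part of the ideal and its generation base-change). Over $\overline{k}$ the surface $X$ is canonical del Pezzo with $K_X^2\geq 4$ and only finitely many Du Val points, so by Bertini a general member $C\in|-K_X|$ avoids $\Sing X$ and is a smooth genus-one curve with $\deg D=K_X^2\geq 4$; by \autoref{l va genus1}(4) it is an intersection of quadrics in the hyperplane $H=\P^{K_X^2-1}$ it spans. It then remains to lift quadric generation from the hyperplane section $C=X\cap H$ to $X$: using that $X$ is projectively normal (degree-one generation from (3)) and that $s_C$ is a nonzerodivisor on $R(X,-K_X)$, one shows $I(X)_d=R_1\cdot I(X)_{d-1}$ for $d\geq 3$ by reducing modulo $s_C$ to the quadric generation of the ideal of $C$ and lifting quadrics. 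I expect this lifting step, together with securing smoothness of the anticanonical curve over a general base field, to be the main technical point; the reductions for (1)–(3) are formal consequences of adjunction, Kodaira vanishing, and \autoref{l va genus1}.
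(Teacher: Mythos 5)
Your proposal is correct and takes essentially the same route as the paper: the paper's (three-line) proof consists precisely of the restriction surjection $H^0(X, \MO_X(-mK_X)) \twoheadrightarrow H^0(C,\MO_C(-mK_X|_C))$ obtained from the vanishing in \autoref{lem: Kod_vanishing}, followed by a reduction to the corresponding statements for the genus-one curve $C$ via \autoref{l va genus1} together with \cite[Lemma 2.10]{Isk77} and \cite[Lemma 7.6]{Tan-FanoI}. Your write-up simply makes explicit the transfer steps (graded Nakayama for ring generation, $X \simeq \Proj R(X,-K_X)$ for very ampleness, and lifting quadric generation of the ideal modulo the nonzerodivisor $s_C$) that the paper delegates to those references.
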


\begin{proof}
Assume that (a) holds. 
By \autoref{lem: Kod_vanishing}, 
we have the following  surjection 
\[
H^0(X, \MO_X(-mK_X)) \to H^0(C,\MO_C(-mK_X|_C))\to H^1(X, \MO_X((-m+1)K_X))=0. 
\]
Then the assertions %(1)-(3) 
follow from  the corresponding result for $C$ 
(\autoref{l va genus1}, \cite[Lemma 2.10]{Isk77} cf. \cite[Lemma 7.6]{Tan-FanoI}). 
{\cyan This completes the proof for the case when (a) holds.} 

{\cyan Assume (b). 
By \cite[Remark 3.2, Theorem 4.9(12), and Theorem 4.12]{Tan24}, 
there exists a purely transcendental extension $\kappa/k$ of finite degree such that 
(a) holds for the base change $X_{\kappa} = X \times_k \kappa$.  
%\textcolor{red}{why do we need to possibly pass to an inseparable extension? I don't see it from your theorem}. {\green I added 3.2 and 4.12 to clarify it. BTW, it is purely "transcendental", not purely "inseparable".} \textcolor{red}{OK, I agree, we can remove the color}. 
Note, for example, that 
\[
\mu : H^0(X, \MO_X(-K_X)) \otimes_k H^0(X, \MO_X(-K_X)) \to 
H^0(X, \MO_X(-2K_X))
\]
is surjective if and only if $\mu \otimes_k \kappa$ is surjective. 
Then we are done as we have already settled the case when (a) holds. 
This completes the proof for the case when (b) holds.}
%Similarly, (4) holds by \cite[Theorem 10, page 80]{Mum70} (). 
%\cite[Theorem 4.4 and 4.9]{Tan-elliptic}. 
\end{proof}

\begin{prop}\label{p hypersurf}
%We work over a field $k$  of characteristic $p>0$. 
% We work over a separably closed field $k$ of characteristic $p>0$.\FB{not needed sep. closed?}{\cyan Yes. Probably, this kind of results (intersection of quads should be summarised in Prelim.} 
Let $X$ be a geometrically integral canonical %regular 
del Pezzo surface  with $H^1(X, \MO_X)=0$. 
Assume that one of {\rm (a)} and {\rm (b)} holds.
\begin{enumerate}
\item[(a)]
%Assume that $H^1(X, \MO_X)=0$ and 
There exists a prime divisor $C$ satisfying $-K_X \sim C$. 
\item[(b)] 
 $|-K_X|$ is base point free.
\end{enumerate}
%Assume that $H^1(X, \MO_X)=0$ and there exists a prime divisor $C$ satisfying $-K_X \sim C$. 
Then the following hold. 
\begin{enumerate}
%\item $K_X^2 =1$ or $K_X^2 =2$. 
\item If $K_X^2=1$, then 
$X$ is isomorphic to a hypersurface in $\P(1, 1, 2, 3)$ of degree $6$. 
\item If $K_X^2 =2$, 
then  $X$ is isomorphic to a hypersurface in $\P(1, 1, 1, 2)$ of degree $4$. 
\item If $K_X^2=3$, then  $X$ is isomorphic to a cubic hypersurface in $\P^3_k$. 
\item If $K_X^2 =4$, then $X$ is isomorphic to a complete intersection of two quadric hypersurfaces in $\P^4_k$. 
\end{enumerate}
\end{prop}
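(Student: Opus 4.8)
The plan is to realize $X$ as $\Proj R$ of its anticanonical ring $R := R(X,-K_X) = \bigoplus_{m \ge 0} H^0(X,\mathcal{O}_X(-mK_X))$ and to read off a presentation of $R$ from \autoref{p va deg3}. Since $-K_X$ is ample, the natural morphism $X \to \Proj R$ is an isomorphism, so everything reduces to understanding $R$ as a graded $k$-algebra, and the first step is to pin down its Hilbert function. By \autoref{lem: Kod_vanishing}(3) we have $H^i(X,\mathcal{O}_X(-mK_X)) = 0$ for all $i>0$ and $m \ge 0$; together with the hypothesis $H^1(X,\mathcal{O}_X)=0$ and Serre duality (which gives $H^2(X,\mathcal{O}_X)=H^0(X,\mathcal{O}_X(K_X))^\vee = 0$, as $-K_X$ is ample), the Riemann--Roch theorem yields
\[
\dim_k R_m = \chi(X,\mathcal{O}_X(-mK_X)) = 1 + \tfrac{1}{2}m(m+1)K_X^2 \qquad (m \ge 0).
\]

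Write $d := K_X^2$. For each $d \in \{1,2,3\}$ I would choose minimal homogeneous generators of $R$ as prescribed by \autoref{p va deg3}, producing a graded surjection $S \twoheadrightarrow R$ from a weighted polynomial ring $S$. Because $R$ is a domain ($X$ is integral), any nonzero form in a basis of $R_1$ is a nonzerodivisor, so the image of $\Sym^{\bullet}R_1$ has the expected dimension and the counts above fix the number of new generators in each degree. For $d=3$, $R$ is generated in degree $1$ with $\dim_k R_1 = 4$ by \autoref{p va deg3}(3), so $S = k[x_0,\dots,x_3]$ and $|-K_X|$ embeds $X \hookrightarrow \P^3_k$. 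For $d=2$, \autoref{p va deg3}(2) forces exactly one generator of weight $2$ beyond $\dim_k R_1 = 3$, so $S = k[x_0,x_1,x_2,y]$ with weights $(1,1,1,2)$. For $d=1$ the cleanest route is to restrict to the anticanonical member $C \in |-K_X|$: by adjunction $\omega_C \cong \mathcal{O}_C$, so $C$ is a Gorenstein curve with $h^1(C,\mathcal{O}_C)=1$, the restriction sequence and \autoref{lem: Kod_vanishing} give a surjection $R \twoheadrightarrow R(C,-K_X|_C)$, and \autoref{l va genus1}(1) supplies the Weierstrass-type presentation of the latter with generators of weights $1,2,3$; lifting these through the section cutting out $C$ gives generators of $R$ of weights $(1,1,2,3)$, i.e.\ $S = k[x_0,x_1,y,z]$.

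Next I would determine the relation ideal $I := \ker(S \to R)$ by comparing Hilbert series. In each case $\dim_k I_m = \dim_k S_m - \dim_k R_m$ vanishes below the critical degree and equals $1$ there, producing a single relation $f$ of degree $6$ (for $d=1$), $4$ (for $d=2$), or $3$ (for $d=3$). Since $(f) \subseteq I$ and the rational-function identity
\[
H_{S/(f)}(t) = (1-t^{\deg f})\,H_S(t) = H_R(t)
\]
holds (a direct check in each case, both sides having $m$-th coefficient $1+\tfrac12 m(m+1)d$), we obtain $\dim_k(S/(f))_m = \dim_k R_m$ for all $m$, whence $I=(f)$. This exhibits $X = \Proj(S/(f))$ as a degree-$6$ hypersurface in $\P(1,1,2,3)$, a degree-$4$ hypersurface in $\P(1,1,1,2)$, and a cubic surface in $\P^3_k$, respectively.

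Finally, for $d=4$, $|-K_X|$ is very ample by \autoref{p va deg3}(3) and embeds $X$ as a non-degenerate surface of degree $(-K_X)^2 = 4$ in $\P^4_k = \P(H^0(X,\mathcal{O}_X(-K_X)))$. The count gives $\dim_k(I_X)_2 = \binom{6}{2} - \dim_k R_2 = 15 - 13 = 2$, so $X$ lies on two linearly independent quadrics $Q_1,Q_2$. Any quadric containing the non-degenerate $X$ is irreducible (a reducible quadric would have a hyperplane component containing the irreducible $X$, contradicting non-degeneracy), so $Q_1 \neq Q_2$ are irreducible and $Q_1 \cap Q_2$ is a complete intersection of pure dimension $2$ and degree $4$; as $X \subseteq Q_1 \cap Q_2$ is integral of the same dimension and degree, the degrees force $X = Q_1 \cap Q_2$ scheme-theoretically. (When $X$ is geometrically normal one may instead invoke \autoref{p va deg3}(4) and match the complete-intersection Hilbert series $(1-t^2)^2 H_S(t)$.) The main difficulty throughout is the $d=1$ bookkeeping, where $|-K_X|$ has a base point and the weight-$2$ and weight-$3$ generators are genuinely new; the reduction to the genus-one curve $C$ renders the generator and relation counts transparent, while the passage from ``a unique relation exists'' to ``the ideal is principal'' rests entirely on the displayed Hilbert-series identity.
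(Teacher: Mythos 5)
Your proof is correct and takes essentially the same route as the paper: for (1)--(3) the paper simply invokes \autoref{p va deg3} together with ``the same argument as in [Kol96, Theorem III.3.5]'', which is exactly the anticanonical-ring presentation and Hilbert-series bookkeeping you carry out explicitly, and your treatment of (4) (counting quadrics via $15-13=2$, ruling out non-integral quadrics by linear normality, and concluding by the degree comparison $\deg X = 4 = \deg(Q_1 \cap Q_2)$) matches the paper's argument almost verbatim.
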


\begin{proof}
%{\cyan NEW PROOF (without geom normal in (4)).} \FB{agree} 
Using \autoref{p va deg3}, the assertions (1)-(3) hold by applying 
the same argument as in \cite[Theorem III.3.5]{Kol96}. 
Let us show (4). 
Assume $K_X^2=4$. By the Riemann-Roch theorem, we get 
\[
h^0(X, \MO_X(-K_X)) = 5\qquad\text{and}\qquad 
h^0(X, \MO_X(-2K_X))= 13. 
\]
We have $h^0(\P^4_k, \MO_{\P^4_k}(2))= \binom{6}{2}=15$. 
By the exact sequence 
\[
0 \to H^0(\P^4_k, I_X \otimes \MO_{\P^4_k}(2)) \to 
H^0(\P^4_k, \MO_{\P^4_k}(2)) \to 
H^0(X, \MO_X(-2K_X)),  
\]
there are two quadric hypersurfaces $Q$ and $Q'$ such that $Q \neq Q'$ and $X \subset Q \cap Q'$. 
If the quadric $Q$ is not an integral scheme, then $X$ would be contained in a hyperplane of $\mathbb{P}^4_k$, which contradicts the isomorphism: 
\[
H^0(\P^4_k, \MO_{\P^4_k}(1)) \xrightarrow{\simeq} H^0(X, \MO_{X}(-K_X)). 
\]
Therefore, both $Q$ and $Q'$ are prime Cartier divisors on $\P^4_k$. 
By $\deg X =4 = \deg (Q \cap Q')$, we get $X = Q \cap Q'$. 
\end{proof}

% \textcolor{red}{it seems to me we are only using (2) later in the article, for which the existence of prime divisors is ok by RR. For (1), can we otherwise add $Pic(X)=\mathbb{Z}K_X$.}
% {\cyan You are right. I corrected following your suggestion! 
% The logical dependence involving 2.6 is probably as follows: 
% \begin{itemize}
%     \item 2.6(1) $\Rightarrow$ 2.6(2).
%     \item 2.6(2) $\Rightarrow$ 3.6(2) and 4.26, Step2. 
% \end{itemize}
% So we may assume that $H^1(\MO_X)=0$ and $\Pic X = \Z K_X$.
% }

\begin{lem}\label{l rho=1}
%We work over a  field $k$ of characteristic $p>0$. 
Let $X$ be a geometrically integral regular del Pezzo surface 
{with $H^1(X, \MO_X)=0$ and $\Pic X = \Z K_X$}. 
%Let $X$ be a geometrically integral regular dP surfaces over a separably closed field. 
Take a $k$-rational point $P$ on $X$ and 
let $\sigma: Y \to X$ be the blowup at $P$. 
Then the following hold. 
\begin{enumerate}
\item If $K_X^2 \geq 3$, then $|-K_Y|$ is base point free and $-K_Y$ is big. 
\item If 
$K_X^2 \geq 4$, 
then $-K_Y$ is ample. 
\end{enumerate}
%Assume that $H^1(X, \MO_X)=0$, $\rho(X) =1$, and $X$ is not geometrically normal. 
%Then $K_X^2 \leq 2$. 
\end{lem}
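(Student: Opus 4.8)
The plan is to base everything on the identity $-K_Y = \sigma^{*}(-K_X) - E$, where $E \simeq \P^1_k$ is the exceptional curve of the blow-up at the $k$-rational (hence smooth) point $P$; one has $E^2 = -1$, $\ -K_Y \cdot E = 1$, and by the projection formula $(-K_Y)^2 = K_X^2 - 1$. Since $\sigma$ is the blow-up of a smooth point, $Y$ is again geometrically integral and regular, $H^0(Y,\MO_Y) = k$, and $H^1(Y,\MO_Y) = H^1(X,\MO_X)$, which vanishes for a regular del Pezzo surface. Bigness of $-K_Y$ in both cases is then immediate: $(-K_Y)^2 = K_X^2 - 1 > 0$, so by Riemann--Roch one of $\pm K_Y$ is big; since $\sigma^{*}(-K_X)$ is nef while $K_Y \cdot \sigma^{*}(-K_X) = -K_X^2 < 0$, the divisor $K_Y$ cannot be big, and therefore $-K_Y$ is big.

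The technical heart of both parts is the following estimate: for every integral curve $C' \subset X$ passing through $P$, with $m := \mult_P C'$ and $d := -K_X \cdot C'$, one has $d \ge m$, and in fact $d > m$ under the hypotheses of (2). As the strict transform satisfies $-K_Y \cdot \widetilde{C'} = d - m$, this governs the sign of $-K_Y$ on every integral curve different from $E$. I would prove it by combining the Hodge index inequality $K_X^2 \cdot (C')^2 \le d^2$ (valid because $-K_X$ is ample) with the genus bound $\binom{m}{2} \le p_a(C') = 1 + \tfrac12\big((C')^2 - d\big)$. Substituting the former into the latter and assuming $m \ge d+1$ leads to $d^2(1 - 1/K_X^2) \le 2 - 2d$, which is impossible for $d \ge 1$ and $K_X^2 \ge 2$; hence $d \ge m$. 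Under the hypothesis of (2) every prime divisor is numerically equal to $b(-K_X)$ for some integer $b \ge 1$, so $d = bK_X^2$ and $p_a(C') = 1 + \tfrac12 b(b-1)K_X^2$; the same computation shows that $m \ge d$ would force $b^2 K_X^2(K_X^2 - 1) \le 2$, impossible once $K_X^2 \ge 4$, giving the strict inequality $d > m$.

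For part (1) the bound $d \ge m$ together with $-K_Y \cdot E = 1$ shows that $-K_Y$ is nef, so $Y$ is a geometrically integral regular (hence canonical) weak del Pezzo surface with $(-K_Y)^2 = K_X^2 - 1 \ge 2$ and $H^1(Y,\MO_Y) = 0$. To obtain base point freeness I would pass to the anticanonical model $\nu\colon Y \to \overline{Y}$ contracting the finitely many $(-K_Y)$-trivial curves; then $\overline{Y}$ is a canonical del Pezzo surface with $K_{\overline{Y}}^2 = K_X^2 - 1 \ge 2$ and $|-K_Y| = \nu^{*}|-K_{\overline{Y}}|$. Applying \autoref{p va deg3}(2),(3) to $\overline{Y}$ shows $|-K_{\overline{Y}}|$ is base point free (indeed very ample when $K_{\overline{Y}}^2 \ge 3$), and pulling back yields that $|-K_Y|$ is base point free. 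For part (2) I would instead conclude by the Nakai--Moishezon criterion, valid over an arbitrary field: every integral curve on $Y$ is either $E$, with $-K_Y \cdot E = 1 > 0$, or the strict transform of an integral curve on $X$, with $-K_Y \cdot \widetilde{C'} = d - m > 0$ by the strict estimate above, and $(-K_Y)^2 = K_X^2 - 1 \ge 3 > 0$, whence $-K_Y$ is ample.

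The main obstacle I anticipate is not the positivity bookkeeping but the two places where imperfect fields intrude. First, the genus inequality $\binom{m}{2} \le p_a(C')$ can fail for geometrically non-reduced curves; I would secure it by base change to $\overline{k}$ and run the intersection estimate on the Gorenstein surface $Y_{\overline{k}} = \Bl_{P_{\overline{k}}}X_{\overline{k}}$, where integral curves have non-negative arithmetic genus and nefness and ampleness can be tested. Second, the application of \autoref{p va deg3} in part (1) presupposes an irreducible anticanonical member on $\overline{Y}$, whose existence must be verified; if it is unavailable a priori, one must instead deduce base point freeness directly from the char-$p$ vanishing of \autoref{lem: Kod_vanishing}(3), which is the more delicate route.
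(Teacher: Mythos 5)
Your entire proposal hinges on the multiplicity estimate $d := -K_X\cdot C' \geq m := \mult_P C'$ (strict in case (2)), and the step you use to prove it --- the genus inequality $\binom{m}{2} \leq p_a(C')$ --- is genuinely false in the setting of this lemma, not just a technicality to be smoothed over. The failure occurs exactly for integral but geometrically non-reduced curves, which are unavoidable on the surfaces this lemma is meant for: if $X$ is geometrically non-normal of type No.~2-4 in \autoref{thm: class_dP} (so $K_X^2 = 4 \geq 3$, hypothesis (1) applies) and $P \in X(k)$, then by \autoref{p always wild} one of the two conic fibrations has no smooth fibre, and its fibre $F$ through $P$ is integral, geometrically non-reduced, with $p_a(F) = 0$, $-K_X \cdot F = 2$, and $\mult_P F = 2$; your inequality would force $\binom{2}{2} = 1 \leq 0$. (An even more elementary instance: $C' = \{x^2 = ty^2\} \subset \P^2_{\F_2(t)}$ through $[0:0:1]$.) Structurally, what breaks is that $p_a(C'^N)$ can be negative because the constant field $H^0(C'^N,\MO_{C'^N})$ grows under normalisation over an imperfect field. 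Your proposed repair --- base change to $\overline{k}$ and run the estimate on $Y_{\overline{k}}$ --- does not close the gap: after base change the relevant curve $C'_{\overline{k}}$ is non-reduced, hence not one of the integral curves to which "non-negative arithmetic genus" applies, and its reduction need not be a Cartier divisor on the possibly non-normal surface $X_{\overline{k}}$, so neither adjunction nor the blow-up formula $p_a(\widetilde{C}) = p_a(C) - \binom{m}{2}$ is available there. Separately, part (1) is unfinished even granting nefness: you concede that \autoref{p va deg3} requires a prime anticanonical member on the anticanonical model, which you cannot produce, and \autoref{lem: Kod_vanishing}(3) by itself does not yield base point freeness of $|-K_Y|$.

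For contrast, the paper's proof never estimates multiplicities or genera of curves on $X$, which is precisely why it survives geometric non-reducedness. For (1) it uses that $|-K_X|$ is very ample once $K_X^2 \geq 3$ (taken as known, cf.\ \cite{BT22}), so $\sigma$ is the resolution of indeterminacies of the hyperplane sections through $P$ and $|-K_Y| = |\sigma^*(-K_X) - E|$ is base point free outright; bigness follows from $K_Y^2 = K_X^2 - 1 \geq 2$. For (2) it argues by contradiction: if $-K_Y$ is not ample, the contraction $\tau \colon Y \to Z$ of the extremal ray not coming from $\sigma$ contracts a single prime divisor $C \equiv \alpha(-K_Y - d_Y E)$ with $d_Y := K_Y^2$; since $K_Y \cdot C = 0$ and $C^2 < 0$, the curve $C$ is a conic, and adjunction gives $-2 = \deg_{k_C} K_C = \deg_{k_C} \MO_Y(-\alpha d_Y E)|_C \in d_Y \Z$, contradicting $d_Y \geq 3$. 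To salvage your Nakai--Moishezon route you would essentially have to reprove the multiplicity estimate on the normalisation of $X_{\overline{k}}$ while tracking the conductor divisor --- i.e.\ redo a substantial part of \cite{Tan19} --- so the missing step is a real obstruction, not a routine verification.
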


% \FB{kill the following sentence?} Since there is an example with $K_X^2=4$ and $\rho(X)=1$, 
% there exists an example $Y$ with $K_Y^2 =3$ and $\rho(Y)=2$. 

\begin{proof}
Let us show (1). 
{The Riemann-Roch theorem implies $H^0(X, \MO_X(-K_X)) \neq 0$. 
This, together with $\Pic X =\Z K_X$, enables us to find a prime divisor $C$ satisfying $-K_X \sim C$. 
Hence $|-K_X|$ is very ample (\autoref{p va deg3}). 
%Since $|-K_X|$ is very ample \textcolor{red}{are we using $H^1(X, \mathcal{O}_X)=0?$}{\cyan WE NEED TO BE CAREUL WITH $H^1(\MO_X)=0$ and $\exists$ prime div in $|-K_X|$.}, 
Then} $\sigma$ coincides with the resolution of indeterminacies of the linear system consisting of the hyperplane sections containing $P$. 
Therefore, 
\[
-K_Y = -\sigma^*K_X -E
\]
is base point free for $E:=\Ex(\sigma)$. 
By $K_Y^2 =K_X^2 -1 \geq 2>0$, $-K_Y$ is big. 
Thus (1) holds.

Let us show (2). 
Suppose that $-K_Y$ is not ample. Let us derive a contradiction. 
As $-K_Y$ is nef and big, the extremal ray not corresponding to $\sigma$ 
induces the contraction to its anticanonical model: 
\[
\tau : Y \to Z. 
\]
By $\rho(Y)= \rho(X) +1 = 2$, 
$C := \Ex(\tau)$ is a prime divisor. 
%By $\Pic\,X \equiv \Z K_X$, 
It holds that 
\[
C \equiv -\alpha K_Y -\beta E 
\]
for some $\alpha, \beta \in \Z$, 
because  $\Pic Y = \sigma^*\Pic X \oplus \Z E \equiv \Z K_Y \oplus \Z E$. 

% We now show that $\alpha >0$ and $\beta >0$. 
% If $\alpha =0$, then it is easy to get a contradiction by using $C^2 <0$, $E^2<0$ and $C \neq E$. 
% Hence $\alpha \neq 0$. Then we have 
% \[
% - K_Y \equiv (1/\alpha) C +(\beta/\alpha) E. 
% \]
% By $\kappa(-K_Y)= 3$ and $\kappa(C) =0$ and $\kappa(E)=0$, 
%  both the coefficients must be positive, i.e., 
%  $1/\alpha >0$ and $\beta/\alpha>0$. 
% Hence  $\alpha >0$ and $\beta >0$. 

For $d_Y := K_Y^2 =K_X^2 -1$, we get  
\[
0 = C \cdot (-K_Y)  = \alpha (-K_Y)^2 -\beta (-K_Y) \cdot E  = d_Y\alpha -\beta. 
\]
Hence 
\[
C \equiv \alpha (-K_Y -d_YE). 
\]
By $K_Y \cdot C =0$ and $C^2 <0$, 
$C$ is a Gorenstein curve of genus $0$, which is a conic. 
In particular, $\deg_{k_C} K_C =-2$ for $k_C :=H^0(C, \MO_C)$. 
We have 
\[
d_Y\Z \ni \deg_{k_C} \MO_Y(-d_Y\alpha E)|_C=
\deg_{k_C} \MO_Y(\alpha (-K_Y -d_YE))|_C
\]
\[
=\deg_{k_C} \MO_Y(C)|_C = \deg_{k_C} \MO_Y(K_Y+C)|_C = \deg_{k_C} K_C = -2,
\]
which contradicts $d_Y =K_Y^2 =K_X^2-1 \geq 3$.  
\qedhere

\end{proof}

% \subsection{Toward the general case}

% $\pi|_E : E \to \P^4$ should be surjective.   $\sigma|_E : E \to \Sigma$ is a $\P^2$-bdl. 
% Take a general $z \in \P^4(k)$. Set $L_Y := \pi^{-1}(z)$ and $L := \sigma(L_Y)$. 
% Fix a closed point $e \in E \subset W$ with $\pi(e) = z$. 
% Then $R:=\sigma(e) \in \sigma(E) = \Sigma$ and $\sigma(e) \subset \sigma(L_Y) = L$. 
% Then consider the fibre $\P^2_{k(R)}$ of $\sigma|_E$ and its image $\pi(\P^2_{k(R)})$. 
% By construction, we have $z \in T := \pi(\P^2_{k(R)}) \subset \P^4$. 
% Now, $T$ is a (possibly singular) projective surface over $k$. 
% We have 
% \[
% \P^2_{k(R)} \to T \ni z.  
% \]
% where $z$ is a $k$-rational point. 
% Taking the base change to the algebraic clousure, we get 
% \[
% \P^2_{\overline k} \amalg \P^2_{\overline k} \to T_{\overline k}, 
% \]
% and hence $T_{\overline k}$ is unirational. 

% %Recall that $\pi' := \pi|_{W'} : W' \to \P^4$ is birational. 

\section{Classification of geometrically integral {regular} del Pezzo surfaces} % over separably closed fields}

In this section, we classify geometrically integral regular del Pezzo surfaces 
which are not geometrically normal (\autoref{thm: class_dP}). 
%over separably closed fields.

\subsection{Primitive case}

\begin{dfn}\label{d primitive}
%We work over a  field $k$ of characteristic $p>0$. 
%\Fabio{{\cyan Yes. So I dropped "separably closed". Anyway, let us keep the sentence (i.e., please do not erase this sentence).}}
Let $X$ be a geometrically integral regular del Pezzo surface. 
%over $k$. 
\begin{enumerate}
\item We say that $X$ is {\em imprimitive} 
if there exists a birational morphism $f: X \to Y$ 
to a regular projective surface $Y$ such that $\rho(Y) = \rho(X)-1$. 
It is well known that 
$Y$ is a geometrically integral regular del Pezzo surface (e.g., \cite[Lemma 2.14]{BM23}).
\item We say that $X$ is {\em primitive} if 
$X$ is not imprimitive. 
\end{enumerate}
\end{dfn}

Let $X$ be a primitive geometrically integral regular del Pezzo surface. Then one of the following holds. 
\begin{itemize}
    \item $\rho(X)=1$ (\autoref{p rho 1}). 
    \item $\rho(X)=2$ and both extremal rays of $\NE(X)$ induce  conic bundle structures  (\autoref{p 2 conic bdls}). 
%    conic bdls. 
\end{itemize}
In what follows, we treat the case when $H^1(X, \MO_X)=0$ and $X$ is not geometrically normal, as the case when 
$H^1(X, \MO_X) \neq 0$ is treated in \cite[Proposition 4.11]{BM23}. 
%If $-K_X \equiv rH$ with $r \geq 2$, then the problem should be easy \FB{can it happen for geometrically non normal ones? see your list \cite[Theorem 4.6]{Tan19}}. So we shall ignore this case.

% \FB{is the following useful?}{\cyan Yes. Thanks. 
% If [Bernasconi-Martin] is correct, then I am happy with it. But it cites [Hidaka-Watanabe], so I prefer to avoid it for safety.  
% FYI \cite[Lemma 3.2]{CT19} you cited uses the assumption that $k$ is alg closed. So I replaced the reference.}
% \FB{OK. I prefer to be self-contained and have the next lemma. Anyway, we do notuse Hidaka-Watanabe for those vanishing if I remember correctlz. And I cannot find a mistake in section 2 of HW}

 %assume that $\Pic X \equiv \Z K_X$. 

% Since $k$ is sep closed, we have $\rho( (X\times_k \overline k)^N=1$. 
% Let us treat the case when $X$ is not geom normal. 
% By my list [boundedness..., Thm 4.6.], 
% the geom non-normal cases are 
% \begin{itemize}
% \item $p=3$ and $K_X^2 =1, 3$. 
% \item $p=2$ and $K_X^2 = 1, 2, 4$. 
% \end{itemize}

% Assume $K_X^2 \geq 3$. 
% Fix a $k$-rational point $P$. 
% Let 
% \[
% \sigma : Y \to X
% \]
% be the blowup at $P$. 
% As above $|-K_Y|$ is bpf and $K_Y^2 = K_X^2-1 \geq 2$, and hence $-K_Y$ is bpf and big. 

\begin{prop}[primitive, $\rho(X)=1$]\label{p rho 1}\label{l index 1}
%We work over a 
Assume that $k$ is separably closed. 
%field $k$ of characteristic $p>0$. 
Let $X$ be a geometrically integral regular del Pezzo surface. 
Assume that $\rho(X)=1$, $X$ is not geometrically normal, and $H^1(X, \MO_X)=0$. 
Then the following hold.
\begin{enumerate}
    \item [(a)] $\Pic X = \Z K_X$. 
    \item [(b)] If $C$ is an effective Cartier divisor satisfying $C \sim -K_X$, 
    then $C$ is a prime divisor. 
\end{enumerate}
Moreover, one of the following holds. 
\begin{enumerate}
\item $p \in \{2, 3\}$, $K_X^2 =1$, and $(X_{\overline k})^N \simeq \P^2$. 
\item $p =2$, $K_X^2=2$, and $(X_{\overline k})^N \simeq \P(1, 1, 2)$. 
\item $p=3$, $K_X^2 =3$, and $(X_{\overline k})^N \simeq \P(1, 1, 3)$. 
\item $p=2$, $K_X^2 =4$, and 
 either $(X_{\overline k})^N \simeq \P^2$ or $(X_{\overline k})^N \simeq \P(1, 1, 4)$.
\end{enumerate}
\end{prop}

\begin{proof}
%As for (1)--(4), 
Since $k$ is separably closed, we get $\rho((X_{\overline k})^N)  = \rho(X)=1$ 
\cite[Lemma  2.2, Proposition 2.4]{Tan18b}. 
Then one of (1)-(4) holds by \cite[Theorem 4.6]{Tan19}. 
In particular, $1 \leq K_X^2 \leq 4$.

%We first prove (a) and (b).
%{\cyan It is enough to show (a) and (b).} \textcolor{red}{to avoid repetition: just erase the previous cyan sentence?}
Since (b) follows from (a), it is enough to show (a). 
By $\rho(X)=1$ {and $H^1(X, \MO_X)=0$}, 
we have $\Pic X = \Z H$ for some ample Cartier divisor $H$ 
(\autoref{lem: Kod_vanishing}). 
In particular, we get $-K_X \sim rH$ for some integer $r >0$. 
It suffices to show $r=1$. 
Suppose $r \geq 2$. It is enough to derive a contradiction. 
By $r^2 H^2 =(rH)^2 = K_X^2 \leq 4$, %(\autoref{p rho 1}),  
%\FB{this is true over any field by \cite[Theorem 4.6]{Tan19} as in the remaining cases where $K_{X_{\overline{k}}}>4$, then the two contractions cannot be interchanged by Galois},{\cyan Yes. But this is the reason I do not want to drop the assumption that $k$ is sep closed. 
%Indeed, the case $K^2>4$ here will be automatically excluded after the final classification for the sep closed field case (i.e., the list says that $K^2 >4$ imply $K^2 \in \{ 5, 6\}$.} 
it holds that  $(r, H^2)=(2, 1)$. 
By the Riemann-Roch theorem, we get the following contradiction: 
\[
\Z \ni 
\chi(X, \MO_X(H)) = \chi(X, \MO_X) + 
\frac{1}{2}H \cdot (H-K_X) 
\]
\[
= 
\chi(X, \MO_X) + \frac{1}{2}H \cdot (H +2H)  = \chi(X, \MO_X) +\frac{3}{2} \not\in \Z. 
\]
Thus (a) holds.
\end{proof}

\begin{comment}
    
\textcolor{red}{erase the following lemma, but be careful when we quote it later}
\begin{lem}\label{l index 1}
We work over a separably closed field $k$ of characteristic $p>0$.  
Let $X$ be a geometrically integral regular del Pezzo surface. 
Assume that $\rho(X)=1$, $H^1(X, \MO_X)=0$, and 
$X$ is not geometrically normal. 
Then the following hold. 
\begin{enumerate}
    \item $\Pic X = \Z K_X$. 
    \item If $C$ is an effective Cartier divisor satisfying $C \sim -K_X$, 
    then $C$ is a prime divisor. 
\end{enumerate}
\end{lem}

\begin{proof}
Since (2) follows from (1), it is enough to show (1). 
By $\rho(X)=1$, 
we have $\Pic X = \Z H$ for some ample Cartier divisor $H$ 
(\autoref{lem: Kod_vanishing}). 
In particular, we get $-K_X \sim rH$ for some integer $r >0$. 
It suffices to show $r=1$. 
Suppose $r \geq 2$. It is enough to derive a contradiction. 
By $r^2 H^2 =(rH)^2 = K_X^2 \leq 4$ (\autoref{p rho 1}),  
%\FB{this is true over any field by \cite[Theorem 4.6]{Tan19} as in the remaining cases where $K_{X_{\overline{k}}}>4$, then the two contractions cannot be interchanged by Galois},{\cyan Yes. But this is the reason I do not want to drop the assumption that $k$ is sep closed. 
%Indeed, the case $K^2>4$ here will be automatically excluded after the final classification for the sep closed field case (i.e., the list says that $K^2 >4$ imply $K^2 \in \{ 5, 6\}$.} 
it holds that  $(r, H^2)=(2, 1)$. 
By the Riemann-Roch theorem, we get the following contradiction: 
\[
\Z \ni 
\chi(X, \MO_X(H)) = \chi(X, \MO_X) + 
\frac{1}{2}H \cdot (H-K_X) 
\]
\[
= 
\chi(X, \MO_X) + \frac{1}{2}H \cdot (H +2H)  = \chi(X, \MO_X) +\frac{3}{2} \not\in \Z. 
\]
%This completes the proof of Claim.    
\end{proof}

\end{comment}

\begin{prop}[primitive, $\rho(X)=2$]\label{p 2 conic bdls}
Assume that $k$ is separably closed.   
Let $X$ be a geometrically integral regular del Pezzo surface. 
%Let $X$ be a geometrically integral regular dP surfaces over a separably closed field. 
Assume that $X$ is not geometrically normal, $H^1(X, \MO_X)=0$, $\rho(X) =2$, and each of the extremal rays induces a conic bundle structure. 
Then {$p=2$ and one of the following holds. 
\begin{enumerate}
\item $K_X^2 =2$ and 
$D$ is a prime divisor for every effective Cartier divisor $D$ satisfying 
%for every effective Cartier divisor $D$ satisfying 
$-K_X \sim D$. % is prime. 
\item $K_X^2=4$ and there exists a double cover 
\[
\pi : X \to \P^1_k \times_k \P^1_k
\]
such that, for each $i \in \{1, 2\}$, 
the composite morphism $\pi_i : X \xrightarrow{\pi} \P^1_k \times_k \P^1_k \xrightarrow{{\rm pr}_i} \P^1_k$ is a contraction of an extremal ray of $\NE(X)$. 
Moreover, $-K_X \sim F_1 + F_2$, 
where $F_i$ is a fibre of $\pi_i$ over a $k$-rational point of $\P^1_k$.
\end{enumerate}
}
\end{prop}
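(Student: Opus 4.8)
The plan is to use the two conic bundle structures at once. Denote them $f_1\colon X \to B_1$ and $f_2\colon X \to B_2$, with fibres $F_1, F_2$, so that $F_i^2 = 0$ and $-K_X \cdot F_i = 2$. First I would identify the bases: since $f_{i\ast}\MO_X = \MO_{B_i}$, the Leray spectral sequence embeds $H^1(B_i, \MO_{B_i})$ into $H^1(X, \MO_X) = 0$, so $B_i$ is a regular curve of arithmetic genus $0$; over the separably closed field $k$ (where $\Br(k) = 0$) this forces $B_i \simeq \P^1_k$. The product morphism $\pi := (f_1, f_2)\colon X \to \P^1_k \times_k \P^1_k$ has $0$-dimensional fibres $f_1^{-1}(b_1) \cap f_2^{-1}(b_2)$, hence is finite; put $d := \deg \pi$. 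Pulling back the fibres of the two projections gives $\pi^\ast(\text{fibre of } \mathrm{pr}_i) = F_i$, so $F_1 \cdot F_2 = d$, and $-K_X \cdot F_i = 2$ forces $-K_X \equiv \tfrac{2}{d}(F_1 + F_2)$ and therefore $K_X^2 = 8/d$. If $d = 1$ then $\pi$ is finite and birational onto the normal surface $\P^1_k \times_k \P^1_k$, hence an isomorphism, contradicting that $X$ is not geometrically normal. Thus $d \geq 2$ and $K_X^2 \in \{1, 2, 4\}$.

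Next I would establish that $p = 2$ by examining the generic fibre $X_\eta$ of $f_1$, a regular Gorenstein conic of arithmetic genus $0$ over the imperfect field $K := k(B_1)$; via $|-K_{X_\eta}|$ it is a plane conic in $\P^2_K$. Since geometric non-normality of a regular surface is a codimension-one phenomenon concentrated on the generic fibre, $X$ being not geometrically normal forces $X_\eta$ to be not smooth over $K$. A regular plane conic in characteristic different from $2$ is automatically smooth (a degenerate quadratic form is singular at its vertex), so $p = 2$; this is consistent with \autoref{prop: geom_normality_degree}. Moreover such a regular non-smooth conic is anisotropic: a $K$-rational point $P$ would, in characteristic $2$, satisfy $f \in \mathfrak{m}_P^2$ for the defining equation $f$, making $\MO_{X_\eta, P}$ non-regular. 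Hence every closed point of $X_\eta$ has degree $\geq 2$, $\Pic(X_\eta) = \Z\,[-K_{X_\eta}]$ with $\deg(-K_{X_\eta}) = 2$, and restricting $F_2$ gives $F_2|_{X_\eta} = m\,(-K_{X_\eta})$, so that $d = 2m$ is even.

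The main obstacle is to exclude the case $d = 8$, that is $K_X^2 = 1$. Here $|-K_X|$ is a pencil, since $h^0(X, -K_X) = 1 + K_X^2 = 2$ by \autoref{lem: Kod_vanishing}, and I would first show that any $D \in |-K_X|$ is prime. Indeed $D$ can have no component contained in a fibre of $f_1$: as the fibres are irreducible (because $\rho(X) = 2$), such a component would be a whole fibre $F_1$, and then $(D - F_1)\cdot F_2 = 2 - d < 0$ would contradict the nefness of $F_2$; and every horizontal component meets $F_1$ in degree $\geq 2$ by the anisotropy of $X_\eta$. Since $D \cdot F_1 = 2$, this leaves $D$ a single reduced irreducible curve, so $-K_X \sim C$ is prime. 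Then \autoref{p hypersurf}(1) identifies $X$ with a degree-$6$ hypersurface in $\P(1,1,2,3)$, and it remains to derive a contradiction with the existence of the base-point-free conic pencil $|F_1|$; I expect this to follow from the anticanonical description in \autoref{p va deg3}(1), where $|-K_X|$ acquires a base point incompatible with a free pencil $F_1$ satisfying $F_1^2 = 0$ and $-K_X\cdot F_1 = 2$. This is the step I anticipate will require the most care.

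Finally I would read off the two remaining cases. If $d = 4$, then $K_X^2 = 2$, and the argument of the previous paragraph (no vertical components, horizontal components meeting $F_1$ in degree $\geq 2$, and $D \cdot F_1 = 2$) applies again to show that every effective $D \sim -K_X$ is prime, which is conclusion (1). If $d = 2$, then $K_X^2 = 4$ and $\pi\colon X \to \P^1_k \times_k \P^1_k$ is a finite morphism of degree $2$, i.e. a double cover; by construction $\mathrm{pr}_i \circ \pi = f_i$ is the contraction of an extremal ray of $\NE(X)$ for each $i \in \{1,2\}$, which is precisely conclusion (2).
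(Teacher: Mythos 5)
Your treatment of $d=1,2,4$ is essentially sound, and your exclusion of $d=1$ (a finite birational morphism onto the normal surface $\P^1_k\times_k\P^1_k$ is an isomorphism, contradicting non-normality) is in fact more self-contained than the paper, which disposes of $K_X^2=8$, $K_X^2=1$, and $p=3$ all at once by quoting the classification list of \cite[Theorem 4.6]{Tan19}. But the case $d=8$, i.e.\ $K_X^2=1$, is exactly where your proposal stops being a proof: you show $-K_X\sim C$ is prime, invoke \autoref{p hypersurf}(1) to write $X\simeq H_6\subset\P(1,1,2,3)$, and then only say you \emph{expect} a contradiction between the base point of $|-K_X|$ (\autoref{p va deg3}(1)) and the free pencil $|F_1|$ with $F_1^2=0$, $-K_X\cdot F_1=2$. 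No contradiction exists at that level of generality: on a smooth del Pezzo surface of degree $1$ over an algebraically closed field, $|-K_X|$ has a base point and the surface nevertheless carries base-point-free conic pencils (e.g.\ $H-E_1$ on the blow-up of $\P^2$ at eight general points satisfies $F^2=0$ and $-K\cdot F=2$), so the mere coexistence of the base point and a free conic pencil can never be contradictory. Any successful exclusion must use $\rho(X)=2$, anisotropy, or geometric non-normality in an essential structural way; the paper's input is precisely of this kind: by \cite[Theorem 4.6]{Tan19}, a geometrically non-normal $X$ with $K_X^2=1$ has $(X_{\overline k})^N\simeq\P^2_{\overline k}$, and over a separably closed field $\rho(X)=\rho((X_{\overline k})^N)=1$ (cf.\ \autoref{p rho 1} and \cite{Tan18b}), contradicting $\rho(X)=2$. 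Without this, or a genuine replacement for it, your argument has a gap at its central case.

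A secondary point: your claim that geometric non-normality forces the generic fibre of $f_1$ to be non-smooth is false for a fixed labelling. When $K_X^2=4$, exactly one of the two fibrations is generically smooth and the other is not (see the Remark following \autoref{p always wild}). What is true, and what you actually need, is that at least one of $f_1,f_2$ is not generically smooth: if both were, then $X_{\overline k}$ would be regular away from finitely many points, hence $R_1$ and $S_2$, hence normal. After relabelling so that $f_1$ is the wild fibration, your derivation of $p=2$ and the anisotropy arguments in the cases $d=4,8$ go through. (Relatedly, ``a component contained in a fibre of $f_1$ must be a whole fibre'' tacitly uses that fibres are reduced as divisors, not merely irreducible; this is harmless, since a multiple fibre $F_1=2C$ would give $\deg\omega_C=(K_X+C)\cdot C=-1$, which is odd and hence impossible, but it deserves a word --- the paper's own Step 3 makes the same implicit reduction.)
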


% \FB{If both extremal rays are conic bundles, then you can already conclude that $H^1(X, \MO_X)=0$ by \cite{BM23}, or just because the generic fibres are $P1s$ and thus also the bases must be.}{\cyan Geom normality is stronger than $H^1(\MO)=0$.}{\FB{I agree but why can you say at the beginning of the proof below that  $H^1(B, \MO_B)=0$? You can use the second fibration for this, I agree it is ok.}}
% {\cyan I see. I am assuming $H^1(\MO_X)=0$, because [BM] treats the case $H^1(\MO_X) \neq 0$. Why the second fibration is useful? 
% } \FB{ahahah, I agree now of course, sorry for the confusion, feel free to erase!}

\begin{proof}
Let $\pi_1 : X \to B_1$ and $\pi_2 : X \to B_2$ be 
the contractions of the two extremal rays of $\NE(X)$.

\setcounter{step}{0}

\begin{step}\label{s1 2 conic bdls}
It holds that $B_i = \P^1_k$ for each $i \in \{1, 2\}$. 
\end{step}

%(of Step 1). 

\begin{s1proof}
Fix $i \in \{1, 2\}$. 
    Since $X$ is geometrically integral, so is $B_i$. 
By the Leray spectral sequence, we have an injection $H^1(B_i, \MO_{B_i}) \hookrightarrow H^1(X, \MO_X)=0$, 
so we deduce $H^1(B_i, \MO_{B_i})=0$. 
As %we work over a separably closed field 
$k$ is separably closed and 
$B_i$ is geometrically integral, 
$B_i$ has a $k$-rational point, which implies $B_i \simeq \P^1_k$. 
 This completes the proof of Step \ref{s1 2 conic bdls}. 
\end{s1proof}

\medskip

For $i \in \{1, 2\}$, we fix a $k$-rational point $Q_i \in B_i = \P^1_k$ 
and set $F_i := \pi_i^*Q_i$ to be its fibre. 
% We have the induced morphism 
% \[
% \pi := \pi_1 \times_k \pi_2 : X \to \P^1_k \times_k \P^1_k. 
% \]

\begin{step}\label{s2 2 conic bdls}
For $a := F_1 \cdot F_2 \in \Z_{>0}$, the following hold. 
\begin{enumerate}
\item $-aK_X \sim 2F_1 + 2F_2$
\item $a K_X^2 = 8$. 
\item $p=2$ and $K_X^2 \in \{2, 4\}$. 
\end{enumerate}
\end{step}

\begin{s2proof}%[Proof of Step \ref{s2 2 conic bdls}]
Let us show (1). 
By $\NE(X) = \R_{\geq 0}[F_1] + \R_{\geq 0}[F_2]$, 
we can write 
\[
-aK_X \equiv b F_1 + c F_2
\]
for some $b, c \in \R_{\geq 0}$. 
Taking the intersection with $F_1$, we obtain 
\[
2a = -aK_X \cdot F_1 = (b F_1 + c F_2) \cdot F_1 = c F_2 \cdot F_1 = ac. 
\]
Hence $c=2$. By symmetry, we get $b=2$. 
Thus $-aK_X \equiv 2F_1 +2F_2$. 
Then (1) holds by \autoref{lem: Kod_vanishing}. 

The assertion (2) follows from 
\[
a^2 K_X^2 = (-aK_X)^2 \overset{{\rm (1)}}{=} (2F_1 + 2F_2)^2 =8F_1 \cdot F_2 = 8a. 
\]
By (2), we get $K_X^2 \in \{1,2,4,8\}$. 
Then the assertion (3) holds by the list of \cite[Theorem 4.6]{Tan19}.  
{Here the case $K_X^2 =1$ (resp. $K_X^2 =8$) is excluded 
by $\rho(X)=2$ (resp. the assumption that $X$ has two Mori fibre space structures to curves).}  
 This completes the proof of Step \ref{s2 2 conic bdls}. 
%The assertion (2) follows from 
\end{s2proof}

\begin{step}\label{s3 2 conic bdls}
If $K_X^2=2$ and $D$ is an effective Cartier divisor satisfying $-K_X \sim D$, then 
$D$ is a prime divisor. 
\end{step}

\begin{s3proof}%[Proof of Step \ref{s3 2 conic bdls}]
There are three types of curves $\Gamma$ on $X$: 
\begin{enumerate}
    \item[(i)] $\pi_1(\Gamma)$ is a point. 
    \item[(ii)] $\pi_2(\Gamma)$ is a point. 
    \item[(iii)] $\Gamma$ satisfies none of (i) nor (ii), i.e., 
    $\Gamma$ is ample. %   None of (i) nor (ii) (this is equivalent to being ample). 
\end{enumerate}

Suppose that $D$ contains no curve of type (iii). 
Then we get $D \sim b_1 F_1 + b_2 F_2$ for some $b_i \in \Z_{>0}$, which leads to the following contradiction: 
%This is a contradiction, because 
\[
2 = K_X^2 = D^2  = (b_1 F_1 + b_2 F_2)^2 =2b_1b_2 F_1 \cdot F_2 = 8b_1b_2 \in 8 \Z.  
\]
Therefore, $D$ contains an ample prime divisor $C$. 

We can write $D = C+D'$ for some effective divisor $D'$. 
It suffices to show $D'=0$. 
Since the extremal rays of $\NE(X)$ are spanned by nef curves,  %\st{Since} 
every effective divisor on $X$ is nef, and thus we get  
\[
2 =K_X^2 = D^2 = (C+D')^2 = C^2 +2 C \cdot D' + D'^2 \geq 
C^2 +2 C \cdot D'. 
\]
If $D' \neq 0$, then we would get the following contradiction: 
\[
2 \geq C^2 +2C \cdot D' \geq 1 +2 \cdot 1 =3. 
\]
Therefore, $D$ is a prime divisor. 
 This completes the proof of Step \ref{s3 2 conic bdls}. 
\end{s3proof}

\begin{step}\label{s4 2 conic bdls}
If $K_X^2=4$, then  there exists a double cover 
\[
\pi : X \to \P^1_k \times_k \P^1_k
\]
such that, for each $i \in \{1, 2\}$, 
the composite morphism $\pi_i : X \xrightarrow{\pi} \P^1_k \times_k \P^1_k \xrightarrow{{\rm pr}_i} \P^1_k$ is a contraction of an extremal ray of $\NE(X)$. 
Moreover, $-K_X \sim F_1 + F_2$. 
\end{step}

\begin{s4proof}%[Proof of Step \ref{s4 2 conic bdls}]
Assume $K_X^2 = 4$. 
In this case, $a = {F_1} \cdot {F_2} = 2$ and 
$-K_X \sim {F_1} + {F_2}$ (\autoref{lem: Kod_vanishing}). 
Let 
\[
\pi := \pi_1 \times \pi_2 : X \to \P^1_k \times_k \P^1_k
\]
be the induced morphism. 
Since the pullback $\pi^*\MO_{\P^1 \times \P^1}(1, 1) = 
{F_1} + {F_2} = -K_X$ is ample, 
$\pi : X \to \P^1_k \times_k \P^1_k$ is a finite surjective morphism. 
We have that 
\[
4 = (-K_X)^2 = (\pi^*\MO_{\P^1 \times \P^1}(1, 1))^2 
=
(\deg \pi)(\MO_{\P^1 \times \P^1}(1, 1))^2  = 2 \deg \pi. 
\]
Hence $\pi$ is a double cover. 
The remaining assertion follows from the construction. 
 This completes the proof of Step \ref{s4 2 conic bdls}. 
\end{s4proof}

\medskip

Step \ref{s1 2 conic bdls}-Step \ref{s4 2 conic bdls} complete the proof of \autoref{p 2 conic bdls}. 
\end{proof}

\subsection{Imprimitive case}

We now deal with geometrically non-normal regular del Pezzo which are imprimitive.

\begin{prop}[imprimitive, $Y$ geometrically non-normal]\label{p imprim geom-non}
Assume that $k$ is separably closed. 
Let $f : X \to Y$ be a birational morphism of 
geometrically integral regular del Pezzo surfaces 
such that $\rho(X) = \rho(Y)+1$. 
Assume that $H^1(X, \MO_X)=0$ and $Y$ is not geometrically normal (and hence neither is $X$). 
Then the following hold. 
\begin{enumerate}
\item $p=2$. 
\item $\rho(X)=2$, $K_X^2 =3$, and $K_Y^2=4$.
\item The blowup centre of $f$ is a $k$-rational point. 
\item {$X$ is isomorphic to a cubic hypersurface in $\P^3_k$.} 
%There exists a prime divisor $C$ on $X$ satisfying $-K_X \sim C$. 
\end{enumerate}
\end{prop}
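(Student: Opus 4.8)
The plan is to pin down $f$ as a point blow-up, constrain the degrees through \autoref{prop: geom_normality_degree}, and finish by an ampleness obstruction special to characteristic $2$. First I would determine the shape of $f$. Over a separably closed field, a birational morphism of regular projective surfaces dropping the Picard number by one contracts a single prime divisor $E=\Exc(f)\simeq\P^1_{\kappa(P)}$ to a regular closed point $P\in Y$, with $K_X=f^*K_Y+E$. Since $\kappa(P)$ is finite over the separably closed field $k$, the extension $\kappa(P)/k$ is purely inseparable, so $d:=[\kappa(P):k]=p^{e}$; the projection formula gives $E^{2}=-d$ and hence $K_X^{2}=K_Y^{2}-d$. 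The hypotheses descend to $Y$: as the blow-down of a regular surface satisfies $R^{1}f_*\MO_X=0$, the Leray sequence yields $H^{1}(Y,\MO_Y)\hookrightarrow H^{1}(X,\MO_X)=0$; and $Y$ must be geometrically non-normal, for otherwise \autoref{prop: geom_normality_degree}(1) would make $Y_{\overline{k}}$ canonical and then $X_{\overline{k}}=\Bl_{\overline{P}}Y_{\overline{k}}$ would be normal, contradicting the assumption on $X$.

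Next I would bound the degrees. Every geometrically non-normal regular del Pezzo surface with $H^{1}(\cdot,\MO)=0$ satisfies $K^{2}\le 4$: the primitive ones are listed in \autoref{p rho 1} and \autoref{p 2 conic bdls}, while an imprimitive one is a blow-up, hence of strictly smaller degree, whose contraction is again of this type by the argument just made. Combined with \autoref{prop: geom_normality_degree}(2) this forces $p=2$ (in characteristic $3$ one would need $K_Y^{2},K_X^{2}\in\{1,3\}$ differing by a power of $3$, which is impossible). Thus $K_X^{2},K_Y^{2}\in\{1,2,3,4\}$ and $d\in\{1,2,4\}$, leaving the candidates
\[
(K_Y^{2},K_X^{2},d)\in\{(2,1,1),\,(3,2,1),\,(4,3,1),\,(3,1,2),\,(4,2,2)\}.
\]

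The decisive step is to eliminate every candidate except $(4,3,1)$ by showing that $-K_X=f^*(-K_Y)-E$ cannot be ample otherwise. The mechanism is that the purely inseparable geometry of $Y$ produces, through the centre $P$, a curve $C$ of small anticanonical degree whose strict transform satisfies $-K_X\cdot\widetilde{C}\le 0$. Using the explicit normalizations of \autoref{p rho 1} and the two rulings of \autoref{p 2 conic bdls}: when $\rho(Y)=2$ the generic fibres of both rulings are geometrically non-reduced, so their reductions are anticanonical-degree-$1$ curves meeting every point and no blow-up stays Fano (this disposes of the degree-$4$ double cover, forcing $\rho(Y)=1$, and of the degree-$2$ case with $\rho=2$); for the low-degree primitive and imprimitive surfaces the inseparable anticanonical map (respectively the exceptional curve of the contraction of $Y$) supplies an anticanonical member singular at $P$; and a centre with $d=2$ is a non-reduced point of $Y_{\overline{k}}$, where ampleness fails by direct computation. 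The surviving configuration is $K_Y^{2}=4$ with $\rho(Y)=1$, i.e.\ \autoref{p rho 1}(4), where $\Pic Y=\Z[K_Y]$ by \autoref{l index 1}, blown up at a $k$-rational point; this is genuinely del Pezzo by \autoref{l rho=1}(2), and gives $d=1$, $K_X^{2}=3$ and $\rho(X)=2$, as claimed.

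The main obstacle is this last step: the characteristic-$2$ ampleness analysis, namely producing through an arbitrary centre the obstructing anticanonical-degree-$1$ curve forced by the non-normal (purely inseparable) structure of $Y$, controlling it under the blow-up, and handling the inseparable centres with $d\ge 2$. The remaining steps are formal given \autoref{prop: geom_normality_degree}, \autoref{p rho 1}, \autoref{p 2 conic bdls} and \autoref{l rho=1}.
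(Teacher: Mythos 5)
Your argument has a genuine gap, and it sits exactly where you placed the weight of the proof. The degree bound in your second step --- ``every geometrically non-normal regular del Pezzo surface with $H^1(\cdot,\MO)=0$ satisfies $K^2\le 4$'' --- is false: Nos.\ 2-5 and 2-6 of \autoref{thm: class_dP} (realised in \autoref{e 2-5} and \autoref{e 2-6}) are geometrically non-normal regular del Pezzo surfaces with $H^1=0$ and $K^2=5,6$. The flaw is in the induction you use to reduce the imprimitive case to the primitive one: you claim that the blow-down of a geometrically non-normal surface is again geometrically non-normal, because blowing up a geometrically normal surface at a closed point would stay geometrically normal. That sub-claim fails when the centre is purely inseparable of degree $\ge 2$ (over $\overline k$ one is blowing up a non-reduced subscheme), and this failure is precisely the content of \autoref{p imprim geom-nor}. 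For the same reason your list of candidate triples $(K_Y^2,K_X^2,d)$ is incomplete: without any control on $\rho(Y)$ you cannot exclude that $Y$ itself is imprimitive (e.g.\ of type 2-3, 2-5 or 2-6), which would put $K_Y^2\in\{3,5,6\}$ and $\rho(X)=3$ on the table, configurations your elimination step never addresses. (You also reversed the parenthetical in the statement: the hypothesis is that $Y$ is not geometrically normal and the assertion is that $X$ then cannot be; the implication you argue for is the false direction.)

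The second gap is that your ``decisive step'' --- eliminating every candidate except $(4,3,1)$ by producing through the centre a curve whose strict transform has $-K_X\cdot\widetilde{C}\le 0$ --- is only described as a mechanism, not carried out, and you yourself flag it as the main obstacle. The paper needs no such ampleness analysis: since $k$ is separably closed, $\rho(X)=\rho((X_{\overline k})^N)$ and $\rho(Y)=\rho((Y_{\overline k})^N)$, and the classification in \cite[Theorem 4.6]{Tan19} shows these normalisations have Picard rank at most $2$; hence $\rho(X)=2$ and $\rho(Y)=1$, which kills all the problematic configurations above at once. The same classification then gives $p=2$, $K_X^2\in\{3,4,5,6,8\}$ (the case $(X_{\overline k})^N\simeq\P^1\times\P^1$ being excluded because $(X_{\overline k})^N$ admits a nontrivial birational contraction) and $K_Y^2\in\{1,2,4\}$, so the inequality $K_X^2<K_Y^2$ forces $(K_X^2,K_Y^2)=(3,4)$ and $d=1$. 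If you wish to keep your structure, the missing ingredient is a proof that $\rho(X)\le 2$ --- which is exactly the Picard-rank half of \cite[Theorem 4.6]{Tan19} --- and once you have it, the case-by-case elimination you postponed becomes unnecessary.
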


\begin{proof}
%The assertion follows from 
Let us show (1)-(3). 
By $\rho((X_{\overline k})^N) =\rho(X)=2$ and 
$\rho((Y_{\overline k})^N) =\rho(Y)=1$, 
it follows from \cite[Theorem 4.6]{Tan19} that $p=2$, 
$K_X^2 \in \{3, 4, 5, 6, 8\}$, and $K_Y^2 \in \{1, 2, 4\}$ (note that the case $(X_{\overline k})^N \simeq \P^1 \times \P^1$ does not occur, as 
$(X_{\overline k})^N$ has a nontrivial birational contraction).  
By $K_X^2 <K_Y^2$, we get  $(K_X^2, K_Y^2) =(3, 4)$. 
Therefore, %the closed point blownup by $f$ 
$f$ is a blowup at  a $k$-rational point $Q$. 
Thus (1)-(3) holds. 

{Let us show (4). 
It is enough to find a prime divisor $C$ on $X$ satisfying $-K_X \sim C$ 
(\autoref{p hypersurf}). 
%Since $|-K_Y|$ is very ample and every member of $|-K_Y|$ is a prime divisor (\autoref{l index 1}), there exists a prime divisor $C_Y$ on $Y$ such that $-K_Y \sim C_Y$ and $C_Y$ is regular at $Q$ 
{Since $Q$ is a smooth $k$-rational point of $Y$ 
and $|-K_Y|$ is very ample 
{ (\autoref{p va deg3}, \autoref{l index 1}(b))}, 
there exists an effective Cartier divisor $C_Y$ on $Y$ 
such that $-K_Y \sim C_Y$ and $C_Y$ is smooth at $Q$. 
We see that $C_Y$ is a prime divisor by \autoref{l index 1}.}
For $E :=\Ex(f)$ and the proper transform $C := f_*^{-1}C_Y$ of $C_Y$ on $X$, 
we get $C+E = f^*C_Y \sim f^*(-K_Y) \sim -K_X+E,$ 
which implies $-K_X \sim C$. 
Thus (4) holds. %    which shows that $-K_X \sim \pi_*^{-1}C$
}
\end{proof}

\begin{prop}[imprimitive, $Y$ geometrically normal]\label{p imprim geom-nor}
%Let $k$ be a separably closed field of characteristic $p>0$.
Assume that $k$ is separably closed. 
Let $f : X \to Y$ be a birational morphism of 
geometrically integral regular del Pezzo surfaces 
such that $\rho(X) = \rho(Y)+1$. 
Assume that $H^1(X, \MO_X)=0$, 
$X$ is not geometrically normal, 
and $Y$ is geometrically normal. % (and hence neither is $X$). 
Then $p=2$, $\rho(X)=2$, and $K_X^2 \in \{ 5, 6\}$. 
Moreover, the following hold. 
\begin{enumerate}
\item If $K_X^2 =5$, then $Y \simeq \P^2_k$ and 
$f : X \to Y$ is a blowup at a point of degree $4$. 
%\FB{with $p$-degree 2?}{\cyan You might be right as this point is an intersection of conics, but I do not have a rigorous proof}.
%F_2(s^{1/2}, t^{1/2}) F_2(s^{1/4}) 
\item If $K_X^2 = 6$, then 
{$Y$ has a unique non-smooth point $P$, 
$P$ is of degree $2$,} 
$f: X \to Y$ is {the} blowup at {$P$}, 
%a point of degree $2$ 
and $Y$ is a non-smooth regular geometrically normal del Pezzo surface with $K_Y^2=8$. 
\end{enumerate} 
\end{prop}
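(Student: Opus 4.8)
The plan is to reduce $f$ to a single blow-up and then read off every numerical invariant from the classification of the geometric normalisation in \cite[Theorem 4.6]{Tan19}.

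First I would note that, since $X$ and $Y$ are regular projective surfaces with $\rho(X)=\rho(Y)+1$, the birational morphism $f$ is the blow-up of a single closed point $P\in Y$. Because $k$ is separably closed, $k(P)/k$ is purely inseparable, so $d:=[k(P):k]=p^{e}$ for some $e\geq 0$, and the adjunction computation for the blow-up of a regular point yields $K_X^2=K_Y^2-d$. Over the separable closure one has $\rho((X_{\overline k})^N)=\rho(X)$ and $\rho(Y_{\overline k})=\rho(Y)$ by \cite[Lemma 2.2, Proposition 2.4]{Tan18b}; moreover $Y$ geometrically normal gives, via \autoref{prop: geom_normality_degree}(1), that $Y_{\overline k}$ is a Gorenstein (canonical) del Pezzo surface with $K_Y^2\leq 9$.

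Next I would pin down the Picard numbers and the characteristic. Since $X$ is geometrically non-normal, \cite[Theorem 4.6]{Tan19} forces $\rho((X_{\overline k})^N)\leq 2$, while $\rho(X)=\rho(Y)+1\geq 2$; hence $\rho(X)=2$ and $\rho(Y)=1$. Returning $\rho((X_{\overline k})^N)=2$ to \cite[Theorem 4.6]{Tan19} gives $p=2$, the list $K_X^2\in\{3,4,5,6,8\}$, and the explicit geometric normalisation together with its conductor. As $X$ is imprimitive, $(X_{\overline k})^N$ carries a nontrivial birational contraction, so it is not $\P^1_{\overline k}\times_{\overline k}\P^1_{\overline k}$; thus $(X_{\overline k})^N$ is a Hirzebruch surface $\mathbb{F}_n$ with $n\geq 1$. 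Composing the normalisation with $f_{\overline k}$ then produces a birational morphism $g\colon\mathbb{F}_n\to Y_{\overline k}$ of normal surfaces dropping the Picard number by one; the only contractible curve of $\mathbb{F}_n$ is its negative section, so $g$ contracts it and $Y_{\overline k}\simeq\P(1,1,n)$. Gorensteinness of $Y_{\overline k}$ forces the singularity $\tfrac1n(1,1)$ to be Du Val, i.e.\ $n\leq 2$: either $n=1$, $Y_{\overline k}\simeq\P^2_{\overline k}$, $K_Y^2=9$, or $n=2$, $Y_{\overline k}\simeq\P(1,1,2)$ (the quadric cone), $K_Y^2=8$.

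Finally I would combine $K_Y^2\in\{8,9\}$ with $K_X^2=K_Y^2-d$ and the fact that $d$ is a power of $2$; this leaves only $K_X^2=5$ (with $d=4$, $K_Y^2=9$) and $K_X^2=6$ (with $d=2$, $K_Y^2=8$) once the borderline value $K_X^2=4$ is excluded. For $K_X^2=5$ the surface $Y$ is a smooth $k$-form of $\P^2$, hence $Y\simeq\P^2_k$ over the separably closed field $k$, and $f$ blows up a point of degree $4$; for $K_X^2=6$ the surface $Y_{\overline k}$ is singular, so $Y$ is a non-smooth regular geometrically normal del Pezzo surface of degree $8$, and $f$ blows up a point of degree $2$. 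The hard part will be this last elimination of $K_X^2=4$: both $K_X^2=4$ and $K_X^2=6$ contract to the quadric cone and are numerically compatible with $d$ a power of $2$, so separating them needs the finer conductor data of \cite[Theorem 4.6]{Tan19}---equivalently the fact, already exploited in \autoref{p 2 conic bdls}, that the geometrically non-normal regular del Pezzo surfaces of degree $4$ are primitive and therefore admit no such contraction $f$.
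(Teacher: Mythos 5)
There is a genuine gap in your final case elimination. Your stated filters are: $K_X^2\in\{3,4,5,6,8\}$ (the Tan19 list for $\rho=2$), $K_Y^2\in\{8,9\}$ (from $Y_{\overline k}\simeq\P(1,1,n)$ with $n\le 2$), and $d=K_Y^2-K_X^2$ a power of $2$. Running through these, the surviving triples $(K_X^2,K_Y^2,d)$ are $(5,9,4)$, $(6,8,2)$, $(4,8,4)$, \emph{and} $(8,9,1)$. You explain how to exclude $(4,8,4)$, but you assert that only $K_X^2\in\{5,6\}$ then remain, silently dropping $(8,9,1)$: the value $8$ is in your degree list, $n=1$ is allowed, and $d=1=2^0$ is a power of $2$, so nothing you wrote rules it out. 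This is exactly the configuration the paper handles as its case (c): if $K_X^2=8$ then $K_Y^2=9$, so $Y\simeq\P^2_k$ (a Severi--Brauer surface over a separably closed field is trivial) and $f$ is the blowup at a $k$-rational point, whence $X\simeq\mathbb{F}_1$ is smooth and in particular geometrically normal, contradicting the hypothesis. Alternatively, inside your own framework: $K_X^2=8$ forces $(X_{\overline k})^N\simeq\P_{\P^1}(\MO\oplus\MO(4))$ by \cite{Tan19}, i.e.\ $n=4$, contradicting $n\le 2$. Either one line closes the gap, but as written the enumeration is incomplete.

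Apart from this, your route is correct and genuinely different from the paper's. The paper obtains the short list $(n,K_X^2)\in\{(1,5),(2,6),(4,8)\}$ by matching the conductor of the normalisation (from \cite[Theorem 4.6]{Tan19}) with the exceptional divisor $E=\Ex(f)$, and only then excludes $n=4$ by the rational-point argument. You instead identify $Y_{\overline k}$ as the contraction of the negative section of $\mathbb{F}_n$, hence $Y_{\overline k}\simeq\P(1,1,n)$, and use Gorensteinness of $Y_{\overline k}$ (equivalently geometric canonicity, \autoref{prop: geom_normality_degree}(1)) to force $n\le 2$ up front; the purely inseparable degree arithmetic then disposes of $K_X^2=3$ automatically (since $9-3=6$ and $8-3=5$ are not powers of $2$), where the paper needs the conductor condition. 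Your exclusion of $K_X^2=4$ via primitivity (\autoref{p 2 conic bdls}, i.e.\ the normalisation would be $\P^1\times\P^1$, which your own step already forbids) is also legitimate. So the approach buys a cleaner numerical argument at the cost of the one missing case above.
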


\begin{proof}
By \cite[Theorem 4.6]{Tan19}, we have 
\[
\wt{X} := (X_{\overline k})^N = \P_{\P^1}(\MO \oplus \MO(n)) 
\]
for some   $n \in \{1,2, 4\}$. 
Since $Y$ is geometrically normal and $X$ is not, 
the divisorial part of the geometrically non-normal locus of $X$ must set-theoretically coincide with $E := \Ex(f)$. Again by \cite[Theorem 4.6]{Tan19}, one of the following holds. 
\begin{enumerate}
\item[(a)] $n=1$ and  $K_X^2=5$. 
\item[(b)] $n=2$ and $K_X^2 = 6$. 
\item[(c)] $n=4$ and $K_X^2 =8$. 
\end{enumerate}

Assume (c). 
Then $K_Y^2 =9$, i.e., $Y \simeq \P^2_k$. 
Moreover, $f: X \to Y$ must be a blowup at a $k$-rational point. 
Then $X$ is smooth, which is absurd.

Assume (a). 
We have birational morphisms 
\[
\mathbb F_1=\wt{X}=(X_{\overline k})^N \to X_{\overline k} \to 
Y_{\overline k}. 
\]
Since $Y_{\overline k}$ is normal, we get $Y_{\overline k}  \simeq \P^2_{\overline k}$. 
Since $Y$ is a Severi--Brauer surface over a separably closed field $k$, 
we get $Y = \P^2_k$.
%\st{and hence $Y =\P^2_k$.} 
The blowup centre $P$ for $f$ is a closed point of degree $4$. 
%\FB{on a Severi--Brauer surface $Y$, and hence $Y =\P^2_k$. }

Assume (b). 
We have birational morphisms 
\[
\mathbb F_2=\wt{X}=(X_{\overline k} )^N \to X_{\overline k} \to 
Y_{\overline k}. 
\]
Then $Y_{\overline k}$ is the singular quadric surface in $\P^3_{\overline k}$, and hence 
$Y$ is a non-smooth regular geometrically normal del Pezzo surface with $K_Y^2=8$. 
The blowup centre $P$ is of degree $2$, 
{and is the unique non-smooth point {of $Y$}}. 
%This is realisable in \cite[Example 6.5]{Tan19}.     
\end{proof}

% Let $X$ be a geometrically integral regular dP surfaces over a separably closed field. 
% Assume that $X$ is imprimitive, i.e., there is a birational contraction 
% $f: X \to Y$ s.t. $Y$ is again  geometrically integral regular dP surface. 
% By my list, we have $\rho(X) =2$. 

% \begin{lem}
% $Y$ is geom normal. 
% \end{lem}

% \begin{proof}
% Assume that $Y$ is non-geom normal.  
% The case $K_Y^2=1$ is impossible, because $0< K_X^2 < K_Y^2 =1$. 

% Assume $K_Y^2 =2$. 
% Then $f$ is a blowup at a $k$-rational point and $K_X^2=1$. 
% By my list, $(X_{\overline k})^N=\P^2$, which is absurd. 
% Hence $K_Y^2 \neq 2$
% \end{proof}

\subsection{Classification: tables and proof}

\begin{lem}\label{l 1-4}
%{\cyan THIS IS NEW.}  
%We work over a field $k$ of characteristic $p>0$. 
Let $X$ be a geometrically integral regular del Pezzo surface 
which is not geometrically normal. 
Assume that  $H^1(X, \MO_X)=0$, $\rho(X)=1$,  $K_X^2=4$, and $X(k) \neq \emptyset$. 
%, and $X(k) \neq \emptyset$. 
Then the following hold. 
\begin{enumerate}
\item $\Pic X = \Z K_X$. 
\item For a $k$-rational point $P$ and the blowup $\sigma : Y \to X$ at $P$, 
$Y$ is a geometrically integral regular del Pezzo surface 
which is not geometrically normal. 
\item $(X_{\overline k})^N \simeq \P^2_{\overline k}$. 
\end{enumerate}
\end{lem}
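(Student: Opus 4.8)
The plan is to prove the three assertions in order, using (1) to organise the blowup in (2), and then (2) together with the classification over $\overline{k}$ to pin down the geometric normalisation in (3). For (1), I would run the Riemann--Roch argument of \autoref{l index 1} essentially verbatim, observing that it does not actually need $k$ to be separably closed once the degree is known. Since $X$ is a regular (hence canonical) geometrically integral del Pezzo surface with $H^1(X,\MO_X)=0$, \autoref{lem: Kod_vanishing}(1) shows that $\Pic X$ is finitely generated and free and that numerical and linear equivalence coincide; as $\rho(X)=1$ this gives $\Pic X = \Z H$ for an ample generator $H$, and $-K_X \sim rH$ with $r \geq 1$. If $r \geq 2$, then $r^2 H^2 = K_X^2 = 4$ forces $(r,H^2)=(2,1)$, and Riemann--Roch for the regular projective surface $X$ yields $\chi(X,\MO_X(H)) = \chi(X,\MO_X) + \tfrac12 H\cdot(H-K_X) = 1 + \tfrac32 \notin \Z$, using $\chi(X,\MO_X)=1$ (from $h^0=1$, $h^1=0$ and $h^2 = h^0(\MO_X(K_X))=0$ as $-K_X$ is ample). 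This contradiction gives $r=1$, so $\Pic X = \Z[-K_X] = \Z K_X$.

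For (2), the point $P \in X(k)$ is a smooth $k$-point (its local ring is regular and its residue field $k$ is separable over $k$), so $\sigma : Y \to X$ is the blowup of a regular surface at a smooth point and $Y$ is again regular. Moreover $\sigma_*\MO_Y = \MO_X$ and $R^1\sigma_*\MO_Y=0$ give $H^i(Y,\MO_Y)\cong H^i(X,\MO_X)$, so $\dim_k H^0(Y,\MO_Y)=1$ and $H^1(Y,\MO_Y)=0$, while $Y_{\overline k} = \Bl_{P_{\overline k}}X_{\overline k}$ is integral, i.e.\ $Y$ is geometrically integral. Ampleness of $-K_Y$ is exactly \autoref{l rho=1}(2), whose hypotheses $K_X^2 \geq 4$ and $(\Pic X/\equiv) = \Z K_X$ hold by assumption and by (1); thus $Y$ is a regular del Pezzo surface with $K_Y^2 = 3$. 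Finally $Y$ is not geometrically normal: $P_{\overline k}$ is a smooth, hence normal, point of $X_{\overline k}$, so it avoids the nonempty codimension-one non-normal locus of the $S_2$ scheme $X_{\overline k}$, and $\sigma_{\overline k}$ is an isomorphism over an open set meeting that locus; hence $Y_{\overline k}$ is non-normal.

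For (3), I would base-change the blowup to $\overline k$ and normalise. Writing $\tilde X := (X_{\overline k})^N$ and $\tilde P$ for the point over $P_{\overline k}$ (a smooth point of $\tilde X$, over which normalisation is an isomorphism since $P_{\overline k}$ is a normal point), the universal property of blowing up identifies $\tilde Y := (Y_{\overline k})^N$ with $\Bl_{\tilde P}\tilde X$. Now I apply the classification over $k^{\sep}$: by \autoref{p rho 1} (equivalently \cite[Theorem 4.6]{Tan19}) we have $\tilde X \in \{\P^2_{\overline k},\, \P(1,1,4)\}$, while applying \autoref{p imprim geom-non} to $\sigma_{k^{\sep}} : Y_{k^{\sep}} \to X_{k^{\sep}}$ (legitimate by (2), since blowing up a single rational point raises $\rho$ by exactly one) shows that $\tilde Y$ is one of the \emph{smooth} Hirzebruch surfaces appearing there. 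Since blowing up a smooth point of $\P(1,1,4)$ leaves its $\tfrac14(1,1)$ vertex untouched, $\Bl_{\tilde P}\P(1,1,4)$ is singular; as $\tilde Y$ is smooth this excludes $\tilde X \simeq \P(1,1,4)$, leaving $\tilde X = (X_{\overline k})^N \simeq \P^2_{\overline k}$.

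The main obstacle will be the normalisation bookkeeping in (3): verifying cleanly that $(Y_{\overline k})^N \simeq \Bl_{\tilde P}(X_{\overline k})^N$, i.e.\ that normalisation commutes with blowing up the smooth point $P_{\overline k}$, and extracting from the cited classification the \emph{smoothness} of $(Y_{\overline k})^N$, which is precisely what rules out the weighted-projective case. The field-of-definition passages (from $k$ to $k^{\sep}$ and then to $\overline k$) are routine but must be tracked, since the classification inputs are phrased over separably closed fields whereas the statement is over an arbitrary $k$.
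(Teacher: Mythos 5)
Your proposal is correct and follows essentially the same route as the paper's proof: (1) by rerunning the Riemann--Roch argument of \autoref{l index 1}, (2) by combining (1) with \autoref{l rho=1}, and (3) by playing the classification of \cite[Theorem 4.6]{Tan19} for $(Y_{\overline k})^N \simeq \F_1$ against that for $(X_{\overline k})^N$ through the birational morphism induced by $\sigma$. The only cosmetic difference is in (3): the paper merely observes that the induced map $(Y_{\overline k})^N \to (X_{\overline k})^N$ is a birational contraction, so that $\F_1$ can only contract onto $\P^2$, whereas you make this map explicit as the blowup of a smooth point and exclude $\P(1,1,4)$ via the persistence of its singular vertex (together with smoothness of $\F_1$) --- the same mechanism, executed with a bit more bookkeeping; note only that the smoothness of $(Y_{\overline k})^N$ should be cited from \cite[Theorem 4.6]{Tan19} rather than from the statement of \autoref{p imprim geom-non}, which does not record it.
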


\begin{proof}
{The assertion (1) follows from the same argument as in \autoref{l index 1}(a).} 
The assertion (2) follows from (1) and \autoref{l rho=1}. 

Let us show (3). Since $\sigma : Y \to X$ is a birational morphism, 
so is the induced morphism 
\begin{equation}\label{e1 l 1-4}
(\sigma \times_k \overline k)^N : (Y  \times_k \overline k)^N \to (X  \times_k \overline k)^N. 
\end{equation}
By $K_X^2 =4$ and $K_Y^2 =3$, it follows from 
\cite[Theorem 4.6]{Tan19}  that 
\begin{itemize}
\item $p=2$, 
\item $(Y_{\overline k})^N \simeq \P_{\P^1}(\MO \oplus \MO(1))$ and 
\item $(X_{\overline k})^N \simeq \P^2$ 
or $(X_{\overline k})^N \simeq \P(1, 1, 4)$. 
\end{itemize}
Since (\ref{e1 l 1-4}) is a birational contraction, we get 
$(X_{\overline k})^N \simeq \P^2$. 
Thus (3) holds. 
\end{proof}

We conclude the classification of geometrically integral regular del Pezzo surfaces which are  geometrically non-normal. 
%over arbitrary fields.

\begin{thm}\label{thm: class_dP}
%We work over a field $k$ of characteristic $p>0$. 
Let $X$ be a geometrically integral regular del Pezzo surface. 
Assume that $X$ is not geometrically normal. 
Then $p \in \{2, 3\}$ and the following hold. 
\begin{enumerate}
\item If $p=3$, then %one 
{$X$ belongs to} \autoref{table p=3}. 
\begin{table}%[H]
\caption{$p=3$}\label{table p=3}
     \centering
{\renewcommand{\arraystretch}{1.35}%
      \begin{tabular}{|c|c|c|c|c|c|}
      \hline
{\rm No.}  & $\rho(X)$ & $K_X^2$& $h^1(\MO_X)$  & {\rm Properties} & {\rm Existence}\\      \hline
$1$-$1$  & $1$ & $1$ & $0$ & $X \simeq H_6 \subset \P(1, 1, 2, 3)$ & {\rm \autoref{e 1-1}} \\              
  & &  &  & 
{\rm and} $(X_{\overline k})^N \simeq \P^2_{\overline k}$ & \\            \hline
$1$-$3$  & $1$ & $3$ & $0$ & $X \simeq H_3 \subset \P^3_k$ &
{\rm \autoref{e 1-3}}\\                 
  & &  &  & 
{\rm and} $(X_{\overline k})^N \simeq \P(1, 1, 3)$ & \\         \hline
      \end{tabular}}
    \end{table}
\item If $p=2$, then 
{$X$ belongs to}  \autoref{table p=2}. 
\begin{table}%[H]
\caption{$p=2$}\label{table p=2}
     \centering
{\renewcommand{\arraystretch}{1.35}%
      \begin{tabular}{|c|c|c|c|c|c|}
      \hline
{\rm No.}  & $\rho(X)$ & $K_X^2$ & $h^1(\MO_X)$ & {\rm Properties} & {\rm Existence} \\      \hline
$1$-$1$  & $1$ & $1$ & $0$& $X \simeq H_6 \subset \P(1, 1, 2, 3)$ {\rm and} & {\rm \autoref{e 1-1}}\\        
  & &  &  & 
 $(X_{\overline k})^N \simeq \P^2$ & \\          \hline
$1$-$2$  & $1$ & $2$& $0$ & $X \simeq H_4 \subset \P(1, 1, 1, 2)$  {\rm and either} & {\rm \autoref{e 1-2 112}}\\         
  & &  &  & 
 $(X_{\overline k})^N \simeq \P(1, 1, 2)$ {\rm or} 
 $(X_{\overline k})^N \simeq \P^1 \times \P^1$  & {\rm \autoref{e 1-2}}\\                  \hline
$1$-$4$  & $1$ & $4$ & $0$& $X \simeq H_{2, 2} \subset \P^4_k$  {\rm and}& {\rm \autoref{e 1-4}}\\           
  & &  &  & 
 $(X_{\overline k})^N\simeq \P^2$ 
% {\rm or}  $ (X_{\overline k})^N\simeq \P^1 \times \P^1$ 
 & \\                \hline
$1$-$1$-{\rm i}  & $1$ & $1$ & $1$& $(X_{\overline k})^N \simeq \P^2$  & {\rm \autoref{e 1-1-i}} \\                     \hline
$1$-$2$-{\rm i}  & $1$ & $2$ & $1$& 
%$\exists$ inseparable extension $k_1/k$ of degree $2$ s.t. 
%$X \simeq H_6 \subset \P(1, 1, 2, 3)$ 
$(X_{\overline k})^N \simeq \P(1, 1, 2)$ 
& {\rm Unknown} \\          \hline 
$2$-$2$  & $2$ & $2$ & $0$& $X \simeq H_4 \subset \P(1, 1, 1, 2)$ {\rm and}  & {\rm \autoref{e 2-2}}\\        
%  & &  &  & 
% {\rm $X$ is a blowup of $Y_{{\rm 1-4}}$ at a $k$-rational point,} & \\       
 & &  &  & $(X_{\overline k})^N \simeq 
 \P^1 \times \P^1$ & \\       
& &  &  & 
 {\rm Types of extremal rays: $C+C$} & \\                     \hline 
$2$-$3$  & $2$ & $3$ & $0$& $X \simeq H_3 \subset \P^3_k$,  & {\rm \autoref{e 2-3}}\\        
  & &  &  & 
 {\rm $X$ is a blowup of $Y_{{\rm 1-4}}$ at a $k$-rational point,} & \\       
  & &  &  & 
{\rm and} $(X_{\overline k})^N \simeq \P_{\P^1}(\MO \oplus \MO(1))$ & \\       
& &  &  & 
 {\rm Types of extremal rays: $B+C$} & \\                     \hline 
%   & &  &  & 
% {\rm at a $k$-rational point} & Example\\           \hline
$2$-$4$ & $2$ & $4$& $0$ & $X \simeq H_{2, 2} \subset \P^4_k$, 
 & {\rm \autoref{e 2-4}}\\   & &  &  & 
 {\rm $X$ is a double cover of $\P^1_{\SB, 1} \times \P^1_{\SB, 2}$}, & \\       
  & &  &  & 
{\rm and}  $(X_{\overline k})^N \simeq \P^1 \times \P^1$ & \\
& &  &  & 
 {\rm Types of extremal rays: $C+C$} & \\                               \hline   
$2$-$5$  & $2$ & $5$ & $0$& {\rm $X$ is a blowup of {$\P^2_k$} } & {\rm \autoref{e 2-5}}\\               
  & &  &  & 
{\rm at a purely inseparable point of degree $4$} & \\                          
  & &  &  & 
{\rm and} $(X_{\overline k})^N \simeq \P_{\P^1}(\MO \oplus \MO(1))$ & \\              
& &  &  & 
 {\rm Types of extremal rays: $B+C$} & \\                          \hline
$2$-$6$  & $2$ & $6$& $0$ & {\rm $X$ is a blowup of $Y_{{\rm 1-8}}$} & {\rm \autoref{e 2-6}}\\                  
  & &  &  & 
{\rm at a purely inseparable point of degree $2$} & \\             
  & &  &  & 
{\rm and} $(X_{\overline k})^N \simeq \P_{\P^1}(\MO \oplus \MO(2))$ & \\              
& &  &  & 
 {\rm Types of extremal rays: $B+C$} & \\                         \hline
      \end{tabular}}
    \end{table}
%\end{enumerate}
%\end{enumerate*}
\end{enumerate}
In \autoref{table p=3} and \autoref{table p=2}, we use the notation listed in 
{\rm (i)}-{\rm (vi)}. 
\begin{enumerate}
\renewcommand{\labelenumi}{(\roman{enumi})}
\item 
The column {\rm No.} gives the numbering. 
When $H^1(X, \MO_X)=0$ (resp. $H^1(X, \MO_X) \neq 0$), 
the numbering  ${\rm a}$-${\rm b}$  (resp. ${\rm a}$-${\rm b}$-${\rm i}$) is given by $a :=\rho(X)$ and $b:=K_X^2$ 
({\lq\,${\rm i}$\rq}\, stands for irregular). 
\item 
$H_d$ denotes a weighted hypersurface of %\FB{weighted} 
degree $d$. 
$H_{2, 2} \subset \P^4_k$ is a complete intersection of two quadric hypersurfaces. 
% \item 
% Each $B_i$ is a Severi-Brauer curve, i.e., $B_i \times_k \overline k \simeq \P^1_{\overline k}$. 
\item 
$Y_{{\rm 1-4}}$ is a geometrically integral geometrically non-normal regular del Pezzo surface with $\rho(Y_{{\rm 1-4}}) =1$ and $K^2_{Y_{{\rm 1-4}}}=4$. 
\item 
$Y_{{\rm 1-8}}$ is a {non-smooth} geometrically normal regular del Pezzo surface with $\rho(Y_{{\rm 1-8}}) =1$ and $K^2_{Y_{{\rm 1-8}}}=8$. 
\item 
$\P^1_{\SB, 1}$ and $\P^1_{\SB, 2}$ are 
one-dimensional Severi-Brauer varieties over $k$, i.e., 
each $\P^1_{\SB, i}$ is a smooth conic in $\P^2_k$. 
% {\cyan should be killed?? $\to$}$\P^2_{\SB}$ is a $2$-dimensional Severi-Brauer variety over $k$, i.e., 
% %a projective $k$-scheme satisfying 
% $\P^2_{\SB} \times_k \overline k \simeq \P^2_{\overline k}$. 
% $\P^1_{\SB, 1}$ and $\P^1_{\SB, 2}$ are 
% one-dimensional Severi-Brauer varieties over $k$. 
\item When $\rho(X)=2$, an extremal ray $R$ is said to be of type $B$ (resp. $C$) if 
$\dim Z = 2$ (resp. $\dim Z =1$) for the contraction $X \to Z$ of $R$ 
({\lq\,$B$\rq}\, stands for birational and {\lq\,$C$\rq}\, stands for conic bundles). 
For example, if $\rho(X) =2$ and {$K_X^2 \not\in \{2, 4\}$}, 
%$K_X^2 \neq 4$, 
then there is an extremal ray of type $B$ and the other extremal ray is of type $C$. 
\end{enumerate}
\end{thm}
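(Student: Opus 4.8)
The plan is to fix the discrete invariants over the separable closure and then read off the concrete models, descending the remaining data to $k$ at the end. To begin, \autoref{prop: geom_normality_degree} gives $p\in\{2,3\}$ and the a priori ranges of $K_X^2$. The quantities $K_X^2$ and $h^1(X,\MO_X)$ are cohomological and stable under the flat base change $k\subseteq k^{\sep}$, while $(X_{\overline k})^N$ depends only on $X_{\overline k}=(X_{k^{\sep}})_{\overline{k^{\sep}}}$ and geometric non-normality is preserved; so I would pass to $X_{k^{\sep}}$ to determine $K_X^2$, $h^1$ and $(X_{\overline k})^N$, remembering only that $\rho(X)\le\rho(X_{k^{\sep}})$. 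Note moreover that $X_{k^{\sep}}$ is geometrically reduced, hence smooth on a dense open subset, and therefore carries a $k^{\sep}$-rational point.

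I would then split on $H^1(X,\MO_X)$. If $H^1(X,\MO_X)\ne 0$, then \cite[Proposition 4.11]{BM23} places $X$ in the irregular families $1$-$1$-$\mathrm{i}$ and $1$-$2$-$\mathrm{i}$, which force $p=2$ and $\rho(X)=1$. If $H^1(X,\MO_X)=0$, I apply over $k^{\sep}$ the primitive/imprimitive dichotomy. When $X_{k^{\sep}}$ is primitive, either $\rho=1$, and \autoref{p rho 1} together with \autoref{l index 1} yields $K_X^2\in\{1,2,3,4\}$ with the stated normalisation — here the $k^{\sep}$-point lets me invoke \autoref{l 1-4}(3) to discard the spurious case $(X_{\overline k})^N\simeq\P(1,1,4)$ in degree $4$ — or $\rho=2$ with two conic-bundle rays, and \autoref{p 2 conic bdls} gives $K_X^2\in\{2,4\}$, a prime anticanonical member when $K_X^2=2$, and the double cover of $\P^1\times\P^1$ when $K_X^2=4$. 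When $X_{k^{\sep}}$ is imprimitive, a birational contraction exists and \autoref{p imprim geom-non}, \autoref{p imprim geom-nor} give $K_X^2=3$ (blow-down of a $k$-point to $Y_{{\rm 1-4}}$), or $K_X^2\in\{5,6\}$ (blow-down of a purely inseparable point to $\P^2_k$, resp.\ to the non-smooth degree-$8$ surface $Y_{{\rm 1-8}}$). In every branch the value $K_X^2=8$ permitted by \autoref{prop: geom_normality_degree} is excluded (e.g.\ the case $n=4$ of \autoref{p imprim geom-nor} would force $X$ smooth). In particular, since the $\rho=2$ and imprimitive branches all force $p=2$, for $p=3$ only the primitive $\rho=1$ branch survives, giving families $1$-$1$ and $1$-$3$.

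Still assuming $H^1=0$, with the numerics fixed I would produce the explicit equations for $K_X^2\le 4$. Since a regular del Pezzo surface is in particular a geometrically integral canonical del Pezzo surface, it suffices to exhibit a prime $C\sim -K_X$ defined over $k$ and then quote \autoref{p hypersurf}: I take any $k$-member of $|-K_X|$ (nonempty by \autoref{lem: Kod_vanishing}) and verify that its base change is prime, using \autoref{l index 1}(2) in the $\rho=1$ families, the primality in \autoref{p 2 conic bdls}(1) when $K_X^2=2$, the strict transform of a prime anticanonical member of $Y_{{\rm 1-4}}$ passing through the blow-up centre when $K_X^2=3$, and a general member of $|-K_X|$ for the double cover when $K_X^2=4$. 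This identifies $X$ with $H_6\subset\P(1,1,2,3)$, $H_4\subset\P(1,1,1,2)$, $H_3\subset\P^3_k$ or $H_{2,2}\subset\P^4_k$ according to $K_X^2=1,2,3,4$; these models descend automatically, being read off the $k$-algebra $R(X,-K_X)$.

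Finally I descend the Picard number. Because $\Pic(X)$ is finitely generated and free (\autoref{lem: Kod_vanishing}(1)) and $H^1=0$, Galois descent gives $\Pic(X)=\Pic(X_{k^{\sep}})^{\Gal(k^{\sep}/k)}$, so $\rho(X)=\rho(X_{k^{\sep}})$ unless $\Gal(k^{\sep}/k)$ interchanges the two conic-bundle classes of a primitive $\rho=2$ surface, in which case $\rho(X)=1$. Since the Galois action preserves the type of an extremal ray, the mixed $B+C$ configurations (the imprimitive families $2$-$3$, $2$-$5$, $2$-$6$, whose purely inseparable blow-up centres remain single points over $k^{\sep}$) never drop, whereas the $C+C$ configurations of degrees $2$ and $4$ descend either to families $2$-$2$, $2$-$4$ or, after a ray swap, to the $\P^1\times\P^1$ alternatives of families $1$-$2$, $1$-$4$. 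Recording the ray types and the Severi–Brauer factors then fills in \autoref{table p=3} and \autoref{table p=2}. I expect the main obstacle to be precisely this descent bookkeeping: tracking how $\rho(X)$ and the rationality of the auxiliary structures (double covers, blow-up centres, Severi–Brauer conics) behave under $\Gal(k^{\sep}/k)$, together with uniformly securing the prime anticanonical member required to feed \autoref{p hypersurf} in the degree-$3$ and degree-$4$ cases of Picard rank $2$.
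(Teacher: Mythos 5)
Your proposal is correct in outline and follows essentially the same architecture as the paper's proof: dispose of the irregular case via \cite{BM23}, classify over $k^{\sep}$ through the primitive/imprimitive dichotomy (\autoref{p rho 1}, \autoref{p 2 conic bdls}, \autoref{p imprim geom-non}, \autoref{p imprim geom-nor}), produce a prime anticanonical member over $k$ to feed \autoref{p hypersurf}, and then descend the remaining structure; in particular, your use of the automatic $k^{\sep}$-rational point to invoke \autoref{l 1-4}(3) and discard $\P(1,1,4)$ is exactly the role that lemma plays in the paper. Two local deviations deserve comment. First, your descent of the Picard number via ``Galois descent gives $\Pic(X)=\Pic(X_{k^{\sep}})^{\Gal(k^{\sep}/k)}$'' is false as stated over a general field: Hochschild--Serre only yields an injection $\Pic(X)\hookrightarrow\Pic(X_{k^{\sep}})^{\Gal(k^{\sep}/k)}$ whose cokernel embeds in $\Br(k)$, which is why the paper's \autoref{lem: picard-invariant} assumes $\Br(k)=0$. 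Your argument nevertheless survives, because $\Br(k)$ is torsion, so the ranks agree, and your ray-type bookkeeping (a $B$-ray cannot be exchanged with a $C$-ray, while two $C$-rays may be swapped) then gives the correct values of $\rho(X)$; the paper reaches the same point differently, by descending the unique negative curve in the $B+C$ cases and matching contractions after base change (its condition $(\beta)$). Second, your strict-transform construction of the prime anticanonical member in degree $3$ presupposes that the contraction $X\to Y_{{\rm 1-4}}$ with $k$-rational centre has already been established over $k$, so it must come after, not before, your descent step, and it needs the further observations that every anticanonical member of $Y_{{\rm 1-4}}$ is prime (\autoref{l index 1} over $k^{\sep}$, then descent of primality) and that very ampleness of $|-K_{Y_{{\rm 1-4}}}|$ produces a member smooth at the centre. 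The paper sidesteps this circularity (and handles your ``general member'' in degree $4$ uniformly) by descending very ampleness of $|-K_X|$ from $k^{\sep}$ and quoting Seidenberg's Bertini theorem \cite{Sei50}, which requires only the separably closed case already settled. With these two repairs --- rank-only Picard descent, and either reordering or replacing the degree-$3$ construction by the Bertini argument --- your plan amounts to a complete proof.
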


\begin{rem}
\begin{enumerate}
\item[(a)] 
%Since each $\P^1_{\SB, i}$ is a smooth conic on $\P^2_k$, 
{If $k$ is a $C_1$-field or separably closed, 
then we get $\P^1_{\SB, i} \simeq \P^1_k$, as $\P^1_{\SB, i}$ is 
a smooth conic on $\P^2_k$.}
% {\cyan should be killed?? $\to$}
% Let $\P^m_{\SB}$ be  an $m$-dimensional Severi-Brauer variety over $k$. 
% Recall that $\P^m_{\SB} \simeq \P^m_k$ if and only if $\P^m_{\SB}(k) \neq \emptyset$. 
% In particular, if $k$ is a $C_1$-field or separably closed, 
% then $\P^m_{\SB} \simeq \P^m_k$. 
\item[(b)] 
If $k$ is separably closed, then 
$\rho(X) = \rho((X_{\overline k})^N)$, 
and hence we automatically have 
$(X_{\overline k})^N \simeq \P(1, 1, 2)$ 
%(resp. $(X_{\overline k})^N \simeq \P^2_{\overline k}$) 
for No. 1-2. %(resp. No. 1-4). 
\end{enumerate}   
\end{rem}

% \begin{itembox}{Improve?}
% {\cyan 
% 2-5: $\P^2_{SB}$ has been replced by $\P^2_k$ by \cite[Proposition 4.31]{BFSZ}. 
% I erased a Remark on SB var $\P^m_{\SB}$ of higher dimension. 
% }
% \end{itembox}

\begin{proof}[Proof of \autoref{thm: class_dP}]
%In what follows, we fir
If $H^1(X, \MO_X) \neq 0$, then 
{$X$ belongs to one of the cases 1-1-i and 1-2-i}  
%1-1-i or 1-2-i holds 
by \cite[Theorem 4.6]{Tan19} and \cite[Theorem 1.1]{BM23}. 
In what follows, we assume $H^1(X, \MO_X)=0$. 
The types of the extremal rays are determined by \cite[Proposition 4.32 and Proposition 4.35]{BFSZ}.

\setcounter{step}{0}

\begin{step}\label{s1 class_dP}
The assertion of Theorem \ref{thm: class_dP} holds if $k$ is separably closed. 
\end{step}

\begin{s1proof}
{\em Primitive case.} 
Assume that $X$ is primitive. 
Then 
either 
%$\rho(X)=1$ or $\rho(X)=2$
\begin{enumerate}
    \item[(I)] $\rho(X)=1$, or % (\autoref{p rho 1}),
    \item[(II)] $\rho(X)=2$ and both extremal rays induce  conic bundle structures.  %(\autoref{p 2 conic bdls}). 
%    conic bdls. 
\end{enumerate}

%(I) 
Assume $\rho(X)=1$. 
Then 
$K_X^2$ and $(X_{\overline k})^N$ satisfy {the conditions of one of the cases  1-1, 1-2, 1-3, 1-4}
%one of {\cyan the conditions} 1-1, 1-2, 1-3, 1-4 
by \autoref{p rho 1} {and \autoref{l 1-4}}. 
% \textcolor{red}{Then the value of
% $K_X^2$ and the isomorphism class of $(X_{\overline k})^N$ belongs to 
% one of the cases 1-1, 1-2, 1-3, 1-4 by \autoref{p rho 1}. We are left to show the existence of a projective model of $X$.}
Since there is a prime divisor $C$ on $X$ with $-K_X \sim C$ (\autoref{l index 1}), 
we get the descriptions in the column \lq\lq Properties" by 
%\autoref{thm: class_dP}. 
\autoref{p hypersurf}. 
%and \autoref{l 1-4}
%{\cyan why do we need to cite  \autoref{l 1-4} here?} \FB{Commented out. it seems not needed}

%(II) 
Assume $\rho(X)=2$. 
Since $X$ is primitive, both extremal rays induce conic bundle structures. 
{By \autoref{p 2 conic bdls}, we get $p=2$ {and $K_X^2 \in \{2, 4\}$.} 
If $K_X^2 =2$ (No. 2-2), then $X \simeq H_4 \subset \P(1, 1, 1, 2)$ 
holds by \autoref{p hypersurf} and \autoref{p 2 conic bdls}(1). 
If $K_X^2 =4$ (No. 2-4), then we have the double cover $\pi : X \to \P^1_k\times \P^1_k$ (\autoref{p 2 conic bdls}(2)).} 
Since $|-K_X|$ is  base point free (\autoref{p 2 conic bdls}), 
% very ample{\green why??? One solution is to use the generic member, which assures the existence of a prime divisor.}, a general member $C \in |-K_X|$ is a prime divisor, and hence 
we get $X \simeq H_{2, 2} \subset \P^4_k$ (\autoref{p hypersurf}). 

\medskip
{\em Imprimitive case.} 
Assume that $X$ is imprimitive. 
Let $\sigma : X \to Y$ be a birational morphism to a geometrically integral  regular del Pezzo surface $Y$. 
If $Y$ is geometrically non-normal (resp. geometrically normal), 
then the assertion follows from \autoref{p imprim geom-non} (resp. \autoref{p imprim geom-nor}). 
This completes the proof of Step \ref{s1 class_dP}. 
\end{s1proof}

\begin{step}\label{s2 class_dP}
The assertion of \autoref{thm: class_dP} holds 
when $p=2$.
\end{step}

\begin{s2proof}
%In what follows, we 
Assume $p=2$. %as  otherwise the problem is simpler. 
% by Step \ref{s1 class_dP}. 
Set $X^{\sep} := X \times_k k^{\sep}$. 
Then the assertion holds for $X^{\sep}$ by Step \ref{s1 class_dP}. 
% , we have $\rho(X) \leq \rho(X^{\sep})$ and $K_X^2 =K^2_{X^{\sep}}$. 
% In particular, if $K_X^2 \not\in \{3, 4\}$, then $\rho(X) = \rho(X^{\sep})$. 
In particular, the description for  
$(X_{\overline k})^N (\simeq (X^{\sep} \times_{k^{\sep}} \overline k)^N$) holds.

Let us prove the following. 
%\begin{claim-}
\begin{enumerate}
\item[$(\alpha)$] 
It holds that $\rho(X) \leq \rho(X^{\sep})$ and $K_X^2 =K^2_{X^{\sep}}$. 
Moreover, if  $K_X^2 =1$, 
then $\rho(X) = \rho(X^{\sep}) =1$. 
\item[($\beta$)] If $\rho(X)=2$, then the types of the extremal rays of $X$ and $X^{\sep}$ coincide. 
\item[$(\gamma)$] 
%If $K_X^2 \neq 4$, then $\rho(X) = \rho(X^{\sep})$. 
%\item[$(\gamma)$] 
If $1 \leq K_X^2 \leq 4$, then 
%$|-K_X|$ is very ample and 
there exists a prime divisor $C$ satisfying $-K_X \sim C$, and hence \autoref{p hypersurf} is applicable. 
\end{enumerate}
%\end{claim-}

Let us show ($\alpha$). 
By standard argument, we have 
$\rho(X) \leq \rho(X^{\sep})$ and $K_X^2 =K^2_{X^{\sep}}$. 
When $K_X^2=1$, we get $\rho(X) \leq \rho(X^{\sep}) = 1$ (Step \ref{s1 class_dP}), which implies $\rho(X) = \rho(X^{\sep}) = 1$. 
Thus ($\alpha$) holds.  
%\FB{Is the second statement of $\alpha$ proven here?}. 
Let us show ($\beta$). 
Assume $\rho(X)=2$. 
We get $\rho(X^{\sep})=2$ by ({\cyan $\alpha$}) and \cite[Theorem 4.6]{Tan19}. 
Let $\sigma : X \to Y$ be the contraction of an extremal ray. 
By $\rho(X^{\sep}) =2$, its base change $\sigma \times_k k^{\sep}$ is the contraction of an extremal ray. 
Since each of $X$ and $X^{\sep}$ has exactly two such contractions, 
the contractions of $X$ and $X^{\sep}$ 
are corresponding via the base change $(-) \times_k k^{\sep}$. 
Thus $(\beta)$ holds. 

Let us show $(\gamma)$. 
If $K_X^2 \in \{3, 4\}$, then $|-K_X|$ is very ample, because so is its base change $|-K_{X^{\sep}}|$ (Step \ref{s1 class_dP}); we are done by a Bertini theorem \cite[Theorem 7' in page 376, cf. Theorem 7 in page 368]{Sei50}. 
Hence we may assume $K_X^2 \in \{1, 2\}$. 
Let us treat the case when $\rho(X^{\sep})=1$. 
%In this case, we have $\rho(X^{\sep})=1$ by Step \ref{s1 class_dP}. 
Take an effective divisor $D$ with $D \sim -K_X$ (\autoref{lem: Kod_vanishing}). 
%By (Step \ref{s1 class_dP}), 
Then $D \times_k k^{\sep}$ is a prime divisor by  \autoref{l index 1} (b), and hence so is $D$. 
The problem is reduced to the case when $K_X^2 \in \{1, 2\}$ and $\rho(X^{\sep})=2$. 
We have $K_X^2 =2$ (Step \ref{s1 class_dP}). 
Then, for an effective divisor $D$ with $D \sim -K_X$, 
$D \times_k k^{\sep}$ is a prime divisor (\autoref{p 2 conic bdls}), and hence so is $D$. 
This completes the proof of $(\gamma)$. 

\medskip

Assume $K_X^2  =K^2_{X^{\sep}} =1$ (resp. $K_X^2  =K^2_{X^{\sep}} =2$). 
Then the assertion follows from $(\gamma)$ (resp. $(\alpha)$-$(\gamma)$). 
%Assume $K_X^2  =K^2_{X^{\sep}} =2$. 

Assume $K_X^2  =K^2_{X^{\sep}} \in \{ 3, 5, 6\}$.
% We now treat the case when $K_X^2 \geq 5$. 
% By Step \ref{1 class_dP}, we have $K_X^2 \in \{5, 6\}$. 
Since $\rho(X^{\sep})=2$ and the types of the extremal rays of $\NE(X^{\sep})$ are $B+C$ (Step \ref{s1 class_dP}), 
$X^{\sep}$ has a unique curve $C'$ with $C'^2<0$. 
By Galois descent, 
{the image $C$ of $C'$ on $X$ is a curve satisfying $C^2 <0$.} 
%we can find a curve $C$ on $X$ (which is the image of $C'$) with $C^2 <0$. 
In particular, $\rho(X)=2$. 
By $(\beta)$, there is a birational morphism 
$\sigma : X \to Y$ to a projective birational morphism to a 
regular del Pezzo surface $Y$ which contracts $C$. 
%\FB{As $X$ is regular del Pezzo, also $Y$ is regular, no?}{\cyan Yes. But I do not think that it helps us anything.} 
%\FB{it implies it is the blow-up of a regular point insider a regular variety, not just some weird subscheme of a regular codomain or a non-regular target... I think you want to use this to apply Lemma 4.31 of \cite{BFSZ}}
%Then its base change $\sigma \times_k k^{\sep}$ is a contraction of an extremal ray. 
% Since each of $X$ and $X \times_k k^{\sep}$ has exactly two such contractions, 
% the contractions are corredponding via the base change $(-) \times_k k^{\sep}$. 
% Therefore, we can find a birational contraction  $\sigma : X  \to Y$ of an extremral ray. 
Then this is a blowup of some closed subscheme $Z$ of $Y$ \cite[Theorem II.7.17]{Ha77}. 
{Since $\sigma: X \to Y$ is a birational morphism of regular projective surfaces satisfying $\rho(X) = \rho(Y)+1$, 
we may assume that $Z$ is a closed point on $Y$.} 
%(since $Y$ is regular,  $Z$ is a closed point; however, we shall not use this fact). 
Since blowups commute with a flat base change $(-) \times_k k^{\sep}$, 
$Z \times_k k^{\sep}$ is a purely inseparable point, 
and hence so is $Z$.  
%(note that if $W \to Z$ is a faithfully being a zero-dimensional noetherian scheme descends via faithfully flat morphisms \cite[Theorem 15.1]{Mat86}). 
%{\cred add ref}
%({\cyan note that $Z\times_k k^{\sep} \to Z$ is faithfully flat}). 
Moreover, it follows from \cite[Proposition 4.32]{BFSZ} that $Y \simeq \P^2_k$ (resp. $K_Y^2=8$, $K_Y^2=4$) when $K_X^2 =5$ (resp. $K_X^2=6, K_X^2=3$). 
This completes the proof for the case when $K_X^2 \in \{3, 5, 6\}$.

Assume $K_X^2  =K^2_{X^{\sep}} =4$. 
By ($\gamma$), we get $X \simeq H_{2, 2} \subset \P^4_k$. 
{We now treat the case when $\rho(X)=1$. 
In this case, it suffices to exclude the possibility $(X_{\overline k})^N \simeq  \P^1_{\overline k} \times \P^1_{\overline k}$. 
If $(X_{\overline k})^N \simeq \P^1_{\overline k} \times \P^1_{\overline k}$, then 
we get $\rho((X_{\overline k})^N) =2$ and the Galois group 
$G := {\rm Gal}(\overline{k}/k^{1/p^{\infty}})$  permutes two Mori fibre space structures on $(X_{\overline k})^N$, 
which contradicts the fact that 
the isomorphism class of the invertible sheaf
 $\omega_{(X_{\overline k})^N} \otimes f^*\omega_X^{-1} \simeq \MO_{\P^1 \times \P^1}(1, 0)$ \cite[Theorem 4.6]{Tan19} is not stable under %${\rm Gal}(\overline{k}/k^{1/p^{\infty}})$ 
the involution switching the direct product factors, 
where $f : (X_{\overline k})^N \to X$ denotes the induced morphism.} 
%\textcolor{red}{I am slightly confused by the final argument. Do we define $f$ before? to see the conductor you should go to the algebraic closure, maybe I am missing something. I might prefer the suggestion fo the referee to use 3.12.}{\cyan 
% I fixed the proof. 
% The problem of using 3.12 is that we need to exchange the order of theorems.}
%Thus we are done for the case when $\rho(X)=1$. 
In what follows, we assume  $\rho(X)=2$. 
By ($\beta$) and Step \autoref{s1 class_dP},  
there are two contractions $\pi_1 :X \to B_1$ and $\pi_2 : X \to B_2$ such that $\dim B_1 = \dim B_2=1$. 
Set $\pi : = \pi_1 \times \pi_2 : X \to B_1 \times_k B_2$. 
Taking the base change $(-) \times_k k^{\sep}$, 
each $\pi_i \times_k k^{\sep} : X^{\sep} \to B_i \times_k k^{\sep}$ 
is  a contraction of an extremal ray. 
Hence $B_i \times_k k^{\sep} \simeq \P^1_{k^{\sep}}$. 
By \autoref{p 2 conic bdls}, $\pi : X \to B_1 \times B_2$ is a double cover. 
This completes the proof of Step \ref{s2 class_dP}. 
\end{s2proof}

\begin{step}\label{s3 class_dP}
The assertion of Theorem \ref{thm: class_dP} holds 
when $p=3$. %without any additional assumptions. 
\end{step}

\begin{s3proof}
Assume $p=3$. 
Set $X^{\sep} := X \times_k k^{\sep}$. 
Then the assertion holds for $X^{\sep}$ by Step \ref{s1 class_dP}. 
% , we have $\rho(X) \leq \rho(X^{\sep})$ and $K_X^2 =K^2_{X^{\sep}}$. 
% In particular, if $K_X^2 \not\in \{3, 4\}$, then $\rho(X) = \rho(X^{\sep})$. 
In particular, the description for  
$(X_{\overline k})^N (\simeq (X^{\sep} \times_{k^{\sep}} \overline k)^N$) holds. 
%By the same argumanet as in Step , 
%Then 
In order to prove the assertion of Step \ref{s3 class_dP}, 
it is enough to show $(\alpha)'$ and $(\gamma)'$ below. 
% hold by 
% a similar and easier argument of 
% the proofs of $(\alpha)$ and $(\gamma)$ 
% in Step \ref{s2 class_dP}, . 
\begin{enumerate}
\item[$(\alpha)'$]
$\rho(X) = \rho(X^{\sep})=1$ and $K_X^2 =K^2_{X^{\sep}}$. 
%Moreover, if $K_X^2 =1$ or $K_X^2 =2$, then $\rho(X) = \rho(X^{\sep})$. 
%\item[($\beta$)] If $\rho(X)=2$, then the types of the extremal rays of $X$ and $X^{\sep}$ coincide. 
\item[$(\gamma)'$] 
%If $K_X^2 \neq 4$, then $\rho(X) = \rho(X^{\sep})$. 
%\item[$(\gamma)$] 
%If $1 \leq K_X^2 \leq 4$, then 
%$|-K_X|$ is very ample and 
There exists a prime divisor $C$ satisfying $-K_X \sim C$, and hence \autoref{p hypersurf} is applicable. 
\end{enumerate}
The assertion $(\alpha)'$ follows from $\rho(X) \leq \rho(X^{\sep})=1$. 
%which implies $\rho(X)=1$. 
%By \cite[Theorem 4.6]{Tan19}
Let us show $(\gamma)'$. 
As $K_X^2 =K^2_{X^{\sep}} \in \left\{1, 3 \right\}$ and 
$\Pic X$ is a free $\Z$-module of rank $1$, 
we deduce that $\Pic X=\mathbb{Z}K_X$. 
The Riemann--Roch theorem implies that $h^0(X,\MO_X(-K_X))>0$, 
and hence there exists an effective divisor $C$ such that $-K_X \sim C$. 
By $\Pic X = \Z K_X$, 
$C$ is a prime divisor and thus $(\gamma)'$ holds. 
This completes the proof of Step \ref{s3 class_dP}.
%we can apply \autoref{p hypersurf} to conclude.
\end{s3proof}

\medskip

Step \ref{s2 class_dP} and 
Step \ref{s3 class_dP} complete the proof of \autoref{thm: class_dP}. 
\end{proof}

\begin{cor} \label{cor: non-normal-rho2}
%We work over a  $C_1$-field $k$ of characteristic $p>0$. 
Assume that $k$ is a $C_1$-field. 
%closed. 
Let $X$ be a geometrically integral regular del Pezzo surface 
which is not geometrically normal. 
Then  the following hold. 
\begin{enumerate}
\item $p=2$. 
\item $\rho(X) = 2$ and $H^1(X, \MO_X)=0$. %\FB{and $h^1(\MO_X)=0$}. 
\item $K_X^2 \in \{2, 4,  5, 6\}$. 
\item $X(k) \neq \emptyset$. 
% \item 
% If $K_X^2=5$, then 
% there is a blowup $f:X \to \P^2_k$ at a purely inseparable point of degree $4$. 
% \item 
% If $K_X^2 =6$, 
% then there is a blowup $f : X \to Q$ at a purely inseparable point of degree $2$, 
% where $Q$ is a regular geometrically normal dP surface of degree $8$. 
% %\item $H^1(X, \MO_X) \neq 0$.
\end{enumerate}
\end{cor}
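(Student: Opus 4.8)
The plan is to invoke the classification \autoref{thm: class_dP} and then discard, row by row, the families in \autoref{table p=3} and \autoref{table p=2} that cannot occur over a $C_1$-field. The organizing observation is that all four assertions reduce to the single claim that \emph{over a $C_1$-field there is no geometrically non-normal regular del Pezzo surface with $\rho(X)=1$}. Indeed, every row of \autoref{table p=3}, as well as the two irregular rows of \autoref{table p=2}, has $\rho(X)=1$; granting the claim, these are excluded, which forces $p=2$, $\rho(X)=2$, and $H^1(X,\MO_X)=0$. The only remaining $\rho=2$ row with $K_X^2\notin\{2,4,5,6\}$ is row $2$-$3$, but there the type-$B$ extremal ray contracts over $k$ to a surface $Y$ with $\rho(Y)=1$ that is still geometrically non-normal (it is $Y_{1\text{-}4}$); the claim excludes $Y$, hence also row $2$-$3$, leaving exactly $K_X^2\in\{2,4,5,6\}$.

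For assertion (4) I would argue uniformly on the surviving rows $2$-$2,2$-$4,2$-$5,2$-$6$, each of which possesses an extremal ray of type $C$, that is, a conic bundle $\pi\colon X\to B$ onto a geometrically integral curve with $H^1(B,\MO_B)=0$. Since a $C_1$-field has $\Br(k)=0$, the curve $B$—a form of $\P^1$—is isomorphic to $\P^1_k$, and the fibre of $\pi$ over a $k$-point of $B$ is a conic over $k$, necessarily isotropic; it therefore contributes a $k$-point of $X$, giving $X(k)\neq\emptyset$.

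It remains to prove the claim. Writing $X^{\sep}=X\times_k k^{\sep}$ and recalling from the separably closed case (Step 1 of \autoref{thm: class_dP}) that $\rho(X)\le\rho(X^{\sep})\le 2$, I would split into two cases. If $\rho(X^{\sep})=1$, then by \autoref{p rho 1} the geometric normalization $(X_{\overline{k}})^N$ is one of $\P^2,\P(1,1,2),\P(1,1,3),\P(1,1,4)$, i.e. a surface of Picard rank one; I contend that such a ``maximal'' degeneration of the geometric normalization can only be produced when the imperfection degree satisfies $[k:k^p]\ge p^2$. If instead $\rho(X^{\sep})=2$, then since a birational ray and a conic-bundle ray cannot be interchanged by the Galois group, the two extremal rays of $X^{\sep}$ must both be conic bundles, so $X^{\sep}$ is row $2$-$2$ or $2$-$4$ and $\rho(X)=1$ arises from Galois swapping them. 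When $K_X^2=4$, the model $H_{2,2}\subset\P^4_k$ has a $k$-point by the $C_1$ property, whence \autoref{l 1-4} forces $(X_{\overline{k}})^N\simeq\P^2$, contradicting $(X_{\overline{k}})^N\simeq\P^1\times\P^1$ for row $2$-$4$; the residual case $K_X^2=2$ I would settle through the intrinsic anticanonical double cover $X\to\P^2_k$ together with $\Br(k)=0$.

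The arithmetic engine of the first case is the lemma that \emph{every $C_1$-field satisfies $[k:k^p]\le p$}, which I would prove as follows: if $a,b\in k$ were $p$-independent, then in the $2p$ variables $x_0,\dots,x_{p-1},y_0,\dots,y_{p-1}$ the degree-$p$ form $\sum_{i=0}^{p-1}x_i^p a^i-b\sum_{i=0}^{p-1}y_i^p a^i$ would be anisotropic—because $1,a,\dots,a^{p-1}$ are linearly independent over $k^p$, so a nontrivial zero would force $b\in\Frac(k^p[a])=k^p(a)$—contradicting the $C_1$ property since $2p>p$. The main obstacle is the geometric assertion in the case $\rho(X^{\sep})=1$: one must show that the Picard-rank-one normalizations genuinely demand two $p$-independent elements (so that $[k:k^p]\le p$ rules them out), which I expect to read off from the conductor and the structure of the non-normal locus in \cite{Tan19}; the separably twisted case $K_X^2=2$ is the second, more delicate, point.
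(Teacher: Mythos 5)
Your reduction of (1)--(3) to the single claim (\emph{no geometrically non-normal regular del Pezzo surface of Picard rank one exists over a $C_1$-field}), your exclusion of row 2-3 via $Y_{{\rm 1-4}}$, and your conic-bundle argument for (4) all either coincide with the paper's proof or are sound variants of it. The arithmetic lemma you prove ($C_1$ implies $[k:k^p]\le p$) is also correct; it is exactly \cite[Lemma 6.1]{BT22}, which the paper quotes. The problem is the claim itself: the paper does not prove it either, but deduces it from \cite[Theorem 14.1]{FS20}, which says that over a field of $p$-degree at most one a geometrically non-normal regular del Pezzo surface must have $\rho(X)\ge 2$ and $H^1(X,\MO_X)=0$. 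You instead attempt to reprove this input and leave its geometric heart open: in your case $\rho(X^{\sep})=1$ you only ``contend'' and ``expect to read off'' from \cite{Tan19} that a Picard-rank-one geometric normalization forces $[k:k^p]\ge p^2$. That assertion is precisely the content of the Fanelli--Schr\"oer theorem; it is a deep result, and the conductor data of \cite[Theorem 4.6]{Tan19} alone do not yield it --- the paper's \autoref{e 1-1}, \autoref{e 1-2}, \autoref{e 1-3}, \autoref{e 1-4} show that such surfaces genuinely exist once the $p$-degree is large, so any proof must use the $p$-degree hypothesis in an essential, non-formal way. This step is therefore a genuine gap, not a routine verification.

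Two further, smaller defects sit inside the same claim. First, in your case $\rho(X^{\sep})=2$ with both rays of type $C$, the sub-case $K_X^2=4$ is handled correctly (Lang's theorem for the intersection of two quadrics in $\P^4_k$ gives a $k$-point, and then \autoref{l 1-4} contradicts $(X_{\overline k})^N\simeq\P^1\times\P^1$), but the sub-case $K_X^2=2$ is again only gestured at (``the intrinsic anticanonical double cover together with $\Br(k)=0$'') with no actual argument. Second, in the case $\rho(X^{\sep})=1$ you cite \autoref{p rho 1}, whose hypotheses include $H^1(X,\MO_X)=0$, so the irregular rows 1-1-i and 1-2-i of \autoref{table p=2} are not covered by your citation; excluding them is part of what \cite[Theorem 14.1]{FS20} does for the paper. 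Unless you are willing to quote that theorem as the paper does, the proof is incomplete.
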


\begin{proof}
As a $C_1$-field has $p$-degree at most 1 \cite[Lemma 6.1]{BT22},  \cite[Theorem 14.1]{FS20} implies that $\rho(X) \geq 2$ (and hence $\rho(X)=2$ by \autoref{thm: class_dP}) 
and $H^1(X, \MO_X)=0$. 
The case  of No. 2-3 in \autoref{thm: class_dP} does not occur, as otherwise 
the case of No. 1-4 would occur, which is absurd. 
Thus $X$ is of No. 2-2, 2-4, 2-5, or 2-6. 
In particular, (1)-(3) hold. 
The assertion (4) easily follows from \autoref{thm: class_dP}. 
\qedhere

% The assertion follows from the \autoref{thm: class_dP} and 
%  (which excludes $H^1 \neq 0$). 
% Note that the cases (2-3) do not occur, as otherwise the case there is a geometrically non-normal del Pezzo surface in case (1-4), which is absurd. %$K_X^2=3$

% Assume that $k$ is as above (i.e., a $C_1$-field). 
% Assume $K_X^2 =5$. 
% Then the contractions of the ext rays 
% $f: X \to Y$ correspond to the one for $X \times_k k^{sep}$, 
% because $\rho(X) = \rho(X \times_k k^{sep})$. 
% Hence we have a birational contraction: $f: X \to Y$. Since the base change $Y \times_k k^{sep}$ is $\P^2$. 
% Hence $Y \simeq \P^2_k$ and $f$ is a blowup at a closed point $P$. 
% Then $P \times_k \overline k$ is a blowup at a point of deg $4$. 

% A similar argument works for the $K_X^2 = 6$ case. 
\end{proof}

\subsection{Conic bundles}

In \autoref{thm: class_dP}, 
each class with $\rho(X)=2$ (i.e., No. 2-x) has a conic bundle structure. 
We here investigate their properties.

% \begin{itembox}{Comment}
% I have added (3) below. 
% % \begin{enumerate}
% % \item 
% % \end{enumerate}
% \end{itembox}

\begin{prop}\label{p always wild}%[No. 2-3, 2-5, 2-6]
%We work over a field of characteristic $p>0$. 
Let $X$ be a geometrically integral regular del Pezzo surface with $\rho(X) \geq 2$. 
%Assume that $X$ is not geometrically normal and $\rho(X)=2$. 
Then the following are equivalent. 
\begin{enumerate}
\item $X$ is not geometrically normal. In particular, $\rho(X)=2$ (\autoref{thm: class_dP}). 
% \item 
% There exists a Mori fibre space $\pi : X \to B$ 
% such that $\dim B  =1$ and no fibre of $\pi$ is smooth. 
\item 
{There exists a morphism  $\pi : X \to B$ %to a regular projective curve $B$ 
such that $B$ is a regular projective curve, $\pi_*\MO_X = \MO_B$, 
and %$\dim B  =1$ and 
no fibre of $\pi$ is smooth.} 
%{\blue Older version: $\pi$ was a MFS to a curve. I.e., I dropped $\rho(X)=2$.} 
\end{enumerate}
% \begin{enumerate}
% \item If $K_X^2 \in \{3, 5, 6\}$. 
% %Let $X$ be of No. 2-3. 
% and $\pi : X \to B$ is a conic bundle, 
% then $\pi$ has no smooth fibre.{\cyan THIS CONTRADICTS [BFSZ, 6.16] for the case $K_X^2=5$?? [BFSZ, 6.16] is probably geom normal, so no contradiction? } 
% \item Assume $K_X^2 =4$. 
% Let the contractions $\pi_1 : X \to B_1$ and $\pi_2 : X \to B_2$ be the contractions of the extremal rays. 
% Recall that $\dim B_1 = \dim B_2 =1$. 
% Then  one of $\pi_1$ and $\pi_2$ is generically smooth and the other is not. 
% \end{enumerate}
% %$\rho(X)=2$, and 
\end{prop}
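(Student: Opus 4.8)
The plan is to translate both conditions into a single statement about the generic fibre $X_\eta$ of a conic bundle, and to read off geometric non-normality from there. The elementary input is a \emph{fibre lemma}: for any proper $\pi\colon X\to B$ onto a curve, the smooth locus $X^{\mathrm{sm}}$ of $\pi$ is open, so $\pi(X\setminus X^{\mathrm{sm}})$ is closed; it is a proper subset of $B$ exactly when $X_\eta$ is smooth over $K(B)$ (in which case almost every closed fibre is smooth), and it equals $B$ exactly when $X_\eta$ is non-smooth (in which case every fibre is singular). Hence ``no fibre of $\pi$ is smooth'' is equivalent to ``$X_\eta$ is not smooth over $K(B)$''. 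Moreover, for any contraction $\pi\colon X\to B$ with $-K_X$ ample, a fibre $F$ has $F^2=0$, and by adjunction its arithmetic genus is $p_a(F)=1+\tfrac12 K_X\cdot F$; since $-K_X$ is ample, $K_X\cdot F<0$, forcing $K_X\cdot F=-2$ and $p_a(F)=0$. Thus $\pi$ is automatically a conic bundle and $X_\eta$ is a regular (its local rings are localisations of those of the regular scheme $X$), geometrically connected, projective curve of arithmetic genus $0$.

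For (2)$\Rightarrow$(1): given $\pi$ with no smooth fibre, the fibre lemma shows $X_\eta$ is not smooth over $K(B)$. A regular non-smooth projective curve of genus $0$ is geometrically non-reduced, and this forces $p=2$: over $\overline{K(B)}$ it degenerates to a non-reduced conic. Since $X_\eta$ is a localisation of $X$ and localisation commutes with the base change $(-)\times_k\overline k$, the geometric non-reducedness of $X_\eta$ propagates to $X_{\overline k}$, which is therefore non-reduced, hence non-normal. Thus $X$ is not geometrically normal, and \autoref{thm: class_dP} gives $\rho(X)=2$.

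For (1)$\Rightarrow$(2): assume $X$ is not geometrically normal. By \autoref{thm: class_dP} together with $\rho(X)\ge 2$ we get $p=2$, $\rho(X)=2$, and $X$ lies in one of the families No.\,2-2,\dots,2-6, each of which carries an extremal ray of type $C$; its contraction $\pi\colon X\to B$ is a conic bundle onto a regular projective curve $B$ (a smooth conic, cf.\ \autoref{p 2 conic bdls}) with $\pi_*\MO_X=\MO_B$. By the fibre lemma it remains to show that $X_\eta$ is not smooth over $K(B)$, i.e.\ to rule out that $X_\eta$ is smooth while $X$ is still geometrically non-normal---in other words, that the non-normality could be concentrated away from the generic fibre.

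This last point is the main obstacle, and I would dispatch it in one of two ways. Structurally: if $X_\eta$ were smooth over $K(B)$ then, $B$ being a smooth conic (hence smooth over $k$), the composite $X\to\Spec k$ would be smooth along $X_\eta$; as $X$ is regular, its non-smooth-over-$k$ locus equals its non-geometrically-regular locus, which would then meet only finitely many fibres of $\pi$. Using that $X_{\overline k}$ is Gorenstein, hence $S_2$, geometric normality reduces to the condition $R_1$, and this could fail only if an entire fibre lay in the non-smooth locus; but a smooth generic fibre makes the conic bundle $X_{\overline k}\to B_{\overline k}$ tame, so $X_{\overline k}$ is normal, contradicting the hypothesis. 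The safer route is a direct check on the explicit models of \autoref{thm: class_dP}: in each family No.\,2-2,\dots,2-6 the type-$C$ contraction is realised by an inseparable (characteristic $2$) double-cover structure---for instance, for No.\,2-4 one composes the double cover $X\to\P^1_{\SB,1}\times_k\P^1_{\SB,2}$ with a projection---so each fibre is a purely inseparable double cover of $\P^1$, hence a singular conic. Either way $X_\eta$ is non-smooth, and the fibre lemma yields that no fibre of $\pi$ is smooth, completing the proof.
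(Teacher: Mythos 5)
Your set-up (the fibre lemma ``no smooth fibre $\Leftrightarrow$ $X_\eta$ not smooth'', flatness of $\pi$, and the adjunction argument showing any such contraction is a conic bundle) is fine, and it is implicitly used in the paper as well. But both implications have genuine gaps. In (2)$\Rightarrow$(1) the propagation step is false: geometric non-reducedness of $X_\eta$ means non-reducedness over $\overline{K(B)}$, whereas the base change $(-)\times_k\overline{k}$ only enlarges the constant field, so the generic fibre of $X_{\overline{k}}\to B_{\overline{k}}$ lives over the much smaller field $K(B_{\overline{k}})=K(B)\cdot\overline{k}$, which does not contain (for instance) a square root of a coordinate on $B$. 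Non-reducedness does not descend along $K(B_{\overline{k}})\hookrightarrow\overline{K(B)}$ --- becoming non-reduced only after that extension is precisely what ``geometrically non-reduced'' means. Indeed your conclusion contradicts the standing hypothesis: $X$ is geometrically integral, so $X_{\overline{k}}$ is integral, hence reduced; were your argument correct, it would prove that geometrically integral wild conic bundles do not exist, contradicting Examples \ref{e 2-2}--\ref{e 2-6} (in Example \ref{e 2-5} every fibre is geometrically a double line, yet $X_{\overline{k}}$ is integral). What non-smoothness of $X_\eta$ actually gives is non-smoothness of the generic fibre of $\pi_{\overline{k}}$ (smoothness is insensitive to arbitrary field extensions), and the conclusion one must extract is non-\emph{normality}, not non-reducedness, of $X_{\overline{k}}$. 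The paper does this contrapositively: if $X_{\overline{k}}$ were normal it would be smooth away from finitely many points, and a Mori fibre space from a smooth surface over an algebraically closed field has geometrically integral (smooth) generic fibre (\cite[Theorem 7.1]{Bad01}, \cite[Lemma 10.6]{kk-singbook}), a contradiction.

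In (1)$\Rightarrow$(2) the decisive point --- that the generic fibre of a type-$C$ contraction is not smooth --- is exactly what you call ``the main obstacle'', and neither of your routes closes it. The structural route ends with ``a smooth generic fibre makes the conic bundle $X_{\overline{k}}\to B_{\overline{k}}$ tame, so $X_{\overline{k}}$ is normal'', which is the desired conclusion restated, not an argument: what must be excluded is that an entire closed fibre lies in the non-smooth locus of $X/k$, i.e.\ that $X_{\overline{k}}$ is singular along a vertical curve, and this is delicate (a regular conic bundle with smooth generic fibre \emph{can} have a geometrically non-reduced closed fibre, so such fibres cannot be dismissed cheaply; one must show the surface is nevertheless not geometrically singular along them). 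The ``safer'' route attributes to \autoref{thm: class_dP} statements it does not contain: the table never asserts that the type-$C$ contractions are purely inseparable double covers with all fibres singular, and that assertion is in fact false for one of the two rulings in case No.\,2-4 --- as the remark following \autoref{p always wild} records, exactly one of $\pi_1,\pi_2$ there is generically smooth, so your recipe cannot even tell which projection to take. The ingredient your proof is missing, and which the paper uses in place of both of your routes, is the classification of the normalisation in \cite[Theorem 4.6]{Tan19}: the conductor divisor of $(X_{\overline{k}})^N\to X_{\overline{k}}$ has positive degree on fibres, hence is horizontal, so the singular locus of $X_{\overline{k}}$ dominates the base (or one of the two bases when there are two rulings), forcing the generic fibre of the corresponding contraction to be non-smooth. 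Some such input --- or an actual proof of your structural claim --- is required; as written, both directions are incomplete.
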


\begin{proof}
Let us show the implication (1) $\Leftarrow$ (2). 
Assume that (1) does not hold. 
Then $X$ is geometrically normal. 
Let $\pi : X \to B$ be a morphism 
such that $B$ is a regular projective curve and $\pi_*\MO_X = \MO_B$. 
It is enough to show that $\pi$ is generically smooth.
%{\cyan $\leftarrow$ Please do NOT erase these two sentences.} 
%which follows from the fact that 
For the base change $\pi_{\overline k}:= \pi \times_k \overline k : X_{\overline k} \to B_{\overline k}$, 
the induced morphism 
$\pi_{\overline k}|_{(\pi_{\overline k})^{-1}(U)} : (\pi_{\overline k})^{-1}(U) \to U$ is a 
Mori fibre space from a smooth surface 
$(\pi_{\overline k})^{-1}(U)$ 
to a smooth curve $U$ 
for some non-empty open subset $U$ of $B_{\overline k}$. 
Then the generic fibre of $\pi_{\overline k}|_{(\pi_{\overline k})^{-1}(U)}$ is a  geometrically integral conic (cf. \cite[Theorem 7.1]{Bad01}, \cite[Lemma 10.6]{kk-singbook}). 
Therefore, 
$\pi_{\overline k}$ is generically smooth, i.e., 
(2) does not hold. 
This complete the proof of the implication (1) $\Leftarrow$ (2). 

Let us show the implication (1) $\Rightarrow$ (2). 
Assume (1). 
By \autoref{thm: class_dP}, 
we get $\rho(X)=2$ and there exists a Mori fibre space $\pi : X \to B$ with $\dim B =1$. 
We treat the case when %$K_X^2 \neq 4$ 
{$K_X^2 \not\in \{2, 4\}$}, i.e., $K_X^2 \in \{3, 5, 6\}$ 
(\autoref{thm: class_dP}). 
%$K_X^2 \in \{3, 5, 6\}$. 
In order to prove that $\pi$ has no smooth fibre, 
we may assume that $k$ is separably closed. 
In particular, $B = \P^1_k$. 
We have the following commutative diagram: 
\[
\begin{tikzcd}
(X_{\overline k})^N \arrow[r, "\nu"] \arrow[d, "\pi''"] 
\arrow[rr, bend left, "g"]&
X_{\overline k} \arrow[r, "\alpha"] \arrow[d, "\pi'"] &
X\arrow[d, "\pi"]\\
B'' \arrow[r] & B' :=B \times_k \overline k =\P^1_{\overline k} \arrow[r] &B =\P^1_k, 
\end{tikzcd}
\]
where the right square is cartesian, 
$\nu : (X_{\overline k})^N \to X_{\overline k}$ is the normalisation, 
and $B''$ is the Stein factorisation 
of the composition $(X_{\overline k})^N \xrightarrow{\nu} X_{\overline k} \xrightarrow{\pi'} B' = B \times_k \overline k$. 
By \cite[Theorem 4.6]{Tan19}, the conductor divisor $D$ of $\nu$,  
given by 
\[
K_{(X_{\overline k})^N} +D =g^*K_X,
\]
satisfies $D \cdot F'' >0$ for a fibre $F''$ of $\pi''$, 
i.e., $D$ dominates $B''$. 
Therefore, the singular locus of $X_{\overline k}$ 
dominates $B' = B \times_k \overline k$, as it contains the image of $D$. 
Hence the non-smooth locus $\Sigma$ of $X$ dominates $B$. 
In particular, $\pi$ is not generically smooth. 

We may assume that {$K_X^2 \in \{2, 4\}$.} 
%$K_X^2 =4$.
In this case, we have two Mori fibre spaces $\pi_1 : X \to B_1$ and $\pi_2 : X \to B_2$ (\autoref{thm: class_dP}). 
By the same argument as above, we see that one of the contraction is not generically smooth (indeed, the conductor divisor dominates $B_1$ or $B_2$ \cite[Theorem 4.6]{Tan19}). 
Thus (2) holds. 
\end{proof}

\begin{rem}%[No. 2-3, 2-5, 2-6]
%We work over a field $k$ of characteristic $p>0$. 
Let $X$ be a geometrically integral regular del Pezzo surface. 
Assume that $\rho(X) = 2$, $K_X^2 =4$, and  $X$ is not geometrically normal. 
We have 
the contractions 
$\pi_1 : X \to B_1$ and $\pi_2 : X \to B_2$ 
of the extremal rays. 
%Assume that $X$ is not geometrically normal and $\rho(X)=2$. 
By the proof of \autoref{p always wild}, one of $\pi_1$ and $\pi_2$ is generically smooth and the other is not. 
\end{rem}

\begin{rem}
Assume that $k$ is of characteristic two. 
%We work over a field $k$ of characteristic two. 
Let $\pi : X \to B$ be a Mori fibre space, 
where $X$ is a regular projective surface and $B$ is a regular projective curve. 
Assume that $\pi$ is not generically smooth. 
Then  
$\pi^{-1}(b)$ is  integral but not geometrically reduced  for every point $b \in B$ \cite[Proposition 2.18]{BT22} (e.g., $\pi^{-1}(b)=\{ x^2 +ty^2 =0\} \subset \P^2_{\F_2(t)} = \Proj \F_2(t)[x, y, z]$). 
An explicit example is given in \autoref{e 2-5}. 
\end{rem}

\subsection{Examples}

We collect examples of geometrically non-normal {geometrically integral} regular del Pezzo surfaces, according to degree and characteristic. 
%with vanishing $H^1(X, \mathcal{O}_X)=0$ 
This shows that all the cases  in \autoref{thm: class_dP} 
are realised except possibly for No. 1-2-i. 
Throughout this subsection, we use the following notation. 

\begin{notation}
\begin{enumerate}
\item $\partial_x$ denotes the derivation $\partial/\partial x$, e.g., 
$\partial_x (x^3+xy) = 3x^2 +y$. 
\item In order to specify the weighted homogeneous coordinate, 
we set  
\[
\P(d_0, ..., d_n)_{[x_0:\cdots : x_n]} := \Proj k[x_0, ..., x_n], 
\]
where $k[x_0, ..., x_n]$ is the polynomial ring over $k$ such that 
$\deg x_i =d_i$ for every $i$. 
Similarly, $\P^n_{[x_0:\cdots : x_n]} := \mathbb{P}(1, ..., 1)_{[x_0:\cdots : x_n]}= \Proj k[x_0, ..., x_n]$,  $\A^n_{(x_1, ..., x_n)} := \Spec k[x_1, ..., x_n]$, and $\A^1_x := \Spec k[x]$. 
\end{enumerate}
\end{notation}

{
\begin{prop}\label{p Jacobian}
Let $\F$ be an algebraically closed field 
and set $k := \F(s_1, ..., s_r)$, which is a  purely transcendental extension of $\F$ with variables $s_1, ..., s_r$. 
For a homogeneous polynomial $f \in k[x_0, ..., x_N]$, we set 
\[
X := \Proj\, k[x_0, ..., x_N]/(f). 
\]
Take a closed point $P$ of $\P^N_k$. 
Then the following are equivalent. 
\begin{enumerate}
    \item[(1)] $P \in X$ and $X$ is not regular at $P$. 
    \item[(2)] $P \in \{ f = \partial_{s_1}f = \cdots = \partial_{s_r}f = 
\partial_{x_0}f = \cdots = \partial_{x_N} f =0\}$. 
\end{enumerate}
In other words,  $\{ f = \partial_{s_1}f = \cdots = \partial_{s_r}f = 
\partial_{x_0}f = \cdots = \partial_{x_N} f =0\}$ is the non-regular locus of $X$. 
%$X$ is regular at $P$ if and only if 
%\[
%%. 
%\]
\end{prop}

This result is well known when the base field $k$ is algebraically closed. 
We give a proof for the sake of completeness. 

\begin{proof}
%Fix a closed point 
%$P \in \P^n_k$.
%We may assume $P \in X$, as this holds by each of (1) and (2). 
%Moreover, 
We may assume $P \in D_+(x_0)$ by symmetry. 
For $g:= f(1, x_1, ..., x_n)$, we get (1) $\Leftrightarrow$ (1)'   by the Jacobian criterion for hypersurfaces:
\begin{enumerate}
%\item[(a)] $X$ is regular at $P$. 
\item[(1)'] $P \in \{ g = \partial_{s_1}g = \cdots = \partial_{s_r}g = 
\partial_{x_1}g = \cdots = \partial_{x_N}g =0\}$. 
\end{enumerate}
Since each of $\partial_{s_j}$ and $\partial_{x_i}$ (with $i>0$) commutes with the substitution $x_0=1$, we get  
the equivalence (1)'  $\Leftrightarrow$ (2)'. 
\begin{enumerate}
\item[(2)'] $P \in \{ f = \partial_{s_1}f = \cdots = \partial_{s_r}f = \partial_{x_1}f = \cdots =\partial_{x_N} f =0\}$. 
\end{enumerate}
As the implication (2) $\Rightarrow$ (2)' is obvious, 
it is enough to show the opposite one (2) $\Leftarrow$ (2)', 
which follows from the Euler identity $\partial_{x_0}f + \partial_{x_1}f + \cdots + \partial_{x_n}f = (\deg f)f$.     
% \begin{enumerate}
% \item[(4)] $P$ is contained in the closed subset $\{\partial_{x_0} f =0\}$ of $\P^N_k$. 
% \end{enumerate}
% This follows from 
\end{proof}
\begin{rem}\label{r Jacobian}
Let $\F$ be an algebraically closed field 
and set $k := \F(s_1, ..., s_r)$, which is a  purely transcendental extension of $\F$ with variables $s_1, ..., s_r$. 
Let $k[x_0, ..., x_N]$ be the weighted polynomial ring 
that satisfies $d_i := \deg x_i \in \Z_{>0}$ and $\deg c =0$ for every $c \in k^{\times}$. 
%We equip $k[x_0, ..., x_N]$ with the 
For a weighted homogeneous polynomial $f \in k[x_0, ..., x_N]$, we set 
\[
X := \Proj\, k[x_0, ..., x_N]/(f) \subset \P(d_0, ..., d_N)_{[x_0: \cdots : x_N]}. 
\]
Assume that there exists an integer $r \geq 0$ such that 
$d_0 = \cdots d_r =1$ and $X \subset D_+(x_0) \cup \cdots \cup D_+(x_r)$. 
%Take a closed point $P \in X$. 
By the same proof as in \autoref{p Jacobian}, 
the non-regular locus $X_{\text{non-reg}}$ of $X$ is given by 
\[
X_{\text{non-reg}} = 
\{ f = \partial_{s_1}f = \cdots = \partial_{s_r}f = 
\partial_{x_0}f = \cdots = \partial_{x_N} f =0\}. 
\]
\end{rem}
}

\begin{ex}[$p \in \{2, 3\}, \rho(X)=1, K_X^2=1$]\label{e 1-1}
Let $\F$ be an algebraically closed field of characteristic three. 
%Assume $p=3$.  
Set $k :=\mathbb F(s_0, s_1, s_2, s_3)$, i.e., 
$k$ is a purely transcendental extension of $\F$ with four variables 
$s_0, s_1, s_2, s_3$. 
We define 
\[
X:= \{ s_0z^2 + s_1y^3 + s_2x_0^6 + s_3x_1^6 =0\} := 
\Proj \frac{k[x_0, x_1, y, z]}
{(s_0z^2 + s_1y^3 + s_2x_0^6 + s_3x_1^6)}
\]
\[
\subset 
\Proj k[x_0, x_1, y, z] = \P(1, 1, 2, 3)_{[x_0: x_1: y: z]}. 
\]
Note that 
we have $[0:0:0:1], [0:0:1:0] \not\in X$, 
where $[0:0:0:1], [0:0:1:0]$ are all the singular points of 
$\P(1, 1, 2, 3)$. 
%the singular points 
In what follows, we prove the following. 
\begin{enumerate}
\item $X$ is regular. 
\item $X$ is geometrically integral. 
\item $X$ is not geometrically normal.  
\item  $\rho(X) =1$ and $K_X^2 =1$. 
\end{enumerate}

Let us show (1). 
Substituting $x_0=1$, we get 
\begin{equation}\label{e1 1-1}
D_+(x_0) = \{ s_0z^2 + s_1y^3 + s_2 + s_3x_1^6 =0\}  \subset 
\A^3_{(x_1, y, z)} = \Spec k[x_1, y, z]. 
\end{equation}
By the Jacobian criterion for smoothness, 
the affine hypersurface 
\begin{equation}\label{e2 1-1}
\{ s_0z^2 + s_1y^3 + s_2 + s_3x_1^6 =0\} \subset \A^7_{\F} = \Spec \F[s_0, s_1, s_2, t, x_1, y, z]
\end{equation}
in $\A^7_{\F}$ is smooth over $\F$ (consider $\partial_{s_2} := \partial/\partial s_2$). 
Since $D_+(x_0)$ in (\ref{e1 1-1}) is obtained by 
applying the localisation $(-) \otimes_{\F[s_0, s_1, s_2, s_3]} \F(s_0, s_1, s_2, s_3)$, 
$D_+(x_0)$ is regular. 
By symmetry, $D_+(x_1)$ is regular. 
% This is regular by the Jacobian criterion using $\partial_{s_2}$. 
% Moreover, there is a smooth point as the partial derivative $\partial_z$ is $ 2 s_0 z\neq 0$. 
% Also $D_+(x_1)$ is regular. 
On the remaining locus $U :=\{yz \neq 0\}$,
the following holds \cite[Section 3.1]{Oka21}: 
\[
U = \{ s_0u^2 + s_1u^3 + s_2x_0^6 + s_3x_1^6 =0\} \subset 
\mathbb A^2_{(x_0, x_1)} \times (\A^1_{u} \setminus \{0\}). %\P(1, 1, 2, 3)_{x_0, x_1, y, z}. 
\]
By applying Jacobian criterison for smoothness as above (specifically, use  $\partial_{s_0}$), 
we get $u=0$, which is absurd. 
Hence $X$ is regular, i.e., (1) holds. 
%Moreover (2) holds, because we have shown that $X$ contains a smooth point. 
%, $X$ is geometrically integral. 

%We are left to show that $X$ is %geometrically integral and not geometrically normal. 
Let us show (3). 
We get 
\[
 X_{\overline k} \simeq  \{ z^2 + y^3 + x_0^6 + x_1^6 =0\} \subset 
\P(1, 1, 2, 3)_{[x_0: x_1: y: z]}. 
\]
Replacing $y+x_0^2+x_1^2$ by $y$, we get 
%\FB{with the further replacement $y \mapsto y+x_0^2+x_1^2$,}
\[
X_{\overline{k}} \simeq  \{ z^2 + y^3=0\} \subset 
\P(1, 1, 2, 3)_{[x_0: x_1: y: z]}. 
\]
As $
D_+(x_0) = \{ z^2 + y^3=0\} \subset \A^3_{(x_1, y, z)},$
the singular locus of $X_{\overline k}$ contains the affine curve 
$\{ z^2 +y^3 = 2z=0\} \subset \A^3_{(x_1, y, z)}$. Hence $X$ is not geometrically normal. 
Thus (3) holds. 
Since $X$ is a regular projective variety with $H^0(X, \MO_X)=k$, 
$X_{\overline k}$ is irreducible. 
In order to show (2), 
it is enough to prove that $X_{\overline k}$ is reduced. 
Since $X_{\overline k}$ is Cohen-Macaulay, it suffices to find a smooth point of $X_{\overline k}$ (cf. reduced $\Leftrightarrow$ $R_1 +S_0$). 
%has a smooth point 
This follows from the fact that 
 $D_+(x_0) \subset X_{\overline k}$ is smooth at 
$(x_0, y, z) = (0, -1, 1) \in D_+(x_0)$ by Jacobian criterion. 
Thus (2) holds. 
Since $X$ is a hypersurface on $\P(1, 1, 2, 3)$ of degree $6$, 
we get $K_X^2 =1$. 
Then \autoref{thm: class_dP} implies $\rho(X)=1$. 
Thus (4) holds.
%\FB{As $X_{\overline{k}}^N$ is isomorphic to $\mathbb{P}^2$, we conclude $\rho(X)=1$. }

\medskip

For an algebraically closed field $\F$ of characteristic two and 
$k :=\mathbb F(s_0, s_1, s_2, s_3)$, the same equation 
%Assume $p=3$. 
%Set  and 
\[
X:= \{ s_0z^2 + s_1y^3 + s_2x_0^6 + s_3x_1^6 =0\} \subset 
\P(1, 1, 2, 3)_{[x_0: x_1: y: z]}. 
\]
gives an example satisfying (1)-(4). 
We omit the proof, as it is identical to the above. 
%The same equation gives an example in characteristic 2. 
\end{ex}

\begin{ex}[$p=2, \rho(X)=1, K_X^2=2, (X_{\overline{k}})^N \simeq \mathbb{P}(1,1,2) $]\label{e 1-2 112}
Let $\F$ be an algebraically closed field of characteristic two. Set $k:= \mathbb{F}(s,t)$ and
\[
X \coloneqq \{ f\coloneqq y^2+x_0^3x_1+sx_1^4+tx_2^4=0 \} \subset \mathbb{P}(1,1,1,2)_{[x_0:x_1:x_2:y]}.
\]
%Let $X_{\overline k}$ be the normalisation of $X_{$
In what follows, we prove the following. 
\begin{enumerate}
\item $X$ is regular. 
\item $X$ is geometrically integral.  
\item $X$ is not geometrically normal. 
\item $(X_{\overline{k}})^N \simeq \mathbb{P}(1,1,2)$ for the normalisation 
$(X_{\overline{k}})^N$ of $X_{\overline k}$. 
\item $\rho(X) =1$ and $K_X^2 =2$. 
\end{enumerate}

Let us show (1). 
By $[0:0:0:1] \not\in X$, 
%$X$ is an effective Cartier divisor on $\P(1, 1, 1, 2)$ and 
we have $X \subset D_+(x_0) \cup D_+(x_1) \cup D_+(x_2)$. 
By \autoref{r Jacobian}, 
it is enough to prove 
\[
\{ f = \partial_{x_0}f = \partial_{x_1}f = \partial_{x_2}f = \partial_y f 
 = \partial_s f  = \partial_t f =0\} = \emptyset. 
\]
This follows from $\partial_sf = x_1^4, \partial_t f = x_2^4, \partial_{x_1}f = x_0^3$, and  $f = y^2+x_0^3x_1+sx_1^4+tx_2^4$. 
Thus (1) holds.

% We use the Jacobian criterion.
% As $\partial_t(f)=x_1^4, \partial_s(f)=x_2^4, \partial_{x_1}=x_0^3$ we obtain from $f=0$ that $y^2=0$ as well, which implies that the locus of non-regularity is empty.{\cyan Why this projective Jacobian criterion is applicable?} 

Let us show (2) and (3). 
Over $\overline{k}$, we get %the following change of coordinates.
\[
f= y^2+x_0^3x_1+sx_1^4+tx_2^4
= (y + s^{1/2}x^2_1 + t^{1/2}x_2^2)^2 + x_0^3x_1, 
\]
% y^2+x_0^3x_1+(t^{\frac{1}{4}}x_1+s^{1}{4}x_2)^4=y^2+x_0^3x_1+x_2'^{4}=(y+x_2'^{2})+x_0^3x_1=y'^2+x_0^3x_1
% $$y^2+x_0^3x_1+(t^{\frac{1}{4}}x_1+s^{1}{4}x_2)^4=y^2+x_0^3x_1+x_2'^{4}=(y+x_2'^{2})+x_0^3x_1=y'^2+x_0^3x_1,$$
and hence 
\[
X_{\overline{k}} \simeq \{ g := y^2+x_0^3x_1=0 \} \subset \mathbb{P}(1,1,1,2)_{[x_0:x_1:x_2:y]}.
\]
Since the right hand side is contained in $D_+(x_0) \cup D_+(x_1) \cup D_+(x_2)$, 
we may apply Remark \ref{r Jacobian} again. 
%By Remark \ref{r Jacobian}
%From the Jacobian criterion (cf. Proposition \ref{p Jacobian}), 
We have 
$(\partial_{x_0}g, \partial_{x_1}g,\partial_{x_2}g,\partial_{y}g)=(x_0^2x_1,x_0^3,0,0)$. 
In particular, $[1:0:0:0]$ is a smooth point of $X$, and hence $X_{\overline{k}}$ is integral. 
Thus (2) holds. 
At the same time, the non-smooth locus 
$(X_{\overline k})_{\text{non-reg}}$ of $X_{\overline k}$ is given by 
$(X_{\overline k})_{\text{non-reg}} = \{ x_0=y=0\}$, and hence one-dimensional. 
Thus (3) holds. 

Let us show (4). 
It is enough to show that the morphism 
\[
\nu \colon \mathbb{P}(1,1,2)_{[z_0:z_1:w]} \to X_{\overline{k}}, \qquad 
[z_0:z_1:w] \mapsto [z_0^2: z_1^2 : w : z_0^3z_1]
\]
is the normalisation of $X_{\overline k}$. 
For a closed point $[a_0 : a_1 : a_2: b] \in X_{\overline k}$, 
it is easy to see that $\nu^{-1}([a_0 : a_1 : a_2: b])$ is a finite set, and hence $\nu$ is a finite morphism. 
We have $\nu^{-1}(D_+(x_1)) = D_+(z_1)$, and the induced morphism 
between these charts is given as follows: 
\[
D_+(z_1) = \mathbb A^2_{\overline k} = \Spec \overline k [z_0, w] 
\to \Spec \overline k[x_0, x_2, y]/(y^2 + x_0^3), \qquad (z_0, w) \mapsto 
(z_0^2, w, z_0^3). 
\]
Hence $\nu$ is birational. 
Thus (4) holds. 

Let us show (5). By $\rho(X) \leq \rho( (X_{\overline k})^N) =1$, 
we get $\rho(X)=1$. 
It holds that $K_X^2 =2$ by adjunction formula. 
Thus (5) holds. 
% \vspace{30mm}
% $\nu \colon \mathbb{P}(1,1,2) \to X_{\overline{k}}$ is given by $$\nu([x_0:x_1:y])=[x_0^2:x_1^2:y:x_0^3x_1].$$
% Looking at the affine chart $(x_1 \neq 0)$, it is easy to see it is the normalisation. As $\nu$ is finite and birational, it is the normalisation.
\end{ex}

\setcounter{equation}{0}

\begin{ex}[$p=2, \rho(X)=1, K_X^2=2$, $(X_{\overline k})^N \simeq \P^1 \times \P^1$]\label{e 1-2}
Let $\F$ be an algebraically closed field of characteristic two. 
Set $k :=\mathbb F(s_0, s_1, s_2, t)$ and 
\[
X:= \{  y^2 + t x_0^2y + s_0x_0^4 + s_1x_1^4 + s_2x_2^4 =0\} \subset 
\P(1, 1, 1, 2)_{[x_0:x_1:x_2: y]}. 
\]
We have $[0:0:0:1] \not\in X$ for the unique singular point $[0:0:0:1]$ of $\P(1, 1, 1, 2)$. 
Hence $X \subset D_+(x_0) \cup D_+(x_1) \cup D_+(x_2)$. In what follows, we prove the following. 
\begin{enumerate}
\item $X$ is regular. 
\item $X$ is geometrically integral. 
\item $X$ is not geometrically normal. 
\item $(X_{\overline k})^N \simeq \P^1_{\overline k} \times \P^1_{\overline k}$. 
\item $\rho(X)=1$ and $K_X^2=2$.
\end{enumerate}

%The assertion (1) follows from the Jacobian 

Let us show (1). 
By $X \subset D_+(x_0) \cup D_+(x_1) \cup D_+(x_2)$, 
we may apply \autoref{r Jacobian}. 
Then $X$ is regular by $\{ f = \partial_{s_0}f = \partial_{s_1}f =\partial_{s_2}f =0\} = \emptyset$. 
Thus (1) holds.

% and (2). 
% We have 
% \[
% D_+(x_0) = \{ y^2 + t y + s_0 + s_1x_1^4 + s_2x_2^4 =0\} \subset 
% \A^3_{(x_1, x_2, y)}. 
% \]
% We see that this is regular by applying the Jacobian criterion 
% \stacksproj{07PF} for the derivation $\partial_{s_0}$. 
% Moreover, there is a smooth point by using $\partial_y$. 
% We have 
% \[
% D_+(x_1) = \{ y^2 + tx_0^2y + s_0x_0^4 + s_1 + s_2x_2^4 =0\} \subset 
% \A^3_{({\cyan x_0}, x_2, y)}. 
% \]
% This is regular by the Jacobian criterion with $\partial_{s_1}$. 
% By symmetry of $x_1$ and $x_2$, %we conclude.  Hence 
% $X$ is regular, i.e., (1) holds. 
% Moreover (2) holds, because we have shown that $X$ contains a smooth point. 

Let us show (2) and (3). 
%that $X$ is geometrically integral and not geometrically normal. 
%Since we have checked that $X$ has a smooth point, $X$ is geometrically integral. 
After taking a suitable coordinate change, we have 
\[
X_{\overline k} = \{ g:= y^2 + x_0^2y + x_1^4 =0\} \subset 
\P(1, 1, 1, 2)_{[x_0: x_1: x_2: y]}. 
\]
By $X_{\overline k} \subset D_+(x_0) \cup D_+(x_1) \cup D_+(x_2)$, 
we may apply \autoref{r Jacobian}, and hence 
the non-smooth locus $(X_{\overline k})_{\text{non-reg}}$ of $X_{\overline k}$ is given by 
\[
(X_{\overline k})_{\text{non-reg}} = \{ g = \partial_{x_0}g = \partial_{x_1}g =\partial_{x_2}g = \partial_y g =0\} 
= \{ x_0 = y+x_1^2= 0\}, 
\]
which is one-dimensional. Hence (2) and (3) holds. 
%Hence $(X_{$

% On 
% \[
% D_+(x_2) = \{ y^2 + tx_0^2y + x_1^4 =0\} \subset 
% \A^3_{(x_0, x_1, y)}, 
% \]
% the singular locus of $D_+(x_2)$ contains the curve $\{x_0=  y+x_1^2 =0\}$. 
% Hence (3) holds. %$X$ is not geometrically normal. 

{ %The structure of $X_{\overline k}^N$: 
Let us show (4). 
We have 
\[
X_{\overline k} \cap \{ x_1 =0\} = \{x_1 = y(y+tx_0^2)=0\}, 
\]
which is reducible. 
Hence $-K_{X_{\overline k}} \sim \MO_{\P(1, 1, 1, 2)}(1)|_{X_{\overline k}} \sim \Gamma_1 + \Gamma_2$ for 
distinct prime divisors $\Gamma_1, \Gamma_2$ on $X_{\overline k}$. 
Here $\Gamma_1 + \Gamma_2$ is an effective Cartier divisor on $X_{\overline k}$, whilst each $\Gamma_i$ is not necessarily Cartier.  
For 
%the induced composite morphism $f : (X_{\overline k})^N \to X_{\overline k} \to X$
the conductor $C \subset (X_{\overline k})^N$ of 
the normalisation $\nu : (X_{\overline k})^N \to X_{\overline k}$, we get 
\[
-K_{(X_{\overline k})^N} \sim C-\nu^*K_{X_{\overline k}} 
\sim C + \nu^*(\Gamma_{1}+\Gamma_{2}), 
\]
where $\nu^*(\Gamma_{1}+\Gamma_{2})$ is an effective Cartier divisor 
on $(X_{\overline k})^N$ 
which contains at least two prime divisors 
$\Gamma'_1$ and $\Gamma'_2$, 
where each $\Gamma'_i$ is a prime divisor contained in 
$\nu^{-1}(\Gamma_i)$. 
Therefore, we get 
\begin{equation}\label{e1 e 1-2}
    -K_{(X_{\overline k})^N} \sim C + \Gamma'_1 + \Gamma'_2 + (\text{an effective Weil divisor}). 
\end{equation}
Recall that either  $(X_{\overline k})^N \simeq \P^1 \times \P^1$ or 
$(X_{\overline k})^N \simeq  \P(1, 1, 2)$ (\autoref{thm: class_dP}). 
Then this linear equivalence (\ref{e1 e 1-2}) 
is impossible for the latter case $(X_{\overline k})^N \simeq \P(1, 1, 2)$ (because %the $\Q$-factorial index of $\P(1, 1, 2)$ is $2$, and 
we have 
$-K_{\P(1, 1, 2)} \sim 2L$ and 
$\text{Cl}(\P(1, 1, 2)) = \Z L$
for some prime divisor $L$ on $\P(1, 1, 2)$,  
where $\text{Cl}(\P(1, 1, 2))$ denotes the divisor class group). 
Therefore, $(X_{\overline k})^N \simeq \P^1 \times \P^1$, 
i.e., (4) holds. 
}

Let us show (5). 
Since $X$ is a hypersurface in $\P(1, 1, 1, 2)$ of degree $4$, 
we have $K_X^2=2$. 
Moreover, we get $\rho(X)=1$ by \autoref{p pic surje} below. 
In what follows, let us check that \autoref{p pic surje} is actually applicable. Recall that the weighted projective space $Y := \P_{\F}(1, 1, 1, 2)$  over $\F$ is $\Q$-factorial. 
Let 
\[
\varphi \colon Y = \P_{\F}(1, 1, 1, 2) 
\to \P^4_{\F}
\]
be the $\F$-morphism induced by the base point free linear system $\Lambda$ on $\P_{\F}(1, 1, 1, 2)$ 
that is generated by $y^2, x_0^2y, x_0^4, x_1^4, x_2^4$. 
The universal family $Y^{{\rm univ}}_{\varphi}$ parametrising the members of $\Lambda$ is given by 
\[
Y^{{\rm univ}}_{\varphi} := \{ u_0 y^2 + u_1 x_0^2y +  u_2  x_0^4 + u_3  x_1^4 + u_4 x_2^4=0\} 
\hookrightarrow Y \times_{\F} \P^4_{\F, [u_0: \cdots : u_4]}. 
\]
Then the generic member $Y^{\gen}_{\varphi}$ of $\varphi$, 
in the sense of \cite[Definition 4.3]{Tan24}, is nothing but the generic fibre 
of the induced morphism $Y^{{\rm univ}}_{\varphi} \to \P^4_{\F, [u_0: \cdots : u_4]}$. Hence the following holds for $v_i :=u_i/u_0$: 
\[
Y^{\gen}_{\varphi} \simeq 
\{ y^2 + v_1 x_0^2y +  v_2  x_0^4 + v_3  x_1^4 + v_4 x_2^4=0\} 
\simeq X. 
\]

\begin{comment}
\FB{I added the variables in $\mathbb{P}^4_\mathbb{F}$. I am a bit confused, I suggest the sentence below:
Let $$\mathcal{X}=\left\{ ty^2 + ux_0^2y + s_0x_0^4 + s_1x_1^4 + s_2x_2^4 =0 \right\} \subset \P_{\F}(1, 1, 1, 2) \times  \P^4_{\F, [t:u:s_0:s_1:s_2]},$$
be the universal family of the linear system  $\Lambda$ on $\P_{\F}(1, 1, 1, 2)$ 
that is generated by $y^2, x_0^2y, x_0^4, x_1^4, x_2^4$.
As $\mathcal{X}$ is an ample divisor in $\P_{\F}(1, 1, 1, 2) \times  \P^4_{\F}$, by Lefschetz's hyperplane theorem we have that $\Pic(\mathcal{X}) \simeq \Pic(\mathbb{P}(1,1,1,2) \times \mathbb{P}^4) \simeq \mathbb{Z}^2.$
As $X$ is the generic member %on $\P_{\F}(1, 1, 1, 2)$ 
of the $\F$-morphism given by projection $\mathcal{X} \to \P^4_{\F, [t:u:s_0:s_1:s_2]}$, we can apply \autoref{p pic surje} below (as $\mathcal{X}$ is regular) to conclude that $\rho(X)=1$.}
\end{comment}
\end{ex}

\begin{prop}\label{p pic surje}
Let $\F$ be a field and let $Y$ be a projective normal $\Q$-factorial variety over $\F$. 
Let $\varphi : Y \to \P^n_{\F}$ be an $\F$-morphism, where $n$ is a non-negative integer. 
%Set $\kappa := K((\P^N)^*)$. 
Then the map 
\[
\beta^* : \Pic Y \otimes_{\Z} \Q \to \Pic Y^{\gen}_{\varphi} \otimes_{\Z} \Q, \qquad L \mapsto \beta^*L
\]
is surjective, where 
$Y^{\gen}_{\varphi}$ is the generic member of $\varphi$  and 
$\beta : Y^{\gen}_{\varphi} \to Y$ denotes the induced morphism 
\cite[Definition 4.3]{Tan24}. 
\end{prop}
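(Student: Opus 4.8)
The plan is to realise $Y^{\gen}_{\varphi}$ as the generic fibre of the universal family and to exploit that this family is a projective bundle over $Y$. Write $W := Y^{{\rm univ}}_{\varphi}$ for the universal family of members of the base point free linear system defining $\varphi$, and let $p : W \to Y$ and $q : W \to \P^n_{\F}$ be the two projections, the target of $q$ being the parameter space of members (i.e. the dual of the target $\P^n_{\F}$ of $\varphi$). By construction $Y^{\gen}_{\varphi}$ is the generic fibre $W_{\eta} := q^{-1}(\eta)$ and $\beta = p|_{W_{\eta}}$. Since $\varphi$ is a morphism, $W$ is the point--hyperplane incidence $\{(y,H) : \varphi(y) \in H\}$, which is the base change along $\varphi$ of the incidence variety over $\P^n_{\F}$; hence $p : W \to Y$ is a Zariski-locally trivial $\P^{n-1}$-bundle.

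First I would record two consequences of this projective-bundle structure. As $Y$ is normal and $\Q$-factorial and these properties are inherited by projective bundles, $W$ is normal and $\Q$-factorial, and the projective bundle formula gives
\[
\Pic W \otimes_{\Z} \Q = p^*(\Pic Y \otimes_{\Z} \Q) \oplus \Q\,\xi,
\]
where $\xi$ denotes the relative hyperplane class. Moreover each fibre $p^{-1}(y) \simeq \P^{n-1}$ is embedded linearly into the parameter space under $q$, so $\xi$ and $q^*\MO_{\P^n_{\F}}(1)$ both restrict to $\MO(1)$ on every $p$-fibre; thus they carry the same $\xi$-coefficient and differ by a class $p^*N$ pulled back from $Y$. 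In particular $\Pic W \otimes_{\Z} \Q$ is generated by $p^*(\Pic Y \otimes_{\Z}\Q)$ together with $q^*\MO_{\P^n_{\F}}(1)$.

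Next I would show that the restriction map $\Pic W \otimes_{\Z} \Q \to \Pic W_{\eta} \otimes_{\Z} \Q$ is surjective. Writing $W_{\eta} = \varprojlim_{U} q^{-1}(U)$ over the nonempty open subsets $U \subseteq \P^n_{\F}$, any line bundle on $W_{\eta}$ extends to $q^{-1}(U)$ for some $U$ by spreading out, and any isomorphism likewise, whence $\Pic W_{\eta} = \varinjlim_{U} \Pic q^{-1}(U)$. Each $q^{-1}(U)$ is open in the $\Q$-factorial variety $W$, hence is itself $\Q$-factorial, and the restriction $\Cl W \to \Cl q^{-1}(U)$ is surjective since every prime divisor on $q^{-1}(U)$ has a closure in $W$. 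Using the identification $\Pic(-)\otimes_{\Z}\Q \simeq \Cl(-)\otimes_{\Z}\Q$ afforded by $\Q$-factoriality and passing to the colimit gives the claimed surjectivity.

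Finally I would combine the two ingredients. Under restriction to $W_{\eta}$ a class $p^*L$ maps to $\beta^*L$, whereas $q^*\MO_{\P^n_{\F}}(1)$ restricts trivially, because $q|_{W_{\eta}}$ factors through $\Spec \kappa(\eta) = \eta \hookrightarrow \P^n_{\F}$ and a line bundle pulled back from a single point is trivial. Therefore the image of the restriction map coincides with $\im(\beta^*)$; since the restriction map is surjective onto $\Pic W_{\eta} \otimes_{\Z}\Q$, so is $\beta^*$. The step I expect to be the main obstacle is the surjectivity onto $\Pic W_{\eta}\otimes_{\Z}\Q$: it hinges on the $\Q$-factoriality of the total space $W$ — rather than of the generic member itself, which may well be singular and non-$\Q$-factorial — together with the compatibility of $\Pic$ with the filtered limit defining the generic fibre.
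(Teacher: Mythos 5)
Your argument is correct and is, in substance, the proof the paper appeals to: the paper's own proof is a one-line reduction to \cite[Proposition 5.17]{Tan24} (applied after replacing $\Pic(-)$ by $\Pic(-)\otimes_{\Z}\Q$), and that argument runs exactly along your lines --- the universal family is a Zariski-locally trivial $\P^{n-1}$-bundle over $Y$, hence normal and $\Q$-factorial, restriction to the generic fibre of $q$ is surjective on $\Pic\otimes_{\Z}\Q$ by spreading out plus excision, and $q^*\MO_{\P^n_{\F}}(1)$ becomes trivial on that fibre. The only technical nit is that in the limit step the colimit should be taken over nonempty \emph{affine} opens $U\subseteq\P^n_{\F}$ (which are cofinal among all nonempty opens), so that the transition morphisms are affine, as required for the identification $\Pic(\varprojlim_U q^{-1}(U))=\varinjlim_U\Pic(q^{-1}(U))$.
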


\begin{proof}
The assertion holds by applying the same argument as in  
\cite[Proposition 5.17]{Tan24}  
after replacing $\Pic (-)$ by $\Pic (-) \otimes_{\Z} \Q$. 
\end{proof}

\begin{ex}[$p=3, \rho(X)=1, K_X^2=3$]\label{e 1-3}
%{\cyan This example is very detailed. We should give the details for the 1st example, then this example should omit the details.}
Let $\F$ be an algebraically closed field of characteristic three. 
%Assume $p=3$. 
%Set $k :=\mathbb F(s_0, s_1, s_2, t)$ and 
%Set $k :=\mathbb F(s_0, s_1, s_2, t)$ and 
%Assume $p=3$. 
Set $k :=\mathbb F(s, t)$ and 
\[
X:= \{ x^2y + xy^2 + sz^3 + tw^3 =0\} \subset \P^3_{[x: y: z: w]}. 
\]
In what follows, we prove the following. 
\begin{enumerate}
\item $X$ is regular. 
\item $X$ is geometrically integral.  
\item $X$ is not geometrically normal. 
\item $\rho(X)=1$ and $K_X^2=3$.
\end{enumerate}

Let us show (1). 
We apply the Jacobian criterion (Proposition \ref{p Jacobian}). 
The equation 
$(\partial_sf, \partial_tf)=(0, 0)$ implies $z=w=0$. 
This, together with $\partial_x f  =  \partial_y f=0$, 
implies $x^2y + xy^2 = 2xy + y^2 = x^2 +2xy=0$. 
In particular, we get $x \neq 0$ and $y \neq 0$, and hence 
 the equation
 $x^2y + xy^2 = 2xy + y^2 = x^2 +2xy=0$ is equivalent to 
 $x +y = 2x +y = x +2y=0$, which has no solution except for $(x, y) =(0, 0)$.  
 Thus (1) holds. 
 %it is enough to show that the equation 

%the non-regular locus $X_{\text{non-reg}}$ of $X$ is given by 
% \[
% X_{\text{non-reg}} = \{ f= \partial_sf= \partial_tf = \partial_xf = \partial_yf = \partial_zf = \partial_wf=0\} 
% = \{  z = w= x^2y + xy^2 = 2xy +y^2 = x^2 +2xy = 0\} 
% \]
% By symmetry, it is enough to show that  the 
% affine open subsets $D_+(x)$ and $D_+(z)$ are regular. 
% %The locus $D_+(z)$ is given as follows by substituting $z=1$:
% We have 
% \[
% D_+(z) = \{ x^2y + xy^2 + s+tw^3 =0\} \subset \A^3_{(x, y, w)} = \Spec \F(s, t)[x, y, w]. 
% \]
% By the Jacobian criterion using $\partial_s$, 
% % for smoothness, 
% % \[
% % \{ x^2y + xy^2 + s+tw^3 =0\} \subset \A^5_{\F} = \Spec \F[x,y,w,s,t]
% % \]
% % is smooth. 
% % Therefore, a localisation 
% $D_+(z)$  is regular. 
% By symmetry, $D_+(w)$ is regular. 
% Therefore, a non-regular point is contained in 
% the locus $\{ z = w=0\}$. 

% We have 
% %The locus $D_+(x)$ is given as follows by substituting $z=1$:
% \[
% D_+(x) = \{ y + y^2 + sz^3+tw^3 =0\} \subset \A^3_{(y, {\cyan z}, w)} = \Spec \F(s, t)[y, {\cyan z}, w]. 
% \]
% Since a non-regular point is contained in $\{z=w=0\}$, 
% a non-regular point satisfies the equations 
% $y+y^2 =0$ and $1+2y=0$, which is impossible. 
% Thus (1) holds. 

Let us show (2) and (3). 
After taking the base change to $\overline k$, we get 
\[
X_{\overline k} %\simeq 
=\{ g := x^2 y + xy^2 +z^3 =0 \} \subset \P^3_{[x: y: z: w]}. 
\]
By the Jacobian criterion (Proposition \ref{p Jacobian}), the singular locus $(X_{\overline k})_{\text{non-reg}}$ of $X_{\overline k}$ is given by 
\[
(X_{\overline k})_{\text{non-reg}} = \{  x^2 y + xy^2 +z^3 =2xy +y^2 =2xy +x^2=0\}. 
\]
%Since $X$ is a regular projective variety with $H^0(X, \MO_X)=k$, $X_{\overline k}$ is irreducible. 
% On $D_+(w)$, the affine equation is given by %We have 
% \[
% D_+(w) =
% \{ g:= x^2 y + xy^2 +z^3 =0 \} \subset \mathbb A^3_{(x, y, z)}. 
% \]
% The Jacobi matrix is as follows:  
% \[
% (\partial_xg, \partial_yg, \partial_zg) =(y^2+2xy, x^2+2xy, 0). 
% \]
Therefore, the singular locus $(X_{\overline k})_{\text{non-reg}}$ 
of $X_{\overline k}$ contains the line $\{x=y=z\}$, 
%\textcolor{red}{$(0,1,0)$ is a point in the surface with non-vanishing jacobian. The line $(x-y=x-z=0)$ should work?}{\cyan AGREE!} 
and hence $X_{\overline k}$ is not normal. 
Thus (3) holds.  
 In order to show (2), it is enough to
 %prove that $X_{\overline k}$ is reduced. 
% Since $X_{\overline k}$ is Cohen-Macaulay, it suffices to 
find a smooth point of $X_{\overline k}$. 
Pick $\alpha \in \overline k$ such that 
$\alpha^2 + 2\alpha \neq 0$. 
Then $X_{\overline k}$ is smooth at the point $[x:y:z:w] = [\alpha :1:\beta:1]$, 
where $\beta$ is the solution of the equations $x^2y +xy^2 + z^3 =0$ and $(x, y) = (\alpha, 1)$ (i.e., $\beta := (-(\alpha^2+\alpha))^{1/3}$). 
%[x : y : z: w]=
Thus (2) holds.  
Since $X$ is a cubic surface in $\P^3_k$, we get $K_X^2=3$. 
It follows from \autoref{thm: class_dP} that $\rho(X)=1$. 
Thus (4) holds.
\end{ex}

% \begin{rem}
% By a similar idea to the above, we may further assume $\rho(X)=1$. 
% Assume $p=3$. Set $k :=\mathbb{F}(s_1, s_2, t_1, t_2, t_3, t_4)$.  %\FB{$\mathbb{F}_3(s_1, s_2, t_1, t_2, t_3, t_4)$}
% \[
% X:= \{ s_1x^2y + s_2xy^2 + t_1 x^3 + t_2y^3 + t_3z^3 + t_4w^3 =0\} \subset \P^3_{x, y, z, w}. 
% \]
% is a generic member of a base point free linear system, 
% and hence $\rho(X)=1$  
% \cite[Proposition 5.17]{Tan24}. 
% Alternatively, for 
% \[
% \mathcal{X} :=
% \{ s_1x^2y + s_2xy^2 + t_1 x^3 + t_2y^3 + t_3z^3 + t_4w^3 =0\} 
% \subset \mathbb{P}^3_{\mathbb{F}} \times_{\mathbb F} \A^6_{\mathbb{F}}
% \]
% is a smooth ample divisor. 
% By the Grothendieck-Lefschetz theorem implies $\rho(X)=2$, 
% and hence its generic fibre $X=\mathcal{X}_{k}$  satisfies $\rho(X) =1$ \cite[Lemma 6.6]{Tan18b}. 
% \end{rem}

\begin{ex}%[$\rho(X)=1, K_X^2 =4$]
[$p=2, \rho(X)=1, K_X^2=4$]\label{e 1-4}
%{\cyan This example is not correct.}
Let $\F$ be an algebraically closed field of characteristic two. 
Set $k :=\mathbb F(s_1, s_2, s_3, s_4, t_1, t_2, t_3, t_4)$ and  
%For $q := sx^2+tw^2 +uv^2$, 
\[
X:= \{ x_0x_1 + %{\blue s_0}x_0^2 +
s_1x_1^2+s_2x_2^2 +s_3x_3^2 +s_4x_4^2   
=  x_0x_2+ %{\blue t_0}x_0^2 +
t_1x_1^2+t_2x_2^2 +t_3x_3^2 +t_4x_4^2=0\} \subset 
\P^4_{[x_0: x_1: x_2: x_3: x_4]}. 
\]
In what follows, we prove the following.  
\begin{enumerate}
\item $X$ is regular. 
\item $X$ is geometrically integral. 
\item $X$ is not geometrically normal. % and 
\item  $(X_{\overline k})^N \simeq \P^2_{\overline k}$, $\rho(X)=1$, 
and $K_X^2=4$. 
%In particular, $\rho(X) = \rho(X_{k^{\sep}})=1$. 
\end{enumerate}

Let us show (1) and (2).  
%By symmetry, it is enough to prove that $D_+(x_0)$ and $D_+(x_1)$ are regular. 
We have 
\[
D_+(x_0)=\{ x_1 + s_1x_1^2+s_2x_2^2 +s_3x_3^2 +s_4x_4^2   
=  x_2+ t_1x_1^2+t_2x_2^2 +t_3x_3^2 +t_4x_4^2=0\} \subset 
\A^4_{(x_1, x_2, x_3, x_4)}. 
\]
By the Jacobian criterion, $D_+(x_0)$ is smooth over $k$. 
We have 
\[
D_+(x_1)=\{ x_0 + s_1+s_2x_2^2 +s_3x_3^2 +s_4x_4^2   
=  x_0x_2+ t_1 +t_2x_2^2 +t_3x_3^2 +t_4x_4^2=0\} \subset 
\A^4_{({x_0}, x_2, x_3, x_4)}. 
\]
By the Jacobian criterion using $\partial_{s_1}$ and 
 $\partial_{t_1}$, 
$D_+({x_1})$ is regular. 
We can check that {$D_+(x_2)$}, $D_+(x_3)$, and $D_+(x_4)$ are regular 
in a similar way. 
Thus (1) holds. 
As we have shown that $X$ has a smooth point, 
(2) holds. 

Let us show (3). 
We have 
\[
X_{\overline k} \simeq  
\{ x_0x_1 + x_3^2=  x_0x_2+x_4^2=0\} \subset 
\P^4_{[x_0: x_1: x_2:x_3: x_4]}. 
\]
We get 
\[
D_+(x_1) = \{ x_0 + x_3^2 = x_0x_2 + x_4^2 =0\} \subset \A^4_{(x_0, x_2, x_3, x_4)}. 
\]
We have 
\[
\Gamma(D_+(x_1), \MO_{X_{\overline k}}) \simeq \frac{\overline{k}[x_0, x_2, x_3, x_4]}{(x_0 + x_3^2, x_0x_2 + x_4^2)} \simeq 
\frac{\overline{k}[x_2, x_3, x_4]}{(x_2x_3^2 + x_4^2)}. 
\]
By Jacobian criterion, the singular locus of $\{ x_2x_3^2 +x_4^2 =0\} \subset \A^3_{(x_2, x_3, x_4)}$ 
contains the affine line $\{ x_3 = x_4=0\}$. 
% Then  it is easy to see that $D_+(x_1)$ is not normal.  
% The Jacobian  matrix is given by 
% \[
% J := \begin{pmatrix}
% x_1 & x_0 & 0 & 0 \\
% x_2 & 0 & x_0 & 0
% \end{pmatrix}. 
% \]
% Hence $\rank J \leq 1$ on $\{x_0=0\}$. 
% Therefore,  the affine line  
% $\{x_0 = x_3 = x_4=0\} \subset  \A^4_{(x_0, x_1, x_2, x_4)}$ is contained in the singular locus of $X_{\overline k}$. 
%Moreover, $[1:1:1: 1:1]$ is a smooth point. 
Thus (3) holds. 
%Hence $X$ is not geomegeometrically integral, if $X$ is regular. 

Let us show (4). 
We work over $\overline k$. 
Consider the morphism 
\[
f : \P^2_{[u:v:w]} \to \P^4_{[x_0 : x_1:x_2:x_3:x_4]}, 
\quad [u:v:w] \mapsto [u^2:v^2:w^2:uv:uw]. 
\]
It is clear that $f$ factors through $X_{\overline k} =   
\{ x_0x_1 + x_3^2=  x_0x_2+x_4^2=0\}$: 
\[
f: \P^2_{[u:v:w]} \xrightarrow{g} X_{\overline k} \hookrightarrow \P^4_{[x_0 : x_1:x_2:x_3:x_4]}. 
\]
In order to prove $(X_{\overline k})^N \simeq \P^2_{\overline k}$, it is enough to show that $g$ is birational. 
This follows from 
$(\MO_{\P^4}(1)|_{X_{\overline k}})^2 = \deg X_{\overline k} = 4$ 
and $(f^*\MO_{\P^4}(1))^2 = (\MO_{\P^2}(2))^2=4$. 
Hence ${(X_{\overline k})^N} \simeq \P^2_{\overline k}$. 
In particular, $\rho(X)=1$. 
Since $X$ is a complete intersection of two quadrics, we get $K_X^2=4$. 
Thus (4) holds.

% Let %which corresponds to 
% \[
% \varphi : k[x_0, x_1, x_2, x_3, x_4] \to k[u, v, w]
% \]
% be the graded $k$-algebra homomorphism corresponding to $j$. 
% We have $\Ker \varphi \supset (x_0x_1+x_3^2, x_0x_2 +x_4^2)$. 
% Since $k[x_0, x_1, x_2, x_3, x_4]/ (x_0x_1+x_3^2, x_0x_2 +x_4^2)$ is a $3$-dimensional integral domain, 
% we obtain 
% \[
% \varphi : k[x_0, x_1, x_2, x_3, x_4] \to 
% k[x_0, x_1, x_2, x_3, x_4]/ (x_0x_1+x_3^2, x_0x_2 +x_4^2)\xrightarrow{\simeq}
% k[u, v, w]. 
% \]
% Therefore, 
% \[
% j: \P^2_{[u:v:w]}
% \]

% Consider 

% (4) 
% We have $P := [1:0:0:0:0] \in X(k)$. 
% By $K_X^2=4$ and {\cyan REF}, $-K_Y$ is ample for the blowup $\sigma : Y \to X$ at $P$. 
% Then we get $(X_{\overline k})^N \simeq \P^2_{\overline k}$. 
% Thus (4) holds. 
% Generic member: WMA $\rho(X)=1$. 
% Then $\Pic X = \Z K_X$. Hence $-K_Y$ is ample. 

% % it is enough to find a $k$-rational point, 
% % which is given 

% % {\blue DROP BLUE. } 
% % $x_0x_1 \cdot x_0 x_2 = x_3^2 x_4^2$. 
% % So $x_1x_2 = y^2$. 

% % $X$ has a $k$-rational point 
% % \[
% % \P^2_{[u:v:w]} \to \P^4_{[x_0:x_1:x_2:x_3:x_4]}, \qquad 
% % [u:v:w] \mapsto 
% % \]
\end{ex}

% \begin{ex}
% {\cyan This example is not correct.}
% Assume $p=2$. Set $k :=\mathbb F(s, t, u)$. 
% For $q := sx^2+tw^2 +uv^2$, 
% \[
% X:= \{ xy + zw + q = yz +q =0\} \subset \P^4_{x, y, z, w, v}. 
% \]
% Let us show that 
% \begin{enumerate}
% \item $X$ is regular, 
% \item $X$ is geometrically integral, and 
% \item $X$ is not geometrically normal. 
% \end{enumerate}

% We have 
% \[
% X_{\overline k} = \{ xy + zw + q = yz +q =0\} \subset \P^4_{x, y, z, w, v}. 
% \]
% where $q = (\sqrt{s} x + \sqrt{t}w + \sqrt{u}v)^2$. 
% By Jacobian criterion, $D_+(y)$ is smooth. 
% By symmetry, $D_+(z)$ is smooth. 
% Let us show that $X_{\overline k}$ is singular along the locus $\{ y = z = q=0\}$. 
% We may assume that $q=v^2$. 
% $D_+(w)$ is given by 
% \[
% \{ xy +z +v^2 = yz + v^2 =0\}. 
% \]
% Hence Jacobian critesion implies what we want. Thus (3) holds. 

% Let us prove that $X$ is regular. 
% We have $w=1$: 
% \[
% D_+(w) = 
% \{ xy +z +q = yz + q =0\} \subset \A_{x, y, z, v}
% \]
% The Jacobian matrix for variables $x, y, z, v, s, t, u$ is %We have 
% \[
% J = 
% \begin{pmatrix}
% y & x & 1 & 0& x^2 & 1 & v^2\\
% 0 & z & y & 0& x^2 & 1 & v^2\\
% \end{pmatrix}
% \]
% WMA $y=z=0$, and hence rank $2$. 

% It suffices to show the regularity of $D_+(v)$: 
% \[
% D_+(v) = \{ xy +zw +q = yz + q =0\} \subset \A_{x, y, z, w}
% \]
% Then 
% \[
% J = 
% \begin{pmatrix}
% y & x & w & z& x^2 & w^2 & 1\\
% 0 & z & y & 0& x^2 & w^2 & 1\\
% \end{pmatrix}
% \]
% WMA $y=z=0$. However, $[0:0:0:0:1]$ is a singular pt. 
% \end{ex}

\begin{ex}[$p=2, \rho(X)=2, K_X^2=2$]\label{e 2-2} 
Set 
\[
k := \F(s_0, s_1, s_2, t_0, t_1, t_2, u_0, u_1, u_2), 
\]
where $\F$ is  an algebraically closed field of characteristic two. 
Let $X \subset \P(1, 1, 1, 2)_{[x_0:x_1:x_2:y]}$ be the weighted hypersurface of degree $4$ given  by 
\[
X := \{ f:= y^2  + f_2 y + f_4=0 \} \subset \P(1, 1, 1, 2)_{[x_0:x_1:x_2:y]}
%\Spec_Y \bigoplus_{m=0}^{\infty}\mathcal L^{\otimes -m}, 
\]
\[
\text{where}\qquad f_2 := \sum_{i=0}^2 s_ix^2_i \qquad \text{and}\qquad 
f_4 := \left(\sum_{i=0}^2 t_ix_i^2\right)\left(\sum_{i=0}^2 u_ix_i^2\right). 
\]
In particular, each $f_i \in k[x_0, x_1, x_2]$ is a homogeneous polynomial of degree $i$. 

In what follows, we prove the following. 
\begin{enumerate}
\item $X$ is regular. 
\item $X$ is geometrically integral. 
\item $X$ is not geometrically normal.  
\item $K_X^2 =2$ and $\rho(X) = \rho(X \times_k k^{\sep})=2$. 
\end{enumerate}
Recall that $[0:0:0:1] \in \P(1, 1, 1, 2)$ is the unique non-smooth point of 
$\P(1, 1, 1, 2)$. 
Since the defining equation $f$ satisfies $f(0, 0, 0, 1) \neq 0$, 
we have that $X \subset \bigcup_{i=0}^2 D_+(x_i)$. 
In order to show (1), it is enough,  by symmetry of the defining  equation $f$ of $X$ with respect to the variables $x_0, x_1, x_2$, to prove that $X \cap D_+(x_0)$ is regular. 
It holds that 
\[
X \cap D_+(x_0) \simeq \{ y^2 + \wt{f}_2y + \wt{f}_4 =0\} \subset \mathbb A^3_{(x_1, x_2, y)},
\]
\[
\text{where}\qquad \wt{f}_i:= \wt{f}_i(x_1, x_2) := f_i(1, x_1, x_2) \in k[x_1, x_2]. 
\]
By Jacobian criterion using the derivations $\partial_y, \partial_{u_0}, \partial_{t_0}$, 
the non-regular locus of $X \cap D_+(x_0)$ is contained in 
\[
\{   s_0 + s_1 x^2_1 + s_2 x_2^2 =   t_0 + t_1 x^2_1 + t_2 x_2^2 =  u_0 + u_1 x^2_1 + u_2 x_2^2 = 0\},  
\]
which is empty. 
Hence (1) holds. 
In order to show (2) and (3), we apply the Jacobian criterion to $X_{\overline k} \cap D_+({x_0})$. The Jacobian matrix is given by 
\[
(\partial_{x_1}f, \partial_{x_2}f, \partial_{y}f) =  (0, 0, \wt{f}_2). 
\]
Hence the non-smooth locus is given by 
$\{ \wt{f}_2 = y^2 + \wt{f}_4 =0\} \subset \A^3_{(x_1, x_2, y)}$. 
This locus is (at least) one-dimensional, and hence $X$ is not geometrically normal. 
Moreover, $X$ is geometrically integral, as 
$X$ has  a smooth point.  
Hence (2) and (3) hold. 

Let us show (4). 
Recall that $X$ is an effective Cartier divisor on $\P(1, 1, 1, 2)$ which does not pass through the unique singular point of $\P(1, 1, 1, 2)$. 
By the adjunction formula, 
we get $K_X^2 = 2$. 
%(K_{\P(1, 1, 1,  2)} + X)^2 \cdot X  = (-5H + 4H)^2 \cdot (2H) = 2H^3$
It is enough to show $\rho (X) \geq 2$. 
Since $X$ is regular, it suffices to find two one-dimensional closed subschemes $C_1$ and $C_2$ on $X$ satisfying $C_1 \cap C_2 = \emptyset$. 
Set 
\begin{eqnarray*}
C_1 &:=& \{ y  + \sum_{i=0}^2 s_ix_i^2 =  \sum_{i=0}^2 t_ix_i^2 =    0\}\\
C_2 &:=& \{ y  + \sum_{i=0}^2 t_ix_i^2 = y + \sum_{i=0}^2 (s_i+u_i)x_i^2  =    0\}. 
\end{eqnarray*}
It is easy to see that $C_1 \subset X$ and $C_2 \subset X$. 
Then the problem is reduced to showing 
\begin{enumerate}
\item[(i)] $C_1 \cap C_2 = \emptyset$, and 
\item[(ii)] $C_1$ and $C_2$ are one-dimensional.
\end{enumerate}
The assertion (i) follows from 
\begin{eqnarray*}
C_1 \cap C_2 
&=& \{ y  + \sum_{i=0}^2 s_ix_i^2 =  \sum_{i=0}^2 t_ix_i^2 =    0\} 
\cap \{ y  + \sum_{i=0}^2 t_ix_i^2 = y + \sum_{i=0}^2 (s_i+u_i)x_i^2  =    0\}\\
&=& \{ y = \sum_{i=0}^2 s_ix_i^2 =  \sum_{i=0}^2 t_ix_i^2 = 
\sum_{i=0}^2 u_ix_i^2  =    0\} \\
&=& \emptyset, 
\end{eqnarray*}
where the last equality is assured by  the fact that 
the intersection of the corresponding three lines  on $\P^2_{\overline k}$  is empty: 
\[
\{  \sum_{i=0}^2 \sqrt{s_i}x_i = \sum_{i=0}^2 \sqrt{t_i}x_i =\sum_{i=0}^2 \sqrt{u_i}x_i = 0\}  = \emptyset \qquad \text{in}\qquad \P^2_{\overline k} = \P^2_{[x_0:x_1:x_2]}. 
\]
Let us show (ii). 
By symmetry, we only check that $\wt{C}_i := C_i \cap D_+(x_0)$ is one-dimensional for each $i \in \{1, 2\}$. We see that 
$\wt{C}_1$ is one-dimensional by  
\[
\wt{C}_1 = \Spec\, \frac{k[x_1, x_2, y]}{(y + s_0 + s_1x_1^2 +s_2x_2, t_0+t_1x_1^2 +t_2x_2^2)} 
\simeq \Spec\, \frac{k[x_1, x_2]}{(t_0+t_1x_1^2 +t_2x_2^2)}. 
\]
Similarly, $\wt{C}_2$ is one-dimensional. 
This completes the proofs of (ii) and (4). 
\end{ex}

\begin{ex}%[$\rho=2, K^2 = 3$]
[$p=2, \rho(X)=2, K_X^2=3$]\label{e 2-3}
Let $\F$ be an algebraically closed field of characteristic two. 
Set $k :=\mathbb F(s_1, s_2, s_3, s_4, t_1, t_2, t_3, t_4)$ and let 
\[
Y:= \{ x_0x_1 + %{\blue s_0}x_0^2 +
s_1x_1^2+s_2x_2^2 +s_3x_3^2 +s_4x_4^2   
=  x_0x_2+ %{\blue t_0}x_0^2 +
t_1x_1^2+t_2x_2^2 +t_3x_3^2 +t_4x_4^2=0\} \subset 
\P^4_{[x_0: x_1: x_2: x_3: x_4]} 
\]
be the example of \autoref{e 1-4}. 
For $P := [1:0:0:0:0] \in Y(k)$, let $\sigma : X \to Y$ be the blowup at $P$. 
By \autoref{e 1-4}, 
$X$ is regular, geometrically integral and geometrically non-normal. 
By \autoref{l 1-4}{(2)}, $-K_X$ is ample.

% Let us show that $-K_X$ is ample. 
% By $\rho(Y)=1$, we have $\Pic Y = \Z H$ for some ample Cartier divisor $H$. 
% We have $-K_Y \sim rH$ for some integer $r>0$. 
% By \autoref{l rho=1}, it is enough to show that $r=1$. 
% Otherwise, we would obtain $(r, H^2) = (2, 1)$ by $K_Y^2 =4$, 
% which leads to the following contradiction by the Riemann-Roch theorem: 
% \[
% \Z \ni 
% \chi(X, \MO_X(H)) = \chi(X, \MO_X) + 
% \frac{1}{2}H \cdot (H-K_X) 
% \]
% \[
% = 
% \chi(X, \MO_X) + \frac{1}{2}H \cdot (H +2H)  = \chi(X, \MO_X) +\frac{3}{2} \not\in \Z. 
% \]

\end{ex}

\begin{ex}%[$K_X^2 =4, \rho(X)=2$]
[$p=2, \rho(X)=2, K_X^2=4$]\label{e 2-4}
Set $k := \F(t_1, t_2, t_3, t_4)$, 
where $\F$ is  an algebraically closed field of characteristic two. 
Let $\pi : X \to \P^1_{[x:y]} \times \P^1_{[x':y']}$ be the double cover given  by %\FB{where does the below equation lives?}{\cyan I added some details.}
\[
X := \{ w^2  + xyx'^2+ t_1x^2x'^2 + t_2 y^2x'^2 + t_3 x^2y'^2 +t_4 y^2 y'^2=0 \} \subset \Spec_Y \bigoplus_{m=0}^{\infty}\mathcal L^{\otimes -m}, 
\]
which is defined by the same way as the cyclic cover \cite[Definition 2.50]{km-book}. 
Specifically, for $Y :=  \P^1_{[x:y]} \times \P^1_{[x':y']}$, 
$\mathcal L := \MO_{\P^1 \times \P^1}(1, 1)$, 
and 
\[
 s:= xyx'^2+ t_1x^2x'^2 + t_2 y^2x'^2 + t_3 x^2y'^2 +t_4 y^2 y'^2
 \in H^0(Y, \mathcal L^{\otimes 2}),
\]
$X$ is defined by gluing as follows: 
\[
X = \bigcup_{i \in I} \Spec \Gamma(Y_i, \MO_{Y_i})[w_i]/(w_i^2 + s_i)
%xyx'^2+ t_1x^2x'^2 + t_2 y^2x'^2 + t_3 x^2y'^2 +t_4 y^2 y'^2
\]
where $Y = \bigcup_{i \in I} Y_i$ is an open cover trivialising $\mathcal L$, and $s_i \in \Gamma(Y_i, \MO_{Y_i})$ is the element corresponding to $s|_{Y_i}$ via a fixed trivialisation $\mathcal L|_{Y_i} \xrightarrow{\simeq} \MO_{Y_i}$. 
%In particular, $X$ is a closed subscheme in $\Spec_Y \bigoplus_{m=0}^{\infty}\mathcal L^{\otimes -m}$.}

%Then $X$ is regular, because 
Then we have 
\[
D_+(y) \cap D_+(y') = 
\{ f:= w^2  + xx'^2+ t_1x^2x'^2 + t_2 x'^2 + t_3 x^2 +t_4 =0 \} \subset \A_{(x, x', w)}^3. 
\]
This is regular by the Jacobian criterion 
%for smoothness 
using $\partial_{t_4}$. 
Similarly, we can check that $X$ is regular. 
Over $\overline k$, we have 
\[
\partial_xf  =\partial_{x'}f = \partial_wf= 0 
\Leftrightarrow x'^2 =0. 
\]
By the Jacobian criterion for smoothness, 
$X$ has a smooth point and the affine curve 
\[
x' = w^2  +  t_3 x^2 +t_4 =0
\]
is contained in the non-smooth locus of $X_{\overline k}$. 
%(the latter is given by $w + \sqrt{t_3} x + \sqrt{t_4}=0$). 
Therefore, $X$ is geometrically integral but not geometrically normal. 
Since $\pi : X \to \P^1_k \times_k \P^1_k$ 
is a double cover given by 
\[
 xyx'^2+ t_1x^2x'^2 + t_2 y^2x'^2 + t_3 x^2y'^2 +t_4 y^2 y'^2
 \in H^0(\P^1_k \times_k \P^1_k, \MO_{\P^1 \times \P^1}(2, 2)), 
\]
we obtain $K_X \sim \pi^*(K_{\P^1 \times \P^1} + \MO_{\P^1 \times \P^1}(1, 1))$. 
Hence $-K_X$ is ample and 
\[
K_X^2 = (\pi^*\MO_{\P^1 \times \P^1}(-1, -1))^2 
= (\deg \pi) (\MO_{\P^1 \times \P^1}(-1, -1))^2 = 2 \cdot 2 = 4. 
\]
Finally, we have $\rho(X) \geq \rho(\P^1 \times \P^1)= 2$, 
which implies $\rho(X)=2$ (\autoref{thm: class_dP}). 
\end{ex}

\begin{example}%[$K_X^2 =5, \rho(X)=2, p=2$]
[$p=2, \rho(X)=2, K_X^2=5$]\label{e 2-5}
Let $\F$ be an algebraically closed field of characteristic two. 
Set $k:=\F(s, t)$. 
Take the closed point $P \in \A^2_{(x, y)} =D_+(z) \subset \P^2_{[x:y:z]}$ 
given by 
\[
P := \Spec k[x, y]/(x^2+s, y^2+t) \subset \Spec k[x, y] = \A^2_{(x, y)}. 
\]
Let $\sigma : X \to \P^2_{[x:y:z]}$ be the blowup at $P$. 
Then $X$ is a geometrically non-normal geometrically integral regular del Pezzo surface satisfying $K_X^2 =5$ and $\rho(X)=2$ 
\cite[Example 6.23]{BFSZ}. 
%This is explictly given as follows. 
Since $X$ is a pencil generated by the conics $\{x^2 +sz^2 =0\}$ and $\{ y^2+tz^2=0\}$, 
the equation of $X$ is explicitly given as follows: 
\[
X = \{ u(x^2 +sz^2) + v(y^2 +tz^2) =0\} \subset \P^2_{[x:y:z]} \times 
\P^1_{[u:v]}. 
\]
% \begin{enumerate}
% \end{enumerate}
% {\cyan Tanaka will edit:} 
% {\blue We only give the explicit information for the blowup centre ideal. We omit further details.}
% By \cite[4.31(1)]{BFSZ}, if it exists, $P$ is an intersection of two conics. 
% Let us construct such an example. \FB{done in \cite[Example 6.17]{BFSZ}}{\cyan Thanks! Yours looks much better.}
% E.g., WMA $P \in \mathbb A^2_k$. 
% Then 
% \[
% k[x, y]/(x^2 -s, y^2 -t) \simeq k(\sqrt{s}, \sqrt{t})
% \]
% Hence $C := \{ x^2 + sz^2\}, C' := \{y^2 + tz^2\}$. Then the blowup at $P := C \cap C'$ is del Pezzo, because it resolves the pencil generated by $C$ and $C'$ (use the Kleiman's criterion). 
\end{example}

\begin{example}%[$K_X^2 =6, \rho=2, p=3$]
[$p=2, \rho(X)=2, K_X^2=6$]\label{e 2-6}
Let $\F$ be an algebraically closed field of characteristic two. 
Set $k:=\F(s)$ and 
\[
Y := \Proj k[x, y, z, w]/(x^2+sy^2+zw). 
\] 
Take the closed point 
\[
Q \in D_+(y) \simeq \Spec k[x, z, w]/(x^2+s+zw) \subset Y 
\]
%Let $Q \in Y'$ be the closed point 
defined by the maximal ideal $\m:=(x^2+s, z, w)$ of $k[x, z, w]/(x^2+s+zw)$. 
Let $f:X \to Y$ be the blowup at $Q$. 
Then the following hold \cite[Example 6.5]{Tan19} 
(cf.~\cite[Example 6.25]{BFSZ}). 
\begin{enumerate}
\item  $Y$ is a regular projective surface such that 
$H^0(Y, \MO_Y)=k$, $K_Y^2=8$, and 
$Y \times_k \overline k \simeq \mathbb P(1, 1, 2)$. 
\item $-K_X$ is ample and $K_X^2=K_Y^2-2=6$. 
\item $X$ is a regular del Pezzo surface which is geometrically integral but not geometrically normal. 
\end{enumerate}
\end{example}

\begin{ex}[$p=2, \rho(X)=1, K_X^2=1, h^1(X, \MO_X) =1$]\label{e 1-1-i}
Let $\F$ be an algebraically closed field of characteristic two. 
For $k:=\F(s_0, s_1, s_2, s_3)$, we set 
\[
Z := \Proj \frac{k[x_0, x_1, x_2, x_3]}{(s_0x_0^2+ s_1x_1^2 + s_2x_2^2 +s_3x_3^2)} \subset  \Proj k[x_0, x_1, x_2, x_3] = \P^3_k. 
\]
Then, for a suitable foliation $\mathcal F$ on $Z$, 
the quotient $X := Z/\mathcal F$ is a geometrically integral regular del Pezzo surface over $k$ such that 
$\rho(X) =1$, $K_X^2=1$, and 
$h^1(X, \MO_X) =1$. 
For more details, see \cite[Subsection 3.1 and Subsection 3.2]{Mad16}. 
\end{ex}

% Let 
% \[
% X := \Proj k(v)[s, t][x, y, z]/(vsx^2 + sy^2 + tz^2) \subset \P^1_{s, t} \times \P^2_{x, y, z}. 
% \]
% This is $\MO_{\P^1 \times \P^2}(1, 2)$, and hence dP. 
% $-K \sim {\cred F_1} + H_2$. 
% Hence 
% \[
% K^2 =(H_1 +H_2)^2 \cdot (H_1 + 2H_2) = H_1H_2^2 + 4H_1H_2^2 = 5. 
% \]
% This is NOT a 2 fibration case. 

\section{Arithmetic of regular del Pezzo surfaces}

Let $X$ be a geometrically integral regular del Pezzo surface. % over a field $k$. 
The main purpose of this section is to establish the following two results. 
\begin{enumerate}
\item If $X(k) \neq \emptyset$ and $K_X^2 \geq 5$, then $X$ is rational (rationality criterion,  
\autoref{t rationality}). 
\item If $k$ is a $C_1$-field, then $X(k) \neq \emptyset$ 
(the $C_1$-conjecture for regular del Pezzo surfaces, \autoref{thm: existence_section}). 
\end{enumerate}
Following the strategy for the smooth case \cite{Man66}, 
we divide the proofs of (1) and (2) %\autoref{thm: dP_larger_5} 
according to the degree $K_X^2$. 
Finally, we discuss unirationality of del Pezzo surfaces of degree 4 and 3.

% In this section, we prove the rationality criterion for regular del Pezzo surfaces of degree at least 5 \autoref{thm: rationality_regular_dP} and we settle the $C_1$-conjecture for regular del Pezzo surfaces \autoref{thm: existence_section}.
%{\cyan edit }

\subsection{Degree $>6$}

We recall how to compute the Picard group of varieties over fields with trivial Brauer group.

\begin{lemma} \label{lem: picard-invariant}
    Let $X$ be a projective variety {with $H^0(X, \mathcal{O}_X)=k$}.
    If $\Br(k)=0$, then $\Pic(X)=\Pic(X_{k^{\sep}})^{\Gal(k^{\sep}/k)}$.
\end{lemma}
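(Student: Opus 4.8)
The plan is to compare $\Pic(X) = H^1_{\et}(X, \mathbb{G}_m)$ with $\Pic(X_{k^{\sep}}) = H^1_{\et}(X_{k^{\sep}}, \mathbb{G}_m)$ by means of the Hochschild--Serre spectral sequence attached to the pro-Galois cover $X_{k^{\sep}} \to X$ with group $G := \Gal(k^{\sep}/k)$. Writing $\mathbb{G}_m$ for the \'etale sheaf of units, this spectral sequence reads
\[
E_2^{p,q} = H^p\big(G, H^q_{\et}(X_{k^{\sep}}, \mathbb{G}_m)\big) \Longrightarrow H^{p+q}_{\et}(X, \mathbb{G}_m),
\]
and the exact sequence of low-degree terms is
\[
0 \to H^1(G, \Gamma(X_{k^{\sep}}, \mathbb{G}_m)) \to \Pic(X) \to \Pic(X_{k^{\sep}})^{G} \to H^2(G, \Gamma(X_{k^{\sep}}, \mathbb{G}_m)) \to \Br(X),
\]
in which the middle arrow $\Pic(X) \to \Pic(X_{k^{\sep}})^{G}$ is exactly the natural base-change map. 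Thus it suffices to prove that the two outer Galois-cohomology groups attached to the global units vanish.

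The first step is then to identify the global units. Since the varieties to which the lemma is applied are geometrically integral, $X_{k^{\sep}}$ is integral and proper over the separably closed field $k^{\sep}$, so $H^0(X_{k^{\sep}}, \MO_{X_{k^{\sep}}}) = k^{\sep}$ and hence $\Gamma(X_{k^{\sep}}, \mathbb{G}_m) = (k^{\sep})^{\times}$ as a $G$-module; this is the key computational input. With this identification the two flanking terms become ordinary Galois cohomology of the multiplicative group: $H^1(G, (k^{\sep})^{\times}) = 0$ by Hilbert's Theorem~90, and $H^2(G, (k^{\sep})^{\times}) \cong \Br(k)$, which vanishes by hypothesis. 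Feeding these two vanishings into the five-term sequence collapses it to the desired isomorphism $\Pic(X) \xrightarrow{\ \sim\ } \Pic(X_{k^{\sep}})^{G}$.

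The only genuinely delicate points here are technical rather than conceptual, and they are where I would spend most care. Because $k^{\sep}/k$ is an infinite (profinite) extension, the spectral sequence above is best obtained by passing to the filtered colimit over the finite Galois subextensions $k'/k$: one has $\Pic(X_{k^{\sep}}) = \varinjlim_{k'} \Pic(X_{k'})$ and $\Br(k) = \varinjlim_{k'} H^2(\Gal(k'/k), (k')^{\times})$, and Galois cohomology commutes with these colimits, so the finite-level Hochschild--Serre sequences assemble into the continuous one used above. The remaining inputs are entirely standard: the identification $H^2(G, (k^{\sep})^{\times}) \cong \Br(k)$ is the description of the Brauer group as $H^2$ of $\mathbb{G}_m$, and Hilbert's Theorem~90 is classical. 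Consequently I expect no substantial obstacle, the whole argument being a formal consequence of the spectral sequence once the global units are computed.
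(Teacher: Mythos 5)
Your proof is correct and follows essentially the same route as the paper's: the Hochschild--Serre spectral sequence for $X_{k^{\sep}} \to X$, Hilbert's Theorem 90 to kill the $H^1(\Gal(k^{\sep}/k), (k^{\sep})^{\times})$ term, and the hypothesis $\Br(k)=0$ to kill the $H^2$ term, yielding the isomorphism $\Pic(X) \simeq \Pic(X_{k^{\sep}})^{\Gal(k^{\sep}/k)}$. Your explicit identification of the global units $\Gamma(X_{k^{\sep}},\mathbb{G}_m)=(k^{\sep})^{\times}$ via geometric integrality, and your remark on assembling the sequence as a colimit over finite Galois subextensions, make explicit two points the paper leaves implicit, but the argument is the same.
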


By \cite[Chapitre II, Proposition 5 and 8]{ser-coh-gal},  
a $C_1$-field $k$ has trivial Brauer group $\Br(k)=0$.

\begin{proof}
See \cite[Proposition 5.4.2]{CTS21}. 
%{\blue We recall the following descent criterion for line bundles proven in \cite[Proposition 5.4.2]{CTS21}.}
\end{proof}
%\begin{proof} {\blue we can commment it out}{\green How about just citing [Colliot-Thelene-Skorobogatov, Prop 5.4.2]? ( although we need to add $H^0(X, \MO_X)=k$)} 
%Denote by $G_L$ be the Galois group of a finite Galois extension $L$ of $k$. From the Hochschild-Serre spectral sequence \cite[Chapter III, Theorem 2.20]{Mil80} $$E_2^{p,q}=H^p(G_L, H^q_{\et}(X_{L}, \mathbb{G}_m)) \Rightarrow H^{p+q}_{\et}(X, \mathbb{G}_m), $$ we have the induced exact sequence of pointed sets  $$ H^1(G_L, L^*) \to H^1_{\et}(X, \mathbb{G}_m) \to H^1_{\et}(X_{L}, \mathbb{G}_m)^{G_L} \to H^2(G_L, L^*). $$ By passing to the limit over all Galois extensions of $k$, we have $$ H^1(G_k, (k^{\sep})^*) \to H^1_{\et}(X, \mathbb{G}_m) \to H^1_{\et}(X_{L}, \mathbb{G}_m)^{G_k} \to H^2(G_k, (k^{\sep})^*), $$   where $G_k=\Gal(k^{\sep}, k)$. Recall that $H^i(G_k, (k^{\sep})^*) \simeq H^i_{\et}(\Spec k, \mathbb{G}_m)$ for all $i \geq 0$.
%By Hilbert's 90 \cite[Chapter III, Proposition 4.9]{Mil80}, we thus have $H^1(G_k, (k^{\sep})^*)=H^1_{\et}(\Spec(k), \mathbb{G}_m)=0$, $H^1_{\et}(X, \mathbb{G}_m)=H^1(X, \mathcal{O}_X^{*})=\Pic(X)$ and $H^1_{\et}(X_{k^{\sep}}, \mathbb{G}_m)^{\Gal(k^{\sep}/k)}=\Pic(X_{k^{\sep}})^{\Gal(k^{\sep}/k)}$, concluding the proof.
%\end{proof}

\begin{prop}[degree 9]\label{p deg9}
Let $X$ be a geometrically integral regular del Pezzo surface with $K_X^2=9$.
Then the following hold. 
\begin{enumerate}
\item $X(k) \neq \emptyset$ if and only if $X \simeq \P^2_k$. 
\item If $\Br(k)=0$ (e.g., $k$ is a $C_1$-field), then $X \simeq \P^2_k$. 
\end{enumerate}
\end{prop}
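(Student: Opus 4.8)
The plan is to show that $X$ is a Severi--Brauer surface and then to apply the standard triviality criteria. First I would pin down the geometry of $X_{\overline k}$. Regardless of the characteristic, $X$ is geometrically canonical: in characteristic zero this is automatic, since $k$ is then perfect, while in characteristic $p$ it follows from \autoref{prop: geom_normality_degree}(1), because $K_X^2 = 9$ does not appear in either list of part (2) of that proposition, so $X$ must be geometrically normal. Hence $X_{\overline k}$ is a canonical del Pezzo surface of degree $9$; its minimal resolution is a smooth weak del Pezzo surface of degree $9$, which is forced to be $\P^2_{\overline k}$ because $9$ is the maximal anticanonical degree and leaves no negative curve to contract. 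Thus $X_{\overline k} \simeq \P^2_{\overline k}$ is already smooth, $X$ is smooth over $k$, and since $\mathrm{PGL}_3$ is a smooth group scheme there are no nontrivial purely inseparable forms of $\P^2$; therefore already $X_{k^{\sep}} \simeq \P^2_{k^{\sep}}$, i.e.\ $X$ is a Severi--Brauer surface.

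For part (1), the direction $X \simeq \P^2_k \Rightarrow X(k) \neq \emptyset$ is immediate. For the converse I would invoke Ch\^atelet's theorem: a Severi--Brauer variety carrying a $k$-rational point is $k$-isomorphic to projective space. Applying this to our Severi--Brauer surface $X$ with $X(k) \neq \emptyset$ yields $X \simeq \P^2_k$. Note that a rational point alone does not make $\Br(k)$ vanish, so this is genuinely where Ch\^atelet's theorem (rather than the Picard argument below) is needed.

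For part (2) I would use \autoref{lem: picard-invariant}. Over the separable closure we have $\Pic(X_{k^{\sep}}) = \Pic(\P^2_{k^{\sep}}) = \Z H$, where $H$ is the hyperplane class; the Galois group acts on this rank-one lattice preserving ampleness, so it fixes $H$, whence $\Pic(X_{k^{\sep}})^{\Gal(k^{\sep}/k)} = \Z H$. When $\Br(k) = 0$, which holds for $C_1$-fields as recalled just after \autoref{lem: picard-invariant}, that lemma identifies $\Pic(X)$ with this invariant subgroup, producing a line bundle $L$ on $X$ with $L_{k^{\sep}} \simeq H$. Then $h^0(X, L) = 3$ by flat base change, and $L$ is very ample because very ampleness descends along $k^{\sep}/k$, so $\varphi_{|L|}$ embeds $X$ into $\P(H^0(X,L)^{\vee}) \simeq \P^2_k$ as a closed subscheme that becomes an isomorphism after base change to $k^{\sep}$; by faithfully flat descent $X \simeq \P^2_k$.

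The cohomological bookkeeping in parts (1) and (2) is routine; the real care is needed in the first step, namely establishing $X_{\overline k} \simeq \P^2$ and, in positive characteristic, descending this identification to $k^{\sep}$ so that $X$ is genuinely a Severi--Brauer surface rather than some purely inseparable form. Once that structure is secured, together with the computation $\Pic(X_{k^{\sep}}) = \Z H$ with trivial Galois action, the two assertions drop out formally from Ch\^atelet's theorem and \autoref{lem: picard-invariant} respectively.
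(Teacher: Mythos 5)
Your proof is correct, and for part (2) it takes a genuinely different route from the paper. Where you explicitly build up the Severi--Brauer structure (geometric normality via \autoref{prop: geom_normality_degree}, then $X_{\overline k}\simeq \P^2_{\overline k}$ as the unique canonical del Pezzo surface of degree $9$, then splitting over $k^{\sep}$ by smoothness of $\PGL_3$), the paper compresses part (1) into a citation of \cite[Lemma 4.6]{BFSZ} and, for part (2), runs the nonabelian cohomology argument: the exact sequence $1 \to \mathbb{G}_m \to \GL_3 \to \PGL_3 \to 1$ of \'etale sheaves gives $H^1_{\et}(k,\GL_3) \to H^1_{\et}(k,\PGL_3) \to \Br(k)=0$, and Hilbert 90 for $\GL_3$ then kills $H^1_{\et}(k,\PGL_3)$, which classifies the forms of $\P^2_k$. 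Your alternative for (2) --- descending the hyperplane class via \autoref{lem: picard-invariant} and checking that $\varphi_{|L|}$ is an isomorphism after faithfully flat base change --- is the abelian (Picard-group) counterpart of the same vanishing, and it is exactly the mechanism the paper itself uses in the degree $8$ case (\autoref{p deg8}(2)); so your argument is internally consistent with the paper's toolkit, just applied one degree higher. What the paper's torsor argument buys is brevity once one grants that $X$ is a form of $\P^2$; what your argument buys is that it avoids nonabelian $H^1$ entirely and makes the splitting over $k^{\sep}$ explicit, a point the paper's proof of (2) leaves implicit. One cosmetic remark: the paper's standing convention is $\operatorname{char} k = p > 0$, so your aside about characteristic zero is unnecessary (though harmless).
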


\begin{proof}
{By \cite[Lemma 4.6]{BFSZ}, $X$ is a Severi--Brauer variety and hence (1) follows}. 
Let us show (2). 
Assume that $\Br(k)=0$. 
By the short exact sequence of sheaves on the \'etale site of $\Spec(k)$:
$$0 \to \mathbb{G}_m \to \GL_{3} \to \PGL_{3} \to 0,$$ 
we deduce an exact sequence of pointed sets:
$$H^1_{\et}(\Spec(k), \GL_{3}) \to H^1_{\et}(\Spec(k),\PGL_{3}) \to \Br(k)=0. $$
As $H^1_{\et}(\Spec(k), \GL_{3})=0$ by Hilbert's 90, we conclude that $H^1_{\et}(\Spec(k),\PGL_{3})=0$ and thus we deduce $X$ is isomorphic to $\mathbb{P}^2_k$.
\qedhere

% \subsubsection{Degree 9}
% By \cite[Lemma 4.6]{BFSZ}, a geometrically integral regular del Pezzo surface $X$ of degree 9 is a Severi--Brauer surface, i.e. a twisted form of the projective plane.
% If $X(k) \neq \emptyset$, we conclude by Châtelet theorem \cite{Ch44} that $X \simeq \mathbb{P}^2_k$.

% If $k$ is a field with trivial Brauer group, by the short exact sequence of \'etale sheaves on $k$:
% $$0 \to \mathbb{G}_m \to \GL_{3} \to \PGL_{3} \to, 0$$ we deduce
% $H^1_{\et}(k, \GL_{3}) \to H^1(k,\PGL_{3}(k)) \to \Br(k)=0$.
% As $H^1_{\et}(k, \GL_{3})=0$ by Hilbert's 90, we conclude that $H^1(k,\PGL_{3}(k))=0$ and thus we deduce $X$ is $\mathbb{P}^2_k$.

\end{proof}

\begin{prop}[degree 8]\label{p deg8}
Let $X$ be a geometrically integral regular del Pezzo surface with $K_X^2=8$. 
Then the following hold. 
\begin{enumerate}
\item If $X(k) \neq \emptyset$, then $X$ is rational. 
%$X$ is isomorphic to a quadric surface in $\P^3_k$ and $X$ is rational. 
\item If $k$ is a $C_1$-field, then $X$ is rational. 
\end{enumerate}
\end{prop}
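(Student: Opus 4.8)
The plan is to reduce first to the geometrically canonical case and then argue by the value of $\rho(X)$. Since degree $8$ appears in neither table of \autoref{thm: class_dP}, a geometrically integral regular del Pezzo surface with $K_X^2=8$ cannot be geometrically non-normal; hence $X$ is geometrically normal, and therefore geometrically canonical with $K_X^2\le 9$ by \autoref{prop: geom_normality_degree}(1). Thus $X_{\overline k}$ is a canonical del Pezzo surface of degree $8$, and by the classification of such surfaces \cite[Section 8]{Dol12} it is one of $\P^1\times\P^1$, $\mathbb{F}_1$, or the quadric cone $\P(1,1,2)$. In particular $H^1(X,\MO_X)=0$, so \autoref{lem: Kod_vanishing} applies, and $\rho(X)\in\{1,2\}$, with $X_{\overline k}=\mathbb F_1$ exactly when $\rho(X)=2$ and a negative curve is present.

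Suppose $\rho(X)=2$, so $X_{\overline k}$ is $\P^1\times\P^1$ or $\mathbb F_1$ and $X$ has two extremal contractions, whose types are governed by \cite{BFSZ}. If one contraction $f\colon X\to X'$ is birational, then $X'$ is a geometrically integral regular del Pezzo surface with $K_{X'}^2=9$; the image of the $k$-point gives $X'(k)\neq\emptyset$, so $X'\simeq\P^2_k$ by \autoref{p deg9}(1) and $X$ is rational. If instead both extremal rays define conic bundles $\pi_i\colon X\to B_i$, then each $B_i$ is a regular genus-$0$ curve and the product morphism $X\to B_1\times_k B_2$ is an isomorphism (it is one over $\overline k$); projecting the rational point yields $B_i(k)\neq\emptyset$, hence $B_i\simeq\P^1_k$ and $X\simeq\P^1_k\times_k\P^1_k$ is rational.

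Now suppose $\rho(X)=1$; then $X_{\overline k}$ is $\P^1\times\P^1$ with conjugate rulings, or the quadric cone, and in both models $-K$ becomes $2$-divisible over $k^{\sep}$ with a $\Gal(k^{\sep}/k)$-invariant half. Because $X(k)\neq\emptyset$ kills the Brauer obstruction in the Hochschild--Serre sequence (as in \autoref{lem: picard-invariant}), we get $\Pic(X)=\Pic(X_{k^{\sep}})^{\Gal(k^{\sep}/k)}$, so there is $H\in\Pic(X)$ with $-K_X\sim 2H$ and $H^2=2$. A Riemann--Roch computation together with \autoref{lem: Kod_vanishing} gives $h^0(X,H)=4$, and $\varphi_{|H|}$ realises $X$ birationally as a quadric surface (smooth, or a cone) in $\P^3_k$. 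The $k$-point $P$ avoids the geometric vertex, whose residue field is non-trivial and purely inseparable, so $P$ is a smooth point of the quadric and linear projection from $P$ is birational onto $\P^2_k$; hence $X$ is rational, proving (1).

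For (2) I would use that a $C_1$-field has $\Br(k)=0$ \cite[Proposition 5 and 8]{ser-coh-gal}. When $\rho(X)=2$ no point is needed: in the birational case $X'\simeq\P^2_k$ by \autoref{p deg9}(2), and in the conic-bundle case each genus-$0$ curve $B_i$ is $\simeq\P^1_k$ since its class in $\Br(k)$ vanishes, giving $X\simeq\P^1_k\times_k\P^1_k$. When $\rho(X)=1$, the half-anticanonical $H$ descends by \autoref{lem: picard-invariant}, producing a quadric surface in $\P^3_k$; as a quadratic form in $4>2$ variables over a $C_1$-field it is isotropic, so the quadric has a $k$-point, and projecting from it gives rationality as above. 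I expect the $\rho(X)=1$ case to be the main obstacle: one must fix the index of $-K_X$, descend the half-anticanonical class to obtain the quadric model over $k$, and handle the genuinely non-smooth cone form, checking that the rational point is distinct from its vertex.
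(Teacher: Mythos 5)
Your overall architecture is sound and, in the end, parallels the paper's: both proofs reduce to the geometrically normal case via \autoref{thm: class_dP}, produce a Cartier divisor $H$ with $-K_X\sim 2H$, realise $X$ as a quadric in $\P^3_k$, and finish by projecting from a rational point (supplied by the $C_1$ hypothesis in part (2)); the paper simply black-boxes the two middle steps into \cite[Lemmas 4.9 and 4.10]{BFSZ}, and for part (2) it applies part (1) over $k^{\sep}$ and descends $H$ exactly as you do, via \autoref{lem: picard-invariant}. Your $\rho(X)=2$ analysis (birational contraction to a degree-$9$ surface handled by \autoref{p deg9}, or the product map $X\to B_1\times_k B_2$) is a correct and pleasantly self-contained variant, modulo the small point that you should say why $X_{\overline k}\not\simeq \mathbb{F}_1$ when both rays are conic bundles ($\mathbb{F}_1$ carries only one numerical class $F$ with $F^2=0$ and $-K\cdot F=2$).

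The genuine gap is in the $\rho(X)=1$ case, at the assertion that ``in both models $-K$ becomes $2$-divisible over $k^{\sep}$ with a $\Gal(k^{\sep}/k)$-invariant half.'' What your Hochschild--Serre argument needs is a class in $\Pic(X_{k^{\sep}})^{\Gal(k^{\sep}/k)}$, but the classification only hands you the half-class in $\Pic(X_{\overline k})$, and line bundles do not automatically descend along the purely inseparable extension $\overline k/k^{\sep}$. In the smooth case this is harmless, since then $X_{k^{\sep}}\simeq\P^1_{k^{\sep}}\times\P^1_{k^{\sep}}$; but in the quadric-cone case --- which occurs only in characteristic $2$ and is precisely the non-smooth case this proposition adds to the classical theory --- $X_{k^{\sep}}$ is regular while $X_{\overline k}\simeq\P(1,1,2)$ is not, so you cannot identify their Picard groups without an argument. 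A priori $\Pic(X_{k^{\sep}})$ could equal $\Z K_{X_{k^{\sep}}}$ (the numerical option $(r,H^2)=(1,8)$ is consistent), and soft Frobenius-descent only recovers $2$-power multiples of $\MO(2)$, i.e.\ multiples of $-K$, so it cannot rule this out. The gap is fixable in two ways: cite \cite[Lemma 4.10]{BFSZ} as the paper does (it yields $-K_X\sim 2H$ directly over $k$ from the rational point, with no Galois theory needed in part (1)); or note that $H^1(X,\MO_X)=0$ forces the Picard scheme $\Pic_{X/k}$ to be unramified, hence \'etale, so that $\Pic(X_{k^{\sep}})\to\Pic(X_{\overline k})$ is bijective and the half does live over $k^{\sep}$. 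Separately, your appeal to \autoref{lem: picard-invariant} in part (1) uses a stronger statement than the one proved in the paper (there $\Br(k)=0$ is assumed); you need the standard refinement that a $k$-rational point splits $\Br(k)\to\Br(X)$, killing the obstruction map $\Pic(X_{k^{\sep}})^{\Gal(k^{\sep}/k)}\to\Br(k)$ --- true, but it should be stated, since it is not what \autoref{lem: picard-invariant} says.
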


\begin{proof}
By \autoref{thm: class_dP}, $X$ is geometrically normal  and 
hence $X$ is geometrically canonical. 
If $X \simeq \F_1 := 
{\mathbb{P}_{\mathbb{P}^1}(\mathcal{O}_{\P^1} \oplus \mathcal{O}_{\P^1}(1))}$, %\textcolor{red}{where $\F_1$ denotes the Hirzebruch surface $\mathbb{P}_{\mathbb{P}^1}(\mathcal{O} \oplus \mathcal{O}(-1))$,} 
then there is nothing to show. 
In what follows, we assume $X \not\simeq \F_1$.

Let us show (1). 
Assume $X(k) \neq \emptyset$. 
It follows from  \cite{BFSZ}*{Proposition 4.10} that 
$-K_X \sim 2H$ for some ample Cartier divisor $H$. 
By \cite{BFSZ}*{Lemma 4.9}, $X$ is isomorphic to a quadric surface in $\P^3_k$. 
A regular quadric surface $X \subset \P^3_k$ 
with a $k$-rational point $P$ is birational to $\mathbb{P}^2_k$ by the classical argument of the projection $\pi_P \colon X \dashrightarrow \mathbb{P}^2_k$ from the point $P$. 
Thus (1) holds. 

Let us show (2). 
Assume that $k$ is a $C_1$-field. 
Since $X$ is geometrically integral,  
the base change $X_{k^{\sep}}$ has a $k^{\sep}$-rational point. 
%where $k^{\sep}$ denotes the separable closure of $k$. 
By (1), $X_{k^{\sep}}$ is a quadric surface in $\mathbb{P}^3_{k^{\sep}}$. 
For a hyperplane section $H'$ on $X_{k^{\sep}}$, 
the adjunction formula implies $-K_{X_{k^{\sep}}} \sim 2H'$. 
Recall that $\Pic X$ is torsion free. 
Then the isomorphism class of $\MO_{X_{k^{\sep}}}(H')$ is fixed by the Galois group, because so is that of the dualising sheaf $\MO_{X_{k^{\sep}}}  (K_{X_{k^{\sep}}})$. % is fixed by the Galois group, so is , because $\Pic X$ is torsion free. 
%The dualising sheaf $\mathcal{O}_{X_{k^{\sep}}}(K_{X_{k^{\sep}}})$ carries a linearisation with respect to the Galois group. 
% Since 
% the dualising sheaf $\MO_{X_{k^{\sep}}}(K_{X_{k^{\sep}}})$ is fixed by the Galois group and 
% the Picard group is torsion free, the isomorphism class of $\mathcal{O}_{X_{k^{\sep}}}(H')$
%  is fixed. 
By \autoref{lem: picard-invariant}, 
there exists a Cartier divisor $H$ on $X$ such that $-K_X\sim 2H.$
By \cite{BFSZ}*{Lemma 4.9}, we conclude that $X$ is a quadric surface in $\mathbb{P}^3_k$. 
Then $X$ has a $k$-rational point 
as the base field $k$ is $C_1$. 
Hence (1) implies that $X$ is rational. 
Thus (2) holds. 
\qedhere

\end{proof}

\begin{prop}[degree 7]\label{p deg7}
Let $X$ be a geometrically integral regular del Pezzo surface with $K_X^2=7$.
Then there exists a birational morphism $X \to \P^2_k$. 
In particular, $X$ is rational. 
\end{prop}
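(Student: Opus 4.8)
The plan is to pin down the geometry of $X$ over $\overline k$, extract a Galois-invariant $(-1)$-curve that produces a rational point, and then contract the remaining two $(-1)$-curves to reach $\P^2_k$.

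First I would record the geometric shape of $X$. Since $K_X^2=7$ appears in neither list of \autoref{prop: geom_normality_degree}(2), $X$ is forced to be geometrically normal, hence geometrically canonical by \autoref{prop: geom_normality_degree}(1); in particular $h^1(X,\MO_X)=h^1(X_{\overline k},\MO_{X_{\overline k}})=0$. I would then reduce to the case where $X_{\overline k}$ is smooth: being canonical of degree $7$, its minimal resolution is a weak del Pezzo carrying at most one $(-2)$-curve, so the only alternative to smoothness is a single du~Val point of type $A_1$, which I would exclude (a birational morphism onto the smooth surface $\P^2_k$ is incompatible with $X_{\overline k}$ being singular). Granting this, $X^{\sep}:=X\times_k k^{\sep}$ is the blow-up of $\P^2_{k^{\sep}}$ at two distinct points, and its negative curves are exactly the three $(-1)$-curves $E_1,E_2,L$ with $E_1\cdot E_2=0$ and $E_1\cdot L=E_2\cdot L=1$, forming a chain $E_1-L-E_2$.

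Next I would isolate the middle curve. As $L$ is the unique $(-1)$-curve meeting the other two, it is preserved by $\Gal(k^{\sep}/k)$ and descends to a geometrically integral smooth curve $L_0\subset X$ over $k$ with arithmetic genus $0$ and $(-K_X)\cdot L_0=1$. Thus $L_0$ is a smooth conic equipped with the line bundle $\MO_X(-K_X)|_{L_0}$ of degree $1$; since a genus-zero curve admitting a line bundle of odd degree has a rational point, we get $L_0\simeq\P^1_k$, and in particular $X(k)\neq\emptyset$. This rational point is the decisive arithmetic input.

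Then I would build the contraction over $k$. The pair $\{E_1,E_2\}$ is Galois-stable, and I would contract it by means of the Cartier divisor $M:=-K_X-L_0$, which is defined over $k$ and which satisfies $M\sim 2\ell$ over $\overline k$ for $\ell$ the class of a line, as one checks from $-K_X\sim 3\ell-E_1-E_2$ and $L\sim\ell-E_1-E_2$ (here $\sim$ and $\equiv$ agree by \autoref{lem: Kod_vanishing}). Hence $M$ is nef, big and semiample with $M\cdot E_i=0$ and $M$ positive on every other curve, so the associated morphism $\varphi\colon X\to S:=\Proj R(X,M)$ is a birational contraction of exactly $E_1$ and $E_2$ onto a normal surface $S$ with $S_{\overline k}\simeq\P^2_{\overline k}$; thus $S$ is a geometrically integral regular del Pezzo surface with $K_S^2=9$. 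Finally, since $L_0$ is disjoint from neither but distinct from $E_1,E_2$, the morphism $\varphi$ is an isomorphism near a $k$-point of $L_0\simeq\P^1_k$ lying off the (at most degree-two) locus $L_0\cap(E_1\cup E_2)$, so $S(k)\neq\emptyset$, and \autoref{p deg9}(1) yields $S\simeq\P^2_k$. Then $\varphi\colon X\to\P^2_k$ is the desired birational morphism and $X$ is rational. I expect the main obstacle to be exactly the passage from a form of $\P^2$ to $\P^2_k$ itself: the contraction a priori only produces a Severi--Brauer surface, and it is the rational point coming from $L_0\simeq\P^1_k$ surviving in $S$ that forces $S\simeq\P^2_k$; establishing geometric smoothness of $X_{\overline k}$ in degree $7$ is the other point deserving care.
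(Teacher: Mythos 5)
Your argument has a genuine gap at its very first reduction: the claim that $X_{\overline k}$ can be assumed smooth is false, and the justification you give for excluding the $A_1$ case is circular. You write that "a birational morphism onto the smooth surface $\P^2_k$ is incompatible with $X_{\overline k}$ being singular", but at that point no such morphism has been produced (it is the statement to be proved), and the implication is in any case wrong: take $k$ imperfect of characteristic $2$ (e.g.\ $k=\F_2(t)$) and let $z\subset \P^2_k$ be the purely inseparable closed point of degree $2$ given in an affine chart by the ideal $(x^2-t,\,y)$. Then $X:=\Bl_z\P^2_k$ is a regular, geometrically integral del Pezzo surface with $K_X^2=7$ (so it trivially satisfies the Proposition), yet $X_{\overline k}=\Bl_{z_{\overline k}}\P^2_{\overline k}$ is the blow-up of the non-reduced length-$2$ ideal $(x^2,y)$, whose second chart is the cone $\{x^2=yv\}$: $X_{\overline k}$ is normal with exactly one $A_1$ singularity. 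This is precisely the phenomenon the paper alludes to via Schr\"oer's theorem (regular twisted forms of $A_1$ exist in characteristic $2$), and it is consistent with \autoref{prop: geom_normality_degree}, which only forces geometric normality, not geometric smoothness, in degree $7$.

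For such an $X$ your Steps 2--4 collapse: over $\overline k$ there is no chain of three $(-1)$-curves $E_1-L-E_2$ (the singular surface $X_{\overline k}$ carries only two negative curves, one of which is not Cartier), so there is no Galois-invariant "middle" curve $L$, no point of odd degree produced from it, and no divisor $M$ contracting a pair of $(-1)$-curves. The statement is still true in this case, but it needs a separate argument, e.g.: the two negative curves on $X_{\overline k}$ are individually Galois-stable (one passes through the unique singular point, the other does not), so the corresponding $K_X$-negative extremal ray is defined over $k$; contracting it exhibits $X$ as the blow-up of a regular del Pezzo surface $Z$ of degree $9$ at a closed point of degree $2$, and a Severi--Brauer surface whose index divides both $3$ and $2$ is $\P^2_k$, by the same mechanism as in \autoref{p deg9}. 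Your smooth-case argument (Galois-fixed curve $L_0$, $L_0\simeq\P^1_k$ from a degree-$1$ line bundle, contraction via the semiample divisor $-K_X-L_0$, then \autoref{p deg9}(1)) is essentially sound and in particular settles every characteristic $p\neq 2$; but as written the proof does not cover characteristic $2$, which is exactly the delicate case. For comparison, the paper disposes of the whole statement by citing \cite{BFSZ}*{Lemma 4.12}, so your self-contained route would be of interest if the missing case were added.
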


\begin{proof} The assertion follows from \cite{BFSZ}*{Lemma 4.12}. \qedhere  \end{proof}

\begin{cor}\label{c deg789}
%We work over a field $k$. 
Let $X$ be a geometrically integral regular del Pezzo surface with $K_X^2 \in \{7, 8, 9\}$.
Assume that $X(k) \neq \emptyset$ or 
$k$ is a $C_1$-field. 
Then $X$ is rational. 
\end{cor}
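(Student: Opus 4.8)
The plan is to reduce immediately to the three preceding propositions by a case analysis on the value of $K_X^2 \in \{7, 8, 9\}$; no new geometric input is required, since each degree has already been settled under exactly the hypotheses assumed here.

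First I would dispose of the degree-$7$ case, which is the easiest: \autoref{p deg7} produces a birational morphism $X \to \P^2_k$ with no arithmetic hypothesis whatsoever, so $X$ is rational regardless of whether $X(k) \neq \emptyset$ or $k$ is $C_1$. Next I would treat the degree-$8$ case by splitting the disjunction in the hypothesis: if $X(k) \neq \emptyset$, rationality is precisely \autoref{p deg8}(1), while if $k$ is a $C_1$-field it is \autoref{p deg8}(2).

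For the degree-$9$ case I would argue as follows. If $X(k) \neq \emptyset$, then \autoref{p deg9}(1) forces $X \simeq \P^2_k$, which is rational. If instead $k$ is a $C_1$-field, then $\Br(k) = 0$ (by \cite[Proposition 5 and 8]{ser-coh-gal}, as recalled before \autoref{p deg9}), so \autoref{p deg9}(2) again yields $X \simeq \P^2_k$. Since the three values of $K_X^2$ exhaust the possibilities and, within each, both disjuncts of the hypothesis have been covered, the corollary follows.

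The only point deserving attention — and the nearest thing to an obstacle — is the degree-$9$, $C_1$-field case: a priori a regular del Pezzo surface of degree $9$ is a Severi–Brauer surface and need not carry a $k$-rational point, so one cannot simply invoke part (1). The argument must instead route through the vanishing of the Brauer group to force $X \simeq \P^2_k$, which is exactly what \autoref{p deg9}(2) supplies; this is why the degree-$9$ statement is phrased in terms of $\Br(k)$ rather than in terms of rational points.
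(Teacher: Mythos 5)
Your proposal is correct and matches the paper's proof, which likewise derives the corollary immediately from \autoref{p deg9}, \autoref{p deg8}, and \autoref{p deg7} by the same case analysis on $K_X^2$. Your extra remark on the degree-$9$, $C_1$ case (routing through $\Br(k)=0$ rather than a rational point) is exactly the content already built into \autoref{p deg9}(2), so nothing further is needed.
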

\begin{proof}
    The assertion immediately follows from 
     \autoref{p deg9},
     \autoref{p deg8}, and 
     \autoref{p deg7}.
\end{proof}

\subsection{Rationality criteria}

%{\cyan add explnation}

In this subsection, we establish two rationality criteria. 
Specifically, we prove that $X$ is rational if $X$ is 
a geometrically integral projective regular surface over a field $k$ 
such that $K_X^2 \geq 5$, $X(k) \neq \emptyset$, 
and one of the following holds. 
\begin{itemize}
\item $-K_X$ is ample (\autoref{t rationality}). 
\item $X$ has a Mori fibre space structure $\pi : X \to B$ (\autoref{thm: rat_min_MFS}). 
\end{itemize}

% \begin{lem} \label{l rationality}
% We work over a field $k$. 
% Let $X$ be a geometrically integral regular del Pezzo surface    with $X(k) \neq \emptyset.$ 
% Then the following hold. 
% \begin{enumerate}
% \item If $K_X^2 =9$, then $X \simeq \P^2_k$. 
% \item If $K_X^2 =8$, then $X$
% \item If $K_X^2 =7$, then 
% there exists a birational morphism $X \to \P^2_k$. 
% \end{enumerate}
    
%     $X$ is rational, i.e., there exists a birational map $X \dashrightarrow \P^2_k$ over $k$. 
% \end{lem}

\begin{thm} 
%[\autoref{thm: rationality_regular_dP}]
\label{t rationality}
    Let $X$ be a geometrically integral regular del Pezzo surface 
    with $K_X^2 \geq 5$ and $X(k) \neq \emptyset.$ 
    Then $X$ is rational. 
    %, i.e., there exists a birational map $X \dashrightarrow \P^2_k$ over $k$. 
\end{thm}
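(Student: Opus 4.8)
The plan is to argue by descending induction on $9 - K_X^2$, using the minimal model program for regular surfaces to contract $X$ toward a Mori fibre space and isolating the genuinely hard input in the Mori fibre space rationality criterion \autoref{thm: rat_min_MFS}. The induction quantity is well-founded because contracting a birational extremal ray strictly increases the anticanonical degree, which is bounded above by $9$. To begin, if $K_X^2 \in \{7, 8, 9\}$ then \autoref{c deg789} gives rationality directly from the hypothesis $X(k) \neq \emptyset$, so I may assume $K_X^2 \in \{5, 6\}$.

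For the inductive step I analyse the extremal rays of $\overline{\NE}(X)$, which is polyhedral since $-K_X$ is ample. Every extremal ray gives a contraction that is either birational (with surface image) or of fibre type (with image a curve), and a fibre-type extremal contraction forces $\rho(X) = 2$; hence for $\rho(X) \geq 3$ every ray is birational, while for $\rho(X) = 2$ either some ray is birational or both rays define conic bundle structures. If $X$ admits a birational extremal contraction $f \colon X \to X'$, then by the minimal model program for regular surfaces $X'$ is again a geometrically integral regular del Pezzo surface with $\rho(X') = \rho(X) - 1$ (cf. \autoref{d primitive} and \cite[Lemma 2.14]{BM23}) and $K_{X'}^2 > K_X^2 \geq 5$; moreover $X'(k) \neq \emptyset$, since the image of a $k$-rational point under the $k$-morphism $f$ is a $k$-rational point. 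As $9 - K_{X'}^2 < 9 - K_X^2$, the inductive hypothesis shows $X'$ rational, and hence $X$ is rational. This step also absorbs the geometrically non-normal surfaces of degree at least $5$: by \autoref{thm: class_dP} these occur only for $p = 2$ and are of type $2$-$5$ or $2$-$6$, each carrying a birational extremal ray whose contraction raises the degree to $9$ (yielding $\P^2_k$) or to $8$ (yielding a degree-$8$ del Pezzo surface with a rational point, which is rational by \autoref{p deg8}).

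The remaining surfaces are those with no birational extremal contraction, which are exactly the Mori fibre spaces: either $\rho(X) = 1$, so that $X \to \Spec k$ is a Mori fibre space, or $\rho(X) = 2$ with both extremal rays of fibre type, giving a conic bundle $X \to B$. In either case $K_X^2 \geq 5$ and $X(k) \neq \emptyset$, so \autoref{thm: rat_min_MFS} applies and completes the induction. The main obstacle is precisely this Mori fibre space input: granting \autoref{thm: rat_min_MFS}, the reduction above is essentially formal, resting only on the fact that birational extremal contractions keep $X$ within the class of regular del Pezzo surfaces. The delicate work — the $2$-ray game and the Galois descent of the relevant configurations of negative curves through the Sarkisov program of \cite{BFSZ}, together with the treatment of the possible canonical singularities of $X_{\overline k}$ and of the arithmetic of the base curve $B$ — is what \autoref{thm: rat_min_MFS} encapsulates, and is therefore where the real difficulty lies rather than in the present reduction.
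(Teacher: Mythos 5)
Your reduction via birational extremal contractions is sound and matches the paper's Step 2 in spirit, but the way you dispose of the remaining cases is circular within the paper's logical structure. You send every Mori fibre space case --- in particular the case $\rho(X)=1$, where the Mori fibre space structure is just $X \to \Spec k$ --- to \autoref{thm: rat_min_MFS}. But \autoref{thm: rat_min_MFS} is proved in the paper \emph{after} \autoref{t rationality}, and its proof of the implication $(2) \Rightarrow (1)$ invokes \autoref{t rationality} twice: verbatim when $\dim B=0$ (``If $\dim B=0$, then this is proven in \autoref{t rationality}''), and again when $\dim B = 1$ and $\pi$ has no section (there $-K_X$ is shown to be ample by \autoref{lem: conic_bdl}(3), and rationality is then quoted from \autoref{t rationality}). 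So for $\rho(X)=1$ your argument reduces the statement to itself, and your closing claim that ``the real difficulty lies'' in \autoref{thm: rat_min_MFS} has the dependency exactly backwards: in the paper, \autoref{thm: rat_min_MFS} is a consequence of \autoref{t rationality}, not an input to it.

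What is missing is precisely the paper's treatment of the case $\rho(X)=1$, $K_X^2 \in \{5,6\}$ (Step 3 of the paper's proof). Since $K_X^2$ is squarefree, $\Pic X = \Z K_X$; one blows up a $k$-rational point $P$, $\sigma \colon Y \to X$, and \autoref{l rho=1}(2) (which needs exactly $K_X^2 \geq 4$ and $(\Pic X/\equiv) = \Z K_X$) guarantees that $-K_Y$ is still ample. One then plays the two-ray game: by \cite[Proposition 4.31 and Proposition 4.34]{BFSZ}, the contraction $\tau \colon Y \to Z$ of the extremal ray not corresponding to $\sigma$ is a birational morphism onto a geometrically integral regular del Pezzo surface $Z$ with $K_Z^2 > K_X^2$, the Lang--Nishimura theorem gives $Z(k) \neq \emptyset$, and the induction hypothesis applies to $Z$. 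Note also that your residual case ``$\rho(X)=2$ with both extremal rays of fibre type'' is in fact empty for $K_X^2 \in \{5,6\}$: two conic bundle structures force $aK_X^2 = 8$ for $a = F_1 \cdot F_2$ (cf.\ the computation in \autoref{p 2 conic bdls} and \cite[Lemma 4.37]{BFSZ}), which is why the paper's Step 2 can always find a birational ray once $\rho(X)\geq 2$. Without the blowup/two-ray-game argument, or some genuinely independent proof of the $\rho(X)=1$ case, your induction has no way to close.
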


\begin{proof}
We prove the assertion %case-by-case. 
by descending induction on $K_X^2$. 

\setcounter{step}{0}

\begin{step}\label{s1 rationality}
%The assertion of \autoref{t rationality} holds 
$X$ is rational if $K_X^2 \geq 7$. 
\end{step}

\begin{s1proof}%[Proof of Step \ref{s1 rationality}]
It follows from \cite[Theorem 4.7]{Tan19} that $K_X^2 \leq 9$. 
Hence the assertion follows from 
%Corollary
 \autoref{c deg789}. 
%  Proposition \ref{p deg7}, 
% Proposition \ref{p deg8}, and Proposition \ref{p deg9}.  
This completes the proof of Step \ref{s1 rationality}. 
\end{s1proof}

\medskip

% If $K_X^2 \in \{ 7, 8, 9\}$, 
% then we are done by 
% Then 
\noindent 
In what follows, we assume $K_X^2 \in \{ 5, 6\}$ and 
the induction hypothesis $(\star)$ below. 
%the assumption that 
\begin{enumerate}
\item[$(\star)$] 
$W$ is rational if $W$ is a regular del Pezzo surface with $K_W^2 >K_X^2$ and $W(k) \neq \emptyset$. 
\end{enumerate}
It is enough to show that $X$ is rational.

% \begin{step}
% $X(k) \neq \emptyset$ if 
% there exists a curve $E$ on $X$ satisfying $E^2 <0$. 
% %C^2 \geq 0$ for every curve $C$ on $X$. 
% \end{step}

% \begin{proof}
% Assume that  there exists a curve $E$ on $X$ with $E^2 <0$. 
% Then there exists a birational morphism $f: X \to Y$ with $\Ex(f) = E$. 
% In this case, $Y$ is a geometrically integral regular del Pezzo surface $K_X^2 < K_Y^2$. 
% By the induction hypothesis $(\star)$, 
% we get $Y(k) \neq \emptyset$. 
% This, together with the Lang-Nishimura theorem, implies that $X(k) \neq \emptyset$.     
% \end{proof}

\begin{step}\label{s2 rationality}
If $\rho(X) \geq 2$, then $X$ is rational. 
\end{step}

\begin{s2proof}%[Proof of Step \ref{s2 rationality}]
Assume $\rho(X) \geq 2$. 
By $K_X^2 \in \{5, 6\}$ and \cite[Lemma 4.24]{BFSZ}, 
there exists an extremal ray which induces a birational morphism $X \to Y$. 
Then $Y$ is a geometrically integral regular del Pezzo surface 
with $K_X^2 < K_Y^2$ and $Y(k) \neq \emptyset$ (cf. \cite[Lemma 6.11]{BT22}). 
By the induction hypothesis $(\star)$, 
 $Y$ is rational, and hence so is $X$. 
This completes the proof of Step \ref{s2 rationality}. 
\end{s2proof}

\begin{step}\label{s3 rationality}
If $\rho(X) =1$, then $X$ is rational. 
\end{step}

\begin{s3proof}%[Proof of Step \ref{s3 rationality}]
By $K_X^2 \in \{5, 6\}$, we get $\Pic X = \Z K_X$ 
 and $H^1(X, \MO_X)=0$ (\autoref{thm: class_dP}). 
Fix a $k$-rational point $P$ on $X$. 
Let $\sigma : Y \to X$ be the blowup at $P$. 
It follows from  \autoref{l rho=1} that $Y$ is a regular del Pezzo surface. 
By \cite[Proposition 4.32 and Proposition 4.35]{BFSZ}, 
the following hold for the contraction $\tau : Y \to Z$ of the extremal ray not corresponding to $\sigma$. 
\begin{itemize}
\item $\tau : Y \to Z$ is a birational morphism. 
\item $Z$ is a geometrically integral regular del Pezzo surface. 
\item $K_X^2 < K_Z^2$. 
\end{itemize}
It follows from the Lang-Nishimura theorem (cf. \cite[Lemma 6.11]{BT22}) that 
$Z(k) \neq \emptyset$. 
By the induction hypothesis $(\star)$, 
$Z$ is rational, and hence so is $X$. 
This completes the proof of Step \ref{s3 rationality}. 
\end{s3proof}

\medskip

Step \ref{s2 rationality} and Step \ref{s3 rationality} 
complete the proof of \autoref{t rationality}. 
\qedhere

% We may assume that $C^2 \geq 0$ for every curve $C$ on $X$. 
% In particular, $\rho(X) \leq 2$ 

%     with
% Then $K_X$

% In what follows, we 
% We prove the assertion by descending induction on $K_X^2$. 

% Assume $K_X^2 =9$. 

% Then $X$ is geometrically normal by . 
% It is easy to see that $X$ is geometrically canonical, and hence 
% $X_{\overline k} \simeq \P^2_{\overline k}$. 
% By $X(k) \neq \emptyset$, 
% we get $X \simeq \P^2_k$. 

% Assume $K_X^2 =8$. 

% We now treat the case when $K_X^2 \in \{ 8, 9\}$. 
% If $K_X^2 =9$, then 

% If $K_X^2 =9$, then OK. 
% In particular, we may assume that $\rho(X)=1$ and $\Pic X = \Z K_X$. 
% Fix a $k$-rational point $P$ and let $\sigma : Y \to X$ be the blowup at $P$. 
% By ..., $-K_Y$ is ample. 
% Moreover, we have $K_Z^2 > K_X^2$ for $\tau : Y \to Z$. 
% Then the assertion follows from the induction hypothesis. 

%     Suppose $\rho(X)>1$. 
%     It follows from \cite[Lemma 4.37]{BFSZ} that there is a birational contraction $\pi \colon X \to Y$ onto a del Pezzo of degree $d>5$ with a rational point. Thus $X$ is rational.
    
%     Suppose $\rho(X)=1$. Then let $\sigma \colon Y \to X $ be the blow-up at a rational point $P$. By \autoref{l rho=1}, $-K_Y$ is ample and by \cite[Proposition 4.34]{BFSZ}, there is a birational contraction $\tau \colon Y \to Z$ onto a del Pezzo surface of degree 9 with a rational point. We conclude $Y \simeq \mathbb{P}^2_k$.
\end{proof}

\begin{lem} \label{lem: conic_bdl}
Let $X$ be a geometrically integral regular projective surface with a Mori fibre space $\pi \colon X \to B$ onto a regular projective curve $B$. 
Suppose $B(k) \neq \emptyset$.
Assume that $\pi$ does not have a section. % \FB{before: is wild}. 
Then the following hold. 
\begin{enumerate}
\item  $N^1(X) = \Z K_X \oplus \Z F$ for a fibre $F$ of $\pi$ over a $k$-rational point of $B$. 
\item If $K_X^2 >0$, then $-K_X$ is big. 
\item $K_X^2 \leq 4$ or $-K_X$ is ample. 
%{\cyan The statement was $K_X^2 \leq 6$. The current one is stronger and applicable.}
%$K_X^2 \leq 6$. 
\end{enumerate}
\end{lem}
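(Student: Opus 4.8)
The plan is to establish the three assertions in order, since (2) and (3) both rely on the basis produced in (1). Throughout, recall that the Mori fibre space structure forces $\rho(X)=2$, and that every fibre $F$ satisfies $F^2=0$ and $K_X\cdot F=-2$ (the general fibre is a conic and $\chi(\MO_F)$ is constant in the flat family $\pi$), while $F=\pi^*(\mathrm{pt})$ is nef. For (1), I would first show that $[F]$ is primitive in $N^1(X)=\Pic(X)/\!\equiv$. If $[F]=m[G]$ with $[G]\in N^1(X)$ and $m\ge 2$, then $G$ is Cartier with $G^2=0$ and $K_X\cdot G=-2/m$, so Riemann--Roch for the regular projective surface $X$ would give $\chi(\MO_X(G))=\chi(\MO_X)+\tfrac12(G^2-K_X\cdot G)=\chi(\MO_X)+\tfrac1m$, which is not an integer, a contradiction. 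Next I would invoke the no-section hypothesis: the generic fibre $X_\eta$ is a regular curve of arithmetic genus $0$ over $K(B)$ with $\deg(-K_{X_\eta})=2$, so a closed point of odd degree would make $\Pic(X_\eta)$ contain a class of degree $1$, whence (by Riemann--Roch on the curve) an effective divisor of degree $1$, i.e.\ a $K(B)$-rational point, which extends to a section of $\pi$ since $B$ is regular and $\pi$ is proper. As a horizontal prime divisor $C$ meets $X_\eta$ in a point of degree $C\cdot F$, the absence of a section forces $C\cdot F\in 2\Z$ for every curve $C$; hence the homomorphism $\deg_F\colon N^1(X)\to\Z$, $D\mapsto D\cdot F$, has image $2\Z$, and its kernel is a saturated rank-one sublattice containing the primitive class $[F]$, so $\ker(\deg_F)=\Z[F]$. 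For any $D$, adding $\tfrac12(D\cdot F)\,[K_X]$ lands in $\ker(\deg_F)=\Z[F]$, giving $N^1(X)=\Z[K_X]\oplus\Z[F]$.

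For (2), I would show $-K_X$ is big by estimating $h^0(-mK_X)$. Since $F$ is nef and $(m+1)K_X\cdot F=-2(m+1)<0$ for $m\ge 0$, the divisor $(m+1)K_X$ is not effective, so Serre duality gives $h^2(X,\MO_X(-mK_X))=h^0(X,\MO_X((m+1)K_X))=0$. Riemann--Roch then yields $h^0(-mK_X)\ge\chi(-mK_X)=\chi(\MO_X)+\tfrac{m(m+1)}2K_X^2$, which grows quadratically in $m$ once $K_X^2>0$; hence $-K_X$ has Iitaka dimension $2$, i.e.\ is big.

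For (3), suppose $K_X^2\ge 5$ and $-K_X$ is not ample, and derive a contradiction. As $(-K_X)^2=K_X^2>0$, the Nakai--Moishezon criterion produces an integral curve $C$ with $-K_X\cdot C\le 0$. A vertical curve is numerically proportional to $F$ and meets $-K_X$ positively, so $C$ is horizontal with $C\cdot F>0$. Writing $[C]=a[K_X]+b[F]$ via (1), one gets $C\cdot F=-2a>0$, so $a\le -1$, together with $K_X\cdot C=aK_X^2-2b\ge 0$ and $C^2=a^2K_X^2-4ab$, whence $K_X\cdot C+C^2=a(a+1)K_X^2-2b(1+2a)$. Being increasing in $b$ and subject to $b\le aK_X^2/2$, this quantity is at most $-a^2K_X^2$. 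On the other hand, with $k'=H^0(C,\MO_C)$ and $e=[k':k]$, adjunction over $k'$ gives $K_X\cdot C+C^2=e\deg_{k'}\omega_C\ge -2e$, since the genus of $C$ over its field of constants is non-negative. Finally $e$ divides $C\cdot F=-2a$ and $\deg_{k'}(\MO_X(F)|_C)\ge 1$, so $e\le 2|a|$. Combining, $a^2K_X^2\le 2e\le 4|a|$, hence $|a|\,K_X^2\le 4$, contradicting $|a|\ge 1$ and $K_X^2\ge 5$; therefore $-K_X$ is ample, proving (3).

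The main obstacle is the bookkeeping in (3) over an imperfect (or merely non-closed) base field: the curve $C$ may have negative \emph{naive} arithmetic genus, so $p_a(C)\ge 0$ fails, and one must instead run adjunction over the field of constants $k'=H^0(C,\MO_C)$ and track the divisibility of all intersection numbers by $e=[k':k]$. The other delicate point is the arithmetic input in (1) --- that a regular genus-zero curve with a zero-cycle of odd degree acquires a rational point --- which is exactly what converts the no-section hypothesis into the parity statement $C\cdot F\in 2\Z$ underpinning the integrality of the basis $\{[K_X],[F]\}$.
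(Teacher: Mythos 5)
Your proof is correct, and in parts (1) and (3) it takes a genuinely different route from the paper's. For (1), the paper simply quotes two external results: that the generic fibre, being a conic without a rational point, has $\Pic(X_\eta)=\Z K_{X_\eta}$ (\cite[Lemma 10.6]{kk-singbook}), and the exact sequence $0 \to N^1(B) \to N^1(X) \xrightarrow{\cdot F} \Z$ (\cite[Theorem 4.4]{Tan18}); you instead reprove what is needed from scratch: primitivity of $[F]$ via Riemann--Roch integrality, the parity statement $C\cdot F \in 2\Z$ extracted from the no-section hypothesis, and a lattice-saturation argument. The only point you should make explicit there is that prime divisors generate $N^1(X)$ (true since Weil equals Cartier on a regular surface), which is what lets you pass from ``every prime divisor has even $F$-degree'' to ``the image of $\deg_F$ is $2\Z$''. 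Part (2) coincides with the paper's argument. For (3), the paper's proof is heavier: it uses (2) to write $-K_X \equiv A+E$ with $A$ ample and $E$ effective, invokes \cite[Theorem 7.5(2)]{Tan18b} and Kleiman's criterion to produce a prime divisor $C$ spanning the second extremal ray with $C^2<0$ and $K_X\cdot C\ge 0$, pins down the signs of the coefficients of $C \equiv -aK_X-bF$ via Iitaka dimensions, and bounds $[k_C:k]\le [K(C):K(B)]$ by showing $K(B)\otimes_k k_C$ is a field. You bypass all of this: Nakai--Moishezon (not ample together with $(-K_X)^2>0$) hands you an integral curve $C$ with $K_X\cdot C\ge 0$, necessarily horizontal by $\pi$-ampleness of $-K_X$, and you bound $e=[k':k]$ by the elementary remark that $e$ divides every $k$-degree of a line bundle on $C$, applied to $\MO_X(F)|_C$; no negativity of $C^2$, no cone theorem, and in fact no use of (2) at all. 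The numerical endgame, $a^2K_X^2 \le 2e \le 4|a|$ contradicting $K_X^2\ge 5$, is the same in both. Your version is more self-contained and elementary; the paper's version produces the extra structural data (the extremal curve with negative self-intersection) that matches the Sarkisov-style analysis used elsewhere in the paper, but that data is not needed for the stated conclusion.
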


\begin{proof}
% We first reduce the problem to the case when $k$ is separably closed. 
% Let $\pi^{\sep} : X^{\sep} \to B^{\sep}$ be the base change $\pi \times_k k^{\sep}$. 
% Then $X^{\sep}$ is a geometrically integral regular projective surface over $k^{\sep}$ and 
% $-K_{X^{\sep}}$ is $\pi^{\sep}$-ample. 
% We run a $K_{X^{\sep}}$-MMP over $B^{\sep}$, whose end result 
% is a Mori fibre space $X' \to B^{\sep}$. 
Let us show (1). 
Pick a Cartier divisor $D$ on $X$. 
It is enough to show that $D \equiv  a K_Z + b F$ for some $a, b \in \Z$. 
Since %$\pi$ 
{there} is no section of $\pi$, 
we get $\Pic X_{\eta} = \Z K_{X_{\eta}}$ for the generic fibre $X_{\eta}$ of $\pi$ 
\cite[Lemma 10.6]{kk-singbook}. 
In particular, we have $D|_{X_{\eta}} \sim aK_X|_{X_{\eta}}$ for some $a \in \Z$, which implies $(D -aK_X) \cdot F=0$. 
We then get $D -a K_X \equiv bF$ for some $b \in \Z$ by the following exact sequence \cite[Theorem 4.4]{Tan18}: 
\[
0 \to N^1(B) \to N^1(X) \xrightarrow{\cdot F} \Z. 
\]
Thus (1) holds. 

Let us show (2). 
As $-K_X$ is $\pi$-ample, we deduce that 
\[
h^2(X, \mathcal{O}_X(-mK_X))=h^0(X, \mathcal{O}_X((m+1)K_X))=0
\]
for every $m>0$. 
By the Riemann--Roch theorem, 
we have 
$$h^0(X, \MO_X(-mK_X)) \geq \chi(X, \MO_X(-mK_X)) = 
\chi(X, \MO_X) + \frac{m(m+1)}{2}K_X^2.$$
Hence $-K_X$ is big when $K_X^2 >0$. 
Thus (2) holds. 

\medskip

Let us show (3). 
Suppose $K_X^2 \geq 5$ and $-K_X$ is not ample. 
Let us derive a contradiction. 

\begin{claim-}
Let $R$ be the extremal ray of $\overline{\NE}(X)$ not corresponding to $\pi$. 
Then the following hold. 
\begin{enumerate}
%\item[(a)] $-K_X$ is not ample. 
\item[(a)] $R = \R_{\geq 0}[C]$ for a prime divisor $C$ on $X$ satisfying $C^2 <0$.  
%\item $C^2 <0$. 
\item[(b)] $C \equiv - aK_X - b F$ for some integers $a>0$ and $b>0$. 
\end{enumerate}
\end{claim-}

\begin{claimproof}%[Proof of Claim]
% Let us show (a). 
% Suppose that $-K_X$ is ample. 
% It suffices to derive a contradiction. 
% Since $R$ is a $K_X$-negative {\cyan extremal} ray, 
% we have the contraction $f: X \to Z$ of $R$. 
% %of the extremal ray not corresponding to $\pi : X \to B$. 
% Then either $f$ is birational or $f$ is a Mori fibre space to a curve $Z$. 
% If $f$ is a Mori fibre space, then we get $X \simeq Z \times B$ \cite[Proposition 4.39]{BFSZ}. 
% In this case, $\pi$ has a section, which is absurd. 
% Hence $f : X \to Z$ is birational. 
% If $K_Z^2 =9$, then the blowup centre $z$ is a closed point of degree $1$ or $4$ \cite[Proposition 4.31(1)]{BFSZ};  
% however, the latter case is excluded by $K_Z^2 -\deg z = K_X^2  \geq 7$. 
% The former case does not occur, because we would get $X \simeq \F_1$, which implies that $\pi : X \to B$ has a section. 
% Similarly, we can check that the remaining case $K_Z^2 \in \{ 4, 8\}$ (cf. \cite[Proposition 4.31(2)(3)]{BFSZ}) does not occur. 
% %However, $X$ again has a section of $\pi : X \to B$, which is a contradiction. 
% Thus (a) holds. 
% %Therefore, $-K_X$ is not ample. 

% By the classification of del Pezzo surfaces of degree at least 7,{\cyan ???} $-K_X$ is not ample \cite[Proposition 4.39]{BFSZ}. 
% As $\pi$ does not have a section, %is wild, 
% we deduce that $N^1(X) = \Z K_X \oplus \Z [F]$, where $F$ is the fibre over a rational point of $B$ (for the precise argument, relying on the base-point-free theorem, we refer to the proof of \cite[Proposition 4.23]{BFSZ}). 

Let us show (a). 
By (2), we can write  $-K_X \equiv A + E$, where $A$ is an ample $\mathbb{Q}$-divisor and $E$ is an effective $\mathbb{Q}$-divisor. 
In particular, $K_X+E+ (1/2)A \equiv -1/2 A$ is anti-ample. 
By \cite[Theorem 7.5(2)]{Tan18b}, 
we have $R = \R_{\geq 0}[C]$ for some prime divisor $C$ on $X$. 
If $C^2 \geq 0$, then we get $-K_X \cdot C = (A +E) \cdot C \geq A \cdot C >0$, and hence the Kleiman's criterion implies that $-K_X$ is ample, which is absurd. 
Therefore, $C^2 <0$. 
Thus (a) holds. 

Let us show (b). 
By (1), we have 
\[
C \equiv -aK_X -bF  
\]
for some $a, b \in \Z$. 
It suffices to show $a>0$ and $b>0$. 
We have $2a = (-aK_X -bF) \cdot F = C \cdot F >0$, 
where the inequality $C \cdot F >0$ is guaranteed by the fact that 
$C$ is a curve generating the {extremal} ray not corresponding to $\pi$. 
% We get $a>0$: indeed, $0 \leq C \cdot F=2a$ and, as $R$ is different to $\mathbb{R}_{+}[F]$, we deduce that $a > 0$.  
We then get $-K_X \equiv \frac{1}{a}C +\frac{b}{a}F$. 
It follows from $\kappa(-K_X) =2$, $\kappa(C) <2$, and $\kappa(F) <2$ 
that both coefficients $\frac{1}{a}$ and $\frac{b}{a}$ are positive, 
and hence  $b>0$. 
Thus (b) holds. 
This completes the proof of Claim. 
\end{claimproof}

\medskip

It is easy to see that the following hold. 
\begin{enumerate}
\item[(i)] $2a = C \cdot F = \deg (\pi|_C : C \to B) = [K(C):K(B)]$. 
\item[(ii)] 
$0 \leq K_X \cdot C = K_X \cdot (-aK_X -bF)  = -a K_X^2 +2b$. In particular, $aK_X^2 \leq 2b$.  
\item[(iii)] $C^2 = (-aK_X -bF)^2 =  a^2K_X^2 -4ab$. 
\end{enumerate}
We have that 
% \[
% 2a = C \cdot F = \deg (\pi|_C : C \to B) = [K(C):K(B)] 
% \]
% and 
% \[
% 0 \leq K_X \cdot C = K_X \cdot (-aK_X -bF)  = -a K_X^2 +2b. 
% %\leq -7a +2b.
% \]
% Hence $aK_X^2 \leq 2b$.
% As
% \[
% C^2 = (-aK_X -bF)^2 =  a^2K_X^2 -4ab 
% \]
% we have 
\begin{eqnarray*}
(K_X + C) \cdot C 
&\overset{{\rm (ii)(iii)}}{=}& (-a K_X^2 +2b)+ (a^2K_X^2  -4ab) \\
&=&(a^2-a)K_X^2 +2b(1-2a)\\
&\overset{{\rm (b)(ii)}}{\leq}& (a^2-a)K_X^2 + aK_X^2 (1-2a)\\
&=& aK_X^2 ( (a-1) +(1-2a))\\
&=& -a^2 K_X^2 \overset{{\rm (b)}}{<}0.
\end{eqnarray*}
% \[
% (K_X + C) \cdot C = -a K_X^2 +2b+ a^2K_X^2  -4ab 
% =(a^2-a)K_X^2 +2b(1-2a)\leq (a^2-a)K_X^2 + aK_X^2 (1-2a)
% \]
% \[
% = aK_X^2 ( (a-1) +(1-2a)) = -a^2 K_X^2<0.
% \]
For %If we denote by 
$k_C \coloneqq H^0(C, \MO_C)$, we get $-2[k_C:k]= (K_X+C) \cdot C \leq  -a^2 K_X^2$, and hence  
\[
[k_C:k] \geq  \frac{a^2K_X^2}{2}. 
\]
Since $B$ is geometrically integral, 
the ring $K(B) \otimes_k k_C$ is an integral domain. 
Then $K(B) \otimes_k k_C$
is a field, because 
$K(B) \otimes_k k_C$ is an integral domain which is integral over 
a field $K(B)$. % (so zero-dimensional). 
Therefore, we get field extensions 
% By the inclusion 
% $K(B) \otimes_k k_C \to K(C),$ we deduce that 
%We get 
\[
K(B) \hookrightarrow 
K(B) \otimes_k k_C \hookrightarrow K(C), 
\]
which imply 
\[
 [K(C):K(B)] \geq [K(B) \otimes_k k_C :K(B)] = [k_C:k]. 
\]
To summarise, it holds that 
\[
\frac{a^2K_X^2}{2} \leq [k_C:k] \leq [K(C) : K(B)]
\overset{{\rm (i)}}{=} 2a. 
\]
We then get  $aK_X^2 \leq 4$, which contradicts $a \geq 1$ and $K_X^2 \geq 5$. Thus  (3) holds. 
\end{proof}

\begin{lemma} \label{lem: selfinter}
    Let $C$ be a geometrically integral regular projective curve and let $\mathcal{E}$ be a locally free sheaf of rank 2 on $C$.
    If $\pi \colon X=\mathbb{P}_{C}(\mathcal{E}) \to C$ is the projectivisation of $\mathcal{E}$, we have $K_X^2=8(1-h^1(C, \MO_C)).$
\end{lemma}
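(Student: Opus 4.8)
The plan is to reduce the statement to the standard intersection theory on the ruled surface $\pi\colon X=\P_C(\mathcal{E})\to C$ together with a Riemann--Roch computation on the Gorenstein curve $C$. Throughout I write $g:=h^1(C,\MO_C)$ and let $\xi:=c_1(\MO_X(1))$ be the class of the relative tautological bundle. Since $\mathcal{E}$ is locally free of rank $2$ and $C$ is regular, $X$ is a regular projective surface and $\pi$ is a $\P^1$-bundle, so all the usual formulae for geometrically ruled surfaces are available (cf. \cite[V.2]{Ha77}).

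First I would record the canonical bundle formula for $\pi$. The relative Euler sequence
\[
0\to \Omega_{X/C}\to \pi^*\mathcal{E}\otimes \MO_X(-1)\to \MO_X\to 0
\]
has outer terms of ranks $1$ and $0$ and middle term of rank $2$; taking determinants gives $\omega_{X/C}\cong \MO_X(-2)\otimes \pi^*\det\mathcal{E}$, whence
\[
K_X\sim -2\xi+\pi^*(K_C+\det\mathcal{E}).
\]

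Second, I would assemble the three intersection numbers. Set $e:=\deg_k\mathcal{E}=\deg_k\det\mathcal{E}$. Using the projection formula together with $\pi_*(\xi\cap[X])=[C]$ and $\pi_*(\xi^2\cap[X])=c_1(\mathcal{E})\cap[C]$ (the Segre/Grothendieck relation, where $c_2(\mathcal{E})=0$ because $C$ is a curve), one obtains, for a divisor class $\alpha$ on $C$,
\[
(\pi^*\alpha)^2=0,\qquad \xi\cdot\pi^*\alpha=\deg_k\alpha,\qquad \xi^2=e.
\]
Routing these through the projection formula (rather than through a fibre class) avoids assuming that $C$ carries a $k$-rational point, which may fail over the fields considered. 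Expanding the canonical class then yields
\[
K_X^2=\bigl(-2\xi+\pi^*(K_C+\det\mathcal{E})\bigr)^2=4\xi^2-4\,\xi\cdot\pi^*(K_C+\det\mathcal{E})=4e-4\bigl(\deg_k K_C+e\bigr)=-4\deg_k K_C,
\]
so the dependence on $e$ cancels.

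Finally, I would compute $\deg_k K_C=2g-2$. As $C$ is regular it is Gorenstein, so $\omega_C$ is an invertible sheaf, and as $C$ is geometrically integral we have $H^0(C,\MO_C)=k$. Serre duality gives $h^0(\MO_C(K_C))=g$ and $h^1(\MO_C(K_C))=1$, hence $\chi(\MO_C(K_C))=g-1$, while $\chi(\MO_C)=1-g$; Riemann--Roch $\chi(\MO_C(K_C))=\deg_k K_C+\chi(\MO_C)$ then forces $\deg_k K_C=2g-2$. Substituting into the displayed formula gives $K_X^2=-4(2g-2)=8(1-g)$, as claimed. I do not expect a genuine obstacle here: the only points requiring care are fixing the $\P(\mathcal{E})$ convention and the matching sign in the canonical bundle formula, and keeping every degree a $k$-degree; reassuringly, the final value $8(1-g)$ is independent both of $\mathcal{E}$ (consistent with $\P_C(\mathcal{E})\cong\P_C(\mathcal{E}\otimes L)$) and of the chosen convention, since the $e$-terms cancel in either normalisation.
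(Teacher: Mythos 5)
Your proof is correct, but it follows a genuinely different route from the paper's. The paper first reduces to the case of a separably closed base field (so that $C$ acquires a rational point and $\pi$ a section $S$), then writes $K_X \equiv aS + bF$ in the basis $N^1(X) = \mathbb{Z}[S] \oplus \mathbb{Z}[F]$ and pins down $a=-2$ and $b = 2h^1(C,\mathcal{O}_C)-2+S^2$ by applying adjunction to the fibre $F$ and to the section $S$; squaring then gives the result, with the $S^2$-terms cancelling. You instead stay over $k$ throughout: the relative Euler sequence gives $K_X \sim -2\xi + \pi^*(K_C + \det\mathcal{E})$, the Grothendieck relation and projection formula give $\xi^2 = \deg\det\mathcal{E}$ and $\xi\cdot\pi^*\alpha = \deg_k\alpha$, and Riemann--Roch plus Serre duality on the Gorenstein curve $C$ give $\deg_k K_C = 2h^1(C,\mathcal{O}_C)-2$. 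What your approach buys is that it needs neither the base change (and the attendant invariance of $K_X^2$ and $h^1$ under separable extension) nor the existence of a rational point or a section of $\pi$, and it makes structurally visible why the answer is independent of $\mathcal{E}$, since the $\deg\det\mathcal{E}$ terms cancel; what the paper's approach buys is that it avoids the tautological-class formalism entirely, using only adjunction and the rank-two lattice $N^1(X)$, which is the toolkit used elsewhere in that section. Both arguments are complete; the only points you rightly flag as needing care (the $\mathbb{P}(\mathcal{E})$ convention and keeping all degrees $k$-degrees) are handled correctly in your write-up.
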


  This result is well known (see for example \cite[Chapter V, Corollary 2.11]{Ha77}). 
  However, we include the proof for sake of completeness as we work over an arbitrary field.

\begin{proof}
    Without any loss of generality, we can suppose that $k$ is separably closed.
    Let $S$ be a %regular 
    section of $\pi$ and $F=\pi^*P$ be the fibre over a $k$-rational point $P$ of $C$.
    As 
    %$N^1(X)=\mathbb{Z}[S] \oplus \mathbb{Z}[F]$ 
    {$N^1(X)=\mathbb{Z}S \oplus \mathbb{Z}F$}, we can write 
    $K_X \equiv a S +b F$ for some $a, b \in \mathbb{Z}$.
    As $(K_X+F) \cdot F=-2$, we deduce $a=-2$.
    Therefore, we have the following chain of equalities:
    $$2h^1(C, \MO_C)-2=\deg_S(K_S)=(K_X+S) \cdot S=(-S+bF) \cdot S=-S^2+b,$$
    which implies that $b=2h^1(\MO_C)-2+S^2$.
    Thus we deduce 
    $$ K_X^2 =  (-2S+bF)^2 = 4S^2+2 \cdot (-2)\cdot (2h^1(\MO_C)-2+S^2)=8(1-h^1(C, \MO_C)), $$
    concluding.
\end{proof}

In \cite[Theorem 4.41]{BFSZ}, it is shown that the degree $K_X^2$ of a rational regular del Pezzo surface of Picard rank 1 satisfies the inequality $K_X^2 \geq 5$.
We prove a converse of this implication, 
generalising the work of Manin \cite{Man66}.

\begin{theorem} \label{thm: rat_min_MFS}
Let $X$ be a geometrically integral projective regular surface with a Mori fibre space structure $\pi \colon X \to B$. 
Then the following are equivalent. 
\begin{enumerate}
\item $X$ is rational. 
\item $K_X^2 \geq 5$ and  $X(k) \neq \emptyset$. % and $h^1(X, \mathcal{O}_X)=0$. 
\end{enumerate}
\end{theorem}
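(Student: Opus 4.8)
The plan is to prove the two implications separately, in each case splitting according to $\dim B$. Since $\pi$ is a Mori fibre space and $X$ is geometrically integral, there are only two possibilities: either $\dim B=0$, so that $B=\Spec k$, $\rho(X)=1$ and $-K_X$ is ample; or $\dim B=1$, so that $B$ is a regular projective curve, $\rho(X)=2$ and $\pi$ is a conic bundle. When $\dim B=0$ both implications are already available: $(2)\Rightarrow(1)$ is exactly \autoref{t rationality}, and $(1)\Rightarrow(2)$ is \cite[Theorem 4.41]{BFSZ}. Thus the real content is the conic bundle case $\dim B=1$, for which the two preliminary lemmas \autoref{lem: conic_bdl} and \autoref{lem: selfinter} are designed.

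For $(2)\Rightarrow(1)$ I would argue as follows, assuming $K_X^2\ge5$ and $X(k)\neq\emptyset$. If $-K_X$ is ample, then $X$ is a geometrically integral regular del Pezzo surface with $K_X^2\ge5$ and a rational point, so it is rational by \autoref{t rationality}. If $-K_X$ is not ample, then $\pi$ necessarily admits a section: were there none, \autoref{lem: conic_bdl}(3) would give $K_X^2\le4$, contrary to assumption. A section $S$ restricts to a rational point on the (smooth, genus $0$) generic fibre, so the generic fibre is $\P^1_{K(B)}$. Moreover $\rho(X/B)=1$ forces every closed fibre to be irreducible, while $S\cdot F=1$ forces each fibre to have multiplicity one and to meet $S$ at a smooth point (a local intersection-number computation rules out passage through a singular point); hence every fibre carries a smooth rational point and is isomorphic to $\P^1$, so $\pi\colon X\to B$ is a $\P^1$-bundle $\P_B(\mathcal E)$. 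Now \autoref{lem: selfinter} gives $K_X^2=8(1-h^1(B,\MO_B))$, whence $K_X^2\ge5$ forces $h^1(B,\MO_B)=0$. Since $X(k)\neq\emptyset$ maps to a $k$-point of $B$, the genus-$0$ curve $B$ is isomorphic to $\P^1_k$, and a $\P^1$-bundle over $\P^1_k$ is rational.

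For $(1)\Rightarrow(2)$, assume $X$ is rational. The birational map $\P^2_k\dashrightarrow X$ and the Lang--Nishimura theorem (applicable as $X$ is proper and $\P^2_k$ has smooth $k$-points) yield $X(k)\neq\emptyset$. It remains to show $K_X^2\ge5$, which I would do by contradiction, supposing $K_X^2\le4$. The case $\dim B=0$ is immediate from \cite[Theorem 4.41]{BFSZ}. If $\dim B=1$, then $X$ is a rational conic bundle; rationality gives $h^1(X,\MO_X)=0$, hence $h^1(B,\MO_B)=0$ and $B\simeq\P^1_k$, so that $X$ is a rational conic bundle over $\P^1_k$ of degree $K_X^2\le4$. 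The claim to be contradicted is precisely that such a surface exists.

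The main obstacle is exactly this last point. Excluding rational conic bundles of degree at most $4$ is the delicate ``rigidity'' half of the Iskovskikh--Manin theory, and it does not follow from the elementary lemmas above; in particular, when $-K_X$ is ample one is facing minimal del Pezzo surfaces of degree $\le4$, which classically fail to be rational. The plan is to deduce it from the Sarkisov program for regular surfaces of \cite{BFSZ}: any birational map between Mori fibre spaces factors into Sarkisov links, and one tracks the degree $K_X^2$ and the types of the extremal rays along such links (using \cite[Proposition 4.31 and Proposition 4.34]{BFSZ}, as in the proof of \autoref{t rationality}) to see that a conic bundle of degree $\le4$ cannot be linked to $\P^2_k$, hence is not rational. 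All the remaining cases, by contrast, reduce cleanly to \autoref{t rationality}, \autoref{lem: conic_bdl}, \autoref{lem: selfinter}, and \cite[Theorem 4.41]{BFSZ}.
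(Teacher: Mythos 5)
Your proof of the implication (2)~$\Rightarrow$~(1) is essentially the paper's own proof: the same case division ($\dim B=0$ handled by \autoref{t rationality}; for $\dim B=1$, no section forces $-K_X$ ample via \autoref{lem: conic_bdl}(3) and hence rationality, while a section forces $X\simeq \P_B(\mathcal E)$, and then \autoref{lem: selfinter} gives $K_X^2=8(1-h^1(B,\MO_B))$, so $h^1(B,\MO_B)=0$, $B\simeq\P^1_k$, and $X$ is rational). The only difference is cosmetic: you split according to ampleness of $-K_X$ rather than existence of a section, and you sketch your own proof that a section makes $\pi$ a $\P^1$-bundle, where the paper simply invokes \cite[Lemma 4.20]{BFSZ}. (Your sketch is roughly right, but note it silently uses $B(k)\neq\emptyset$, which you should extract from $X(k)\neq\emptyset$ before applying \autoref{lem: conic_bdl}.)

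The genuine gap is in (1)~$\Rightarrow$~(2). You correctly obtain $X(k)\neq\emptyset$ from Lang--Nishimura, but for the bound $K_X^2\geq 5$ you only cite \cite[Theorem 4.41]{BFSZ} in the case $\dim B=0$, and in the conic bundle case you offer a plan (factor the birational map $\P^2_k\dashrightarrow X$ into Sarkisov links and track degrees) that you explicitly leave uncompleted. The paper's proof has no such hole: it cites \cite[Theorem 4.41]{BFSZ} for the \emph{entire} implication (1)~$\Rightarrow$~(2), i.e.\ that theorem is a statement about rational regular surfaces with a Mori fibre space structure --- covering conic bundles over a curve as well as Picard-rank-one del Pezzo surfaces --- and not merely the rank-one statement suggested by the paper's informal lead-in sentence, which is what your reading assumes. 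So the ``delicate rigidity half of the Iskovskikh--Manin theory'' that you flag as the main obstacle is exactly the content of the reference you already have in hand; by restricting that citation to $\dim B=0$ and not carrying out the Sarkisov-link analysis, your proposal leaves the non-rationality of degree~$\leq 4$ conic bundles (with a rational point) unproven, and the equivalence is not established as written.
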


% {\green The condition "with a Mori fibre space structure $\pi \colon X \to B$" can be replaced by "$\kappa(X) = -\infty$"? If so, we may erase the next Corollary. (In this case, we should change the statement in Intro as well)}

\begin{proof}
The implication (1) $\Rightarrow$ (2) has been settled in 
\cite[Theorem 4.41]{BFSZ}. 
In what follows, we prove the opposite implication (1) $\Leftarrow$ (2). 
Assume (2).     Let us show (1). 
    If $\dim B=0$, then this is proven in \autoref{t rationality}. 
    % and \autoref{them: Enriques}.
    We may assume that $\dim B=1$, i.e., 
    $\pi : X \to B$ is a conic bundle. 

Assume that  $\pi$ has no section. 
In this case, $-K_X$ is ample (\autoref{lem: conic_bdl}(3)), and hence $X$ is rational (\autoref{t rationality}). 
Hence we may assume that $\pi$ has a section.     
    % If  $5 \leq K_X^2 <8$, then $X$ is a del Pezzo surface by \cite[Proposition 4.23]{BFSZ} and thus we are done.
    % Suppose $K_X^2 \geq 8$.
    % By \autoref{lem: conic_bdl}, $X \to B$ has a section. 
    We then get $X \simeq \P_B(\mathcal E)$ for some locally free sheaf $\mathcal E$ on $B$ of rank $2$ \cite[Lemma 4.20]{BFSZ} 
    (i.e., $\pi: X \to B$ is a $\P^1$-bundle). 
    As $K_X^2=8(1-h^1(B, \MO_B))$ by \autoref{lem: selfinter}, we conclude that $K_X^2=8$ and $h^1(B, \MO_B)=0$. As $B(k) \neq \emptyset$, we deduce $B \simeq \mathbb{P}^1_k$ and 
    $X$ is a $\P^1$-bundle over $\P^1_k$, and thus rational.
\end{proof}

\begin{corollary} %\FB{new corollary}
Let $X$ be a geometrically integral projective regular surface 
such that $\kappa(X)=-\infty$. 
If $K_X^2 \geq 5$ and $X(k) \neq \emptyset$, then $X$ is rational. 
\end{corollary}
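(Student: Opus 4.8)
The plan is to reduce the statement to \autoref{thm: rat_min_MFS} by running the minimal model program for regular surfaces. The hypothesis $\kappa(K_X)=-\infty$ will be used precisely to guarantee that this MMP terminates with a Mori fibre space, rather than with a model on which the canonical divisor is nef. Concretely, I would first argue that $K_X$ cannot be nef: by abundance for regular projective surfaces, a nef canonical divisor has non-negative Iitaka dimension, which would contradict $\kappa(K_X)=-\infty$. Hence there is a $K_X$-negative extremal ray, and the $K_X$-MMP of \cite{BFSZ} for geometrically integral regular surfaces produces a birational morphism $g\colon X\to X'$ onto a geometrically integral regular projective surface $X'$ carrying a Mori fibre space structure $\pi\colon X'\to B$. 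Since $\kappa$ of the canonical divisor is a birational invariant, at every intermediate step $K$ remains non-nef, so the process can only stop at a Mori fibre space.

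Next I would check that the three hypotheses of \autoref{thm: rat_min_MFS} hold for $X'$. The morphism $g$ is a composition of birational extremal contractions; as $X'$ is regular, each is the blow-up of a closed point of some degree $d\geq 1$, so that $K_{X'}^2=K_X^2+d>K_X^2$ and therefore $K_{X'}^2\geq K_X^2\geq 5$. Since $g$ is a $k$-morphism, it carries the given point of $X(k)$ to a point of $X'(k)$, so $X'(k)\neq\emptyset$ (one could also invoke the Lang--Nishimura theorem here). Finally, geometric integrality, regularity and projectivity of the target of each birational contraction are preserved by the surface MMP of \cite{BFSZ}.

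With these verifications in place, \autoref{thm: rat_min_MFS} applies to $\pi\colon X'\to B$ and yields that $X'$ is rational; as $g$ is birational, $X$ is rational as well. When $X$ already admits a Mori fibre space structure the morphism $g$ is simply the identity, and one applies \autoref{thm: rat_min_MFS} (or \autoref{t rationality} in the $\dim B=0$ case) directly.

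I expect the only genuinely delicate point to be the first step: that $\kappa(K_X)=-\infty$ forces the MMP to end in a Mori fibre space. This rests on abundance for surfaces (valid in all characteristics in the regular setting) together with the birational invariance of $\kappa(K)$; once this is granted, the monotonicity $K_{X'}^2\geq K_X^2$, the descent of the $k$-rational point, and the preservation of the geometric integrality, regularity and projectivity conditions are routine consequences of the constructions in \cite{BFSZ}.
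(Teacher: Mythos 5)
Your proof is correct and follows essentially the same route as the paper: run the $K_X$-MMP, use $\kappa(K_X)=-\infty$ (birational invariance plus abundance/non-vanishing, which the paper cites as \cite[Theorem 1.1]{Tan20}) to conclude the end result is a Mori fibre space rather than a minimal model, note that $K^2$ only increases and the rational point pushes forward along the birational contraction, and conclude by \autoref{thm: rat_min_MFS}. The only cosmetic difference is your citation of the MMP to \cite{BFSZ} where the paper uses \cite[Theorem 1.1]{Tan18}.
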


\begin{proof}
%    \FB{By the abundance for surfaces \cite[Theorem 1.1]{Tan20}, $K_X$ is not pseudo-effective.}{\cyan This is not precise, as \cite[Theorem 1.1]{Tan20} assumes that $K_X$ is nef.} 
    Running a $K_X$-MMP, we have a birational contraction $X \to Y$ of regular projective surfaces, 
    where $Y$ denotes the end result of this MMP \cite[Theorem 1.1]{Tan18}. 
    By $\kappa(Y)=\kappa(X) = -\infty$, $K_Y$ is not nef \cite[Theorem 1.1]{Tan20}. 
Then $Y$ has a Mori fibre space structure $Y \to Z$ \cite[Theorem 1.1]{Tan18}. 
    As $K_Y^2 \geq K_X^2 \geq 5$ and $Y(k) \neq \emptyset$, we conclude by \autoref{thm: rat_min_MFS} that $Y$ is rational, and thus $X$ also.
\end{proof}

\subsection{Degree 6}

% In this case, we split the discussion in two parts.

% \emph{Rationality}.
% We start with the rationality.
% Let $X$ be a del Pezzo surface of degree 6 with $X(k) \neq \emptyset$.
% If $X$ is not geometrically normal,  by \autoref{thm: class_dP} $X$ is the blow-up of a regular quadric $Q$. As $Q(k) \neq \emptyset$, we conclude $X$ is rational.

% Assume from now on that $X$ is geometrically normal, and thus geometrically canonical.  
% Let $P$ be a rational point of $X$.  
% If $\rho(X)>1$, then there is a birational contraction $f \colon X \to Y$ to a del Pezzo surface of degree $>6$ with a rational point and we conclude rationality by the previous cases.
% If $\rho(X)=1$, then the blow-up $\sigma : Y \to X$ at $P$ is a del Pezzo surface by \autoref{l rho=1}.
% As $X$ has degree 6, by \cite{BFSZ}*{Proposition 4.31 and Proposition 4.34}, the two ray game show there is a birational contraction $\tau \colon Y \to Z$, where $Z$ is a regular del Pezzo surface of degree 8. As $Z$ has a rational point, we conclude by the previous case that $Z$, and thus $X$, is rational.   \\   

% \noindent \emph{Existence of a rational point}

In this subsection, we prove the existence of a rational point on a del Pezzo of degree 6 over a $C_1$-field.

\begin{lem}\label{l 0 dim lin sub}
Let $Z$ be a zero-dimensional closed subscheme of $\P^n_k$ 
such that $d:=\dim_k \Gamma(Z, \MO_Z) \leq  n$.  
% \FB{you mean $\dim_k \MO_Z \leq n$}{\cyan Yes. But, rigorously speaking, $\MO_Z$ is a sheaf, not a $k$-module. Replace $\MO_Z(Z)$ by $\Gamma(Z, \MO_Z)$ if you prefer.}. 
Then $Z$ is contained in  a $(d-1)$-dimensional linear subvariety $V$ of $\P^n_k$. 
\end{lem}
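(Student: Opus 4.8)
The plan is to realise the desired $V$ as an intersection of hyperplanes passing through $Z$, and to control its dimension by a single dimension count on linear forms. First I would consider the restriction map on global sections of the twisting sheaf,
\[
\rho \colon H^0(\P^n_k, \MO_{\P^n_k}(1)) \longrightarrow H^0(Z, \MO_Z(1)),
\]
whose kernel is precisely $W := H^0(\P^n_k, \sI_Z(1))$, the space of linear forms vanishing scheme-theoretically on $Z$, where $\sI_Z$ denotes the ideal sheaf of $Z$ in $\P^n_k$.

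The key input is that, since $Z$ is zero-dimensional, $\MO_Z(1)$ is an invertible sheaf on the Artinian scheme $Z$, so its space of global sections has dimension equal to the length of $Z$; that is, $\dim_k H^0(Z, \MO_Z(1)) = \length(\MO_Z) = \dim_k \Gamma(Z, \MO_Z) = d$. Hence the target of $\rho$ has dimension $d$, while its source $H^0(\P^n_k, \MO_{\P^n_k}(1))$ has dimension $n+1$. Therefore the kernel satisfies
\[
\dim_k W \;\geq\; (n+1) - d \;\geq\; 1,
\]
where the last inequality uses the hypothesis $d \leq n$.

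I would then set $V \subseteq \P^n_k$ to be the common zero locus of the linear forms in $W$. Since every form in $W$ lies in $\sI_Z$, the ideal sheaf of $V$ (generated by $W$) is contained in $\sI_Z$, giving the inclusion $Z \subseteq V$. As $W$ is spanned by $m := \dim_k W$ linearly independent linear forms, the subscheme $V$ is a linear subvariety of dimension $n - m \leq d-1$. Finally, because $d-1 \leq n$, any linear subvariety of dimension at most $d-1$ is contained in one of dimension exactly $d-1$, so enlarging $V$ if necessary yields the claim. The argument is essentially formal linear algebra of linear systems; the only two points requiring care are the identification $\dim_k H^0(Z, \MO_Z(1)) = d$ for zero-dimensional $Z$ and the fact that a space of $m$ independent linear forms cuts out a linear subvariety of codimension $m$, so there is no serious obstacle to anticipate.
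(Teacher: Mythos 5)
Your proof is correct and is essentially the same as the paper's: both restrict linear forms to $Z$, bound the kernel $H^0(\P^n_k,\sI_Z(1))$ from below by $n+1-d$ using $\dim_k H^0(Z,\MO_Z(1)) = d$, and take $V$ to be (an enlargement of) the common zero locus of those forms, which is linear of dimension at most $d-1$. The only difference is that you spell out the two small points (triviality of invertible sheaves on the Artinian scheme $Z$, and the enlargement to dimension exactly $d-1$) that the paper leaves implicit.
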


\begin{proof}
Consider the exact sequence 
\[
0 \to H^0(\P^n_k, \MO_{\P^n}(1) \otimes \mathcal I_Z) 
\to  H^0(\P^n_k, \MO_{\P^n_k}(1)) 
\to  H^0(Z, \MO_{\P^n_k}(1)|_Z). 
\]
We then get  
\[
h^0(\P^n_k, \MO_{\P^n_k}(1) \otimes \mathcal I_Z) \geq 
 h^0(\P^n_k, \MO_{\P^n_k}(1)) - h^0(Z, \MO_{\P^n_k}(1)|_Z) 
 = n+1-d. 
\]
Set $W$  to be the intersection of all the hyperplanes $H$ corresponding to 
the elements of $H^0(\P^n_k, \MO_{\P^n_k}(1) \otimes \mathcal I_Z)$. 
Then 
\[
\dim W =  \dim \P^n_k - h^0(\P^n_k, \MO_{\P^n_k}(1) \otimes \mathcal I_Z) 
\leq n - (n+1-d) = d-1, 
\]
and thus $W$ is contained in a $(d-1)$-dimensional linear subvariety $V$.
\end{proof}

\begin{lem} \label{lem: d>6_rho>1}
  Assume that $k$ is a $C_1$-field. % be a $C_1$-field.
  Let $X$ be a geometrically integral regular del Pezzo surface  with $K_X^2 =6$ and $\rho(X)>1.$ 
    Then $X(k) \neq \emptyset$.
\end{lem}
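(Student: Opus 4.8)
The plan is to reduce this degree-$6$ statement to the already-established cases of degree $7$, $8$, and $9$ by contracting an extremal ray, and then to transfer the resulting rational point back to $X$ through rationality together with the Lang--Nishimura theorem, rather than by lifting a point along the contraction (which is delicate over imperfect fields).

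First I would use that $K_X^2 = 6$ and $\rho(X) \geq 2$ to produce a birational contraction. Exactly as in Step \ref{s2 rationality} of the proof of \autoref{t rationality}, the input \cite[Lemma 4.37]{BFSZ} guarantees an extremal ray of $\overline{\NE}(X)$ whose contraction is a birational morphism $f \colon X \to Y$ onto a geometrically integral regular del Pezzo surface $Y$ with $K_Y^2 > K_X^2 = 6$. Since every geometrically integral regular del Pezzo surface satisfies $K^2 \leq 9$ (\autoref{prop: geom_normality_degree}, cf.\ \cite[Theorem 4.7(1)]{Tan19}), this forces $K_Y^2 \in \{7, 8, 9\}$. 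In the geometrically non-normal situation, which by \autoref{cor: non-normal-rho2} can only be case No.~2-6 of \autoref{thm: class_dP}, the type-$B$ extremal ray already contracts $X$ onto a degree-$8$ surface, so the same reduction is available in a self-contained way.

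Next I would apply \autoref{c deg789}: over the $C_1$-field $k$, a geometrically integral regular del Pezzo surface of degree $7$, $8$, or $9$ is rational, so $Y$ is rational; as $f$ is birational, $X$ is rational too, giving a birational map $\P^2_k \dashrightarrow X$. Finally, exactly as in Step \ref{s3 rationality} of \autoref{t rationality}, I would invoke the Lang--Nishimura theorem for this map: the source $\P^2_k$ carries smooth $k$-rational points and the target $X$ is proper over $k$, whence $X(k) \neq \emptyset$.

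I expect the only genuine subtlety --- the main obstacle --- to be that $k$ may be imperfect, so a $k$-point of $Y$ need not be a smooth point and cannot be lifted directly along $f$; passing through the rationality of $X$ and then running Lang--Nishimura out of the smooth variety $\P^2_k$ is precisely the device that circumvents this. The only auxiliary checks are that $Y$ is again a geometrically integral regular del Pezzo surface and that $K_Y^2 \leq 9$, both of which are already recorded in the preliminaries.
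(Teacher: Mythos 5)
Your proof is correct and takes essentially the same route as the paper: contract an extremal ray via \cite[Lemma 4.37]{BFSZ} to a geometrically integral regular del Pezzo surface $Y$ with $K_Y^2>6$, apply \autoref{c deg789} over the $C_1$-field, and transfer the rational point back by Lang--Nishimura. The only difference is cosmetic: the paper deduces $Y(k)\neq\emptyset$ and applies Lang--Nishimura to $Y \dashrightarrow X$ directly, which is legitimate because a $k$-rational point of the regular variety $Y$ is automatically a smooth point (cf. \autoref{r non geom int}(2)), whereas you route through the rationality of $X$ and apply Lang--Nishimura out of $\P^2_k$; both are valid, so your ``main obstacle'' is not actually an obstacle.
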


\begin{proof}
By \cite[Lemma 4.24]{BFSZ}, $X$ does not admit two Mori fibre space structures onto curves.  
Therefore, $X$ admits a birational contraction $X \to Y$ onto a regular del Pezzo surface with $K_Y^2>K_X^2 =6$. 
By \autoref{c deg789}, we have $Y(k) \neq \emptyset$, 
which implies $X(k) \neq \emptyset$ by the Lang-Nishimura theorem. 
% and thus we conclude by the previous cases.
\end{proof}

\begin{prop}\label{p smooth case}
Assume that $k$ is a $C_1$-field. 
Let $X$ be a geometrically integral smooth del Pezzo surface. 
Then $X(k) \neq \emptyset$. 
\end{prop}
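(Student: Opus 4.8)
The plan is to argue by the value of the degree $d := K_X^2$, which satisfies $1 \le d \le 9$ since $X$ is smooth and hence geometrically canonical. Throughout I would exploit two features of a $C_1$-field $k$: first, every form of degree $e$ in $n>e$ variables has a nontrivial $k$-zero, and more generally (the $C_1$-property for systems) forms of degrees $e_1,\dots,e_r$ in $n$ variables have a common nontrivial zero as soon as $n>e_1+\cdots+e_r$; second, $\Br(k)=0$ and, because every finite extension of a $C_1$-field is again $C_1$, the same vanishing holds over all finite extensions of $k$. The key structural remark is that, $X$ being smooth, the geometric Picard group together with its Galois action is unaffected by the purely inseparable extension $\overline k/k^{\sep}$; this is what allows the classical, Galois-combinatorial arguments (written over perfect fields) to transfer to an imperfect $C_1$-field.

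The high-degree and ``polynomial'' cases would be immediate. For $d\in\{7,8,9\}$, $X$ is rational by \autoref{c deg789}, so $X(k)\neq\emptyset$ by the Lang--Nishimura theorem applied to the birational map $\P^2_k\dashrightarrow X$. For $d=5$, a smooth quintic del Pezzo surface has a $k$-point over an arbitrary field by a classical theorem \cite{SD72}. For $d=4$ and $d=3$, the hypotheses of \autoref{p hypersurf} hold (choosing a prime anticanonical member, possible since $-K_X$ is very ample), so $X$ is a complete intersection of two quadrics in $\P^4_k$, respectively a cubic hypersurface in $\P^3_k$; a $k$-point is then produced by the $C_1$-property, using $5>2+2$ for the system of two quadrics, respectively $4>3$ for the single cubic. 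Finally, for $d=1$ the system $|-K_X|$ is a pencil whose base locus has length $(-K_X)^2=1$, hence is a single reduced $k$-rational point, giving $X(k)\neq\emptyset$ unconditionally.

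For $d=6$ I would split on the Picard number. If $\rho(X)\ge 2$, then \autoref{lem: d>6_rho>1} already yields $X(k)\neq\emptyset$. If $\rho(X)=1$, the Galois group surjects onto the symmetry group of the hexagon of $(-1)$-curves, and the obstruction to a $k$-point lies in two Brauer classes attached to the quadratic and cubic \'etale algebras determined by this action; since those algebras are finite \'etale extensions of the $C_1$-field $k$, hence themselves $C_1$ with trivial Brauer group, both classes vanish and $X(k)\neq\emptyset$.

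The remaining and genuinely delicate case is $d=2$, where $-K_X$ presents $X$ as a double cover $\pi\colon X\to\P^2_k$ and the $C_1$-property no longer applies to the (weighted, degree-$4$) defining equation; moreover in characteristic $2$ the cover may be inseparable, which adds to the difficulty. When $\rho(X)\ge 2$ I would contract a birational extremal ray to a del Pezzo surface of higher degree, already settled above, and conclude by Lang--Nishimura. The hard core is the minimal case $\rho(X)=1$, where no $k$-rational curve of genus $0$ is available and one must produce a point directly; here I would rely on the classical analysis of the Galois action on the $56$ exceptional curves together with $\Br(k)=0$ (as in \cite{Man66, Isk79, CT86}), the only subtlety being that these arguments are $k^{\sep}$-combinatorial and so, by the structural remark above, remain valid over imperfect $C_1$-fields. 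I expect this minimal degree-$2$ situation to be the main obstacle.
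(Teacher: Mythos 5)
Your proposal takes a genuinely different route from the paper: the paper's entire proof is a one-line citation to \cite[Theorem IV.6.8]{Kol96} (or to the argument of \cite[Theorem 4.2]{Man66}), whereas you attempt a self-contained degree-by-degree argument. Most of your cases are sound: degrees $7,8,9$ via \autoref{c deg789} plus Lang--Nishimura, degree $5$ via the classical theorem on quintic del Pezzo surfaces, degrees $3$ and $4$ via the anticanonical models together with the Lang--Nagata ``system'' version of the $C_1$ property, degree $1$ via the base point of the anticanonical pencil, and degree $6$ via the Brauer classes over the associated quadratic and cubic \'etale algebras (although your claim that the Galois group \emph{surjects} onto the symmetry group of the hexagon is neither true in general nor needed for that argument, and for $\rho(X)\geq 2$ you should also allow the case where \emph{both} extremal rays are conic bundles, where one concludes instead by noting that the base and every fibre are plane conics, to which $C_1$ applies directly).

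The genuine gap is the case $K_X^2=2$, $\rho(X)=1$, which you yourself flag as unresolved. There you give no argument: you defer to the ``classical analysis of the Galois action on the $56$ exceptional curves together with $\Br(k)=0$'' in \cite{Man66, Isk79, CT86}. This is inadequate for two reasons. First, those proofs are written over perfect fields, and your transfer remark (smoothness gives $\Pic(X_{\overline k})=\Pic(X_{k^{\sep}})$ equivariantly) only guarantees that lattice-theoretic and Galois-cohomological inputs carry over; it does not by itself show that the arguments in those references use nothing beyond such inputs, and you have not checked this for the degree-$2$ case. Second, there is no shortcut through the $56$ lines and $\Br(k)=0$ alone: the minimal degree-$2$ case is exactly where the real work lies (note, for instance, that restricting the anticanonical double cover $X\to\P^2_k$ to lines produces genus-one curves, which over a $C_1$ field need not have rational points, and that cohomological dimension $\leq 1$ alone is insufficient by the examples of \cite{CTM}). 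So the proposal is incomplete precisely at its hardest point; the paper sidesteps this by invoking \cite[Theorem IV.6.8]{Kol96}, whose proof does treat degree $2$ and is valid over arbitrary, possibly imperfect, $C_1$ fields. To repair your write-up, either supply the degree-$2$ argument in full or do what the paper does and cite it.
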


\begin{proof}
The assertion follows from  \cite[Theorem IV.6.8]{Kol96} or  \cite[the argument of Theorem 4.2]{Man66}. 
\end{proof}

% When $X$ is a smooth del Pezzo surface, then 
% $X$ has a rational point by \cite[the argument ofTheorem 4.2]{Man66}. 

From now on, we focus on the case when $X$ is not smooth. %geometrically normal, but not geometrically regular.

\begin{lem}\label{l deg6 p=2,3}
    Let $X$ be a geometrically integral regular del Pezzo surface 
    with $K_X^2 =6$. 
    %of degree 6 over a field of characteristic $p>0$.
    If $X$ is not smooth, then $p \in \left\{2,3 \right\}$.
\end{lem}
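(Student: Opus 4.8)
The plan is to split the argument according to whether $X$ is geometrically normal. Since $X$ is not smooth and $\overline k$ is perfect, $X_{\overline k}$ fails to be regular (recall that $X$ is smooth over $k$ if and only if $X_{\overline k}$ is regular); in particular exactly one of the following holds: either $X$ is not geometrically normal, or $X$ is geometrically normal but $X_{\overline k}$ is singular. Note also that the geometrically non-normal case is consistent with non-smoothness, since smoothness would force geometric normality.

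First I would dispose of the geometrically non-normal case directly via \autoref{prop: geom_normality_degree}(2). If $X$ is not geometrically normal and $K_X^2 = 6$, then the option $p=3$ is excluded (it requires $K_X^2 \in \{1,3\}$), so necessarily $p=2$, and we are done in this case.

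It remains to treat the case in which $X$ is geometrically normal. Then \autoref{prop: geom_normality_degree}(1) shows that $X$ is geometrically canonical, so $X_{\overline k}$ is a canonical (Du Val) del Pezzo surface with $K_{X_{\overline k}}^2 = 6$, and it is genuinely singular because $X$ is not smooth. On the minimal resolution of a degree-$6$ del Pezzo surface the lattice $K^{\perp}$ is the root lattice $A_2 \oplus A_1$, so every Du Val singularity of $X_{\overline k}$ is of type $A_1$ or $A_2$ (see \cite[Section 8]{Dol12}). I would then invoke the local analysis of rational double points produced by a regular but non-smooth closed point of a surface over an imperfect field: such a point necessarily has inseparable residue extension, and the resulting geometric singularity of type $A_n$ forces $p \mid n+1$ (concretely, the singularity arises from an inseparable degeneration of order dividing $n+1$, as in the local model $uv = w^{\,n+1} - a$ with $a \notin k^p$). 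For $n \in \{1,2\}$ this gives $p=2$ for type $A_1$ and $p=3$ for type $A_2$, hence $p \in \{2,3\}$.

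The main obstacle is precisely this geometrically normal case: unlike \autoref{prop: geom_normality_degree}(2), part (1) supplies no characteristic bound, so the constraint $p \in \{2,3\}$ must be extracted from the local structure of the geometric singularities rather than from a global numerical restriction. The delicate point is to justify rigorously that an $A_1$ (resp. $A_2$) singularity on $X_{\overline k}$ can come from a regular point of $X$ only when $p=2$ (resp. $p=3$); if one prefers to bypass this explicit computation, the same conclusion can be read off from the classification of non-smooth regular del Pezzo surfaces in \cite{FS20}.
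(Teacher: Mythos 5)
Your proposal is correct and follows essentially the same route as the paper: reduce to the geometrically normal case (your use of \autoref{prop: geom_normality_degree}(2) for the non-normal case works just as well as the paper's appeal to \autoref{thm: class_dP}), observe that a canonical del Pezzo surface of degree $6$ can only acquire $A_1$ or $A_2$ singularities over $\overline{k}$, and then invoke the restriction on which characteristics admit regular twisted forms of these singularities. The only adjustment concerns the step you yourself flag as delicate: the fact that a regular point of $X$ can produce an $A_n$ singularity on $X_{\overline{k}}$ only when $p \mid n+1$ is precisely \cite[Theorem~6.1]{Sch08}, which is what the paper cites; your local model $uv = w^{n+1}-a$ with $a \notin k^p$ only exhibits such regular forms when $p \mid n+1$ (the existence direction), and \cite{FS20} is not the right source for the needed converse (indeed, non-smooth regular del Pezzo surfaces are not confined to $p \in \{2,3\}$ in general, e.g.\ degree $1$ with geometric $E_8$ singularity in characteristic $5$, so no such blanket classification exists there).
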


\begin{proof}
{If $X$ is not geometrically normal, 
then we get $p \in \{2, 3\}$ by \autoref{thm: class_dP}. Hence} 
%By \autoref{thm: class_dP}, 
we may assume that $X$ is geometrically normal. 
    From %\cite[Proposition 4.2]{BFSZ} 
    \cite[Theorem 3.3]{BT22} 
    and the classification of canonical singularities \cite[page 109]{kk-singbook}, $X_{\overline{k}}$ can only have %$A_1$ (resp.\,$A_2$) 
    {$A_1$ and $A_2$} singularities.
    Note that regular twisted forms of $A_1$ singularities (resp.\,$A_2$ singularities) appear only in characteristic 2 (resp.\,3) by \cite[Theorem 6.1]{Sch08}. 
\end{proof}

Thus we are left to prove the existence of rational points for non-smooth del Pezzo surfaces of Picard rank 1 in characteristic $p=2,3$. We now divide the proof according to the characteristic.

\begin{prop}\label{p deg6 char2}
Assume that $k$ is a $C_1$-field of characteristic two. 
Let $X$ be a geometrically integral regular del Pezzo surface with $K_X^2=6$. 
%over a $C_1$-field $k$  of characteristic two. 
Then $X(k) \neq \emptyset$. 
\end{prop}

\begin{proof}
If $X$ is not geometrically normal, then we are done by \autoref{cor: non-normal-rho2}.
Hence we may assume that $X$ is geometrically normal and $\rho(X)=1$ by \autoref{lem: d>6_rho>1}. 
By \cite[the proof of Proposition 6.6, especially the case (5)]{BT22}, $X$ has a purely inseparable point $P$ of degree two, i.e., $P$ 
is a closed point of $X$ such that $k(P)/k$ is a purely inseparable field extension satisfying $[k(P):k] =2$.

Let $X \subset \P^6_k$ be the anti-canonical embedding. 
Then there exists a line $L$ in $\P^6_k$ containing $P$ by \autoref{l 0 dim lin sub}. 
If $L \subset X$, then 
we get $-K_X \cdot L=1$ and $\deg K_L =-2$, 
and hence we may contract $L$, contradicting the hypothesis $\rho(X)=1$.
%We may assume that 
We then get $L \not\subset X$. 
Since $X$ is an intersection of quadrics by \autoref{p va deg3}(4), 
there is a quadric hypersurface $Q \subset \P^6_k$ such that $X \subset Q$ and $L\not\subset Q$. 
Then $L \cap Q$ is an effective divisor on $L=\P^1_k$ of degree two. 
Hence we get a scheme-theoretic equality $L \cap Q = P$, 
which implies $P \subset L \cap X \subset  L\cap Q =P$, and hence 
\[
L \cap X = P. 
\]

Since $L \cap X$ is an intersection of members of $|-K_X|$, 
the equality $L \cap X = P$ implies that the blow-up 
\[
\sigma  : Y \to X
\]
at $P$ coincides with the resolution of indeterminacies of the linear system corresponding to $H^0(X, \MO_X(-K_X) \otimes \mathfrak m_P)$. 
Hence $|-K_Y|$ is base point free. 
By $K_Y^2 = K_X^2 -2 =4>0$, $-K_Y$ is big.
Let $\tau \colon Y \to Z$ be the anti-canonical model of $Y$. 
%\textcolor{red}{ and denote by $\Ex(\tau)$ the exceptional locus of $\tau$}. 
%Since $Y$ is geometrically normal (\autoref{thm: class_dP}){\cyan ??? [BT22] should not be enough on this part} \FB{I agree that $Y$ might not be normal.}, so is $Z$. 
Since $|-K_Z|$ is  base point free 
%very ample {\green ???} (\autoref{p va deg3}(3)) 
and $H^1(Z, \MO_Z)=0$ \cite[Proposition 4.8]{BM23}, 
 \autoref{p hypersurf}(4) is applicable, and hence 
$Z$ is a complete intersection of two quadric hypersurfaces. 

Thus we know that $Z(k) \neq \emptyset$ by \cite[Corollary at page 376]{Lan52}. 
Fix $Q \in Z(k)$. If $\tau : Y \to Z$ is isomorphic around $Q$, 
then we clearly have $Y(k) \neq \emptyset$. 
Hence we may assume that $Q$ lies in $\tau(\Ex(\tau))$. 
By $\rho(Y)=2$, $E=\Ex(\tau)$ is 
{irreducible, and hence} 
a Gorenstein curve. 
The exact sequence 
\[
0 \to \MO_Y(-E) \to \MO_Y \to \MO_E \to 0
\]
induces another one 
\[
\tau_*\MO_Y \to \tau_*\MO_E \to R^1\tau_*\MO_Y(-E) =0. 
\]
We then conclude $H^0(E, \MO_E) = k(Q) = k$, 
where $k(Q)$ denotes the residue field of $Q \in Z$. 
By adjunction, $-K_E$ is ample, and hence 
$E$ is a conic over the $C_1$-field $k=k(Q)$. 
We then get $Y(k) \supset E(k) \neq \emptyset$, which implies $X(k) \neq \emptyset$. 
\end{proof}

\begin{prop}\label{p deg6 char3}
%\begin{prop}\label{p deg6 char2}
Assume that $k$ is a $C_1$-field of characteristic three. 
Let $X$ be a geometrically integral regular del Pezzo surface with $K_X^2=6$. 
%over a $C_1$-field $k$ of characteristic three.
Then $X(k) \neq \emptyset$. 
\end{prop}

\begin{proof}
Let $X \subset \P^6_k$ be the anti-canonical embedding. 
We may assume that 
 $X$ is geometrically normal (\autoref{cor: non-normal-rho2}) 
% , then we are done by . 
and $\rho(X)=1$ (\autoref{lem: d>6_rho>1}). 
In particular, we get $\Pic X = \Z K_X$ as $K_X^2=6$ is not divisible by a square. 
%If $\rho(X) >1$, then there is a birational contraction 
% By [Dolgachev] classification and [Schroer], 
% $X_{\overline k}$ has the unique singular point $P'$ and $P'$ is of type $A_2$. 
% Set $P := \alpha(P')_{\red}$, where $\alpha : X_{\overline k} \to X$.  
% It follows from [BT] that $k(P)/k$ is a purely inseparable extension of degree $3$. 
It follows from \cite[Proposition 6.5]{BT22} that $X$ contains a purely inseparable point $P$ of degree $3$. 
By \autoref{l 0 dim lin sub}, there exists a plane $V \simeq \P^2_k$ in $\P^6_k$ such that $P \in V$. 
As $X \neq  V$, we have that $\dim (X \cap V) = 0$ or $\dim (X \cap V) = 1$.

% Since $P \to \Spec k$ is a universal homeomorphism, 
% $Z := P \times_k \overline k$ is a closed subscheme such that $Z_{\red} = Q$ and 
% $\dim \MO_Z(Z)=3$. 
% % By the above Lemma, one of the following holds. 
% \begin{enumerate}
% \item[(i)] $\m_{P'}^2 \subset \mathcal I_Z \subset \m_{P'}$. 
% \item[(ii)] There exists a plane $V$ on $\P^N$ such that $Z \subset V$. 
% \end{enumerate}
% Let $\sigma: Y \to X$ be the blowup at $P$. 
% If $-K_Y$ is nef and big, we are done by the same argument as in char 2. 
% Hence it suffices to show that $-K_Y$ is nef (as it automatically implies the bigness). 

% % (i) Assume $\m_{P'}^2 \subset \mathcal I_Z \subset \m_{P'}$. 
% % By the argument as in $p=2$, we can find an effective divisor  $C$ on $X$ 
% % such that $-K_X \sim C$ and $C$ is regular at $P$. 
% % By $\Pic X = \Z K_X$, $C$ is a prime divisor. 
% % Hence $-K_Y$ is nef and big; done. 

% (ii) 

% $P \times_k \overline k = Z \subset V$. 
% By Lemma below, we obtain 
% \[
% \langle P \rangle \times_k \overline k = \langle P \times_k \overline k \rangle  
% = \langle Z \rangle \subset V. 
% \]
% Therefore, $\dim \langle P \rangle \leq  2$ (as $\langle P \rangle$ contains a non-rational point $P$, we get $\dim \langle P \rangle \geq 1$). 
% In particular, $\dim X \cap \langle P \rangle = \dim X_{\overline k} \cap \langle Z \rangle = 0, 1$  
% (indeed, if $\dim \langle P\rangle =2$, then $\langle P\rangle \simeq \P^2$, and hence $X \neq \langle P \rangle$, which implies $X \cap \langle P \rangle \subseteq X$). 

\medskip 

Assume $\dim (X \cap V) =  1$. 
%In this case, we shall derive a contradiction. 
Fix a prime divisor $C \subset X \cap V$ on $X$. 
We have that 
\[
C \subset X \cap V  = \bigcap_{X \subset Q} (Q \cap V),  
\]
where $Q$ runs over all the quadric hypersurfaces  in $\P^6_k$  containing $X$. 
Since $X \cap V$ is one-dimensional, 
there exists a quadric hypersurface $Q \subset \P^6_k$ such that  
$Q \cap V \subsetneq V = \P^2_k$ is a conic. 
As $
C \subset X \cap V \subset Q \cap  V, $
the curve $C$ is contained in a conic $Q \cap  V$ on $V=\P^2_k$. 
We then get $\deg K_C <0$, which contradicts $C \subset X$ and $\Pic X = \Z K_X$, because 
the following holds for the integer $m>0$ satisfying $C \sim -mK_X$: 
\[
0>\deg K_C = (K_X +C) \cdot C  = (K_X +  (-mK_X)) \cdot C = %(1-m) K_X \cdot C = 
(m-1)(-K_X) \cdot C \geq 0. 
\]

\medskip 

{In what follows, we assume $\dim (X \cap V)= 0$.} 
Let $\sigma: Y \to X$ be  the blowup at $P$. 
Then $K_Y^2 = K_X^2 -3 =3$. 
We now finish the proof by assuming that 
\begin{enumerate}
\item[($\star$)] $X(k) \neq \emptyset$, or $|-K_Y|$ is base point free and $-K_Y$ is big. 
% \begin{itemize}
% \item 
% \end{itemize}
%, and \item $|-K_Z|$ is very ample for the anticanonical model $\tau \colon Y \to Z$ of $Y$. 
\end{enumerate}
% %$-K_Y$ is nef and big.} 
% Let $\tau \colon Y \to Z$ be its anticanonical model,  and hence 
% $Z$ is a canonical del Pezzo surface with $K_Z^2 =3.$ 
% %If $-K_Y$ is nef and  big, 
% %then its anticanonical model $\tau \colon Y \to Z$ is a canonical del Pezzo 
% %with $K_Z^2 =3.$ %surface with canonical singularities of degree 3. 
% Since $|-K_Z|$ is very ample  {\green ???} (\autoref{p va deg3}(3)) and $H^1(Z, \MO_Z)=0$ \cite[Proposition 4.8]{BM23}, 
%  \autoref{p hypersurf}(3) is applicable, and hence 
% $Z$ is a cubic surface. 
%\FB{which is a cubic surface by \autoref{p hypersurf} (note $H^1(Z, \MO_Z)=H^1(X, \MO_X)=0$ by \cite{BM23}).} 
%\autoref{thm: class_dP} (if $Z$ is not geometrically normal \FB{we have to prove this last statement, right?}) and \cite[Theorem 2.15]{BT22} (if $Z$ is geometrically normal).{\cyan This part probably contains a gap, as $Z$ is not regular.}  
\noindent
By ($\star$), we may assume that $|-K_Y|$ is base point free and $-K_Y$ is big. 
Let $\tau \colon Y \to Z$ be the anticanonical model of $Y$, 
and hence $Z$ is a canonical del Pezzo surface with $K_Z^2 =3.$ 
%Note that $Z$ is a canonical del Pezzo surface with $K_Z^2 =3.$ 
% Since $|-K_Z|$ is very ample  {\green ???} (\autoref{p va deg3}(3)) and 
Since $|-K_Y|$ is base point free, so is $|-K_Z|$. 
%is base point free. 
 By %($\star$) and 
 $H^1(Z, \MO_Z)=0$ \cite[Proposition 4.8]{BM23}, 
  \autoref{p hypersurf}(3) is applicable, and hence  $Z$ is a cubic surface. 
Since the base field $k$ is $C_1$, 
%By the $C_1$-property, 
we get $Z(k) \neq \emptyset$. Let $Q \in Z(k)$ and let $F=\Ex(\tau)$, which is a Gorenstein curve with $-K_F$ ample by adjunction. 
If $Q \neq \tau(\Ex(\tau))$, then we clearly have $Y(k) \neq \emptyset$.
If $Q =\tau(\Ex(\tau)),$ then we conclude that $F$ is a conic 
over $k$ as in the proof of \autoref{p deg6 char2}, and therefore $F(k) \neq \emptyset,$ concluding $X(k) \neq \emptyset.$ 

%{\cyan 
Let us show ($\star$). 
By $K_Y^2 =3>0$, it is enough to show that $X(k) \neq \emptyset $ or $|-K_Y|$ is base point free. 
Recall that 
\[
\dim (X \cap V) = 0, \qquad 
X = \bigcap_{Q \in \Sigma(X)} Q, 
\qquad \text{and}\qquad 
X \cap V = \bigcap_{Q \in \Sigma(X)} (Q \cap V), 
\]
where $\Sigma(X)$ denotes the set consisting of 
all the quadric hypersurfaces in $\P^6_k$ containing $X$. 
For $Q \in \Sigma(X)$, we set $C_Q := Q \cap V$. 
Note that  either $C_Q = V$ or $C_Q$ is a (possibly non-integral) conic on $V= \P^2_k$.  
By $\dim (X \cap V) =0$, there exist quadric hypersurfaces 
$Q_1, Q_2 \in \Sigma(X)$ such that, 
for each $i \in \{1, 2\}$, 
$C_{Q_i}$ is a conic on $V=\P^2_k$ and $\dim (C_{Q_1} \cap C_{Q_2})=0$. 
%Again by $\dim (X \cap V) =0$, we can find another 
%quadric hypersurface $Q' \in \Sigma(X)$ such that $\dim (C_Q \cap C_{Q'}) =0$. 
By $P \in C_{Q_1} \cap C_{Q_2}$, $\deg_k P = 3$, and 
%the intersection number 
$C_{Q_1} \cdot C_{Q_2} =4$ (which denotes the intersection number on $V=\P^2_k$), 
the scheme-theoretic intersection 
$C_{Q_1} \cap C_{Q_2}$ is reduced and 
we have $C_{Q_1}\cap C_{Q_2} = \{P, \wt{P}\}$ for some $k$-rational point $\wt{P}$. 
If $\wt{P} \in Q$ for every $Q \in \Sigma(X)$, then we get 
\[
\wt{P} \in \bigcap_{Q \in \Sigma(X)} Q = X, 
\]
which implies $X(k) \neq\emptyset$, and hence $(\star)$ holds. 
If $\wt{P} \not\in Q_3$ for some $Q_3 \in \Sigma(X)$, then we have scheme-theoretic inclusions
\[
\{ P\} \subset X \cap V  = \bigcap_{Q \in \Sigma(X)} C_Q 
\subset C_{Q_1} \cap C_{Q_2} \cap C_{Q_3} \subset \{ P\},  
\]
which implies a scheme-theoretic equality $X \cap V = \{ P\}$. 
Since $V$ is an intersection of some hyperplanes on $\P^6_k$ passing through $P$, 
the divisor 
$-K_Y = -\sigma^*K_X -E  \sim \sigma^*(\mathcal{O}_{\mathbb{P}^6_k}(1)|_X) -E$ is base point free for $E := \Ex(\sigma)$. 
Therefore, ($\star$) holds.%}
\qedhere

\end{proof}

\begin{prop}\label{p deg6 rat pt}
Assume that $k$ is a $C_1$-field. 
Let $X$ be a geometrically integral regular del Pezzo surface with $K_X^2=6$. 
Then $X(k) \neq \emptyset$. 
\end{prop}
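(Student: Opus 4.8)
The plan is to reduce the statement to a short case analysis, since the preceding propositions already cover every possibility. First I would dispose of the smooth case: if $X$ is smooth, then $X$ is a geometrically integral smooth del Pezzo surface over the $C_1$-field $k$, and \autoref{p smooth case} gives $X(k) \neq \emptyset$ at once.

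It then remains to treat the non-smooth case, and here the crucial input is \autoref{l deg6 p=2,3}, which asserts that a non-smooth regular del Pezzo surface with $K_X^2 = 6$ forces $p \in \{2, 3\}$. This splits the remaining situation into exactly two subcases. When $p = 2$ I would invoke \autoref{p deg6 char2}, and when $p = 3$ I would invoke \autoref{p deg6 char3}; each of these yields $X(k) \neq \emptyset$ directly, with no further hypotheses needed. Since these possibilities are exhaustive, the argument would be complete.

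There is no genuine obstacle remaining at this level, as the substantive work was carried out earlier. The content of \autoref{p deg6 char2} and \autoref{p deg6 char3} is precisely where the difficulty resides: producing a purely inseparable point of the appropriate degree (degree $2$ in characteristic two, degree $3$ in characteristic three), cutting $X$ down by a suitable linear subspace through this point inside the anticanonical embedding $X \subset \P^6_k$, and then applying the structure theory of low-degree del Pezzo surfaces (\autoref{p hypersurf}, \autoref{p va deg3}) together with Lang's theorem for $C_1$-fields to descend to an actual $k$-rational point on the anticanonical model. The present proposition is merely the assembly of these results, and the only point worth checking is that the characteristic constraint coming from \autoref{l deg6 p=2,3} matches exactly the characteristics handled by the two existence propositions, so that the three invocations leave no case uncovered.
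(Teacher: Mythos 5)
Your proposal is correct and follows exactly the paper's own proof: dispose of the smooth case via \autoref{p smooth case}, use \autoref{l deg6 p=2,3} to reduce the non-smooth case to $p \in \{2,3\}$, and then invoke \autoref{p deg6 char2} and \autoref{p deg6 char3} respectively. Your observation that the two characteristic-specific propositions require no smoothness hypothesis (so the cases genuinely assemble) is the only verification needed, and it holds.
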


\begin{proof}
By \autoref{p smooth case} and \autoref{l deg6 p=2,3}, we may assume that $p=2$ or $p=3$. 
If $p=2$ (resp. $p=3$), then the assertion follows from 
\autoref{p deg6 char2} (resp. \autoref{p deg6 char3}). 
\end{proof}

\subsection{Degree 5}

%\subsection{Rationl points}

% Let $\Gamma$ be a smooth elliptic curve of degree $5$ in $\P^5$. 
% Let $\sigma : W \to V=\P^5$ be the blowup along $\Gamma$. 
% Recall that $\sigma$ is an intersection of $5$ quedrics $Q_1, ..., Q_5$ (reference). 
% Then $2\sigma^*H -E$ is bpf. 

% Pick general two points $P, Q \in \Sigma$: quintic dP (assume smooth). 
% Let $\overline{PQ} =L$ be the line. 
% Let $\P^4$ be a hyperplane containing $P, Q$. 
% Then the secant line $L$ is contained in $\P^4$. 
% Hence $L$ is 

The aim of this section is to generalise the theorem of Enriques on the rationality of smooth del Pezzo surfaces of degree 5 \cite{Enr97} (proven also in \cite{SD72, SB92, Sko93}) to the geometrically integral regular case. 

\begin{theorem}\label{them: Enriques}
    Let $X$ be a {geometrically integral} regular del Pezzo surface with $K_X^2=5$.
    Then $X$ is rational.
\end{theorem}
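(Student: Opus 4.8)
The plan is to reduce the rationality of $X$ to the existence of a $k$-rational point and then to invoke the criterion already proved. Since $K_X^2 = 5 \geq 5$, \autoref{t rationality} shows that $X$ is rational as soon as $X(k) \neq \emptyset$, so the entire problem becomes the production of a $k$-point (except in one case below where rationality will be immediate). Before doing so I would fix the geometric setup. As $X$ is a geometrically integral regular del Pezzo surface of degree $5$, its base change $X_{\overline{k}}$ is either not geometrically normal, in which case it is governed by the classification \autoref{thm: class_dP}, or geometrically normal, in which case it is geometrically canonical and $H^1(X,\MO_X)=0$ by \autoref{prop: geom_normality_degree}(1). In the latter case, since $K_X^2 = 5 \geq 3$, \autoref{p va deg3}(3) applies and $|-K_X|$ is very ample; this realises $X$ as an anticanonically embedded quintic surface $X \subset \P^5_k$, the classical model on which the descent argument will run.

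If $X$ is not geometrically normal, then \autoref{thm: class_dP} forces $p = 2$ and places $X$ in case No.\ 2-5, so that $X$ is the blow-up of $\P^2_k$ at a purely inseparable closed point of degree $4$. The associated contraction $X \to \P^2_k$ is then a birational morphism of $k$-varieties, whence $X$ is rational with no further work. I may therefore assume from now on that $X$ is geometrically normal, equivalently geometrically canonical.

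For the geometrically canonical case the goal is to find a $k$-point, after which \autoref{t rationality} finishes. Here I would extend the classical theorem of Enriques and Swinnerton-Dyer \cite{Enr97, SD72} (see also \cite{SB92, Sko93}) — that a smooth quintic del Pezzo over an arbitrary field always carries a rational point — to the possibly geometrically singular situation. The strategy is Galois descent on the negative curves: after passing to $k^{\sep}$, the group $G = \Gal(k^{\sep}/k)$ acts on the $(-1)$-curves of the minimal resolution of $X_{\overline{k}}$, which form the Petersen configuration with symmetry group $S_5$ and determine the five blow-down structures, while the canonical singularities of $X_{\overline{k}}$ contribute finitely many $(-2)$-curves. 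If $G$ fixes a blow-down structure, the resulting birational $k$-morphism $X \to \P^2_k$ gives rationality at once; if $G$ admits a small invariant set of disjoint lines, contracting it produces a regular del Pezzo surface of higher anticanonical degree, the degree-$7$ case being unconditionally rational by \autoref{p deg7}, and one concludes inductively. The main obstacle is the remaining case, where no Galois-invariant contraction exists (for instance when the image of $G$ in $S_5$ acts transitively on the five blow-down structures): there one must reproduce Swinnerton-Dyer's direct construction of a rational point, while simultaneously checking that it is unaffected by the extra $(-2)$-curves coming from the canonical singularities and that it remains valid in characteristics $2$ and $3$. Carrying out this descent carefully yields $X(k) \neq \emptyset$, and \autoref{t rationality} then completes the proof.
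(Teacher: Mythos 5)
Your opening reduction is sound and matches the paper: by \autoref{thm: class_dP} the non-geometrically-normal degree-$5$ surface (case No.~2-5) is a blow-up of $\P^2_k$, hence rational, and in the geometrically normal case $|-K_X|$ embeds $X$ anticanonically in $\P^5_k$. The genuine gap is what comes after. Your whole argument funnels into the case where the Galois group admits no invariant contraction, and there you only say one must ``reproduce Swinnerton-Dyer's direct construction of a rational point'' while ``checking'' it survives the $(-2)$-curves and characteristics $2$ and $3$. That unexecuted step \emph{is} the theorem: extending the smooth-case point construction to geometrically canonical quintics over imperfect fields is precisely the new content required, and nothing in your sketch does it. Moreover, the descent framework itself is unreliable here. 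The $(-1)$-curves you want $\Gal(k^{\sep}/k)$ to permute live on the minimal resolution of $X_{\overline k}$; when $X$ is regular but not smooth, that resolution is a purely $\overline{k}$-object (it need not descend to $k^{\sep}$, since $X_{k^{\sep}}$ is already regular while $X_{\overline k}$ is not), the $(-2)$-curves are invisible on $X$ (they are absorbed into regular, non-smooth points), and the curves on $X$ corresponding to geometric $(-1)$-curves can have different self-intersections after the purely inseparable base change --- so a ``Galois-invariant blow-down structure'' does not translate into a birational $k$-morphism $X \to \P^2_k$. Finally, your intermediate inductive step is incomplete even granting the descent: contracting a Galois-stable set of disjoint lines can land on a del Pezzo of degree $6$, $8$ or $9$, where rationality is \emph{not} unconditional; you would still need to produce a rational point (e.g.\ via Springer's theorem on the degree-$8$ quadric, or an index argument on the degree-$9$ Severi--Brauer surface), none of which is supplied.

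For comparison, the paper's proof deliberately avoids both the existence of a $k$-point and any descent on the line configuration. After the same reduction to the geometrically normal case (finite fields are dispatched separately via \autoref{p smooth case} and \autoref{t rationality}), it writes $X \subset \P^5_k$ as an intersection of five quadrics (\autoref{lem: dP_5_geom}), blows up $\P^5_k$ along $X$, and shows that the linear system of quadrics through $X$ gives a contraction $\pi \colon W \to \P^4_k$ whose fibre over a general $k$-rational point of $\P^4_k$ is the proper transform of a secant line $L$ of $X$ (\autoref{lem: secant}). Then $X \cap L$ is either a pair of $k$-rational points, in which case \autoref{t rationality} concludes, or a single point $\Spec k'$ with $k'/k$ separable quadratic and in general position; blowing it up yields a regular cubic surface whose base change contains two disjoint lines interchanged by $\Gal(k'/k)$, and rationality follows from the classical two-conjugate-lines argument. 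If you wish to rescue your approach, you must actually carry out a Swinnerton-Dyer-type construction for geometrically canonical quintics in characteristics $2$ and $3$; the secant-line argument exists exactly to bypass that work.
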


% We start by settling the geometrically non-normal case. \FB{Erase below abd just refer to }

% \begin{lem}
%     Let $X$ be a geometrically integral regular del Pezzo of degree $K_X^2=5$.
%     If $X$ is not geometrically normal, then $X$ is rational.
% \end{lem} 

% \begin{proof}
% By \autoref{thm: class_dP}, there exists a birational contraction $X \to Z $, where $Z$ is a Severi--Brauer variety. 
% As $K_Z^2-K_X^2=4$, there exists a closed point of degree 4 and thus we conclude $Z \sim \mathbb{P}^2_k$ by \cite[Corollary 54]{Kol-SB}.
% \end{proof}

% We show that the existence of a rational point implies rationality.

% \begin{proposition} \label{lem: rat_5_with_rat_point}
%     Let $X$ be a regular del Pezzo surface of degree $5$ with $X(k) \neq \emptyset.$ Then $X$ is rational. 
% \end{proposition}

% \begin{proof}
%     Suppose $\rho(X)>1$. 
%     It follows from \cite[Lemma 4.37]{BFSZ} that there is a birational contraction $\pi \colon X \to Y$ onto a del Pezzo of degree $d>5$ with a rational point. Thus $X$ is rational.
    
%     Suppose $\rho(X)=1$. Then let $\sigma \colon Y \to X $ be the blow-up at a rational point $P$. By \autoref{l rho=1}, $-K_Y$ is ample and by \cite[Proposition 4.34]{BFSZ}, there is a birational contraction $\tau \colon Y \to Z$ onto a del Pezzo surface of degree 9 with a rational point. We conclude $Y \simeq \mathbb{P}^2_k$.
% \end{proof}

{If $X$ is not geometrically normal, then it is a blowup of $\mathbb{P}^2_k$ by \autoref{thm: class_dP} and thus rational. 
Therefore, we may assume that 
$X$ is geometrically normal, and hence geometrically canonical. }

%To prove \autoref{them: Enriques}, %we suppose from now on that 
%we may assume that  $X$ is a geometrically normal by \autoref{thm: class_dP}, and thus geometrically canonical. 
%, del Pezzo surface of degree 5. 

\begin{lem}\label{lem: dP_5_geom}
Let $k$ be an infinite field and let $X$ be a geometrically normal regular del Pezzo surface with  $K_X^2 =5$. 
Then $X$ is a scheme-theoretic intersection of $5$ quadrics $Q_1, ..., Q_5 \subset \P^5_k =V$ via its anti-canonical embedding. 
The same statement holds true for $X' \subset V'$, 
where $V'$  is a general hyperplane on $\P^5_k$ and $X' := X \cap V' \subset \P^4_k = V'$. 
\end{lem}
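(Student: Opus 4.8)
The plan is to place $X$ inside its anticanonical $\P^5_k$, to count the quadrics through it by a Riemann--Roch computation combined with projective normality, and then to invoke the structural results \autoref{p va deg3}(3) and \autoref{p va deg3}(4) to conclude that these quadrics cut $X$ out scheme-theoretically; the hyperplane section is then obtained by restriction.

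First I would record the cohomological input. Since $X$ is geometrically normal, \autoref{prop: geom_normality_degree}(1) shows it is geometrically canonical, so $X_{\overline k}$ is a canonical del Pezzo surface of degree $5$; in particular $H^1(X_{\overline k},\MO_{X_{\overline k}})=0$, and by flat base change $H^1(X,\MO_X)=0$. As very ampleness may be checked after the faithfully flat extension $k\to\overline k$ and $-K_{X_{\overline k}}$ is very ample (degree $\geq 3$), the sheaf $-K_X$ is very ample. By Riemann--Roch and the vanishing in \autoref{lem: Kod_vanishing} one gets $h^0(X,\MO_X(-K_X))=1+K_X^2=6$ and $h^0(X,\MO_X(-2K_X))=1+3K_X^2=16$, so $\varphi_{|-K_X|}$ embeds $X$ as a non-degenerate surface of degree $K_X^2=5$ in $\P^5_k=V$.

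Next I would arrange the hypotheses needed for \autoref{p va deg3}, which require a prime anticanonical divisor. A general member $C\in|-K_X|$ is regular by a Bertini theorem \cite{Sei50}; moreover the sequence $0\to\MO_X(K_X)\to\MO_X\to\MO_C\to 0$ together with $H^1(X,\MO_X(K_X))=H^1(X,\MO_X)^{\vee}=0$ (Serre duality) and $H^0(X,\MO_X(K_X))=0$ gives $H^0(C,\MO_C)=k$, so $C$ is connected and regular, hence a prime divisor with $C\sim -K_X$. Now \autoref{p va deg3}(3) shows that $R(X,-K_X)$ is generated in degree one, so the map $H^0(V,\MO_V(2))=\Sym^2 H^0(X,\MO_X(-K_X))\to H^0(X,\MO_X(-2K_X))$ is surjective. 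From $0\to H^0(V,\mathcal{I}_X(2))\to H^0(V,\MO_V(2))\to H^0(X,\MO_X(-2K_X))\to 0$ I read off $h^0(V,\mathcal{I}_X(2))=21-16=5$; fix a basis $Q_1,\dots,Q_5$. By \autoref{p va deg3}(4) the ideal sheaf $\mathcal{I}_X$ is generated by its quadrics, that is by $Q_1,\dots,Q_5$, whence $X=Q_1\cap\cdots\cap Q_5$ scheme-theoretically in $V=\P^5_k$.

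Finally, for a general hyperplane $V'\subset\P^5_k$ with $X'=X\cap V'\subset V'=\P^4_k$, I would simply restrict, setting $Q_i':=Q_i|_{V'}$. Since $X$ is non-degenerate, no hyperplane contains $X$, so the restriction $H^0(V,\mathcal{I}_X(2))\to H^0(V',\MO_{V'}(2))$ is injective: a nonzero quadric in the kernel would be divisible by the linear form defining $V'$ and hence force $X$ into a hyperplane. Thus $Q_1',\dots,Q_5'$ are five linearly independent quadrics, and cutting the scheme-theoretic equality $X=V(Q_1,\dots,Q_5)$ with $V'$ yields $X'=V(Q_1',\dots,Q_5')$ in $\P^4_k$, as required. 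The main obstacle is the third paragraph: producing a prime anticanonical divisor in the possibly imperfect setting so that the cited projective-normality and intersection-of-quadrics statements apply; once this is in place, the Riemann--Roch count pins the number of quadrics to exactly $5$ and the passage to the hyperplane section is formal.
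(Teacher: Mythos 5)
Your proof is correct, but it runs in the opposite direction from the paper's. The paper works curve-first: by Bertini a general hyperplane section $X'=X\cap V'$ is a smooth genus-one quintic in $\P^4_k$, to which \autoref{l va genus1} (projective normality and intersection of quadrics, via Mumford) applies; the count $15-10=5$ gives $X'=Q_1'\cap\cdots\cap Q_5'$, and only then is the statement lifted from the hyperplane section to $X$ itself by \cite[Lemma 2.10]{Isk77}, which produces quadrics $Q_i\subset\P^5_k$ with $Q_i\cap V'=Q_i'$. You instead work surface-first: you verify the hypotheses of \autoref{p va deg3} for $X$ (Bertini plus the connectedness argument giving a prime anticanonical divisor), apply \autoref{p va deg3}(3)--(4) to get projective normality and the intersection-of-quadrics property for $X$ directly, count $21-16=5$ quadrics on $\P^5_k$, and then obtain the statement for $X'$ by restriction, using non-degeneracy to see that no quadric through $X$ dies on $V'$. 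What each route buys: yours avoids a direct appeal to Iskovskikh's lifting lemma, produces the relation $Q_i'=Q_i|_{V'}$ by construction, and in fact proves the hyperplane-section statement for \emph{every} hyperplane, not just general ones (generality is still needed elsewhere, e.g.\ in \autoref{lem: blowup_P4}, for smoothness of $X'$); the paper's route is more self-contained at the level of this lemma, since \autoref{p va deg3}(4) — your key input — is itself proved in the paper by restricting to an anticanonical curve and invoking the very same \cite[Lemma 2.10]{Isk77}, so the underlying mathematics coincides and only the deduction arrow between curve and surface is reversed. Both proofs share the same implicit caveats (very ampleness of $-K_X$ checked after base change to $\overline{k}$, and Bertini requiring an infinite field, which is how the lemma is ultimately used in \autoref{n quintic}).
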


\begin{proof}
\iffalse
By \cite[Proposition 2.14]{BT22}, the anti-canonical ring $R(X, -K_X)$ is generated in degree 1, and thus the anti-canonical map embeds $
X $ in $\mathbb{P}^5_k$ and the embedding is projectively normal.
Note that $X' \subset \P^4_k$ is embedded by the complete linear system of a divisor $|-K_{X}|_{X'}|$ of degree $5$, because 
\[
H^0(X, \mathcal{O}_{X}(-K_{X})) \to H^0(X', \MO_{X'}(K_{X}|_{X'}))
\]
is surjective by $H^1(X, \MO_{X})=0$.
\fi
Note that $X$ is smooth outside finitely many points. 
Since $|-K_X|$ is very ample { \cite[Proposition 2.14(5)]{BT22}}, 
we can apply the Bertini theorem to conclude that a general member $X'$ of $|-K_X|$ is 
a smooth curve of degree $5$. 
By adjunction, $X'$ is of genus one, and hence $X'_{\overline k}$ is an elliptic curve. 
By \autoref{l va genus1}(3)(4), $X' \subset \P^4_{k}$ is projectively normal 
and an intersection of  quadrics.
%therefore by \cite[Theorem 10, page 80]{Mum70}, 
%$X'$ is (scheme-theoretically) an intersection of quadrics. 
%Since $X'$ is projectively normal, 
Thus we get a short exact sequence  
\[
0 \to H^0(\P^4_k, \MO_{\P^4_k}(2) \otimes I_{X'}) \to 
 H^0(\P^4_k, \MO_{\P^4_k}(2)) \to H^0(X', \MO_{\P^4_k}(2)|_{X'}) \to 0, 
\]
which implies 
\[
h^0(\P^4_k, \MO_{\P^4_k}(2) \otimes I_{X'}) = 
h^0(\P^4_k, \MO_{\P^4_k}(2)) - h^0(X', \MO_{\P^4_k}(2)|_{X'}) = (6 \cdot 5)/2 - 10 = 5. 
\]
For a $k$-linear basis $ H^0(\P^4_k, \MO_{\P^4_k}(2) \otimes I_{X'}) = ks_1 \oplus  ...\oplus k s_5$ and the corresponding quadrics $Q'_1, ..., Q'_5 \subset \P^4_k$, we get 
\[
X' = \bigcap_{X' \subset Q} Q = Q'_1 \cap \cdots \cap Q'_5. 
\]
Hence the assertion of the lemma holds for $X'$.
By \cite[Lemma 2.10]{Isk77}, we conclude that $X$ is an intersection of 5 quadrics 
\[
X = Q_1 \cap \cdots \cap  Q_5, 
\]
where $Q_i \subset \P^5_k$ is a quadric hypersurface satisfying $Q_i \cap V'=Q_i'$. 
\end{proof}

\begin{nota}\label{n quintic}
\begin{enumerate}
\item Assume that $k$ is infinite. 
%Let $k$ be a field and 
Let $X$ be a geometrically normal regular del Pezzo surface with  
$H^0(X, \MO_X)=k$ and $K_X^2 =5$. 
Recall that $X$ is a scheme-theoretic intersection of 
5 quadrics $Q_1, ..., Q_5 \subset \P^5_k$ (\autoref{lem: dP_5_geom}). 
Let $\sigma : W \to \mathbb{P}^5_k$ be  the blowup  along $X$, 
which coincides with the resolution of the indeterminacies of the linear system $\Lambda \subset |\MO_{\mathbb{P}^5_k}(2)|$ generated by $Q_1, ..., Q_5$. Let 
\[
\pi : W \to \P^4_k
\]
be the morphism 
induced by the base point free complete linear system $|2\sigma^*H-E|$, 
where $H := \MO_{\mathbb{P}^5_k}(1)$ and $E:=\Ex(\sigma)$. 
\item 
Take a general hyperplane $V' =: \P^4_k$ of $\P^5_k$. 
Set $X' \coloneqq X \cap V'$ and let $ \sigma' \colon W' = \Bl_{X'} V' \to V'$ be the blow-up of $V'$ along $X'$.
Note that $W' \subset W$ and $\sigma^*V'=W'$, 
because $X$ is not contained in $V'$. 
\end{enumerate}
\end{nota}

% Since $X$ is a scheme-theoretic intersection of 
% 5 quadrics $Q_1, ..., Q_5 \subset \P^5_k$, 
% the blowup $\sigma : W \to \mathbb{P}^5_k$ along $X$ coincides with the resolution of the indeterminacies of the linear system $\Lambda \subset |\MO_{\mathbb{P}^5_k}(2)|$ generated $Q_1, ..., Q_5$. 
% Then we get the morphism  
% \[
% \pi : W \to \P^4_k
% \]
% induced by the resolved linear system $|2\sigma^*H-E|$, 
% where $H := \MO_{\mathbb{P}^5_k}(1)$ and $E:=\Ex(\sigma)$. 

% Let $V' \simeq \P^4_k$ be a hyperplane of $\P^5$, set $W' \coloneqq X \cap V'$ and let $ \sigma' \colon W' = \Bl_{X'} V' \to V'$ be the blow-up of $V$ along $X'$.
% Note that $W' \subset W$ and, as $X$ is not contained in $V'$, that $\sigma^*V'=W'$.

\begin{lem} \label{lem: blowup_P4}
We use Notation \ref{n quintic}. 
Then the following hold. 
\begin{enumerate}
    \item $X' = X \cap V' \subset V' = \P^4_k$ is a smooth curve of genus 1 and degree $5$. 
    \item The induced morphism $\pi|_{W'} : W' \to \P^4_k$ is birational. 
\end{enumerate}
% the following hold. 
% \begin{enumerate}
% \item For $E' := \Ex(\sigma')$, $|2\sigma'^*H' -E'|$ is base point free and it induces $W' \to \P^4$, which coincides with  $\pi|_{W'} : W' \to \P^4$. 
% \item $(2\sigma'^*H' -E')^4 =1$. 
% \end{enumerate}  
\end{lem}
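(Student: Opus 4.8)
For part (1), I would argue that it is essentially a restatement of what was already obtained in \autoref{lem: dP_5_geom}. Under the anticanonical embedding $X \hookrightarrow \P^5_k$ the hyperplane class restricts to $-K_X$, so the general hyperplane section $X' = X \cap V'$ is a general member of the very ample system $|-K_X|$. Since $X$ is geometrically normal, it is smooth away from finitely many closed points, and the Bertini theorem (in exactly the form invoked in the proof of \autoref{lem: dP_5_geom}) shows that $X'$ is a smooth curve. Its degree equals $\deg X = K_X^2 = 5$, and adjunction $K_{X'} = (K_X + X')|_{X'} \sim (K_X - K_X)|_{X'} = \MO_{X'}$ gives $\deg K_{X'} = 0$, so $X'$ has genus one. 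This settles (1).

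For part (2), the plan is to reduce the statement to a single intersection number. By \autoref{n quintic} the morphism $\pi|_{W'}$ is the restriction of $\pi$, hence is given by the base point free system $|2\sigma'^*H' - E'|$ on $W' = \Bl_{X'}\P^4_k$, spanned by the five quadrics $Q_1',\dots,Q_5'$ through $X'$; since $h^0(\P^4_k,\MO_{\P^4_k}(2)\otimes I_{X'}) = 5$ (\autoref{lem: dP_5_geom}), the target is exactly $\P^4_k$. Writing $L := (\pi|_{W'})^*\MO_{\P^4_k}(1) = 2\sigma'^*H' - E'$, it suffices to prove $L^4 = 1$. Indeed, $L^4 \neq 0$ forces the image of $\pi|_{W'}$ to be $4$-dimensional, hence all of $\P^4_k$, and then $L^4 = \deg(\pi|_{W'})\cdot\deg(\mathrm{image}) = \deg(\pi|_{W'})$, so $L^4 = 1$ makes $\pi|_{W'}$ birational.

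The computation of $L^4$ is standard intersection theory on the blow-up $\sigma'\colon W' \to \P^4_k$ of the smooth curve $X'$, of codimension $3$. Using the projection formula
\[
\sigma'_*(E'^{k}\cdot \sigma'^*\alpha) = (-1)^{k-1}(s_{k-3}(N_{X'/\P^4_k})\cap [X'])\cdot \alpha,
\]
the terms $(\sigma'^*H')^3\cdot E'$ and $(\sigma'^*H')^2\cdot E'^2$ vanish, as they involve the Segre classes $s_{-2}$ and $s_{-1}$, which are zero. Expanding leaves
\[
L^4 = 16\,(\sigma'^*H')^4 - 8\,(\sigma'^*H')\cdot E'^{3} + E'^{4}.
\]
Here $(\sigma'^*H')^4 = 1$, and $(\sigma'^*H')\cdot E'^{3} = \deg X' = 5$, while $E'^{4} = \deg N_{X'/\P^4_k}$. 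The normal bundle degree follows from $\det N_{X'/\P^4_k}\simeq \MO_{\P^4_k}(5)|_{X'}\otimes \omega_{X'}$, so $\deg N_{X'/\P^4_k} = 5\cdot 5 + (2g-2) = 25$. Hence $L^4 = 16 - 40 + 25 = 1$, as required.

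The main obstacle is the bookkeeping in this last step: fixing the Segre-class sign conventions for the self-intersections $E'^{3}$ and $E'^{4}$, and checking the vanishing of the intermediate terms. Once $L^4 = 1$ is in hand, the positivity $L^4 = 1 > 0$ automatically guarantees that $\pi|_{W'}$ dominates $\P^4_k$ (rather than mapping to a proper subvariety), so no separate surjectivity argument is needed, and birationality follows at once.
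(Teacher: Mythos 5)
Your proof is correct and takes essentially the same route as the paper's: part (1) via Bertini and adjunction through \autoref{lem: dP_5_geom}, and part (2) by reducing birationality to the single intersection number $(2\sigma'^*H-E')^4 = 16\cdot 1 - 8\cdot 5 + 25 = 1$, computed with the Segre classes of $N_{X'/\P^4_k}$ exactly as in the paper. In fact your bookkeeping is the cleaner of the two: the paper's printed computation contains typos, writing $N_{X'/\P^5_k}$ and $\deg_{X'}(-K_{\P^5_k})=25$ where the normal bundle in $V'=\P^4_k$ and $\deg_{X'}(-K_{\P^4_k})=5\cdot 5=25$ are what is actually meant and used.
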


\begin{proof}
As $X$ is smooth outside a finite number of points, $X'$ is a smooth curve of genus 1 in $\mathbb{P}^4_k$, not contained in a plane, which is the intersection of 5 quadrics $Q_1', ..., Q_5'$ by \autoref{lem: dP_5_geom}. 
Then (1) holds.

Let us show (2). 
Set $E' := \Ex(\sigma')$. 
Note that the restriction $\pi|_{W'} : W' \to \P^4_k$ coincides with the morphism induced by the complete linear system $|2\sigma'^*H-E'|$ which resolves the sublinear system of quadrics containing $X'$. 
We now follow the proof of \cite[Theorem 2.2]{CK89}.
Since $\pi|_{W'} : W' \to \P^4_k$ is a generically finite morphism of degree $(2\sigma'^*H-E')^4,$ it is sufficient to verify $(2\sigma'^*H-E')^4=1$.
By  dimension reason, 
we have $(\sigma'^*H)^2 \cdot E'^2=(\sigma'^*H)^3 \cdot E'=0 $. Moreover, the projection formula shows that $(\sigma'^*H)^4=1$. 
Let $N_{X'/\mathbb{P}^4_k}$ be the normal bundle of $X'$ inside 
$V' = \mathbb{P}^4_k$.
By \cite[Example 3.3.4]{Ful98}, we have $E^{4-i}=(-1)^{3-i}c_1(\mathcal{O}_{\mathbb{P}(N_{X'/\mathbb{P}^4_k})}(1))^{3-i} \cap [E]$ for $i \in \left\{0, 1\right\}$. By \cite[Corollary 4.2.2]{Ful98} and the definition of the Segre class \cite[page 73]{Ful98},  we have $\sigma'^{*}H^i\cdot E'^{4-i}=(-1)^{3-i}H^i \cdot s_{1-i}(N_{X'/\mathbb{P}^4_k}) \cap [X']$, where $s_{j}$ denotes the $j$-th Segre class.
Then we deduce that $\sigma'^*H \cdot E'^3=H\cdot X'= 5$ and
$E'^4=-\deg_{X'}(s_1(N_{X'/\P^4_k}))=\deg_{X'}(N_{X'/\P^4_k})$.
By the conormal bundle short exact sequence   $0 \to N_{X'/\P^4_k}^\vee \to \Omega^1_{\P^4_k}|_{X'} \to \Omega^1_{X'} \to 0$, we deduce $E'^4=\deg(N_{X'/\P^4_k})=\deg_{X'}(-K_{\mathbb{P}^4_k})=25$.
%{\green I think that $\deg_{X'}(-K_{\mathbb{P}^5_k}) = 5 \cdot 6 =30$. We should consider $N_{X'/\P^4}$ (not $N_{X'/\P^5}$) from the beginning.} 
Therefore we have
$$(2\sigma'^*H-E')^4 = 16-8 \sigma'^*H \cdot E'^3+E'^4=16-8 \cdot 5 +25=1,$$
concluding the proof.
\iffalse
COMPUTATIONS.
We have $W' = \sigma^*V' \sim \sigma^*H$. 
Then 
\[
(2\sigma'^*H -E')^4 \cdot W' = (2\sigma^*H -E)^4 \cdot \sigma^*H. 
\]
Recall that 
\[
(\sigma^*H)^3 \cdot E \equiv 0 
\]
as the blowup centre $X$ is $2$-dim. 
Hence 
\[
(2\sigma^*H -E)^4 \cdot \sigma^*H 
 = (4 (2\sigma^*H) \cdot (-E)^3 +(-E)^4)  \cdot \sigma^*H. 
\]
\fi
\end{proof}

%Take a \FB{general} secant line $L$ of $X$ \FB{Here we might be using the field is infinite, not a real problem though} (i.e.,  a line $L \subset \mathbb{P}^5_k$ intersecting $X$ and not contained in $X$). 
%Set  $L_W := \sigma_*^{-1}L \subset W$ to be the strict transform of $L$ in $W$. 
%\FB{Then, as $X$ is contained in a quadric, $L$ intersects $X$ in two points,} therefore
%\[
%(2\sigma^*H -E) \cdot L_W = 2H \cdot L - E \cdot L_W = 2 \cdot 1 - 2 =0. 
% \]
% Thus the proper transform $L_W$ of every secant line $L$ of $X$ is contracted by $\pi : W \to \P^4_k$. 
% By 
% \[
% K_W = \sigma^*K_V + 2E, 
% \]
% it holds that 
% \[
% -K_W \cdot L_W = (\sigma^*K_V + 2E) \cdot L_W = K_{V} \cdot L_W +2E = -6 + 2 \cdot 2 = -2. 
% \]

We say that a line $L$ in $\mathbb{P}^{{n}}_k$ is a \emph{secant} of a closed subvariety $Y \subset \mathbb{P}^n_k$ 
if the intersection $L \cap Y$ %intersects $Y$ in a subscheme 
is of length at least 2. 
Let $X$ be  a geometrically normal regular del Pezzo surface $X$ 
with $H^0(X, \MO_X)=k$ and $K_X^2 =5$. 
For its anticanonical embedding $X \subset \mathbb{P}^5_k$, 
$X$ is an intersection of quadrics (\autoref{lem: dP_5_geom}). 
For a secant $L$ of $X$, 
 either $L \subset X$ or $L \cap X$ is a zero-dimensional closed subscheme of length 2.

\begin{lem}\label{lem: secant}
We use Notation \ref{n quintic}. 
Let $P$ be a general $k$-rational point of $\P^4_k$ and set $F := \pi^{-1}(P)$, 
which denotes the scheme-theoretic fibre. 
Then the following hold. 
\begin{enumerate}
\item 
$\pi : W \to \P^4_k$ is a contraction, i.e., $\pi_*\MO_W = \MO_{\P^4_k}$. 
\item 
%A general fibre $F$ of $\pi : W \to \P^4_k$ 
%Then $F$ is geometrically integral and 
$F$ is the proper transform of a secant  $L$ of $X$. 
Moreover, $F$ is smooth and geometrically integral.
% the general irreducible curve $F \subset W$ over a general rational point $Q \in \mathbb{P}^1_k$ is the strict transform of a secant line $L \subset \mathbb{P}^5_k$. {\cyan }
\item 
For a general secant $L$ of $X$, its proper transform $L_W= %\pi
{\sigma}_*^{-1}L$ on $W$ 
is equal to a scheme-theoretic fibre of $\pi$. 
\end{enumerate}
In particular, general fibres of $\pi$ are smooth and geometrically integral. 
\end{lem}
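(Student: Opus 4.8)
The plan is to run everything off the birational morphism $\pi|_{W'}\colon W'\to\P^4_k$ furnished by \autoref{lem: blowup_P4}(2), together with two intersection numbers on $W$. First I would settle (1) by Stein factorization. Since $X$ is regular and $\P^5_k$ is regular, $W=\Bl_X\P^5_k$ is regular, hence normal and integral, and $\pi$ is proper. Write $\pi=g\circ h$, where $h\colon W\to B$ has $h_*\MO_W=\MO_B$ with $B$ normal and $g\colon B\to\P^4_k$ finite. As $W'$ is a prime divisor with $\pi|_{W'}$ surjective and $g$ finite, $h(W')$ is a closed subvariety of $B$ dominating $\P^4_k$, so $h(W')=B$; comparing degrees in $1=\deg(\pi|_{W'})=\deg(g)\cdot\deg(h|_{W'})$ forces $\deg g=1$. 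A finite birational morphism onto the normal variety $\P^4_k$ is an isomorphism, whence $\pi_*\MO_W=g_*\MO_B=\MO_{\P^4_k}$.

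Next I would identify $F_{\mathrm{red}}$ for a general $k$-rational point $P$ (such points are dense since $k$ is infinite). Here $W'\sim\sigma^*H$ with $H=\MO_{\P^5_k}(1)$, and $\pi^*\MO_{\P^4_k}(1)=2\sigma^*H-E$ by the construction in Notation~\ref{n quintic}. Because $\pi|_{W'}$ is birational, for general $P$ the intersection $W'\cap F$ is a single reduced point, so $\sigma^*H\cdot F=1$; since $F$ is contracted by $\pi$, we get $(2\sigma^*H-E)\cdot F=0$ and hence $E\cdot F=2$. A fibre $f$ of $E\to X$ satisfies $(2\sigma^*H-E)\cdot f=-E\cdot f=1\neq0$, so such fibres are not contracted by $\pi$ and cannot occur in $F$; any other $\sigma$-exceptional component would contribute to $H\cdot\sigma_*F$, contradicting $H\cdot\sigma_*F=\sigma^*H\cdot F=1$. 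Thus the cycle $F$ has the single component $\wt L$, the proper transform of the line $L:=\sigma(F)$, with $F=\wt L$ as cycles; then $E\cdot\wt L=2$ shows $L\cap X$ has length $2$, i.e.\ $L$ is a secant, and $F_{\mathrm{red}}=\wt L\cong L\cong\P^1_k$.

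The delicate point, which I expect to be \emph{the main obstacle}, is that $F$ is actually reduced (equivalently smooth): in characteristic $p$ this does not follow from generic smoothness, and a priori $\pi$ could be a wild conic bundle with every fibre non-reduced (cf.\ \autoref{p always wild}). I would rule this out using the rational section $s=(\pi|_{W'})^{-1}\colon\P^4_k\dashrightarrow W'\subset W$. For $P$ in the dense open $V_0\subseteq\P^4_k$ over which $\pi|_{W'}$ is an isomorphism, at the point $w':=s(P)$ the differential of $\pi|_{W'}$ is an isomorphism, so $\Omega_{W'/\P^4_k}$ vanishes near $w'$; substituting this into the conormal sequence of $W'\subset W$ shows $\Omega_{W/\P^4_k}$ is locally free of rank $1$ near $w'$, and miracle flatness ($W$ Cohen--Macaulay, $\P^4_k$ regular, equidimensional fibres) gives flatness, so $\pi$ is smooth at $w'$. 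Hence $F$ carries a smooth $k$-point, is generically smooth, and therefore geometrically reduced; combined with $F=\wt L$ as a cycle this yields $F=\wt L\cong\P^1_k$, which is smooth and geometrically integral. This proves (2) and the final ``in particular'' assertion.

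Finally, for (3) I would observe that every secant $L$ satisfies $(2\sigma^*H-E)\cdot\wt L=2\cdot1-2=0$, so $\pi$ contracts $\wt L$ into a fibre. By (2) the general fibre of $\pi$ is itself the proper transform of a secant, so the assignment $L\mapsto\pi(\wt L)$ is dominant onto $\P^4_k$; thus a general secant $L$ maps to a general point $P$, where the scheme-theoretic fibre is the reduced curve $\wt L$ by the reducedness established above, giving $\pi^{-1}(P)=\wt L$. In summary, the intersection-number identification of $F_{\mathrm{red}}$ with a secant line is routine, and the real work is excluding a non-reduced generic fibre, for which the rational section coming from $W'$ and the local smoothness computation are the crucial inputs.
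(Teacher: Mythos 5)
Your step (1) and the cycle computation in (2) essentially reproduce the paper's argument: Stein factorisation plus ``finite birational onto a normal variety is an isomorphism'' gives the contraction property, and the intersection numbers $W'\cdot F=1$, $(2\sigma^*H-E)\cdot F=0$, $E\cdot F=2$ identify the $1$-cycle $[F]$ with the proper transform $[\wt L]$ of a secant line $L=\sigma_*F$. The genuine gap is precisely at the step you flag as the delicate point. Your rational-section argument (conormal sequence plus miracle flatness at $w'=s(P)$) correctly shows that $\pi$ is smooth at the single point $w'$, but the ensuing inference ``$F$ is generically smooth, and therefore geometrically reduced'' is false for schemes: reducedness is $R_0+S_1$, and a generically reduced one-dimensional fibre can still carry embedded points. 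If $F$ has an embedded point, then $F\neq\wt L$ as schemes and $F$ is neither reduced, nor smooth, nor integral, even though $[F]=[\wt L]$ as cycles and $F$ is smooth at $w'$. The cycle equality only controls multiplicities at generic points, so it cannot be ``combined'' with smoothness at one point to yield the scheme equality $F=\wt L$; excluding embedded points is exactly what remains to be proved, and both the ``Moreover'' clause of (2) and the scheme-theoretic statement (3) depend on it.

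The paper closes this hole with a Cohen--Macaulay argument: the $k$-rational point $P$ is a local complete intersection in $\P^4_k$, so $F=\pi^{-1}(P)$ is cut out locally by four equations in the regular fivefold $W$; since $\dim F=1$, $F$ is Cohen--Macaulay, hence $S_1$, and $S_1$ together with generic reducedness gives that $F$ is reduced, hence integral, hence equal to $\wt L\simeq L\simeq\P^1_k$ (the same argument is then reused verbatim for the fibre $F'$ in (3), where $\pi$-ampleness of $W'$ and $\rho(W/\P^4_k)=1$ give $[F']=[L_W]$). Note that your own tools would suffice to repair the proof: every component of every fibre of $\pi$ has dimension at least $\dim W-\dim\P^4_k=1$, so along a general fibre the fibre dimension is exactly $1$ at \emph{every} point; miracle flatness then gives flatness of $\pi$ along all of $F$, and a flat morphism with Cohen--Macaulay source and regular target has Cohen--Macaulay fibres, which is the missing $S_1$. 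As written, however, the reducedness of $F$ --- and with it (2), (3) and the final ``in particular'' --- is not established. (A smaller shared omission: one should also observe that for general $P$ the unique component of $F$ is not contained in $E$, e.g.\ because the fibres over a dense open set cannot all lie in the proper closed subset $E$; otherwise $L=\sigma(F)$ would be a line inside $X$ and the length-two conclusion would be meaningless.)
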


\begin{proof}
Let us show (1). 
By \autoref{lem: blowup_P4}, $\pi':= \pi|_{W'} \colon W' \to \P^4_k$ is birational and, as $\P^4_k$ is normal, $\mathcal{O}_{\P^4_k} = \pi'_{*}\MO_{W'}$. 
Let $W \to Z \to \P^4_k$ be the Stein factorisation of $\pi : W \to \P^4_k$. 
As the composition $W' \to W \to Z \to \mathbb{P}^4_k$ is birational, we conclude that $Z \to \mathbb{P}^4_k$ is a finite birational morphism between normal varieties, and hence it is an isomorphism. 
Thus (1) holds. %proving (1). 

Let us show (2). 
%Fix a general $k$-rational point $P$ of $\P^4_k$ and set $F := \pi^{-1}(P)$ of $\pi$. 
%By (1), a general fibre $F$ of $\pi : W \to \P^4_k$ is geometrically irreducible \cite[Lemma 2.2(1)]{Tan18b}. 
Since $\pi' : W' \to \P^4_k$ is birational, 
we get $W' \cdot F =1$. 

\begin{claim-}
$F$ is an integral scheme. 
\end{claim-}

\begin{claimproof}%[Proof of Claim]
    By (1),  $F$   is (geometrically) irreducible (cf. \cite[Lemma 2.2(1)]{Tan18b}). 
% a general schematic fibre $F_p$ over a rational point $p$ is (geometrically) irreducible. 
% Pick a general secant line $L$ of $X$ and set $L_W \subset W$ to be its proper transform. 
% By the same computation as in (\ref{e1 secant}), 
% $L_W$ is contained in a fibre of $\pi :W \to \P^4_k$.
Hence we get $[F] = n [F_{\red}]$ as $1$-cycles for some integer $n>0$. 
We then get $1 =  W' \cdot F = n W' \cdot F_{\red}$, 
and thus $n=1$ and $[F] =[F_{\red}]$, i.e., $F$ is generically reduced. 
Recall that $F$ is Cohen-Macaulay, 
because the $k$-rational point  $P = \pi(F) \in \P^4_k$ is locally a complete intersection, and hence so is its fibre $F$. 
Therefore, $F$ is $R_0$ and $S_1$, and hence $F$ is an integral scheme. 
This completes the proof of Claim.
%By $L_W \subset F$, we get $L_W = F$. 
\end{claimproof}

\medskip

%for a general fibre $F$ of $\pi \colon W \to \P^4_k$. 
%Then $F$ is geometrically integral. 
It holds that $1 = W' \cdot F =\sigma^*V' \cdot F = V' \cdot \sigma_*F$, 
and thus $L:=\sigma_*F$ is a line. 
%By projection formula, $W'=\sigma^*V'$ implies $\pi_*F \cdot V=1$, and thus 
Therefore, 
%\begin{equation}\label{e1 secant}
\[
0= \pi^*\MO_{\P^4_k}(1) \cdot F =(2\sigma^*H -E) \cdot F = 2H \cdot L - E \cdot F = 
2 - \sigma^{-1}(X) \cdot F.  
\]
%\end{equation}
This, together with $\sigma^{-1}(X) \cap F \simeq X \cap L$, implies that $L$ is a secant of $X$. 
In particular, $F ( \simeq L)$ is smooth and geometrically integral. 
Thus (2) holds.

Let us show (3). Let $L$ and $L_W$ be as in the statement. 
%By (1),  $F$   is geometrically irreducible (cf. \cite[Lemma 2.2(1)]{Tan18b}). 
% a general schematic fibre $F_p$ over a rational point $p$ is (geometrically) irreducible. 
%Pick a general secant line $L$ of $X$ and set $L_W \subset W$ to be its proper transform. 
We have $L_W \xrightarrow{\sigma|_{L_W}, \simeq} L$ and 
\[
0 =2 - \sigma^{-1}(X) \cdot L_W= 2 H \cdot L -E \cdot L_W =
(2\sigma^*H -E) \cdot L_W = \pi^*\MO_{\P^4_k}(1) \cdot L_W. 
\]
%By the same computation as in (\ref{e1 secant}), 
Hence $L_W$ is contained in a fibre of $\pi :W \to \P^4_k$. 
For the $k$-rational point $P' := \pi(L_W)$ and 
its fibre $F' :=\pi^{-1}(P')$, 
  we get $[F'] = [L_W] + [\Gamma_1]+ \cdots +[\Gamma_r]$ as $1$-cycles for some curves $\Gamma_1, ..., \Gamma_r$. 
By $\rho(W)=2$, we get $\rho(W/\P^4_k)=1$, and hence $W'$ is $\pi$-ample as $W' \cdot F'=1$. 
Therefore, we get $r=0$, i.e., $[F'] =[L_W]$. 
By the same argument as in Claim, $F' =\pi^{-1}(P')$ is an integral scheme, and hence $L_W$ 
coincides with the scheme-theoretic fibre $\pi^{-1}(P')$. 
% $F' = L_W$. 
% Then $F'$ is generically reduced. 
% Recall that $F$ is Cohen-Macaulay, 
% because the point  $R := \pi(F)$ is locally a complete intersection, and hence so is the fibre $F$ over $R$. 
% Therefore, $F$ is $R_0$ and $S_1$, and hence $F$ is an integral scheme. 
% By $L_W \subset F$, we get $L_W = F$. 
Thus (3) holds. 
\end{proof}

\begin{proof}[Proof of \autoref{them: Enriques}]
{If $X$ is not geometrically normal, then it is a blowup of $\mathbb{P}^2_k$ by \autoref{thm: class_dP} and hence rational. 
Therefore,} 
{we may assume that 
$X$ is geometrically normal, and thus geometrically canonical.}
%By \autoref{thm: class_dP}, 
%{\cyan This Cor is over a $C_1$ field. I think that there is a gap in this proof for the geom non-normal case, because \autoref{thm: class_dP} only assures the existence of a blowup  $X \to \P^2_{{\rm SB}}$ at a degree $4$ pt.}, 
%we may assume that $X$ is geometrically normal. 
If $k$ is a finite field, then $X$ has a rational point by 
\autoref{p smooth case}, 
%\cite[Corollary 1.3]{Esn03} {\cyan or \cite[Theorem IV.6.8]{Kol96}.} {\cyan Esnault is not enough. Probably, GNT and Esnault-Blickle is enough.} \FB{regular=smooth for finite fields, no?}{\cyan I see. You are right!} 
and thus we conclude by \autoref{t rationality}.
From now on, we assume that $k$ is an infinite field.

We use \autoref{n quintic}. 
% \FB{Remove from here}Fix a general line $L \subset \P^5_k$ (defined over $k$).  
% Set $L_W$ to be the proper transform of $L$ on $W$. 
% Then $L_W$ scheme-theoretically coincides with a fibre of $\pi : W \to \P^4_k$ (\autoref{lem: secant}). \FB{remove till here}
% \FB{correct argument (the order before was confusing):} 
Let $y \in \mathbb{P}^4_k$ be a general $k$-rational point and let $L_y$ be the fibre of $\pi \colon W \to \mathbb{P}^4$ over $y$. Then the image $L \coloneqq \sigma(L_y)$ is a secant line of $X$ by \autoref{lem: secant}(2). %\FB{end new argument}
%its image $L_W$ of $X$ is a secant line defined over $k$.
Then $(X \cap L) \times_k \overline k$ consists of two distinct points $P$ and $Q$ (even in characteristic 2), because this property holds for a general secant defined over $\overline k$, 
which implies that the induced morphism $\pi|_E \colon E \to \P^4_k$ is a generically finite separable morphism of degree two. 
If $P$ (or $Q$) descends to a $k$-rational point on $X$, then we conclude from \autoref{t rationality}.

Otherwise, we set $R:=X \cap L = \Spec k'$, where $k'$ is a Galois extension of degree 2 of $k$. 
As the base field $k$ is infinite, 
%by the classification of \cite[Theorem 8.1.13]{Dol12},  
we may assume that 
$P$ and $Q$ are general closed points 
%these two points $P$ and $Q$ are in general position 
{(i.e., given a non-empty open subset $U$ of $X_{\overline k} \times_{\overline k} X_{\overline k}$, we may assume $(P, Q) \in U$). 
Let $Y \to X$ be the blowup at $R$. 
We now finish the proof by assuming that 
\begin{enumerate}
\item[($\star$)] $Y$ is a regular del Pezzo surface. 
\end{enumerate}
}  
%which means that the anticanonical $-K_Y$ of the blowup $Y \to X$ at $R$ is ample. 
\noindent
Then $Y$ is a geometrically canonical regular del Pezzo surface with $\rho(Y)=2$, 
%Then the blowup $Y \to X$ at $R$ is a geometrically canonical regular del Pezzo surface with $\rho(Y)=2$, 
because its base change $Y_{\overline{k}}$ to the algebraic closure $\overline k$ is a blowup of a canonical {del Pezzo}  surface $X_{\overline k}$ 
at two general non-singular points $P$ and $Q$. 
As $Y_{\overline{k}}$ is a normal cubic surface containing two disjoint lines, interchanged by the Galois action, we can conclude that $Y$ is rational by \cite[Example 1.35]{KSC04}. 

{
In what follows, we prove ($\star$). 
%For this we can argue as follows. 
Take the minimal resolution $\tau \colon Z \to X_{\overline{k}}$ 
and we set $F :=\Ex(\tau)$. 
%be the minimal resolution, and let $E$ denote the exceptional divisor. 
Since $P$ and $Q$ are in general position, 
we get $P \not\in \tau(F)$ and $Q \not\in \tau(F)$. 
The condition that the blowup $\Bl_{P \amalg Q} X_{\overline{k}} = Y \times_k \overline k$ of $X_{\overline k}$ at the points $P$ and $Q$ remains a del Pezzo surface is equivalent to requiring that the blowup $\Bl_{P \amalg Q} Z$ is a weak del Pezzo surface with the same number of $(-2)$-curves as $Z$. The existence of such a pair of points is ensured by \cite[Remark 2.9]{MS24} and the fact that to obtain a $(-2)$-curve 
it is necessary to blowup a point on a $(-1)$-curve.}
\qedhere

\end{proof}

\begin{rem}
The last step of the above proof (i.e., the part using \cite[Example 1.35]{KSC04}) 
can be replaced by playing the two-ray game (Sarkisov link) on $Y$ as follows. 
Let $\tau : Y \to Z$ be the other contraction. 
By $K^2_{X} = 5$, it is easy to exclude the case when $\dim Z=1$ 
(i.e., the Sarkisov link of type I)   \cite[Proposition 4.32]{BFSZ}. 
Then $\tau : Y \to Z$ is a birational contraction onto a regular del Pezzo surface $Z$ with $K_Z^2=8$ by \cite[Proposition 4.35]{BFSZ}.
In particular, $Z$ has a closed point of degree $5$, and therefore 
it follows from \cite[Lemma 4.9, Proposition 4.10]{BFSZ} that $Z$ is a quadric surface in $\mathbb{P}^3_k$. 
%of index 1. 
By \cite[Theorem 18.5]{EKM08}, we conclude that $Z$ has a rational point,
and thus it is a rational surface.
\end{rem}

\subsection{Existence of rational points (over $C_1$-fields)}\label{ss rat pt}

{We are ready to prove \autoref{thm: C_1}, 
which is equivalent to the theorem below (\autoref{r non geom int}(4)).}

\begin{thm}\label{thm: existence_section}
Assume that $k$ be a $C_1$-field. 
Let $X$ be a geometrically integral regular del Pezzo surface. % over $k$. 
Then $X(k) \neq \emptyset$, i.e., $X$ has a $k$-rational point. 
\end{thm}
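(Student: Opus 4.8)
The plan is to prove the statement by a case analysis on whether $X$ is geometrically normal, and in the geometrically normal case by induction on the degree $d:=K_X^2\in\{1,\dots,9\}$ (the bound being \autoref{prop: geom_normality_degree}), assuming the result for all regular del Pezzo surfaces of strictly larger degree. If $X$ is not geometrically normal, then \autoref{cor: non-normal-rho2} gives $X(k)\neq\emptyset$ outright, so from now on I would assume $X$ is geometrically normal, hence geometrically canonical; by the classification of \autoref{thm: class_dP} the irregular cases are all geometrically non-normal, so $H^1(X,\MO_X)=0$. Recall that $\Br(k)=0$ since $k$ is $C_1$, and that over a $C_1$-field a system of forms of degrees $d_1,\dots,d_r$ in $n>d_1+\cdots+d_r$ variables has a nontrivial common zero (\cite{Lan52}); in particular every plane conic, every cubic surface in $\P^3_k$, and every complete intersection of two quadrics in $\P^4_k$ has a $k$-rational point.

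First I would dispose of the subcase $\rho(X)\geq 2$. Here $X$ carries a $K_X$-negative extremal contraction $f:X\to Z$. If $f$ is birational, then $Z$ is again a geometrically integral regular del Pezzo surface with $K_Z^2>K_X^2$, so $Z(k)\neq\emptyset$ by the inductive hypothesis and hence $X(k)\neq\emptyset$ by the Lang--Nishimura theorem. If instead $\dim Z=1$, then $f$ is a conic bundle; as in the proof of \autoref{p 2 conic bdls} the base $Z$ is a geometrically integral regular curve with $H^1(Z,\MO_Z)=0$, so $Z$ is a conic and therefore $Z\cong\P^1_k$ over the $C_1$-field $k$. The fibre of $f$ over a $k$-rational point of $Z$ is a conic in $\P^2_k$, which has a $k$-point by the $C_1$ property, whence $X(k)\neq\emptyset$. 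Thus it remains to treat the atoms $\rho(X)=1$.

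For $\rho(X)=1$ I would argue degree by degree. For $d\in\{7,8,9\}$ the surface has a $k$-point by \autoref{c deg789}; for $d=6$ I would invoke \autoref{p deg6 rat pt}; for $d=5$ the surface is rational by \autoref{them: Enriques}, so $X(k)\neq\emptyset$ by Lang--Nishimura. For $d=4$ and $d=3$ the anticanonical model realises $X$ as a complete intersection of two quadrics in $\P^4_k$, respectively a cubic surface in $\P^3_k$ (\autoref{p hypersurf}), and Lang's theorem on forms over $C_1$-fields supplies a $k$-point. For $d=1$ the base scheme $\Base|-K_X|$ is a single $k$-rational point by \autoref{p va deg3}(1), and we are done.

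The main obstacle is the remaining case $d=2$ with $\rho(X)=1$. If $X$ is smooth, then \autoref{p smooth case} (i.e.\ \cite{Kol96, Man66}) already furnishes a $k$-point, so the difficulty is concentrated in the non-smooth geometrically normal surfaces, which carry twisted canonical singularities and hence occur only in characteristic $2$ and $3$ (cf.\ \cite{Sch08}). Here $-K_X$ is merely base point free, realising $X$ only as a double cover $\pi:X\to\P^2_k$ (\autoref{p va deg3}(2)) rather than an embedding, so the form-theoretic arguments used for $d\in\{3,4\}$ do not apply directly; moreover, since $X$ is \emph{regular}, there is no resolution of $X$ over $k$ onto which one could transfer the smooth result. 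My plan would be to embed $X$ into $\P^6_k$ by $|-2K_X|$, locate a purely inseparable closed point of small degree using the $p$-degree bound for $C_1$-fields (in the spirit of \cite{BT22}), cut down with a linear subspace through it via \autoref{l 0 dim lin sub}, and extract a $k$-rational point from a conic or genus-one slice by the $C_1$ property, paralleling the degree-$6$ analysis of \autoref{p deg6 char2} and \autoref{p deg6 char3}. I expect the genus-one curves produced by the double-cover structure to be the genuine difficulty, since a single genus-one curve over a $C_1$-field need not have a rational point; controlling this across the relevant family, or ruling out the non-smooth geometrically normal degree-two surfaces over $C_1$-fields altogether, is the heart of the matter.
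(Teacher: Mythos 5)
Your architecture for $K_X^2\geq 5$ is exactly the paper's (degrees $7$--$9$ by \autoref{c deg789}, degree $6$ by \autoref{p deg6 rat pt}, degree $5$ by \autoref{them: Enriques} plus Lang--Nishimura), and your handling of the geometrically non-normal case via \autoref{cor: non-normal-rho2} and of $\rho(X)\geq 2$ via contractions is fine. The genuine gap is the one you flag yourself: the case $K_X^2=2$, $\rho(X)=1$, geometrically normal but not smooth, is never proved, and your sketch for it cannot close as stated. Such a surface is a weighted quartic $\{y^2+b_2y+c_4=0\}\subset\P(1,1,1,2)$ (\autoref{p hypersurf}(2)), and the $C_1$ property in the form you use for degrees $3$ and $4$ — Lang's theorem for systems of honest forms — says nothing here, since a quartic form in the three weight-one variables fails the inequality ``variables $>$ degree''. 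Your fallback of extracting a point from a genus-one slice fails for precisely the reason you acknowledge: over a $C_1$ field such as $\overline{\F}_p(t)$ there are genus-one curves with no rational point, so no single anticanonical member can be expected to carry one; nor can the non-smooth geometrically normal degree-two surfaces be ``ruled out'' over $C_1$-fields. The paper closes this (and all of $K_X^2\leq 4$, in one line) by citing \cite[Lemma 6.3]{BT22}, which asserts $X(k)\neq\emptyset$ for every regular del Pezzo surface of degree at most $4$ over a $C_1$-field. That citation is not a convenience but the missing idea: to prove it from scratch one needs an argument special to weighted forms of this shape, e.g.\ observing that either $k$ admits no quadratic extension (and then the equation, being quadratic in $y$, is solved directly over any point of $\P^2(k)$), or one substitutes linear forms in five new variables for $x_0,x_1,x_2$ and a quadratic form for $y$ chosen anisotropic on the kernel of the substitution, and applies $C_1$ to the resulting genuine quartic form in five variables. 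Nothing equivalent appears in your proposal, so the proof is incomplete.

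Two secondary remarks. First, your degree-by-degree appeals to \autoref{p va deg3} and \autoref{p hypersurf} (for $d\in\{1,3,4\}$, $\rho(X)=1$) silently assume their hypothesis that $-K_X\sim C$ for a \emph{prime} divisor $C$; this does hold, but it requires the index computation of \autoref{l index 1} ($K_X^2$ squarefree forces $\Pic X=\Z K_X$ for $d\in\{1,3\}$, and the Riemann--Roch parity argument excludes index $2$ when $d=4$), together with Riemann--Roch to produce an effective member — a step that should be stated. Second, note that the paper's proof needs none of your geometric-normality, irregularity, or Picard-rank analysis in low degree: once $K_X^2\leq 4$, the cited \cite[Lemma 6.3]{BT22} applies uniformly to all geometrically integral regular del Pezzo surfaces, including the geometrically non-normal and non-smooth ones, so the portions of your argument relying on \autoref{cor: non-normal-rho2}, \autoref{p smooth case}, and Lang's theorem for degrees $3$ and $4$ reconstruct, correctly but only partially, what that single reference already provides.
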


\begin{proof}%[Proof of \autoref{thm: existence_section}]
%Let $X$ be a del Pezzo surface of degree $d$.
%Set $d := K_X^2$. 
% \FB{ok, than we can kill the following} Note that $X$ is geometrically integral . 
 If $K_X^2 \leq 4$, then the assertion follows from \cite[Lemma 6.3]{BT22}. 
For the remaining case $K_X^2 \geq 5$, 
we are done by \autoref{c deg789} ($K_X^2 \in \{ 7, 8, 9\}$), \autoref{p deg6 rat pt} 
($K_X^2 =6$), and 
\autoref{them: Enriques} ($K_X^2 =5$). 
% Note that $X$ is geometrically integral. 
% If $d \geq 5$, this is proven inand \autoref{them: Enriques}.
\end{proof}

% We first discuss the arithmetic of regular del Pezzo surfaces of high degree.

% \begin{theorem}\label{thm: dP_larger_5}
%     Let $X$ be a geometrically integral regular del Pezzo surface over a field $k$.
%     Suppose that $K_X^2 > 5$.
%     Then
%     \begin{enumerate}
%         \item if $X(k) \neq \emptyset$, $X$ is rational;
%         \item if $k$ is a $C_1$-field, then $X(k) \neq \emptyset$. 
%     \end{enumerate}
%     Moreover, a del Pezzo surface of degree $K_X^2=7$ is always rational.
% \end{theorem}

\begin{corollary}
    Assume that $k$ be a $C_1$-field. 
Let $X$ be a geometrically integral regular projective surface. 
%over a $C_1$-field $k$.
If $\kappa(X)=-\infty$ and $H^1(X, \MO_X)=0$, then $X(k) \neq \emptyset$.
\end{corollary}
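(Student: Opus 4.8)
The plan is to reduce, via a $K_X$-minimal model program, to the two situations already settled in this section: a regular del Pezzo surface (where \autoref{thm: existence_section} applies) and a conic over the $C_1$-field $k$. First I would run a $K_X$-MMP to obtain a birational contraction $g \colon X \to Y$ of regular projective surfaces (\cite[Theorem 1.1]{Tan18}); since $\kappa(K_Y) = \kappa(K_X) = -\infty$, the divisor $K_Y$ is not nef (\cite[Theorem 1.1]{Tan20}), so $Y$ carries a Mori fibre space structure $\pi \colon Y \to Z$ (\cite[Theorem 1.1]{Tan18}). Because $g$ is a birational morphism of regular surfaces we have $R^1 g_* \MO_X = 0$, and the Leray spectral sequence gives $H^1(Y, \MO_Y) \simeq H^1(X, \MO_X) = 0$; moreover $Y$ remains geometrically integral. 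By the Lang--Nishimura theorem it then suffices to exhibit a $k$-rational point on $Y$.

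If $\dim Z = 0$, then $Y$ is a geometrically integral regular del Pezzo surface over the $C_1$-field $k$, and $Y(k) \neq \emptyset$ by \autoref{thm: existence_section}.

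If $\dim Z = 1$, then $Z$ is a geometrically integral regular projective curve with $\pi_* \MO_Y = \MO_Z$, and the Leray spectral sequence yields an injection $H^1(Z, \MO_Z) \hookrightarrow H^1(Y, \MO_Y) = 0$. Hence $Z$ has arithmetic genus $0$; being smooth and geometrically integral it is a smooth conic, so $Z(k) \neq \emptyset$ as $k$ is $C_1$. I would then fix $z \in Z(k)$ and analyse the fibre $F := \pi^{-1}(z)$: writing $F = \pi^*(z)$ shows $F^2 = 0$, so adjunction yields $\omega_F \simeq \omega_Y|_F$, whence $F$ is a Gorenstein curve with $H^0(F, \MO_F) = k$, arithmetic genus $0$, and $-K_F$ ample of degree $-K_Y \cdot F = 2$; that is, $F$ is a conic over $k$ (compare the argument for $\Ex(\tau)$ in \autoref{p deg6 char2}). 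Since $k$ is $C_1$, $F(k) \neq \emptyset$, and so $Y(k) \supseteq F(k) \neq \emptyset$.

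In either case $Y(k) \neq \emptyset$, and Lang--Nishimura transports this point back across $g$ to give $X(k) \neq \emptyset$. The routine parts are the birational bookkeeping (vanishing of $H^1$ and preservation of geometric integrality along the MMP) and the del Pezzo case, which is immediate. The main obstacle I anticipate is the conic-bundle case: one must confirm that the fibre over a rational point of $Z$ is genuinely a conic---a Gorenstein genus-$0$ curve with $H^0 = k$ and anticanonical degree $2$---so that the defining property of a $C_1$-field can be invoked; this rests precisely on the identities $F^2 = 0$ and $\omega_F \simeq \omega_Y|_F$, together with the hypothesis $H^1(X, \MO_X) = 0$ forcing the base curve $Z$ to have genus $0$.
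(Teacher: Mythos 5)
Your proposal is correct and follows essentially the same route as the paper: run a $K_X$-MMP to reach a Mori fibre space $Y \to B$, reduce to $Y$ via Lang--Nishimura, invoke \autoref{thm: existence_section} when $\dim B = 0$, and when $\dim B = 1$ use $H^1 = 0$ to see $B$ is a conic (hence has a rational point by $C_1$) and then that the fibre over that point is again a conic. The only difference is cosmetic: the paper simply asserts the fibre is a conic in $\P^2_k$ (a standard fact for two-dimensional Mori fibre spaces over curves), where you supply the adjunction computation justifying it.
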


\begin{proof}
Assume that $\kappa(X) = -\infty$ and $H^1(X, \MO_X)=0$. 
It follows from \cite[Theorem 1.1]{Tan18} and \cite[Theorem 1.1]{Tan20} that $K_X$ is not pseudo-effective. 
Then, by running a $K_X$-MMP, 
we have a birational morphism $X \to Y$, where 
$Y$ is a regular projective surface which admits a Mori fibre space structure $\pi : Y \to B$. 
    By the Lang--Nishimura lemma (cf. \cite[Lemma 6.11]{BT22}), it is sufficient to prove $Y(k) \neq \emptyset$. % to conclude. 
    If $\dim B=0$, then $Y$ is a regular del Pezzo surface 
    and hence we get $Y(k) \neq \emptyset$ by  \autoref{thm: existence_section}.
    We may assume that $\dim B=1$. 
    %If $Y$ admits a conic bundle structure $Y \to B$, 
    Then $H^1(B,\mathcal{O}_B) \hookrightarrow H^1(X, \MO_X)=0$. Therefore, $B$ is a conic over a $C_1$-field $k$ and it has a rational point $P$. As the fibre $X_P$ of $\pi$ over $P$ is also a conic in $\P^2_k$, we conclude that $\emptyset \neq X_P(k) \subset X(k)$.
\end{proof}

We now prove the application to numerically trivial line bundles on del Pezzo fibrations. 

\begin{proof}[Proof of \autoref{cor: torsion_index}]
    We follow the proof of \cite[Theorem 8.2]{BT22}. 
    As $X_{K(B)}$ is a regular del Pezzo with $\rho(X_{K(B)})=1$ over a function field of a curve, 
    $X_{K(B)}$ is geometrically normal by \cite[Theorem 14.1]{FS20}, and 
    hence $H^1(X_{K(B)}, \MO_{X_{K(B)}})=0$ and $L|_{X_{K(B)}} \sim 0$ (\autoref{lem: Kod_vanishing}). 
    This implies that
    $L \sim \sum l_i D_i$, where $l_i \in \mathbb{Z}$ and $D_i$ are prime divisors such that $\pi(D_i)$ is a closed point $b_i$.
    Since $\rho(X/B) = 1$ and $X$ is $\mathbb{Q}$-factorial, all the fibres of $\pi$ are irreducible. Hence we can write $\pi^*(b_i) = n_iD_i$ for some $n_i \in \mathbb{Z}_{>0}$. To conclude, it is sufficient to show that $l_i \in n_i \Z$. 
    As $l_iD_i$ is Cartier, it is enough to show that $n_i$ is the Cartier index of $D_i$, i.e., the smallest positive integer $m$ such that $mD_i$ is Cartier. 
    
    For this, we take a section $\Gamma \subset X$ of $\pi$ given by \autoref{main dP fib}.
    The equation 
    $\Gamma \cdot \pi^*(b_i) = \Gamma \cdot (n_i D_i)=1$ implies that the Cartier index of $D_i$ coincides with $n_i$, concluding the proof.
\end{proof}

\subsection{Unirationality }

We discuss unirationality of del Pezzo surfaces of degree 4.

% \begin{itembox}{MEMO}
% It is ideal if we have a equivalent condition for the unirationality. 
% Question: non-unirational iff the following? (geom int reg dP of deg 4 under the assumption that $X(k)$ is dense in $X$): 
% \begin{enumerate}
% \item $p=2$. 
% \item $X$ is primitive. 
% \item $X$ is not geom normal. 
% %\item $X(k)$ is dense in $X$. 
% \end{enumerate}
% \end{itembox}

% \begin{itembox}{MEMO($\rho=2$}
% We might not need to take the blowup when $\rho(X)=2$. 
% Since $X$ is geom normal, both 
% $\pi_1 : X \to B_1$ is gene smooth and $\pi_2 : X \to B_2$ are gene smooth. Then general rat fibres of $\pi_2$ is $\Gamma := \P^1_k$. 
% Since $\Gamma \to B_1$ is a double cover, we are done by taking the base change? 

% If the double cover $X \to \P^1_k \times \P^1_k$  is inseparable, then $X$ is unirational. 
% So we may assume that this is a separable double cover. 
% Hence $F_1 \cap F_2$ is a two-point set. 
% \end{itembox}
%{We can drop the density assumption. }

\begin{theorem} \label{thm: unirat_deg_4}
%{\cyan NEW VERSION}
%{\cyan UNDER EDITING}
    Let $X$ be a geometrically integral regular del Pezzo surface with $K_X^2 =4$ and $X(k) \neq \emptyset$. 
    Assume that $X$ is not unirational. % and $X(k) \neq \emptyset$. 
%    { and $X(k)$ is dense in $X$}{\blue this can be replaced by $X(k) \neq \emptyset$}. 
    Then $p=2$, $X$ is primitive, and $X$ is not geometrically normal. 
% \begin{enumerate}
% \item $X$ is not unirational. 
% \item $p=2$, $\rho(X) =1$, and $X$ is not geometrically normal. 
%     \end{enumerate}
\end{theorem}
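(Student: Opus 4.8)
The plan is to prove the logically equivalent statement that every geometrically \emph{normal} regular del Pezzo surface $X$ with $K_X^2=4$ and $X(k)\neq\emptyset$ is unirational. Granting this, suppose $X$ is not unirational. Then $X$ cannot be geometrically normal, so by \autoref{prop: geom_normality_degree}(2) the equality $K_X^2=4$ forces $p=2$; and by \autoref{thm: class_dP} such an $X$ is of type 1-4 (with $\rho(X)=1$) or 2-4 (with $\rho(X)=2$ and both extremal rays of conic-bundle type), both of which are primitive. This yields all three conclusions at once, so the entire content lies in the unirationality of the geometrically normal case, where $X$ is moreover geometrically canonical (\autoref{prop: geom_normality_degree}(1)) and $H^1(X,\MO_X)=0$.

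First I would dispose of the imprimitive subcase: the defining birational morphism $f\colon X\to Y$ (\autoref{d primitive}) lands on a regular del Pezzo surface $Y$ with $Y(k)\ni f(P)$ and $K_Y^2>K_X^2=4$, whence $K_Y^2\geq5$; such a $Y$ is rational by \autoref{t rationality}, so $X$ is rational, a fortiori unirational. It then remains to treat the primitive cases, namely $\rho(X)=1$ and $\rho(X)=2$ with both extremal rays of conic-bundle type. When $\rho(X)=1$ we have $\Pic X=\Z K_X$ by \autoref{lem: Kod_vanishing}(1), and blowing up the rational point $P$ (a smooth point, since $X$ is regular) yields, via \autoref{l rho=1}(2), a regular del Pezzo surface with $-K$ ample and self-intersection $3$; its anticanonical model $Z$ is a geometrically canonical del Pezzo surface of degree $3$ (cf.\ \autoref{p hypersurf}(3)), birational to $X$ and carrying a rational point. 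One then invokes the unirationality of such cubic surfaces (Kollár \cite{Kol02} in the smooth case, cf.\ \autoref{thm: unirat_cubics}) to conclude that $X$ is unirational.

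For the remaining primitive case $\rho(X)=2$ with two conic bundles, each extremal contraction is a morphism $\pi_i\colon X\to B_i$ with $B_i\simeq\P^1_k$ (as $\pi_i(P)\in B_i(k)$ and $H^1(B_i,\MO_{B_i})\hookrightarrow H^1(X,\MO_X)=0$). The key point is that, $X$ being geometrically normal, the $\pi_i$ are generically smooth by \autoref{p always wild} — in sharp contrast to the wild conic bundles underlying the non-normal $p=2$ examples. The mechanism is a rational-multisection argument: the fibre $F_1$ of $\pi_1$ through $P$ is a conic carrying the rational point $P$, so when it is smooth it is isomorphic to $\P^1_k$ and meets the general fibre of $\pi_2$ in $F_1\cdot F_2>0$ points, i.e.\ it is a rational multisection of $\pi_2$. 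Base-changing $\pi_2$ along $C:=F_1\cong\P^1_k$, the generic fibre acquires a $K(C)$-rational point (the tautological one) and hence becomes $\P^1_{K(C)}$, so $X\times_{B_2}C$ is a $k$-rational surface dominating $X$; thus $X$ is unirational. When the fibre through $P$ is reducible or (in characteristic $2$) non-reduced, I would instead extract the multisection from a split component of it, or fall back on the cubic-surface reduction after blowing up $P$.

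The main obstacle is the genuinely singular geometrically canonical cases, which occur only for $p\in\{2,3\}$ (regular twisted forms of du Val singularities exist only in characteristics $2$ and $3$, by the argument of \autoref{l deg6 p=2,3} via \cite{Sch08}); for $p\geq5$ geometric normality already forces smoothness and the smooth cubic input of \cite{Kol02} closes the argument cleanly. In characteristics $2$ and $3$ the cubic surface $Z$ (or the relevant conic-bundle fibres) may be singular, and the delicate task is to produce a rational multisection — equivalently, a rational point lying on a \emph{smooth} conic-bundle fibre — starting from the single assumed rational point $P$. This is exactly the step that collapses for the everywhere-non-smooth conic bundles of the non-normal surfaces of type 1-4 and 2-4, and it is what pinpoints why those, and only those, can fail to be unirational.
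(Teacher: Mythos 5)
Your global architecture is sound and matches the paper's: reduce to showing that a geometrically normal $X$ with $K_X^2=4$ and a rational point is unirational, dispose of the imprimitive case via \autoref{t rationality}, and split the primitive case by $\rho(X)$. Your treatment of the primitive case $\rho(X)=2$ is essentially correct and is the same multisection mechanism the paper uses: one of the two fibres through $P$ is smooth at $P$ because $2=F_1\cdot F_2\geq(\mult_P F_1)(\mult_P F_2)$ (this multiplicity trick, which the paper spells out, also removes your hedge about reducible or non-reduced fibres, since an irreducible conic smooth at a $k$-rational point is a smooth conic), hence is $\simeq\P^1_k$ and serves as a degree-two rational multisection of the other, generically smooth, conic bundle; base change then yields a rational surface dominating $X$. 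You apply this directly to $X$, whereas the paper routes even this case through the blowup at $P$.

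The genuine gap is the case $\rho(X)=1$, and you acknowledge it yourself. After blowing up $P$ you obtain a regular, geometrically canonical cubic surface $Y\subset\P^3_k$ with rational points, and you then appeal to ``unirationality of such cubic surfaces'' via \cite{Kol02} or \autoref{thm: unirat_cubics}. Neither applies: Koll\'ar's theorem requires smoothness, and \autoref{thm: unirat_cubics} requires its hypotheses (1) and (2), which are automatic only for $p\geq 5$; here $Y$ may be geometrically singular and $p$ may be $2$ or $3$, exactly the range where unirationality of cubics is delicate (and where the Oguiso--Schr\"oer counterexamples live). The idea you are missing --- the crux of the paper's proof --- is that $Y$ is not an arbitrary cubic: it contains the exceptional curve $E\simeq\P^1_k$, which is a \emph{line} in the anticanonical embedding. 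Projection from $E$ gives a conic bundle $\rho\colon Y\to\P^1_k$ induced by the base point free system $|-K_Y-E|$; this conic bundle is generically smooth because $Y$ is geometrically normal (\autoref{p always wild}); and $E$ itself is a degree-two rational multisection of $\rho$, since $(-K_Y-E)\cdot E=2$. Base-changing $\rho$ along $\rho|_E\colon E\to\P^1_k$ produces a conic bundle over $E\simeq\P^1_k$ with a tautological section and smooth generic fibre, hence a rational surface mapping onto $Y$ with degree two --- i.e., exactly your $\rho(X)=2$ argument, transplanted to $Y$ with the line $E$ playing the role of $F_1$. With this observation your proof closes; without it, the $\rho(X)=1$ case remains open.
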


% Open: (1) $\Rightarrow$ (2). OS helps us? I.e., assume $X$ is unirational. and $p=2, \rho=1$. want: geom normal. $Y$ is smooth along $E$. 
% Then $Y$ is geom normal iff $\pi$ is gene sm. 
% Suppose wild. OS implies $(Y \times_{\P^1} \P^1)_{\red}$ is unirational. Does this help us? 

\begin{proof}
We divide the proof into several steps.
%Fix $P \in X(k)$ and 
% Pick a $k$-rational point $P$ on $X$. 
% When $\rho(X)=2$, we shall later replace $P$ by a suitable $k$-rational point on $X$ when $\rho(X)=2)$. 
% Let $\sigma : Y \to X$ be the blowup at $P$. 
% Set $E := \Ex(\sigma)$. 

\setcounter{step}{0}

\begin{step}\label{s1 unirat_deg_4}
$X$ is primitive. In particular, $\rho(X) \leq 2$. 
% in order to prove \autoref{thm: unirat_deg_4}. 
\end{step}

\begin{s1proof}%[Proof of Step \ref{s1 unirat_deg_4}]
%Assume that $X$ is imprimitive. 
%Note that (2) implies that $X$ is primitive. 
Suppose that $X$ is imprimitive. 
%It suffices to show that (1) does not hold. 
By \autoref{d primitive}, there is a birational morphism $f : X \to X'$ 
to a geometrically integral regular del Pezzo surface $X'$ with $4=K_X^2 <K_{X'}^2$. 
Then $X'$ is rational (\autoref{t rationality}), which is absurd. %t is enough to show that $X$ 
This completes the proof of Step \ref{s1 unirat_deg_4}.
\end{s1proof}

\medskip

%In what follows, we assume that $X$ is primitive. 
It is enough to show that $X$ is not geometrically normal (\autoref{thm: class_dP}). 
Suppose that $X$ is geometrically normal. 
Let us derive a contradiction.

% \begin{step}
% It holds that $\rho(X)=1$. 
% \end{step}

% {\cyan In this step, I do not see how to avoid using sep closed assumption.}

% \begin{proof}
% Suppose  $\rho(X) \neq 1$, i.e., $\rho(X)=2$. 
% It is enough to derive a contradiction. 
% Since there is no non-trivial birational contraction from $X$, 
% we have two Mori fibre spaces $\pi_1 : X \to B_1=\P^1_k$ and $\pi_2 : X \to B_2 =\P^1_k$. % with $\dim B_1 = \dim B_2 =1$. 
% Since $X$ is geometrically normal, 
% both $\pi_1$ and $\pi_2$ are generically smooth (\autoref{p always wild}). 
% Fix a general $k$-rational point $P \in X(k)$. 
% For  $i \in \{1, 2\}$, 
% we set $Q_i := \pi_i(P) \in B_i(k)$ and $F_i := \pi_i^{-1}(Q_i)$. 
% Then each $F_i$ is a smooth prime divisor, which is a conic in $\P^2_k$. 
% As $F_i$ has a $k$-rational point, we obtain $F_i \simeq \P^1_k$ for each $i \in \{1, 2\}$. 
% Take the base change $X' := X \times_{B_1} F_2$.  
% Then $X'$ is birational to $\P^2_k$, because 
% we  have $F_2 \simeq \P^1_k$ and the generic fibre of $X' \to F_2$ is 
% a smooth conic which has a rational point 
% $X'$
% \end{proof}

\begin{step}\label{s2 unirat_deg_4}
There exists $P \in X(k)$ such that $-K_Y$ is ample, where
$Y$ is the surface obtained by the blowup $\sigma: Y \to X$ of the closed point $P$. 
\end{step}

% {\cyan In this step, I do not see how to avoid using 
% the density assumption of $X(k) \subset X$.} \FB{Again bit lost: where do you use it?}
% {\cyan For a general $k$-rational point $P$, the fibre passing thru $P$ is smooth. I think that we need the density to assure this.
% }

\begin{s2proof}%[Proof of Step \ref{s2 unirat_deg_4}]
%Assume  $\rho(X)=1$. 
Pick $P \in X(k)$ and let $\sigma: Y \to X$  
be the blowup at $P$.

Assume  $\rho(X)=1$. 
As $H^1(X, \MO_X)=0$, we have that $\Pic X =\Z K_X$ by  \autoref{lem: Kod_vanishing} and \cite[Corollary 4.13]{BFSZ}. 
Then $-K_Y$ is ample by \autoref{l rho=1}.

In what follows, assume $\rho(X)=2$. 
Since there is no non-trivial birational contraction from $X$ (Step \ref{s1 unirat_deg_4}), 
we have two Mori fibre spaces $\pi_1 : X \to B_1$ and $\pi_2 : X \to B_2$ with $\dim B_1 = \dim B_2 =1$. 
%Pick $P \in X(k)$. 
For each $i \in \{1, 2\}$, 
set $Q_i := \pi(P) \in B_i(k)$ and $F_i :=\pi_i^{-1}(Q_i)$. 
We have $F_1 \cdot F_2 = 2$ and %we can write 
$-K_X \sim F_1+F_2$ (cf. the proof of Step \ref{s2 2 conic bdls} of \autoref{p 2 conic bdls}). 
Since $X$ is smooth at the $k$-rational point $P$, it holds that 
\[
2 = F_1 \cdot F_2 \geq (\mult_P F_1) \cdot (\mult_P F_2). 
\]
Hence 
$\mult_P F_1=1$ or $\mult_P F_2=1$, i.e., 
one of $F_1$ and $F_2$ is smooth at $P$. 
%Recall that each $F_i$ is isomorphic to a conic on $\P^2_k$ . 
% be the fibre
% By $F_1 \cdot F_2 =2$, one of $F_1$ and $F_2$ is regular at $P$, 
% as otherwise we would get the following contradiction 
% for the blowup $\sigma: Y \to X$ at $P$, $E :=\Ex(\sigma)$, 
% and the proper transforms $F_{1, Y}$ and $F_{2, Y}$ of $F_1$ and $F_2$: 
% \[
% 2 = F_1 \cdot F_2 = (F_{1, Y} +2E) \cdot (F_{2, Y} +2E) = 
% F_{1, Y} \cdot F_{2, Y} +2E \cdot (F_{1, Y} + F_{2, Y}) + 4E^2 \geq 0 + 4 + 4 -4=4. 
% \]
By symmetry, we may assume that $F_1$ is smooth at $P$. 
We then get $F_1 \simeq \P^1_k$, because 
we have $F_1(k) \neq \emptyset$ and $F_1$ is a smooth conic on $\P^2_k$ \cite[Proposition 2.18]{BT22}. 
Since $X$ is geometrically normal, 
%and $P$ is assumed to be general, 
%each $F_{i}$ is a smooth prime divisor 
$\pi_2 : X \to B_2$ is generically smooth (\autoref{p always wild}). 
Replacing $P$ by a general $k$-rational point on $F_1(k) (\simeq \P^1_k(k))$, 
we may assume that also $F_2$ is smooth, 
%isomorphic to  a smooth conic on $\P^2_k$, 
and hence $F_2 \simeq \P^1_k$.

% Assuming ($\star$) below, we now finish the proof.  
% \begin{enumerate}
% \item[($\star$)] $F_1 \simeq \P^1_k$ or $F_2 \simeq \P^1_k$. 
% %At least one of $F_1$ and $F_2$ is isomorphic to $\P^1_k$. 
% \end{enumerate}
% By symmetry, we may assume that $F_1 \simeq \P^1_k$. 
% Replacing $P$ by a general $k$-rational point $P$ on $F_1(k)$, 
% we may assume that also 
% Take a general $k$-rational point $P$ on $F_1(k)$. 
% Set $F'_2 := \pi_2^{-1}\pi_2(P)$. 
% Then $F'_2$ is a smooth prime divisor 

% , 
% where we set $Q_i := \pi_i(P)$ and $F_i := \pi_i^{-1}(Q_i)$ for $i \in \{1, 2\}$. 
%Since $X$ is geometrically normal and $P$ is assumed to be general, each $F_{i}$ is a smooth prime divisor (\autoref{p always wild}). 
We then have $-K_Y \sim  \sigma^*(F_1+F_2)-E \sim  F_{1, Y}+F_{2, Y}+E$, 
where $E := \Ex(\sigma)$ and each $F_{i, Y}$ denotes the proper transform of $F_i$ on $Y$. 
Let $C$ be a curve on $Y$. 
By $K_Y^2 =K_X^2 -1 =3>0$, it is enough to show $-K_Y \cdot C>0$.  
If $C \in \{E, F_{1, Y}, F_{2, Y}\}$, then $-K_Y \cdot C = 1$. 
We may assume $C \not\in \{E, F_{1, Y}, F_{2, Y}\}$. 
In particular, $\sigma(C)$ is still a prime divisor. 
Since $F_1 +F_2 (\sim -K_X)$ is ample, 
$\sigma(C)$ intersects $F_1 \cup F_2$. 
Then $C$ intersects $F_{1, Y} \cup F_{2, Y} \cup E$. 
By $C \not\in \{E, F_{1, Y}, F_{2, Y}\}$, we get 
$-K_Y \cdot C=(F_{1, Y}+F_{2, Y}+E) \cdot C >0$. 
This completes the proof of Step \ref{s2 unirat_deg_4}.
% not passing through $E$, then $-K_Y \cdot C>0$. If $C$ is $E$, clear. If $C$ intersects $E$ and $C \neq F_1$ or $F_2$ we are done. Finally, $-K_Y \cdot F_1=1$,{\cyan I think that this part is quite delicate. 
% The point $P$ on $F_1$ might be a singular point of $F_1$. 
% Indeed, for every geom non-normal example in \autoref{thm: class_dP} (No. 2-4), 
% its blowup $Y$ should not be dP, because there is no geom int geom non-normal dP with $\rho=3$.} so we conclude $Y$ is a del Pezzo surface of degree containing a smooth curve $E$ with $-K_Y \cdot E=1$.
\end{s2proof}

\medskip

In what follows, we use the same notation as in the statement of Step \ref{s2 unirat_deg_4}. 
Set $E := \Ex(\sigma)$.

\begin{step}\label{s3 unirat_deg_4}
$Y$ is (isomorphic to) a cubic surface in $\P^3_k$ and $E$ is a line in $\P^3_k$. 
Moreover, the projection $\rho : Y \to \P^1_k$ from the line $E$ is the morphism induced 
by the base point free linear system $|-K_Y-E|$. 
\end{step}

\begin{s3proof}%[Proof of Step \ref{s3 unirat_deg_4}]
We have $K_Y^2 =3$  and $-K_Y$ is ample. 
Hence $Y$ is (isomorphic to) a cubic surface in $\P^3_k$ by 
\autoref{thm: class_dP} and \cite[Theorem 2.15(3)]{BT22}. 
It follows from the adjunction formula that $\MO_{\P^3}(1)|_Y \simeq \MO_Y(-K_Y)$. 
We then get $\MO_{\P^3}(1) \cdot E = -K_Y \cdot E =1$, 
and hence $E$ is a line. 
Then the projection from the line $E$ gives a conic bundle structure $Y \to \mathbb{P}^1_k$ given by the linear system $|-K_Y-E|$. 
If $F$ is the residual conic of a plane $H$ containing $E$  (i.e., $H \cap Y = E \cup F$), then $(-K_Y-E) \cdot F=2-E \cdot F=0$ and $(-K_Y-E) \cdot E=1+1=2.$ 
Alternatively, the base point freeness of $|-K_Y-E|$ can be confirmed 
by taking the blowup of $\P^3_k$ along $E$. 
%We have $(-K_Y-E)^2 = K_Y^2 + 2 K_Y \cdot E +E^2 = 3 -2-1=0$. 
This completes the proof of Step \ref{s3 unirat_deg_4}.
%\qedhere
\end{s3proof}

\begin{step}\label{s4 unirat_deg_4}
$X$ is unirational. 
\end{step}

\begin{s4proof}%[Proof of Step \ref{s4 unirat_deg_4}]
The restriction $\rho|_E \colon E =\mathbb{P}^1_k \to \mathbb{P}^1_k=:A$ is a finite morphism of degree 2 and let $Z := Y \times_A E \to E  = \mathbb{P}^1_k$ be the base change of $\rho : Y \to \P^1_k=A$. % by $\pi|_E$. 
%\textcolor{red}{as $Y$ is geometrically normal, we have that }
Since $\rho : Y \to \P^1_k =A$ is a generically smooth conic bundle 
{(\autoref{p always wild})}, 
so is its base change $Z \to E$. 
Then, for $K := K(E)$, 
the generic fibre $Z_K$ is a smooth conic with a $K$-rational point, 
and hence $Z_K \simeq \P^1_K$. 
By $K =K(E) = K(\P^1_k) \simeq k(t)$, we see that $Z$ is rational, i.e., 
$K(Z) =K(Z_K) \simeq k(s, t)$. %is a rational function field over $k$. 
Therefore, its image $Y$ is unirational and hence so is $X$. 
This completes the proof of Step \ref{s4 unirat_deg_4}.
% If $Y \to \mathbb{P}^1_k$ is a generically smooth conic bundle (this is automatic if $p \neq 2$), then the generic fibre $Y_{E,\eta}$ is $\mathbb{P}^1_{\eta}$. As $k(\eta) \simeq k(t)$ and we conclude that the $Y_E$ is rational. 
\end{s4proof}

\medskip

Step \ref{s4 unirat_deg_4} completes the proof of \autoref{thm: unirat_deg_4}.
\qedhere

% \begin{step}
% If $\rho(X)=2$, then $X$ is not geom normal. 
% \end{step}

% \begin{proof}
% {\cyan In this step, I do not see how to avoid using sep closed assumption.}
%     Suppose $\rho(X)=2$ and $X$ is geom normal. 
%     Let $P$ be a general $k$-rational point. 
%     Then the fibres $F_1$ and $F_2$ passing through $P$ are smooth. 
%     Then $-K_Y$ is ample and we may apply Fabio's argument. 
% \end{proof}

% \begin{step}
% If $\rho(X)=1$, then $X$ is not geom normal. 
% \end{step}

% \begin{proof}

% \end{proof}

\end{proof}

%{\cyan I do not see why the projection from the line is }

% {\cyan 
% Assume $P\in X(k)$ is a sing pt of $F_1$ and $F_2$. 
% $-K_X \sim F_{1}  +F_2$. Then $-K_Y +E = -\sigma^*K_X \sim \sigma^*F_1 + \sigma^*F_2 = F_{1, Y} +2E + F_{2, Y}+2E$. 
% Then 
% \[
% -K_Y = F_{1, Y}  + F_{2, Y}+3E. 
% \]
% $|-K_Y|$ is nef and big. 
% $K_Y^3 =3$. $F_1 \cdot F_2 =2$. 

% \[
% 2 =(F_{1, Y}+2E) \cdot (F_{2, Y}+2E) = F_{1,Y}F_{2,Y} + 2E(F_{1Y}+F_{2Y}) +4E^2.
% \]
% $F_{1Y}E=2$, then 
% \[
% 2 = F_{1,Y}F_{2,Y} + 4+4+(-4). 
% \]
% Imposssible: 

% Then WMA $F_1$ is smooth at $P$. 
% Pick a general $k$-rational point $Q$ on $F_1$. 
% Then $F_2$ passing thru $Q$ is also smooth. 
% Then we can apply the above argument without the density assumption. 
% }

% Assume $P\in X(k)$ is a sing pt of $F_1$ and $F_2$. 
% $-K_X \sim F_{1}  +F_2$. Then $-K_Y +E = -\sigma^*K_X \sim \sigma^*F_1 + \sigma^*F_2 = F_{1, Y} +E + F_{2, Y}+2E$. 
% Then 
% \[
% -K_Y = F_{1, Y}  + F_{2, Y}+2E. 
% \]
% $|-K_Y|$ is nef and big. 
% $K_Y^3 =3$. $F_1 \cdot F_2 =2$. 

% \[
% 2 =(F_{1, Y}+E) \cdot (F_{2, Y}+2E) = 
% F_{1,Y}F_{2,Y} + 2EF_{1Y}+EF_{2Y} +2E^2 = 
% F_{1,Y}F_{2,Y} +2 +2 -2. 
% \]
% So $F_{1,Y}F_{2,Y}=0$? 

\begin{remark} \label{rem: quartic}
%{\cyan Take the sep closure if we impose the sep closed assumption.}
%The result of \autoref{thm: unirat_deg_4} is sharp. 
Note that there actually exists a non-unirational geometrically integral regular del Pezzo surface $X$ satisfying $X(k) \neq \emptyset$ and $K_X^2 =4$. % \FB{equal $4$?}. 
    Consider the example \cite[Section 4]{OS22} of the cubic surface in characteristic 2:
    $$ Y=\left\{y^3_1 + t_1x^2_1y_1 + y^3_2 + t_2x^2_2y_2 = 0 \right\} \subset \mathbb{P}^3_{\mathbb{F}_2(t_1, t_2)}.$$
    Then $Y$ is not unirational by \cite[Theorem 4.4]{OS22} and it contains the line $L=\left\{y_1=y_2=0 \right\}$.
    Therefore, we can contract $L$ via a birational contraction $ \pi \colon Y \to X$, where $X$ is a regular del Pezzo surface of degree 4, which is not unirational.
\end{remark}

The case of del Pezzo surfaces of degree 3 has been discussed in \cite{Kol02} and \cite{OS22}.
Based on their work, we give a sufficient condition for unirationality.

\begin{theorem} \label{thm: unirat_cubics}
	Let $X\subset \P^3_k$ be a geometrically integral regular cubic surface with $X(k) \neq \emptyset$. 
 Then $X$ is unirational if the following conditions are satisfied: 
    \begin{enumerate}
        \item for some smooth $k$-rational point $x$ of $X$, the projection $\pi_x \colon X \dashrightarrow \mathbb{P}^2_k$ from $x$ is separable;
        \item for a general $k^{\sep}$-rational point $x \in X_{k^{\sep}}$, the curve $C_x =T_x X \cap X$ is a geometrically integral singular cubic curve, where $T_x X$ is the hyperplane tangent to $X_{k^{\sep}}$ at $x$.
    \end{enumerate}
    Moreover, conditions (1) and (2) always hold for $p \geq 5$, whilst condition (1) always holds for $p \geq 3$.
\end{theorem}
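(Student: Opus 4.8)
The plan is to adapt the classical sweeping-out argument of Segre and Manin, in the form used by Koll\'ar, to our regular cubic surface. Recall the \emph{tangent-section} construction: for a smooth point $x\in X$ and a line $\ell\subset T_xX$ through $x$, the line $\ell$ meets $X$ at $x$ with multiplicity $\ge 2$ and in one residual point $\phi(x,\ell)\in X$; the lines through $x$ inside the plane $T_xX$ form a $\mathbb{P}^1$, and for fixed $x$ the residual points trace out the tangent plane section $C_x=T_xX\cap X$. This defines a rational map $\phi\colon \widetilde X\dashrightarrow X$ from the ruled threefold $\widetilde X\to X$ whose fibre over $x$ is the $\mathbb{P}^1$ of tangent directions at $x$. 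Since $X$ is unirational as soon as it admits a dominant rational $k$-map from a rational surface (compose with a birational parametrisation of that surface to obtain a dominant $\mathbb{P}^2_k\dashrightarrow X$), it suffices to restrict $\phi$ over a suitable rational curve in $X$ and prove dominance.

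First I would fix the smooth $k$-rational point $x_0$ provided by condition (1) and set $B:=C_{x_0}=T_{x_0}X\cap X$. This is a plane cubic curve singular at the $k$-point $x_0$; after possibly replacing $x_0$ by a general smooth $k$-point so that condition (2) applies at $x_0$, the curve $B$ is geometrically integral, and projection from its singular point $x_0$ — a map of degree $3-2=1$, hence birational in any characteristic — realises $B$ as a rational curve over $k$. Restricting the construction over $B$ yields $\phi_B\colon \widetilde X|_B\dashrightarrow X$, where $\widetilde X|_B$ is a $\mathbb{P}^1$-bundle over the rational curve $B$ with an evident section, hence a rational surface. The image of $\phi_B$ is the union $\bigcup_{q\in B}C_q$ of tangent sections along $B$, and condition (2) guarantees that for general $q\in B$ the curve $C_q$ is a geometrically integral singular cubic, so these are genuinely moving rational curves on $X$.

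The main obstacle is dominance: I must show that $\bigcup_{q\in B}C_q$ is two-dimensional, i.e.\ that $\phi_B$ is dominant. The danger in characteristic $p$ is that the Gauss map $x\mapsto [T_xX]$ could be inseparable, or (in the ``strange'' situations peculiar to small $p$) could force the tangent sections to degenerate or to sweep out only a curve; this is exactly what conditions (1) and (2) are designed to exclude. Concretely, separability of $\pi_{x_0}$ prevents $X$ from being a strange surface and forces the tangent planes, hence the sections $C_q$, to vary non-trivially with $q$, while the geometric integrality of the general $C_q$ keeps the members irreducible; together these give $\dim\bigcup_{q\in B}C_q=2$, so $\phi_B$ is a dominant rational map from a rational surface to $X$ and $X$ is unirational. (If a sharper parametrisation were wanted one could iterate over the generic point of $B$, but dominance of $\phi_B$ already suffices for unirationality.)

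Finally, for the ``moreover'' clause I would verify the two numerical ranges. A smooth $k$-rational point exists because the smooth locus of the geometrically integral regular surface $X$ is dense open and $X(k)\neq\emptyset$; projection from such a point is generically finite of degree $2$ onto $\mathbb{P}^2_k$, and a dominant map of degree $2$ is automatically separable once $p\neq 2$, which gives condition (1) for all $p\ge 3$. For condition (2) with $p\ge 5$ I would show that the general tangent section $C_x$ of $X_{k^{\sep}}$ is an ordinary nodal cubic: the general hyperplane section is a smooth plane cubic (plane cubics being generically smooth for $p\ge 5$), the Gauss map is separable, and the second fundamental form is generically non-degenerate in these characteristics, so the general tangent section acquires a single node and is in particular geometrically integral and singular. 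The exclusion of $p\in\{2,3\}$ mirrors the failure of these separability statements in small characteristic, consistently with the non-unirational examples of Oguiso--Schr\"oer.
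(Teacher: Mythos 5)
Your proposal is Koll\'ar's \emph{first} unirationality construction (sweeping tangent sections along one rational tangent-plane section), and it founders on a genuine gap: the step ``after possibly replacing $x_0$ by a general smooth $k$-point so that condition (2) applies at $x_0$.'' Condition (2) only controls tangent sections at \emph{general} $k^{\sep}$-rational points, whereas the hypothesis gives you a single, fixed $k$-rational point $x_0$; you cannot move it to a general position unless you already know that $X(k)$ is Zariski dense, and that density is a consequence of unirationality -- the very statement being proved. Without this, the curve $B=C_{x_0}=T_{x_0}X\cap X$ at your specific point may be reducible or non-reduced (e.g.\ a union of lines, possibly conjugate over $k$, through an Eckardt-type point), in which case projection from $x_0$ does not make $B$ a rational curve over $k$ and the whole sweep $\phi_B$ collapses. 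Everything downstream (the $\mathbb{P}^1$-bundle over $B$, dominance of the union $\bigcup_{q\in B}C_q$) is conditioned on this unjustified genericity, so the argument as written is circular.

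The paper's proof is designed precisely to avoid this issue. It fixes $x\in X(k)$ and uses hypothesis (1) to get a \emph{separable} degree-$2$ projection $\pi_x\colon X\dashrightarrow\mathbb{P}^2_k$; then, since $k$ may be taken infinite, a \emph{general} $k$-rational point $y\in\mathbb{P}^2_k$ exists, and its fibre $\pi_x^{-1}(y)$ consists of two points that are either $k$-rational or conjugate over a separable quadratic extension $L/k$. These points $P,Q\in X_L$ \emph{are} general $k^{\sep}$-points, so condition (2) legitimately applies to them, giving geometrically integral singular cubic tangent sections $C_P$, $C_Q$, hence Galois-conjugate rational maps $\mathbb{P}^1_L\dashrightarrow C_P,\ C_Q$. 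Because these curves are only defined over $L$, the paper then invokes Koll\'ar's \emph{second} unirationality construction: the Weil restriction $R_{L/k}\mathbb{P}^1_L$ (a rational $k$-variety) admits a rational map $\Psi$ to $X$, which is dominant by hypothesis (1) and \cite[Lemma 15]{Kol02}. This conjugate-pair/Weil-restriction mechanism is exactly the missing ingredient your proposal needs to handle the fact that rational points cannot be assumed general; if you wish to salvage your approach, you would have to either prove density of $X(k)$ first (which in effect requires this construction) or add the hypothesis that the tangent section at the given point $x_0$ is itself geometrically integral, which is strictly stronger than condition (2).
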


\begin{proof}
    We can suppose that $k$ is infinite by \cite[Theorem 1]{Kol02}.
    Let $x \in X(k)$ be a rational point and consider the projection $\pi_x \colon X \dashrightarrow \mathbb{P}^2_k$ from $x$, which is a rational map of degree 2.
    By (1), we may assume that $\pi_x$ is separable (note this is automatic if $p > 2$).
    
    Let $y \in \mathbb{P}^2_k$ be a general $k$-rational point. 
    Then %the residue field of 
    the inverse image 
    $\pi_x^{-1}(y)$ is either a disjoint union of two $k$-rational points  or a single closed point whose residue field is a separable quadratic extension of $k$.
    Let $L$ be the residue field of a closed point in  $\pi_x^{-1}(y)$.  
    As $L/k$ is separable, the base change $X_L := X \times_k L$ is still a geometrically integral regular cubic surface. 
    For  the $L$-rational point $y_L:= y\times_k L \in \P^2_L$ 
    lying over $y$,  $\pi^{-1}_x(y_L)$ is the union of $L$-rational points $P$ and $Q$. 
    Denote by $T_P X_L$ the tangent plane to the cubic $X_L$ at $P$ in $\mathbb{P}^3_L$.
    Set  $C_P :=T_P X_L \cap X_L$ and $C_Q:=T_Q X_L \cap X_L$. 
%    As $P$ and $Q$ are general,     $C_P$ and $C_Q$  are  curves with a double point at $P$ (resp. $Q$) by (2).
        As $P$ { (resp. $Q$) is} general,     
        $C_P$ { (resp. $C_Q$)  is a}   curve with a double point at $P$ (resp. $Q$) by (2). 
    %we can apply \cite[Proposition 14]{Kol02}{\cyan Kollar seems to impose the geom normal assumption on $X$. Is this OK?} \FB{it seems to me the proof does not require geom. normality, but only normality}{\cyan Do we actually need to apply [Kollar]? I mean, in which part we need it? The assumption probably assures that $C_P$ is birational to $\P^1_L$, as $P$ is a unique singular point of $C_P \times_k \overline k$.}
    %to deduce that 
    Since $C_P$ and $C_Q$ are geometrically integral by hypothesis (2), 
    %the projection from $P$ inside the plane $T_P X\simeq \mathbb{P}^2_L$ shows 
    we see that $C_P$ is birational to $\mathbb{P}^1_L$ by using the projection from $P$ inside the plane $T_P X\simeq \mathbb{P}^2_L$.
    We thus have Galois--conjugate rational maps $\mathbb{P}^1_L \dashrightarrow C_P$ and $ \mathbb{P}^1_L \dashrightarrow C_Q$.

    We can now apply %the second unirationality construction presented in 
    \cite[8(Second unirationality construction) in page 469]{Kol02} to construct a rational map $\Psi: R_{L/k} \mathbb{P}^{1}_L \dashrightarrow  X$,  where $R_{L/k}$ denotes the Weil restriction of scalars.
    By hypothesis (1) and \cite[Lemma 15]{Kol02}, we conclude that $\Psi$ is a dominant rational map, concluding the proof as 
    {$R_{L/k} \mathbb{P}^{1}_L$ is rational} by \cite[Definition 2.1]{Kol02}. %{\cyan Does there exist Def 2.1? Def 7, you mean?}. 
\end{proof}

\begin{remark}
%The above result is optimal in the case of imperfect field..
The assumptions (1) and (2) can not be dropped from \autoref{thm: unirat_cubics}. 
Let $\F$ be an algebraically closed field of characteristic $p \in \{2, 3\}$. 
\begin{itemize}
\item If $p=2$, then we set 
\[
Y :=\left\{y^3_1 + t_1x^2_1y_1 + y^3_2 + t_2x^2_2y_2 = 0 \right\} \subset \mathbb{P}^3_k
\]
for $k:= \F(t_1, t_2)$. 
\item If $p=3$, 
then we set 
\[
Y := \left\{ y^3-yz^2+t_1x_1^{{3}}+t_2x_2^{{3}}=0 \right\} \subset \mathbb{P}^3_{k}
\]
for $k:= \F(t_1, t_2)$. 
\end{itemize}
Set $X := Y_{k^{\sep}}$ for each $p \in \{2, 3\}$.  
Then $X({k^{\sep}})$ is dense in $X$ and 
$X$ is not unirational by  
\cite[Theorem 4.4]{OS22} ($p=2$) and 
\cite[Theorem 2.7]{OS22} ($p=3$). 
% Consider the cubic surface constructed in \cite[Theorem 2.7]{OS22}:
% $$Y=\left\{ y^3-yz^2+t_1x_1^2+t_2x_2^2=0 \right\} \subset \mathbb{P}^3_{\mathbb{F}_3(s,t)}$$ in characteristic 3 and take the cubic example in characteristic 2 explained in \autoref{rem: quartic}.
% Let $Y_{k^{\sep}}$ the base change to the separable closure. 
% By \cite{OS22}, $Y_{k^{\sep}}$ is not unirational and the set of rational points are Zariski dense.
\end{remark}

\begin{rem}\label{r non geom int}
Some results established in this article are extended to the geometrically non-integral case. 
\begin{enumerate}
\item 
$X$ is rational if $k$ is a field of characteristic $p>0$, 
$X$ is a regular del Pezzo surface $X$ over $k$, 
$H^0(X, \MO_X)=k$, and $K_X^2 \in \{5, 7\}$. 
Indeed, we get $\epsilon(X/k)=0$ by the list of \cite[Theorem 4.6]{Tan19},  which implies that $X$ is geometrically integral by \cite[Theorem 7.3]{Tan21}. 
Then we may apply \autoref{them: Enriques}  $(K_X^2 =5)$ and \autoref{p deg7} $(K_X^2 =7)$. 
\item 
Let $k$ be a field and let $X$ be a regular variety over $k$. 
If $P$ is a $k$-rational point of $X$, then $X$ is smooth around $P$ (cf. \cite[Proposition 2.13]{Tan21}), 
and hence $X$ is automatically geometrically integral when 
$X$ is projective over $k$ and $H^0(X, \MO_X)=k$. 
In particular, the geometrically integral assumption in \autoref{t rationality} 
can be replaced by $H^0(X, \MO_X)=k$. 
\item 
{Let $k$ be a field and let $X$ be a projective normal variety over $k$. 
If $X$ is geometrically integral, then we get $H^0(X, \MO_X)=k$.} 
\item 
%On the other hand, the opposite implication holds if $k$ is a $C_1$-field, i.e., 
%over a $C_1$-field, 
%we have $H^0(X, \MO_X)=k$ if and only if $X$ is geometrically integeral \cite[Lemma 6.1]{BT22}.
% the condition 
 Let $k$ be a $C_1$-field and let $X$ be a projective normal variety over $k$. % with $H^0(X, \MO_X)=k$. 
 {By \cite[Lemma 6.1]{BT22} and \cite[Theorem at the first page]{Sch10}, 
 $H^0(X, \MO_X)=k$ if and only if 
 $X$ is geometrically integral}. %{\cyan Thm number?}}. 
In particular, 
the geometrically integral assumption in \autoref{thm: existence_section} 
can be replaced by $H^0(X, \MO_X)=k$. 
\end{enumerate}
% the following hold for a regular del Pezzo surface $X$ over a field $k$ of characteristic $p>0$. 
% \begin{enumerate}
% \item If $K_X^2 =5$, then $X$ is rational. 
% \item 
% \end{enumerate}
% {\cyan This should go to the last part of this section, and we should mention on the other degrees}
%     The hypothesis of geometric integrality is not necessary: if $X$ is a regular del Pezzo surface of degree $K_X^2=5$, then, by the list of \cite[Theorem 4.6]{Tan19}, we have $\epsilon(X/k)=0$, which implies that $X$ is geometrically integral by \cite[Theorem 7.3]{Tan21}.
\end{rem}

\bibliographystyle{amsalpha}
\bibliography{refs}
	
\end{document}